\let\Gamma=\varGamma
\let\Omega=\varOmega
\let\Sigma=\varSigma
\renewenvironment{enumerate}[1][]
{\begin{enumerat}[#1]\setlength{\itemsep}{6pt}}{\end{enumerat}}
\newenvironment{enuma}{\begin{enumerate}[{\rm(a) }]}{\end{enumerate}}
\newenvironment{enumi}{\begin{enumerate}[{\rm(i) }]}{\end{enumerate}}
\renewenvironment{itemize}
{\begin{list}{$\bullet$}{\setlength{\itemsep}{6pt} \itemindent=-3mm 
\leftmargin=8mm }}
{\end{list}}
\definecolor{darkgreen}{rgb}{0,0.5,0}
\definecolor{bluegreen}{rgb}{0,0.2,0.8}
\definecolor{darkred}{rgb}{0.8,0,0}
\definecolor{newercolor}{rgb}{0.2,0,1}
\definecolor{darkyellow}{rgb}{0.7,0.7,0}
\definecolor{darkorange}{rgb}{0.8,0.4,0}
\numberwithin{table}{section}
\newcommand{\boldd}[1]{{\mathversion{bold}\textbf{#1}}}
\newcommand{\bmid}{\mathrel{\big|}}
\newlength{\short}
\newcommand{\4}[1]{\widebar{#1}}
\newcommand{\5}[1]{\widehat{#1}}
\newcommand{\9}[1]{{}^{#1}\!}
\def\pair[#1,#2]{[\hskip-1.5pt[#1,#2]\hskip-1.5pt]}
\let\oldcirc=\circ
\renewcommand{\circ}{\mathchoice
    {\mathbin{\scriptstyle\oldcirc}}{\mathbin{\scriptstyle\oldcirc}}
    {\mathbin{\scriptscriptstyle\oldcirc}}
    {\mathbin{\scriptscriptstyle\oldcirc}}}
\def\beq#1\eeq{\begin{equation*}#1\end{equation*}}
\def\beqq#1\eeqq{\begin{equation}#1\end{equation}}
\numberwithin{equation}{section}
\newtheorem{Thm}{Theorem}[section]
\newtheorem{Prop}[Thm]{Proposition}
\newtheorem{Cor}[Thm]{Corollary}
\newtheorem{Lem}[Thm]{Lemma}
\newtheorem{Not}[Thm]{Notation}
\newtheorem{Rmk}[Thm]{Remark}
\newtheorem{Quest}[Thm]{Question}
\newtheorem{Th}{Theorem}
\theoremstyle{definition}
\newtheorem{Defi}[Thm]{Definition}
\newtheorem{Ex}[Thm]{Example}
\newcommand{\widebar}[1]
      {\overset{{\mskip1mu\leaders\hrule height0.4pt\hfill\mskip1mu}}{#1}
      \vphantom{#1}}
\newcounter{let} \setcounter{let}{0}
\loop\stepcounter{let}
\edef\csname cal\alph{let}\endcsname%
\loop\stepcounter{let}
\edef\csname scr\alph{let}\endcsname%
\newcommand{\tdef}[2][]{\expandafter\newcommand\csname#2\endcsname%
{#1\textup{#2}}}
\newcommand{\ON}{\textit{O'N}}
\newcommand{\chr}{\textup{char}}
\newcommand{\fdef}[1]{\expandafter\newcommand\csname#1\endcsname%
{\mathfrak{#1}}}
\newcommand{\bbdef}[1]{\expandafter\newcommand%
\csname#1\endcsname{\mathbb{#1}}}
\newcommand{\itdef}[1]{\expandafter\newcommand\csname#1\endcsname%
{\textit{#1}}}
\newcommand{\POmega}{\textit{P}\varOmega}
\newcommand{\sminus}{\smallsetminus}
\newcommand{\lie}[3]{\def\test{#2}\def\tst{G}\ifx\test\tst{{}^{#1}#2_{#3}}
\else{{}^{#1}\!#2_{#3}}\fi}
\renewcommand{\*}{\,\lower6pt\hbox{\Large{\textup{*}}}\,}
\newcommand{\syl}[2]{\textup{Syl}_{#1}(#2)}
\newcommand{\sylp}[1]{\syl{p}{#1}}
\renewcommand{\Im}{\textup{Im}}
\newcommand{\autf}{\Aut_{\calf}}
\newcommand{\outf}{\Out_{\calf}}
\newcommand{\homf}{\Hom_{\calf}}
\newcommand{\defeq}{\overset{\textup{def}}{=}}
\newcommand{\mxfoura}[8]{\left(\begin{smallmatrix}#1&#2&#3&#4\\#5&#6&#7&#8}
\newcommand{\mxfourb}[8]{\\#1&#2&#3&#4\\#5&#6&#7&#8\end{smallmatrix}\right)}
\let\emptyset=\varnothing
\renewcommand{\:}{\colon}
\newcommand{\pcom}{{}^\wedge_p}
\newcommand{\nsg}{\trianglelefteq}
\newcommand{\snsg}{\nsg\,\nsg}
\let\nnsg=\ntrianglelefteq
\let\too=\longrightarrow
\let\xto=\xrightarrow
\let\fromm=\longleftarrow
\renewcommand{\gg}{\mathbb{G}}
\newcommand{\hh}{\mathbb{H}}
\newcommand{\gen}[1]{{\langle}#1{\rangle}}
\newcommand{\Gen}[1]{{\bigl\langle}#1{\bigr\rangle}}
\newcommand{\longleft}[1]{\;{\leftarrow%
\count255=0 \loop \mathrel{\mkern-6mu}%
    \relbar\advance\count255 by1\ifnum\count255<#1\repeat}\;}
\newcommand{\longright}[1]{\;{\count255=0 \loop \relbar\mathrel{\mkern-6mu}%
    \advance\count255 by1\ifnum\count255<#1\repeat\rightarrow}\;}
\newcommand{\Right}[2]{\overset{#2}{\longright#1}}
\newcommand{\RIGHT}[3]{\mathrel{\mathop{\kern0pt\longright#1}
        \limits^{#2}_{#3}}}
\newcommand{\LEFT}[3]{\mathrel{\mathop{\kern0pt\longleft#1}\limits^{#2}_{#3}}
}
\newcommand{\dRIGHT}[3]{\mathrel{%
   \mathop{\vcenter{\baselineskip=0pt\hbox{$\kern0pt\longright#1$}%
   \hbox{$\kern0pt\longright#1$}}}\limits^{#2}_{#3}}}
\newcommand{\LRIGHT}[3]{\mathrel{%
   \mathop{\vcenter{\baselineskip=0pt\hbox{$\kern0pt\longleft#1$}%
   \hbox{$\kern0pt\longright#1$}}}\limits^{#2}_{#3}}}
\newcommand{\RLEFT}[3]{\mathrel{%
   \mathop{\vcenter{\baselineskip=0pt\hbox{$\kern0pt\longright#1$}%
   \hbox{$\kern0pt\longleft#1$}}}\limits^{#2}_{#3}}}
\newcommand{\onto}[1]{\;{\count255=0 \loop \relbar\mathrel{\mkern-6mu}%
    \advance\count255 by1
    \ifnum\count255<#1 \repeat \twoheadrightarrow}\;}
\newcommand{\EE}[1]{\textbf{\textup{E}}_{#1}}
\newcommand{\minsc}[1]{\mathfrak{minsc}(#1)}
\newcommand{\ST}[1]{\text{\boldd{$\textit{ST}_{#1}$}}}
\newcommand{\longline}{\bigskip\hfill\hbox to 8cm{\hrulefill}%
\hfill\bigskip}
\newcommand{\scl}{^{\mathfrak{sc}}}
\newcommand{\hocolim}{\mathop{\textup{hocolim}}}
\newcommand{\holim}{\mathop{\textup{holim}}}
\def\LFS(#1){\textup{LFS($#1$)}} 
\def\LF(#1){\textup{LF($#1$)}} 
\begin{document}

\title{Realizability of fusion systems by discrete groups}

\author{Carles Broto}
\address{Departament de Matem\`atiques, Edifici Cc, Universitat Aut\`onoma de 
Barcelona, 08193 Cerdanyola del Vall\`es (Barcelona), Spain.}
\email{carles.broto@uab.cat}
\thanks{C. Broto is partially supported by MICINN
grant PID2020-116481GB-100 and AGAUR grant 2021-SGR-01015.}

\author{Ran Levi}
\address{Institute of Mathematics, University of Aberdeen,
Fraser Noble 138, Aberdeen AB24 3UE, U.K.}
\email{r.levi@abdn.ac.uk}
\thanks{}

\author{Bob Oliver}
\address{Universit\'e Sorbonne Paris Nord, LAGA, UMR 7539 du CNRS, 
99, Av. J.-B. Cl\'ement, 93430 Villetaneuse, France.}
\email{bobol@math.univ-paris13.fr}
\thanks{B. Oliver is partially supported by UMR 7539 of the CNRS}

\thanks{All three authors would like to thank the Isaac Newton Institute 
for Mathematical Sciences and the Gaelic College on Isle of Skye for their 
support and hospitality during the programme ``Topology, representation 
theory, and higher structures'', supported by EPSRC grant no. EP/R014604/1.}



\subjclass[2020]{Primary 20D20, 20F50. Secondary 20G15, 55R35, 57T10, 
22E15} \keywords{Sylow subgroups, locally finite groups, linear torsion 
groups, linear algebraic groups, classifying spaces of groups, compact Lie 
groups, $p$-compact groups}

\begin{abstract}
For a prime $p$, fusion systems over discrete $p$-toral groups are categories 
that model and generalize the $p$-local structure of Lie groups and certain 
other infinite groups in the same way that fusion systems over finite 
$p$-groups model and generalize the $p$-local structure of finite groups. 
In the finite case, it is natural to say that a fusion system $\calf$ is 
realizable if it is isomorphic to the fusion system of a finite group, but 
it is less clear what realizability should mean in the discrete $p$-toral 
case. 

In this paper, we look at some of the different types of realizability for 
fusion systems over discrete $p$-toral groups, including realizability by 
linear torsion groups and sequential realizability, of which the latter is 
the most general. After showing that fusion systems of compact Lie groups 
are always realized by linear torsion groups (hence sequentially 
realizable), we give some new tools for showing that certain fusion systems 
are not sequentially realizable, and illustrate it with two large families 
of examples. 
\end{abstract}

\maketitle

\bigskip

\section*{Introduction}

For a fixed prime $p$, let $\Z/p^\infty$ denote the union of an 
increasing sequence of finite cyclic $p$-groups $\Z/p^n$. A discrete 
$p$-torus is a group isomorphic to a finite product of copies of 
$\Z/p^\infty$, and a \emph{discrete $p$-toral group} is a group that 
contains a discrete $p$-torus with (finite) $p$-power index. For example, 
if $T$ is a torus in the usual sense (a product of copies of $S^1$), then 
the group of all elements of $p$-power order in $T$ is a discrete $p$-torus 
(hence the name). 

A \emph{fusion system} over a discrete $p$-toral group $S$ is a category 
$\calf$ whose objects are the subgroups of $S$, whose morphisms are 
injective homomorphisms between the subgroups including all morphisms 
induced by conjugation in $S$, and such that $\varphi\:P\to Q$ in $\calf$ 
implies $\varphi^{-1}\:\varphi(P)\to P$ is also in $\calf$. A fusion system 
is \emph{saturated} if it satisfies certain additional conditions listed in 
Definition \ref{d:sfs}. Saturated fusion systems over discrete 
$p$-toral groups arise in different contexts relevant to algebra and 
topology (see, e.g., \cite{BLO6} and \cite{KMS1,KMS2}). In 
\cite{BLO3}, we proved their basic properties and showed how they appear 
naturally in various situations.

When $G$ is a group and $S\le G$ is a discrete $p$-toral subgroup, we let 
$\calf_S(G)$ (the ``fusion system of $G$ with respect to $S$'') be the 
category whose objects are the subgroups of $S$, and whose morphisms are 
those homomorphisms between subgroups induced by conjugation in $G$. This 
is always a fusion system, and it is saturated whenever $G$ is finite and 
$S$ is a Sylow $p$-subgroup of $G$. These examples provided Puig with 
part of his original motivation for defining saturated fusion 
systems. 

When $\calf$ is a fusion system over a finite $p$-group $S$, it is natural 
to say that $\calf$ is realizable if it is isomorphic to $\calf_S(G)$ for 
some finite group $G$ with $S\in\sylp{G}$. In this situation, the 
$p$-completed classifying space $BG\pcom$ ($p$-completed in the sense of 
Bousfield and Kan \cite{BK}) is always homotopy equivalent to the 
classifying space of the fusion system (see \cite[Proposition 1.1]{BLO1} 
and \cite[Definition 1.8]{BLO2}). Many examples have been constructed 
of fusion systems over finite $p$-groups that do not arise in this way, 
such as the systems $\calf_{\textup{Sol}}(q)$ constructed in \cite{LO,LO-corr} 
(essentially the only known examples when $p=2$), and those constructed in 
\cite{RV} and \cite{indp2} for odd primes $p$.

In this paper, we look at the question of what ``realizable'' should mean 
for fusion systems over discrete $p$-toral groups. When we first looked at 
fusion systems over discrete $p$-toral groups, we were motivated in part by 
the example of fusion systems of compact Lie groups, but we also showed 
that such fusion systems arise from classifying spaces of $p$-compact 
groups, and of torsion subgroups of $\GL_n(K)$ when $K$ is a field with 
$\chr(K)\ne p$ --- which we call linear torsion groups
(see Theorems 9.10, 10.7, and 8.10 in \cite{BLO3}). In view of this, 
it became clear that it would be much too restrictive to say that $\calf$ 
is realizable only if it is isomorphic to the fusion system of a compact 
Lie group. The question of what should be the correct concept of 
realizability in this setting (if there is one) remained open. 
This paper aims to address this question. As we shall see, rather than a 
simple answer, there are several forms of realizability, of which the most 
general that we have found is what we call ``sequential realizability''.

A fusion system $\calf$ over $S$ is \emph{sequentially realizable} if it 
is the union of an increasing sequence $\calf_1\le\calf_2\le\cdots$ of 
fusion subsystems over finite subgroups of $S$, each of which is realized 
by a finite group. (Note that we do not assume any relations between 
the groups realizing the $\calf_i$.) We show that fusion systems of compact Lie 
groups and $p$-fusion systems of linear torsion groups in characteristic 
different from $p$ are all sequentially realizable. We will show in a 
later paper that sequentially realizable fusion systems are always 
saturated (we avoid that question in this paper by assuming saturation 
when necessary). 

Quite surprisingly, it turns out that a fusion system can be sequentially 
realizable, and at the same time the union of an increasing sequence of fusion 
subsystems over finite $p$-groups that are not realizable. In Example 
\ref{ex:seq.real-exo}, we construct a fusion system $\calf$ over a discrete 
$p$-toral group $S$, together with an increasing sequence of finite 
saturated fusion subsystems $\calf_1\le\calf_2\le\cdots$ whose union is 
$\calf$, where the $\calf_i$ alternate between being realizable (by finite 
groups) and exotic (not realizable). For this reason, while we often use 
the word ``exotic'' to mean ``not realizable'', the phrase ``sequentially 
exotic'' does not seem appropriate when we mean ``not sequentially 
realizable''. 

As one example, we show in Proposition \ref{p:f.s.union} that a 
saturated fusion system $\calf$ is sequentially realizable whenever 
$\calf\cong\calf_S(G)$ for some locally finite group $G$ and some discrete 
$p$-toral subgroup $S\le G$ that is a Sylow $p$-subgroup of $G$. For 
example, if $G$ is a linear torsion group in characteristic different 
from $p$, then $G$ is locally finite and has Sylow $p$-subgroups that are 
discrete $p$-toral, and the fusion system $\calf_S(G)$ (for $S\in\sylp{G}$) 
is sequentially realizable.

Our first result shows that fusion systems over finite $p$-groups 
that are exotic in the earlier sense are still exotic with respect to these 
new criteria.

\begin{Th}[Theorem \ref{t:fin.real.}] \label{ThA}
Let $\calf$ be a saturated fusion system over a finite $p$-group $S$. If 
$\calf$ is sequentially realizable, or if it is realized by some locally 
finite group $G$ containing $S$ as a maximal $p$-subgroup, then $\calf$ is 
realized by a finite group containing $S$ as a Sylow $p$-subgroup.
\end{Th}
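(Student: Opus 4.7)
The plan is to exploit the key finiteness observation: when $S$ is a finite $p$-group, $\calf$ has only finitely many morphisms, since there are finitely many pairs $(P,Q)$ of subgroups of $S$ and each $\homf(P,Q)$ consists of injective homomorphisms between finite groups. Both halves of the theorem will reduce to extracting a single finite stage that already realizes all of $\calf$.

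For the sequentially realizable case, write $\calf=\bigcup_{i\ge 1}\calf_i$, where each $\calf_i$ is a fusion subsystem over a finite subgroup $S_i\le S$ realized by a finite group $G_i$ with $S_i\in\sylp{G_i}$. Since $S$ itself is an object of $\calf$, and $\calf_i\le\calf_{i+1}$ forces $S_i\le S_{i+1}$, the equality of object sets $\bigcup_i\{P\le S_i\}=\{P\le S\}$ together with finiteness of $S$ gives $S_N=S$ for some $N$; from that index on, all $\calf_j$ are fusion systems over $S$. Since $\Mor(\calf)$ is finite and the ascending chain $\Mor(\calf_N)\subseteq\Mor(\calf_{N+1})\subseteq\cdots$ has union $\Mor(\calf)$, it must stabilize, so $\calf_M=\calf$ for some $M\ge N$. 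Hence $\calf$ is realized by the finite group $G_M$ with $S\in\sylp{G_M}$.

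For the locally finite case, suppose $\calf\cong\calf_S(G)$ with $G$ locally finite and $S$ a maximal $p$-subgroup of $G$. For each of the finitely many morphisms $\varphi$ of $\calf$, choose an element $g_\varphi\in G$ whose conjugation action on $S$ induces $\varphi$, and set
\[
H \;=\; \Gen{\,S,\; g_\varphi \;:\; \varphi\in\Mor(\calf)\,}\;\le\; G.
\]
Local finiteness of $G$ makes $H$ finite, and by construction $\calf_S(H)=\calf_S(G)=\calf$ (the inclusion $\calf_S(H)\subseteq\calf_S(G)$ is automatic from $H\le G$, and the reverse inclusion holds because each morphism of $\calf_S(G)$ is some chosen $\varphi=c_{g_\varphi}$ with $g_\varphi\in H$). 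Finally, any Sylow $p$-subgroup of $H$ containing $S$ is a $p$-subgroup of $G$ containing the maximal $p$-subgroup $S$, and therefore equals $S$; thus $S\in\sylp{H}$.

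The main obstacle is rhetorical rather than mathematical: there is no delicate step, only a careful unpacking of the definitions of ``sequentially realizable'' and of $\calf_S(G)$. The content of the theorem is that the finiteness of $S$ caps both the object set and the morphism set of $\calf$, preventing any genuinely new behaviour from arising ``at infinity'' in either the union or the ambient locally finite group.
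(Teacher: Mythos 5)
Your proof is correct and follows essentially the same approach as the paper's. Part (ii) is verbatim the paper's argument (choose a representative conjugating element per morphism, take the subgroup they generate together with $S$, and use local finiteness plus maximality of $S$). In part (i) the paper simply notes that $\calf$, being finite, must already equal $\calf_i$ for some $i$; you unpack this into the two stages $S_N=S$ and then stabilization of the morphism chain, which is the same observation spelled out slightly more fully.
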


When $G$ is a compact Lie group and $p$ is a prime, we let $\sylp{G}$ be 
the set of all maximal discrete $p$-toral subgroups of $G$. For each 
$S\in\sylp{G}$, there is a maximal torus $T\le G$ such that $S\cap T$ is 
the subgroup of elements of $p$-power order in $T$ and 
$ST/T\in\sylp{N_G(T)/T}$ (see, e.g., \cite[Proposition 9.3]{BLO3}). 
We showed in \cite[Lemma 9.5]{BLO3} that $\calf_S(G)$ is a 
saturated fusion system for each $S\in\sylp{G}$. 

\begin{Th}[Theorem \ref{t:cpt.Lie}] \label{ThB}
Let $G$ be a compact Lie group, and fix a prime $p$ and $S\in\sylp{G}$. 
Then there is a linear torsion group $\Gamma$ in characteristic different 
from $p$ such that $\calf_{S_\Gamma}(\Gamma)\cong\calf_S(G)$ for 
$S_\Gamma\in\sylp\Gamma$. In particular, $\calf_S(G)$ is sequentially 
realizable. 
\end{Th}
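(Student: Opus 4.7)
My plan is to realize $\calf_S(G)$ as the $p$-fusion system of the $\overline{\F}_\ell$-points of an algebraic group derived from $G$, for a prime $\ell$ different from $p$. This is the group-theoretic analogue of the Friedlander--Mislin comparison between compact Lie groups and their algebraic counterparts in cross characteristic. Concretely, I would fix a faithful unitary embedding $G\hookrightarrow U(n)\subset\GL_n(\C)$, let $\mathbb{G}\le\GL_n$ be the Zariski closure of the image of $G$ (a complex linear algebraic group whose identity component complexifies $G^0$ and whose component group equals $\pi=G/G^0$), and then spread $\mathbb{G}$ out to a group scheme over a finitely generated subring $A\subset\C$.

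Next, I would choose $\ell\ne p$ together with a ring map $A\to\overline{\F}_\ell$, and set $\Gamma=\mathbb{H}(\overline{\F}_\ell)\le\GL_n(\overline{\F}_\ell)$, where $\mathbb{H}$ is the resulting algebraic group over $\overline{\F}_\ell$. Every element of $\Gamma$ lies in some $\mathbb{H}(\F_{\ell^m})$ and hence has finite order, so $\Gamma$ is a linear torsion group in characteristic $\ell\ne p$. Because $\mathbb{H}$ shares its root datum and component group with $\mathbb{G}$, a maximal torus $\mathbb{T}\le\mathbb{H}^0$ has the same rank as a maximal torus $T\le G^0$, and its $p$-primary torsion is a discrete $p$-torus of matching rank. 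Taking $S_\Gamma\in\sylp\Gamma$ as an extension of this discrete $p$-torus by a Sylow $p$-subgroup of the full Weyl group $N_\mathbb{H}(\mathbb{T})(\overline{\F}_\ell)/\mathbb{T}(\overline{\F}_\ell)$, which carries the same $\pi$-extended action as $G$ does on $T$, yields an isomorphism $S\cong S_\Gamma$ (compatible with the inclusion of the underlying discrete $p$-tori).

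The heart of the proof is the matching of fusion. By saturation, $\calf_S(G)$ is determined by the automorphism groups $\autf(P)$ at a representative set of $\calf$-centric subgroups $P\le S$ together with inclusions. For such $P$, the normalizer $N_G(P)$ is a compact extension of a torus by a finite group, and $\Aut_G(P)$ is its finite component group. The parallel calculation inside $\Gamma$ produces the same component group, since root data, normalizer schemes, and Weyl actions are preserved in passing from $\mathbb{G}$ to $\mathbb{H}$; this yields $\calf_S(G)\cong\calf_{S_\Gamma}(\Gamma)$. Sequential realizability then follows from Proposition~\ref{p:f.s.union}, because $\Gamma$ is locally finite (any finitely generated torsion subgroup of $\GL_n(\overline{\F}_\ell)$ is finite) and $S_\Gamma$ is Sylow in $\Gamma$ by construction. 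I expect the main obstacle to be this centric-by-centric verification that every $G$-conjugation between subgroups of $S$ is witnessed by an element of $\Gamma$ and conversely, particularly when the component group $\pi$ is nontrivial, since in that case one must control how $\pi$ acts on normalizer data on both sides.
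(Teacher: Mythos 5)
Your overall strategy — realize $\calf_S(G)$ by a group of $\overline{\F}_\ell$-points of a suitable algebraic group — is the right spirit, and the setup via Zariski closure and spreading-out over a finitely generated subring $A\subset\C$ is fine as far as it goes. But the proposal stops precisely where the paper's argument begins, and what you label ``the main obstacle'' is in fact the entire content of the theorem.

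The direct ``centric-by-centric'' verification you propose is not a bookkeeping step. Matching $\Aut_G(P)$ with $\Aut_\Gamma(P)$ for all $\calf$-centric $P$, while controlling that the inclusions and conjugation data match, is essentially equivalent to proving the Friedlander--Mislin theorem by hand. You never invoke a mechanism that actually effects the comparison; ``root data, normalizer schemes, and Weyl actions are preserved'' is not such a mechanism, since the point of Friedlander--Mislin (which the paper uses via \cite[Theorem 1.4]{FM}) is that the mod-$p$ homotopy types of $B\gg(\C)$ and $B\gg(\4\F_q)$ agree, and this is a deep fact proved by \'etale homotopy methods, not a formal consequence of having isomorphic root data. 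The paper's proof of Lemma~\ref{l:BG=BGamma} then converts this homotopy equivalence into an isomorphism of fusion systems via \cite[Theorem 7.4]{BLO3} — that $p$-completed classifying spaces determine fusion systems — a result you would also need but never cite. Without these two inputs your plan has no engine.

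Secondly, the disconnected case is where most of the paper's proof lives, and your plan doesn't address it. Friedlander--Mislin is stated for connected reductive group schemes, so you cannot simply feed in the Zariski closure $\mathbb{G}$ of a disconnected compact $G$. The paper handles $\pi_0(G)\ne1$ by a four-case decomposition (adjoint type, torus, $Z(G_s)=1$, general) whose general case quotients by $Z(G_s)$ and then reconstructs $\Gamma$ by an obstruction-theoretic comparison of fibrations $BZ\to\4{B\Gamma}\to\4{B\Gamma^*}$ against $BZ\to\4{BG}\to\4{BG^*}$, using $[\,\4{B\Gamma^*},B\Aut(BZ)\,]_0\cong[\,B\Gamma^*,B\Aut(BZ)\,]_0$. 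Nothing in your proposal plays the role of this step; in particular, there is no reason the extension $1\to\Gamma_e\to\Gamma\to\pi\to1$ you get by Zariski closure and reduction carries the correct extension class to match $\4{BG}\to B\pi$ after fiberwise $p$-completion. Even the very first step of your fusion argument, the existence of an isomorphism $S\cong S_\Gamma$ compatible with the tori, already hides an extension class that needs controlling. So the proposal, while heading toward the same kind of answer, has genuine gaps at both the connected comparison (missing Friedlander--Mislin and the classifying-space determination of fusion) and the disconnected extension (missing the paper's obstruction-theoretic decomposition).
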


In fact, in the situation of Theorem \ref{ThB}, for each prime $q\ne p$, 
there is a group $\Gamma$ that is linear over $\4\F_q$ (hence torsion), 
such that $B\Gamma\pcom\simeq BG\pcom$, and 
$\calf_S(G)\cong\calf_{S_\Gamma}(\Gamma)$ for $S_\Gamma\in\sylp\Gamma$.

The next theorem shows that sequential realizability imposes strong 
restrictions on the structure of fusion systems. It is our main tool for 
proving the nonrealizability of certain fusion systems. Its proof requires 
the classification of finite simple groups.

\begin{Th}[Theorem \ref{t:seq-exotic}] \label{ThC}
Let $\calf$ be a saturated fusion system over an infinite discrete 
$p$-toral group $S$, let $T\nsg S$ be the identity component, and set 
$W=\autf(T)$. Assume
\begin{enumi} 

\item $S>T$ and $C_S(T)=T$; 

\item no subgroup $T\le P<S$ is strongly closed in $\calf$; and 

\item no infinite proper subgroup of $T$ is invariant under the action of 
$O^{p'}(W)$. 

\end{enumi}
Assume also that $\calf$ is sequentially realizable. Then $W$ contains a 
normal subgroup of index prime to $p$ that is isomorphic to of one of the 
groups listed in cases (a)--(e) of Proposition \ref{p:Weyl}.
\end{Th}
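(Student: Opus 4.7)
The plan is to exploit sequential realizability to pass to finite-group approximations of $\calf$, and then feed the resulting finite setup into the classification of finite simple groups via Proposition \ref{p:Weyl}. Write $\calf=\bigcup_i\calf_i$ with $\calf_i=\calf_{S_i}(G_i)$ for finite groups $G_i$ and $S_i\in\sylp{G_i}$. Since $T$ is a discrete $p$-torus of some rank $r$, we have $T=\bigcup_n T[p^n]$ with $T[p^n]\cong(\Z/p^n)^r$; because $T$ (hence each $T[p^n]$) is characteristic in $S$, we get $T[p^n]\nsg S_i$ as soon as $T[p^n]\le S_i$, which happens for $i$ large depending on $n$.

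Next I would identify $W=\autf(T)$ inside the $G_i$. Condition (i) forces $W$ to act faithfully on $T$, so restriction yields an injection $W\hookrightarrow W_n:=\autf(T[p^n])$ for $n$ large. Writing $W_n$ as the increasing union of the finite groups $\Aut_{\calf_i}(T[p^n])\cong N_{G_i}(T[p^n])/C_{G_i}(T[p^n])$, finiteness of $W_n$ gives $W_n=N_{G_i}(T[p^n])/C_{G_i}(T[p^n])$ for $i\ge i(n)$. In particular, the $W$-action on each $T[p^n]$ is realised by conjugation inside some finite $G_i$. Condition (iii) then rephrases as the statement that $O^{p'}(W)$ acts $\Q_p$-irreducibly on the Tate module $T_p(T)\otimes\Q_p$.

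Condition (ii) says, in particular, that $T$ itself is not strongly closed in $\calf$: there exist $t\in T$ and a $\calf$-morphism $\varphi\:\gen{t}\to S$ with $\varphi(t)\in S\sminus T$. For $i$ large this is an honest $G_i$-conjugacy between an element of $T[p^{n_i}]$ and an element of $S_i\sminus T[p^{n_i}]$. The stronger form of (ii), applied to each proper intermediate $P$ with $T\le P<S$, iterates this to produce fusion moving elements out of every such $P$. Together with the irreducibility from (iii), this places the finite linear group $W_{n_i}\le \Aut(T[p^{n_i}])$ exactly in the situation that Proposition \ref{p:Weyl} analyses: a finite subgroup of $GL_r(\Z/p^{n_i})$ acting irreducibly on $T[p^{n_i}]$ and admitting a Sylow-$p$ fusion pattern that is non-split over the maximal torus.

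Applying Proposition \ref{p:Weyl} to the $W_{n_i}$ outputs that $O^{p'}(W_{n_i})$ lies in the finite list (a)--(e), and since $W\hookrightarrow W_{n_i}$ for all large $i$ and the list is independent of $n$, the same holds for $O^{p'}(W)$. The main obstacle is the translation: verifying that conditions (ii) and (iii), after passing to the finite approximations $G_i$ and to the finite quotients $W_n$, fit the exact hypotheses of Proposition \ref{p:Weyl}. In particular one must check that the non-strong-closure in (ii) survives the truncation $T\rightsquigarrow T[p^{n_i}]$ and that the $\Q_p$-irreducibility in (iii) gives the required irreducibility on $T[p^{n_i}]$ modulo bounded subgroups. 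Once these checks are in place, the CFSG step lives entirely inside Proposition \ref{p:Weyl}, and the conclusion of Theorem \ref{t:seq-exotic} follows.
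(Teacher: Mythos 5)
Your outline captures the correct overall strategy (pass to finite approximations $G_i$, feed the resulting finite data into Proposition \ref{p:Weyl}), but it misreads what Proposition \ref{p:Weyl} actually says, and as a result the central part of the argument is missing.

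Proposition \ref{p:Weyl} is not a statement about finite subgroups of $\GL_r(\Z/p^n)$ acting irreducibly and "admitting a non-split Sylow-$p$ fusion pattern." It is a statement specifically about \emph{finite simple groups of Lie type in characteristic different from $p$} that possess a large abelian $p$-subgroup $A$ (in the sense of Definition \ref{d:largeabel}), and it classifies the possible pairs $(\Aut_G(A),\Omega_k(A))$. Before one can invoke it, one must first establish that the relevant simple groups \emph{are} of Lie type in non-defining characteristic. That is the content of Lemma \ref{l:seq-exotic}, which is the heart of the paper's proof and which does not appear in your outline. Concretely, one must show that for $n$ large: $O_p(G_n)=1$ and $O_{p'}(G_n)=1$ (so $F^*(G_n)$ is a product of nonabelian simple groups), the simple factors are permuted transitively by $G_n$, and $A_n := F^*(G_n)\cap T$ is a large abelian $p$-subgroup of each simple factor. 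Only then do Propositions \ref{p:ex_p} and \ref{p:ex_p-le3} plus CFSG force each simple factor to be of Lie type in characteristic $\ne p$, at which point Proposition \ref{p:Weyl} becomes applicable. None of this is a direct translation of conditions (ii) and (iii); it uses the strong-closure hypotheses repeatedly, for example to rule out $O_p(G_n)\ne1$ and to show the simple factors form a single $G_n$-orbit.

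There are two further gaps. First, condition (ii) only controls strongly closed subgroups containing $T$; there may be finite strongly closed subgroups inside $T$, and the paper handles this by quotienting out the (finite, strongly closed) subgroup $Q$ generated by all proper strongly closed subgroups of $S$, reducing to the case $\minsc\calf=S$ (Lemmas \ref{l:no.str.cl.}(b), \ref{l:F/Q-seq.real}, \ref{l:F/Q}). This reduction is needed before Lemma \ref{l:seq-exotic} can be applied, and it is absent from your argument. Second, the conclusion of the theorem is that $W$ \emph{contains a normal subgroup $H$ of index prime to $p$} isomorphic to one of the listed groups — not that $W$ itself or $O^{p'}(W)$ is on the list. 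Producing $H$ requires identifying it as $\Aut_{F^*_i}(A_i)\nsg\Aut_{G_i}(A_i)\cong W$ and then verifying, via the Frattini-type observation $\Aut_{S_i}(A_i)\le\Aut_{F^*_i}(A_i)$ (using $U_iT=S$), that this normal subgroup has index prime to $p$. Your outline simply asserts the conclusion at this point without this step.
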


Theorem \ref{ThC} is proven as Theorem \ref{t:seq-exotic}. That 
theorem is stated in slightly greater generality, but Theorem 
\ref{ThC} suffices for our applications here.

As a first application of Theorem \ref{ThC}, we determine in Section 
\ref{s:p-cpt} exactly which fusion systems of simple, connected 
$p$-compact groups are sequentially realizable. Then, in Section 
\ref{s:other.ex.}, we consider simple fusion systems over 
nonabelian infinite discrete $p$-toral groups containing a discrete 
$p$-torus with index $p$ (classified in \cite[\S\,5]{indp3}), and determine 
exactly which of them are sequentially realizable. In all of these cases, 
either the fusion system in question is realized by an explicitly given 
linear torsion group, or it fails to be sequentially realizable by Theorem 
\ref{ThC}. 

Another result, whose proof is closely related to that of Theorem \ref{ThC} 
and also depends on the classification of finite simple groups, is the 
following. It is stated more generally in Theorem \ref{t:cn.p-cpt} as a 
result about connected $p$-compact groups.

\begin{Th}[Theorem \ref{t:cn.p-cpt}(b)] \label{ThD}
Fix a compact connected Lie group $G$, a prime $p$, and $S\in\sylp{G}$, and 
assume that $p$ divides the order of the Weyl group of $G$. Then there is 
no linear torsion group in characteristic $0$ whose fusion system with 
respect to a Sylow $p$-subgroup is isomorphic to $\calf_S(G)$. 
\end{Th}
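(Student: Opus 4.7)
The plan is to assume a linear torsion group $\Gamma\le GL_n(K)$ with $\chr(K)=0$ realizes $\calf_S(G)$, and extract a contradiction from the hypothesis $p\mid|W(G)|$. Without loss of generality I take $K=\C$. Since $\Gamma$ is a torsion subgroup of $GL_n(\C)$, Schur's theorem shows $\Gamma$ is locally finite, and a Hermitian-form averaging argument (applied to each finite subgroup, glued together over the union) lets us conjugate $\Gamma$ into $U(n)$. Set $G':=\overline{\Gamma}$, a compact Lie subgroup of $U(n)$ in which $\Gamma$ sits as a dense torsion subgroup.

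Next, I would identify $\calf_{S_\Gamma}(\Gamma)$ with the $p$-fusion system of $G'$. Pick $S'\in\sylp{G'}$ containing $S_\Gamma$ so that the identity component of $S_\Gamma$ is the $p$-power torsion of the maximal torus of $S'$. Morphisms in $\calf_{S_\Gamma}(\Gamma)$ lie inside $\calf_{S'}(G')$ by inclusion, and conversely any finite-order conjugator in $G'$ between finite subgroups of $S_\Gamma$ can, using density and the discreteness of the relevant conjugation data, be approximated by a torsion element of $\Gamma$ realising the same conjugation. Combined with saturation and Theorem \ref{ThB}, this yields $\calf_S(G)\cong\calf_{S_\Gamma}(\Gamma)\cong\calf_{S'}(G')$, so $\calf_S(G)$ is already the $p$-fusion system of a compact Lie subgroup of $U(n)$ with a dense torsion subgroup.

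To extract the contradiction, I would invoke Theorem \ref{ThC} together with the classification of finite simple groups. Since $\Gamma$ is locally finite with $S_\Gamma$ a maximal discrete $p$-toral subgroup, Proposition \ref{p:f.s.union} gives sequential realizability of $\calf_S(G)$, so $W=\autf(T)$ contains a normal subgroup of $p'$-index on the list (a)--(e) of Proposition \ref{p:Weyl}. The hypothesis $p\mid|W(G)|$ produces a nontrivial $p$-element $w\in W$ acting on $T$, which must lift to a torsion element $\tilde w\in\Gamma$ of $p$-power order acting on $T_\Gamma$ through $w$. The compact Lie subgroup $\overline{\langle\tilde w\rangle\cdot T_\Gamma}\le U(n)$ then carries an integral Weyl-type action on the lattice $\pi_1(\overline{T_\Gamma})$ whose $\otimes\Z_p$-extension recovers the $w$-action on $T_\Gamma$; this equips $O^{p'}(W)$ with a faithful integral reflection representation over $\Z$. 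Combined with case-by-case inspection of the options allowed by Theorem \ref{ThC} and Proposition \ref{p:Weyl}(a)--(e), this extra rationality should eliminate all possibilities when $p\mid|W(G)|$.

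The hardest step will be the identification $\calf_{S_\Gamma}(\Gamma)\cong\calf_{S'}(G')$: one inclusion is immediate, but the other requires showing that every fusion isomorphism of $G'$ between finite subgroups of $S_\Gamma$ is realized by a \emph{torsion} element of $G'$, hence by one in $\Gamma$. The prototype obstruction, the one that forces the whole argument, is $G=SU(2)$ at $p=2$: the $SU(2)$-conjugation moving a cyclic order-$4$ subgroup $\langle b\rangle\not\le T$ into $T$ is visible in $SU(2)$ but cannot be supplied by any torsion subgroup of $SU(2)$, since any finite-order element of $SU(2)$ conjugating $b$ into $T$ generates, together with the discrete $2$-torus, a non-torsion subgroup. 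Converting this local obstruction into a uniform global argument, using the classification of finite simple groups and the structure of Weyl groups of compact connected Lie groups, is where the bulk of the work lies.
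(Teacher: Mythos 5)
Your proposal diverges substantially from the paper's route, and the final step contains a genuine gap that cannot be repaired in the form you sketch.

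The paper proves the statement as a corollary of Proposition \ref{p:no.char0}. After reducing to the case $\minsc\calf=S$ via Lemma \ref{l:no.str.cl.}, it takes an increasing sequence of finite subgroups $G_i\le\Gamma$ with $\calf=\bigcup\calf_{S_i}(G_i)$ and $O_{p'}(G_i)=1$, invokes Lemma \ref{l:seq-exotic} to conclude that for $i$ large $F^*(G_i)$ is a product of groups of Lie type $\lie{d}{\gg}(q_i)$ in characteristic $r_i\ne p$, and then, by a pigeonhole argument on the root datum and on Dickson's list of subgroups of $\PSL_2$, shows that a single prime $r$ recurs with unbounded $q_i$. Hence $\srk_r(\Gamma)=\infty$, and Lemma \ref{l:srk(GLn)}(a) forbids $\Gamma$ from being linear in characteristic $0$. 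The contradiction is about \emph{unbounded sectional rank in the defining characteristic of the approximating simple groups}, not about any arithmetic or rationality constraint on the Weyl group.

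Your final step is where the argument breaks. You propose that the existence of a $p$-power torsion lift $\tilde w\in\Gamma$ of a $p$-element $w\in W$ ``equips $O^{p'}(W)$ with a faithful integral reflection representation over $\Z$,'' and that this extra rationality, combined with Theorem \ref{ThC}, eliminates all cases. But case (a) of Proposition \ref{p:Weyl} is precisely $W\cong\Aut_{\cptG}(\cptT)$ for a simple compact connected Lie group $\cptG$, and these Weyl groups \emph{already} act by an integral reflection representation on the character lattice of $\cptT$. So the rationality you extract is vacuous: it does not exclude case (a), which is exactly the case realized by compact Lie groups. Moreover, Theorem \ref{ThC} asserts that a sequentially realizable $\calf$ satisfying hypotheses (i)--(iii) \emph{has} such a $W$; since fusion systems of compact Lie groups \emph{are} sequentially realizable (Theorem \ref{ThB}), Theorem \ref{ThC} cannot, by itself, produce any contradiction here. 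The obstruction to linearity over $\C$ must come from somewhere other than the isomorphism type of $W$, and the paper locates it in the unboundedness of $\srk_r$.

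A second, subsidiary gap is the identification $\calf_{S_\Gamma}(\Gamma)\cong\calf_{S'}(G')$ with $G'=\overline{\Gamma}\le U(n)$. You admit this is the ``hardest step'' and then exhibit (with $SU(2)$ at $p=2$) a situation where conjugation in $G'$ between finite subgroups of $S_\Gamma$ need not be realized by a torsion element. That is not a local obstruction to be resolved later; it shows the identification can fail, so the chain $\calf_S(G)\cong\calf_{S_\Gamma}(\Gamma)\cong\calf_{S'}(G')$ is not available. Note also that you never use the hypothesis $p\mid|W|$ until the very last (broken) step, whereas in the paper's proof this hypothesis enters essentially: when $p\nmid|W|$ the system really \emph{is} LT-realizable in characteristic $0$ (via $T\rtimes W$), so any argument that doesn't hinge on $p\mid|W|$ must be wrong.
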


In particular, Theorem \ref{ThD} implies that there is no torsion subgroup 
$\Gamma\le G$ with the same fusion system as $G$. Note that by Theorem 
\ref{t:cpt.cn.Lie}, under the hypotheses of Theorem \ref{ThD}, $\calf_S(G)$ 
is realized by fusion systems of linear torsion groups in every 
characteristic except $p$ and $0$.

After a brief overview in Section \ref{s:background} of some general 
definitions and results about fusion systems, we define sequential 
realizability and give some of its basic properties in Section 
\ref{s:seq.real.}, and then look at the special case of linear torsion 
groups in Section \ref{s:LT-gps}. We then prove in Section \ref{s:cpt.Lie} 
that the fusion system of a compact Lie group is always realized by a 
linear torsion group (Theorem \ref{ThB}). We then show some general results 
in Sections \ref{s:incr.seq.} and \ref{s:fin.p-gp} which are applied in 
Section \ref{s:realiz} to prove Theorem \ref{ThC}. We finish by looking at 
realizability of fusion systems of $p$-compact groups in Section 
\ref{s:p-cpt}, and in Section \ref{s:other.ex.} that of fusion systems over 
discrete $p$-toral groups with a discrete $p$-torus of index $p$. 

\medskip

\textbf{Notation:} Our notation is fairly standard, with a few exceptions. 
By a prime, we always mean a nonzero prime number. 
Composition is always taken from right to left. If $H\le G$ are groups and 
$x,g\in G$, then $\9xH=xHx^{-1}$, $\9xg=xgx^{-1}$, and $c_x^H$ (or $c_x$) 
denotes the conjugation homomorphism $(g\mapsto \9xg)$ from $H$ to $\9xH$. 
If $H_1,H_2,K\le G$, then $\Hom_K(H_1,H_2)$ is the set of all 
$c_x^{H_1}\in\Hom(H_1,H_2)$ for $x\in K$ such that $\9xH_1\le H_2$. Also,
\begin{itemize} 

\item $\Phi(P)$ denotes the Frattini subgroup of a finite $p$-group $P$; 

\item $\Omega_n(P)=\gen{x\in P\,|\,x^{p^n}=1}$ when $P$ is a $p$-group and 
$n\ge1$; 

\item $H\circ K$ denotes a central product of $H$ and $K$; 

\item $H.K$ or $H{:}K$ denotes an arbitrary extension or a split extension 
of $H$ by $K$ (i.e., a group with normal subgroup isomorphic to $H$ and 
quotient isomorphic to $K$); 

\item $E_{p^n}$ (or $p^n$ when part of an extension) denotes an elementary 
abelian $p$-group of rank $n$; 

\item $p^{1+2n}_\pm$ denotes an extraspecial $p$-group of order $p^{1+2n}$; 

\item $\Fin(G)$ denotes the set of finite subgroups of a discrete group 
$G$; 

\item $\Z_p$ and $\Q_p$ denote the rings of $p$-adic integers and $p$-adic 
rationals, respectively; 

\item $\ord_p(n)$ denote the order of $n$ in $(\Z/p)^\times$ if $n$ is 
prime to $p$; and 

\item $v_p(n)$ denote the $p$-adic valuation of an integer $n$.

\end{itemize}

Finally, when $X$ is a space, we let $\Aut(X)$ denote the monoid of self 
homotopy equivalences of $X$, and let $\Out(X)$ be the group of homotopy 
classes of elements in $\Aut(X)$.

\section{Saturated fusion systems over discrete 
\texorpdfstring{$p$-toral}{p-toral} groups} 
\label{s:background}

We begin by recalling some definitions and notation from \cite[Sections 
1--2]{BLO3}, starting with the definition of a discrete $p$-toral group. 

\begin{Defi} \label{d:d.p-toral}
Let $p$ be a prime.
\begin{enuma} 

\item A \emph{discrete $p$-torus} is a group that is isomorphic to 
$(\Z/p^\infty)^r$ for some $r\ge0$, where $\Z/p^\infty\cong\Z[\frac1p]/\Z$ 
is the union of the cyclic $p$-groups $\Z/p^n$. The \emph{rank} of a 
discrete $p$-torus $U\cong(\Z/p^\infty)^r$ is $r=\rk(U)$. 

\item A \emph{discrete $p$-toral group} is a group with a normal subgroup 
of $p$-power index that is a discrete $p$-torus. 

\item The \emph{identity component} of a discrete $p$-toral group $P$ is 
the unique discrete $p$-torus of finite index in $P$; 
equivalently, the intersection of all subgroups of finite index in $P$. 

\item The \emph{order} of a discrete $p$-toral group $P$ with identity 
component $U$ is the pair $|P|=(\rk(U),|P/U|)\in\N^2$, where $\N^2$ is 
ordered lexicographically. 

\end{enuma}
\end{Defi}

Thus if $Q\le P$ is a pair of discrete $p$-toral groups, then 
$|Q|\le|P|$, and $|Q|=|P|$ if and only if $Q=P$. 

We next recall some more terminology. 

\begin{Defi} \label{d:loc.fin.}
\begin{enuma} 

\item A \emph{$p$-group} (for a prime $p$) is a discrete group all of 
whose elements have (finite) $p$-power order.

\item A discrete group $G$ is \emph{locally finite} if every finitely 
generated subgroup of $G$ is finite.

\item When $p$ is a prime and $G$ is any group, a \emph{Sylow 
$p$-subgroup} of $G$ is a $p$-subgroup of $G$ that contains all other 
$p$-subgroups up to conjugacy. We let $\sylp{G}$ be the (possibly empty) 
set of Sylow $p$-subgroups of $G$. 

\item A discrete group $G$ is \emph{artinian} if each descending sequence 
of subgroups of $G$ becomes constant.

\end{enuma}
\end{Defi}

Definitions in the literature of ``Sylow $p$-subgroups'' of infinite 
discrete groups vary slightly (see, e.g., \cite[p. 85]{KW}), but the 
strict criterion given above is the most appropriate for our purposes.

Discrete $p$-toral groups play an important role when working with compact 
Lie groups and $p$-compact groups (see, e.g., \cite[\S\,6]{DW}), and that 
in turn made it natural for us to consider them when constructing fusion 
systems for these objects. Since local finiteness and the descending chain 
condition are used in many of our arguments, the following characterization 
of discrete $p$-toral groups helps to explain their importance.

\begin{Prop}[{\cite[Proposition 1.2]{BLO3}}] \label{p:artin-loc.f.}
A group is discrete $p$-toral if and only if it is a $p$-group, artinian, 
and locally finite.
\end{Prop}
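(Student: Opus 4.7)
For the forward direction, suppose $G$ is discrete $p$-toral with identity component $T\cong(\Z/p^\infty)^r$ and $[G:T]$ a finite power of $p$. That $G$ is a $p$-group and locally finite is immediate: every element of $T$ has $p$-power order, every element of $G$ has $p$-power order modulo $T$, and any finite subset of $T$ lies in some $(\Z/p^n)^r$. For the artinian property, I would deduce it for $T=(\Z/p^\infty)^r$ from Pontryagin duality --- the lattice of subgroups of $T$ is anti-isomorphic to the lattice of closed $\Z_p$-submodules of its dual $\Z_p^r$, which is Noetherian since $\Z_p$ is --- and then extend to $G$ using that an extension of an artinian group by a finite group is artinian.

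For the converse, let $G$ be an artinian, locally finite $p$-group. My plan is to use the descending chain condition (noting that the intersection of two finite-index subgroups has finite index) to produce a unique minimum subgroup $T\le G$ of finite index. Then $T$ is preserved by every automorphism of $G$, hence characteristic and in particular normal, $G/T$ is a finite $p$-group, and by construction $T$ itself admits no proper subgroup of finite index. Granting for the moment that $T$ is abelian, divisibility then follows from artinianness: the quotient $T/T^{p^n}$ is abelian of exponent dividing $p^n$ and is artinian, hence finite (an infinite abelian group of bounded exponent fails DCC), hence trivial by minimality of $T$. The structure theorem for divisible abelian $p$-groups then writes $T$ as a direct sum of Pr\"ufer groups $\Z/p^\infty$, and the artinian hypothesis caps the number of summands at some finite~$r$, giving $T\cong(\Z/p^\infty)^r$ and identifying $G$ as discrete $p$-toral.

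The main obstacle, by far, is proving that $T$ is abelian. The plan is to invoke the classical theorem of Chernikov that a locally finite artinian group is a finite extension of a divisible abelian group of finite rank (a direct sum of finitely many Pr\"ufer groups); see, for instance, \cite{KW}. Applied to $T$, together with the fact that $T$ has no proper subgroup of finite index, this forces the finite quotient part to collapse and identifies $T$ with its divisible abelian radical. A self-contained alternative would first show that every nontrivial artinian locally finite $p$-group has nontrivial center --- using that an element whose $T$-conjugacy class is finite must lie in $Z(T)$ since its centralizer then has finite index and so equals $T$, combined with a DCC argument to produce such an element --- and then apply the minimality of $T$ to the terms of its upper central series to conclude $T=Z(T)$. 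Either route makes $T$ abelian, after which the preceding paragraph completes the proof.
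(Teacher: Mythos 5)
The paper does not prove this proposition; it cites it to \cite{BLO3}, whose proof likewise rests on the Kegel--Wehrfritz/Chernikov structure theory for locally finite artinian groups, so your primary route (a) is essentially the same as the cited argument. Your forward direction is complete and correct (the Pontryagin-duality argument for artinianness of $(\Z/p^\infty)^r$ is a clean variant of the usual direct check, and the extension-of-artinian-by-finite lemma is standard), and in the converse your reduction to the minimum finite-index subgroup $T$, followed by divisibility from $T/T^{p^n}$ being artinian of bounded exponent, is exactly right once $T$ is known to be abelian.

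Two cautions. First, the theorem you call Chernikov's, stated for arbitrary locally finite artinian groups, is actually Shunkov's theorem and its proof requires the Feit--Thompson theorem; you should invoke instead the older, elementary result (due to Chernikov) that a locally finite \emph{$p$-group} satisfying the minimal condition is Chernikov, which is all you need since $T$ is a $p$-group. Second, your ``self-contained alternative'' (route (b)) has a genuine gap: the argument you sketch, showing $Z(T)\ne1$ and then iterating, only yields that $T$ is hypercentral (the upper central series is strictly increasing until it reaches $T$, possibly at a transfinite ordinal), and neither artinianness nor the absence of proper finite-index subgroups lets you truncate that series at the first step to conclude $T=Z(T)$. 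Closing this gap --- for instance, by proving $\Omega_1(T)$ is finite, or that $T$ has a divisible abelian subgroup of finite index --- is essentially re-proving the Chernikov-type structure theorem, so route (b) as written is not a shortcut around it. The proof should therefore rest on route (a), with the theorem stated in its $p$-group form.
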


Whenever $P$ is a discrete $p$-toral group with identity component $U\nsg 
P$, there is a finite subgroup $R\in\Fin(P)$ such that $P=RU$. This holds 
since $P$ is locally finite and $P/U$ is finite. 

We will need the following variant on the standard result that an inverse 
limit of a system of finite nonempty sets is nonempty. 


\begin{Lem} \label{l:invlim}
Fix a group $\Gamma$, and let 
$(\dots\xto{~r_4~}\Phi_3\xto{~r_3~}\Phi_2\xto{~r_2~}\Phi_1)$ be an 
inverse system of nonempty $\Gamma$-sets and $\Gamma$-maps such that 
$\Phi_i/\Gamma$ is finite for each $i\ge1$. Then 
$\lim_i(\Phi_i,r_i)\ne\emptyset$. 
\end{Lem}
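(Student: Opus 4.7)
My plan is to reduce the problem to the classical statement that a sequential inverse limit of nonempty finite sets is nonempty. Since each $r_i$ is $\Gamma$-equivariant, it descends to a map of orbit sets $\bar r_i\colon \Phi_i/\Gamma \to \Phi_{i-1}/\Gamma$, and the resulting inverse system $(\Phi_i/\Gamma, \bar r_i)$ consists of nonempty finite sets by hypothesis. The classical fact (obtained from K\"onig's lemma applied to the tree of compatible finite partial orbit-sequences, or from Tychonoff for discrete compact spaces) yields $\lim_i \Phi_i/\Gamma \ne \emptyset$, so I may choose a compatible family of orbits $(O_i)_{i\ge 1}$, where each $O_i$ is a single $\Gamma$-orbit in $\Phi_i$ and $r_{i+1}(O_{i+1}) \subseteq O_i$ for all $i$.

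The next step is to upgrade this containment to surjectivity. For each $i$, the image $r_{i+1}(O_{i+1})$ is $\Gamma$-invariant (because $r_{i+1}$ is a $\Gamma$-map and $O_{i+1}$ is $\Gamma$-invariant), it is nonempty, and it is contained in the single orbit $O_i$; any nonempty $\Gamma$-invariant subset of a single orbit equals the entire orbit, so $r_{i+1}|_{O_{i+1}}\colon O_{i+1}\to O_i$ is surjective.

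Finally, I build a compatible sequence $(x_i)_{i \ge 1}$ with $x_i \in O_i \subseteq \Phi_i$ and $r_{i+1}(x_{i+1}) = x_i$ by induction on $i$: pick any $x_1 \in O_1$, and given $x_i$, invoke the surjectivity just established to produce $x_{i+1} \in O_{i+1}$ with $r_{i+1}(x_{i+1}) = x_i$. The resulting sequence defines an element of $\lim_i(\Phi_i, r_i)$. The main (but mild) obstacle is the initial reduction to orbit sets; once a coherent family of orbits is in hand, $\Gamma$-equivariance automatically upgrades the set-theoretic compatibility to surjectivity, and the inductive construction is routine.
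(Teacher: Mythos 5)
Your proof is correct and follows essentially the same approach as the paper's: pass to the finite inverse system of orbit sets, pick a compatible family of orbits, observe that the restricted maps are surjective, and then lift. You spell out in more detail (the $\Gamma$-invariance argument and the explicit induction) two steps that the paper leaves implicit, but the underlying idea is identical.
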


\begin{proof} Since $\Phi_i/\Gamma$ is finite and nonempty for each $i$, 
the inverse limit $\lim_i(\Phi_i/\Gamma,r_i/\Gamma)$ is nonempty. Choose an 
element $(\Psi_i)_{i\ge1}$ in that inverse limit. Thus for each $i$, 
$\Psi_i\subseteq\Phi_i$ is a $\Gamma$-orbit and $r_i(\Psi_i)=\Psi_{i-1}$. 
Thus $(\dots\xto{r'_4}\Psi_3\xto{r'_3}\Psi_2\xto{r'_2}\Psi_1)$ is an inverse 
system of $\Gamma$-sets where each map $r'_i=r_i|_{\Psi_i}$ is surjective, 
and hence $\lim_i(\Phi_i,r_i)\supseteq\lim_i(\Psi_i,r'_i)\ne\emptyset$. 
\end{proof}

As a first application of Lemma \ref{l:invlim}, we note the following 
striking property of discrete $p$-toral groups, one which allows us to give 
a slightly weaker condition for a $p$-subgroup to be Sylow in Proposition 
\ref{p:Pi<Q}.

\begin{Lem} \label{l:P<Q}
Let $P$ and $Q$ be two discrete $p$-toral groups. If every finite subgroup 
of $P$ is isomorphic to a subgroup of $Q$, then $P$ is isomorphic to a 
subgroup of $Q$. 
\end{Lem}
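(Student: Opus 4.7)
The plan is to apply Lemma~\ref{l:invlim}. Proposition~\ref{p:artin-loc.f.} tells us that $P$ is locally finite, and since the identity component of $P$ is countable and the quotient is finite, $P$ itself is countable; so we may write $P=\bigcup_{i\ge 1}P_i$ as an ascending union of finite subgroups. Set $\Phi_i=\Inj(P_i,Q)$, with the restriction maps $r_i\:\Phi_i\to\Phi_{i-1}$. The hypothesis that every finite subgroup of $P$ embeds into $Q$ says exactly that each $\Phi_i$ is non-empty. Let $Q$ act on $\Phi_i$ by post-composition with conjugation, $q\cdot\iota=c_q\circ\iota$, and observe that the restriction maps $r_i$ are $Q$-equivariant.

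Once one verifies that $\Phi_i/Q$ is finite for every $i$, Lemma~\ref{l:invlim} produces an element $(\iota_i)\in\lim_i\Phi_i$. The compatibility $\iota_i|_{P_{i-1}}=\iota_{i-1}$ lets us define $\iota\:P\to Q$ by $\iota(x)=\iota_i(x)$ whenever $x\in P_i$; this is well-defined, a group homomorphism (any two elements lie in a common $P_i$), and injective (since each $\iota_i$ is), yielding the desired embedding $P\hookrightarrow Q$.

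The main obstacle is therefore the finiteness of $\Phi_i/Q$. Let $T\nsg Q$ be the identity component and $\pi\:Q\to Q/T$ the projection onto the finite $p$-group $Q/T$. For $\iota\in\Phi_i$, the composite $\bar\iota=\pi\circ\iota\:P_i\to Q/T$ lies in the finite set $\Hom(P_i,Q/T)$, so there are only finitely many possibilities for $\bar\iota$. Fix one such $\bar\iota$ admitting a lift, and pick a lift $\iota_0$; every other lift has the form $\iota(x)=c(x)\iota_0(x)$ with $c\in Z^1(P_i,T)$, where $P_i$ acts on $T$ via $\bar\iota$ followed by the conjugation action of $Q/T$ on $T$, and two lifts are $T$-conjugate precisely when their cocycles differ by a coboundary. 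Hence lifts of $\bar\iota$ modulo $T$-conjugacy inject into $H^1(P_i,T)$. Since $|P_i|$ annihilates $H^1(P_i,T)$ and $T$ is $p$-divisible, the long exact sequence attached to $0\to T[|P_i|]\to T\xto{\cdot|P_i|}T\to 0$ produces a surjection $H^1(P_i,T[|P_i|])\twoheadrightarrow H^1(P_i,T)$, and the source is finite because $T[|P_i|]$ is a finite abelian group. Combining over the finitely many $\bar\iota$, and noting that $Q$-conjugacy only coarsens $T$-conjugacy, we conclude $|\Phi_i/Q|<\infty$ and the proof is complete.
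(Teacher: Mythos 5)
Your proof is correct and follows the same strategy as the paper: realize $P$ as an increasing union of finite subgroups $P_i$, form the inverse system $\Phi_i=\Inj(P_i,Q)$ with the $\Inn(Q)$-action, and apply Lemma~\ref{l:invlim}. The one place where you diverge is that the paper simply cites \cite[Lemma 1.4(a)]{BLO3} for the finiteness of $\Inj(P_i,Q)/\Inn(Q)$, whereas you supply a direct proof via the short exact sequence $1\to T\to Q\to Q/T\to1$ and the finiteness of $H^1(P_i,T)$ (which you obtain from $p$-divisibility of $T$ together with the fact that $|P_i|$ annihilates $H^1$); that cohomological argument is sound and amounts to a self-contained verification of the cited fact.
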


\begin{proof} Choose an increasing sequence $P_1\le P_2\le P_3\le\cdots$ of 
finite subgroups of $P$ such that $\bigcup_{i=1}^\infty P_i=P$. For each 
$i\ge1$, let $\Inj(P_i,Q)$ be the set of injective homomorphisms from $P_i$ 
to $Q$. These sets form an inverse system $(\Inj(P_i,Q),r_i)$ of sets with 
$\Inn(Q)$-action, where each map $r_i\:\Inj(P_i,Q)\too\Inj(P_{i-1},Q)$ is 
defined by restriction. Each orbit set $\Inj(P_i,Q)/\Inn(Q)$ is finite (see 
\cite[Lemma 1.4(a)]{BLO3}), and is nonempty by assumption. So the inverse 
limit of this system is nonempty by Lemma \ref{l:invlim}. Choose 
$(\varphi_i)_{i\ge1}\in\lim_i(\Inj(P_i,Q),r_i)$; then 
$\bigcup_{i=1}^\infty\varphi_i$ is an injective homomorphism from $P$ to 
$Q$. 
\end{proof}

\begin{Prop} \label{p:Pi<Q}
Fix a prime $p$ and a discrete group $G$. Assume $G$ has Sylow 
$p$-subgroups that are discrete $p$-toral. Then a $p$-subgroup $P\le G$ is 
a Sylow $p$-subgroup if every finite $p$-subgroup of $G$ is conjugate to 
a subgroup of $P$.
\end{Prop}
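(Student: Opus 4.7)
The plan is to show that $P$ is actually $G$-conjugate to $S$ itself, which will immediately imply $P\in\sylp{G}$. The argument reduces to comparing the orders $|P|$ and $|S|$ as discrete $p$-toral groups, together with the fact that a proper containment of discrete $p$-toral groups is strict in order.

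First, since $S\in\sylp{G}$ is discrete $p$-toral and $P$ is a $p$-subgroup of $G$, the definition of a Sylow subgroup furnishes $g\in G$ with $P_0:=gPg^{-1}\le S$. As $S$ is discrete $p$-toral, so is the subgroup $P_0$, and therefore so is $P\cong P_0$. Moreover $|P|=|P_0|\le|S|$ by the remark following Definition~\ref{d:d.p-toral}.

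Next I would use the hypothesis in the opposite direction. Every finite subgroup of $S$ is a finite $p$-subgroup of $G$, and is therefore $G$-conjugate, hence in particular isomorphic, to a subgroup of $P$. With $S$ and $P$ both now known to be discrete $p$-toral, Lemma~\ref{l:P<Q} promotes this to an abstract injection $S\hookrightarrow P$, giving $|S|\le|P|$.

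Combining the two bounds yields $|P_0|=|P|=|S|$. Because $|Q|=|P|$ together with $Q\le P$ forces $Q=P$ for discrete $p$-toral groups, we conclude $P_0=S$, so $P$ is $G$-conjugate to $S$, and since the defining property of a Sylow $p$-subgroup is conjugation-invariant, $P\in\sylp{G}$. The main ingredient is Lemma~\ref{l:P<Q}, which upgrades the pointwise statement ``every finite subgroup of $S$ embeds in $P$'' into an abstract embedding of the whole of $S$ into $P$; everything else is bookkeeping with the order function on discrete $p$-toral groups.
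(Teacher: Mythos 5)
Your proof is correct and follows essentially the same approach as the paper's: conjugate $P$ into $S$, use Lemma~\ref{l:P<Q} to embed $S$ abstractly into $P$, and conclude via the order function on discrete $p$-toral groups. The paper states the final step more tersely (``Then $P$ and $S$ are conjugate in $G$ since $S\in\sylp{G}$''), and you have simply made explicit the order-comparison bookkeeping that the paper leaves implicit.
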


\begin{proof} Assume $S\in\sylp{G}$ and is discrete $p$-toral. Let $P$ be a 
$p$-subgroup of $G$ that contains every finite $p$-subgroup of $G$ up to 
conjugacy. In particular, every finite subgroup of $S$ is 
isomorphic to a subgroup of $P$, and hence $S$ is isomorphic to a subgroup 
of $P$ by Lemma \ref{l:P<Q}. Then $P$ and $S$ are conjugate in $G$ since 
$S\in\sylp{G}$, and so $P$ is also a Sylow $p$-subgroup.
\end{proof}

The following example helps to explain why we needed to assume that $G$ has 
Sylow $p$-subgroups in Proposition \ref{p:Pi<Q}. It is based on 
\cite[Example 3.3]{KW}. 

\begin{Ex} \label{ex:FqS.S} 
Fix two distinct primes $p$ and $q$ and an infinite discrete $p$-toral 
group $S$. Set $H=\F_qS$, regarded as an abelian $q$-group with action of 
$S$, and set $G=H\rtimes S$. Then $G$ is locally finite, every $p$-subgroup 
of $G$ is isomorphic to a subgroup of $S$, and every finite $p$-subgroup of 
$G$ is conjugate to a subgroup of $S$. In contrast, for each proper 
infinite subgroup $T<S$, there is a maximal $p$-subgroup $P\le G$ such that 
$PH=TH$, and $P$ is not conjugate to a subgroup of $S$. In particular, $G$ 
has no Sylow $p$-subgroups unless $S\cong\Z/p^\infty$. 
\end{Ex}

\begin{proof} The group $G$ is locally finite since it is an 
extension of one locally finite group by another. If $P\le G$ is a 
$p$-subgroup, then $P\cap H=1$, so $P\cong PH/H\le G/H\cong S$. 

It remains to prove the claims involving conjugacy between $p$-subgroups of 
$G$. When doing this, it is convenient to consider the groups 
$\5H=\map(S,\F_q)$ and $\5G=\5H\rtimes S$, where $S$ acts on $\5H$ by 
setting
	\[ g(\xi)(h) = \xi(g^{-1}h) \quad \textup{for all $g,h\in S$ and 
	$\xi\:S\too\F_q$.} \]
We consider $H=\F_qS$ as a 
subgroup of $\5H$: the subgroup of those $\xi\:S\too\F_p$ in $\5H$ with 
finite support. In this way, we have $G\le\5G$. 

Now, $\5H\cong\Hom_{\Z}(\Z S,\F_q)=\textup{Coind}_1^S(\F_q)$ in the notation 
of \cite[\S\,III.5]{Brown}. So $H^1(S;\5H)\cong H^1(1;\F_q)=0$ by 
Shapiro's lemma (see \cite[Proposition III.6.2]{Brown}), and a similar argument 
shows that $H^1(T;\5H)=0$ for all $T\le S$. So every $p$-subgroup $P\le\5G$ 
such that $P\5H=T\5H$ is conjugate to $T$ by an element of $\5H$ (see 
\cite[Proposition IV.2.3]{Brown}).

Assume $P\le G$ is a finite $p$-subgroup. Then $PH=UH$ for some finite 
subgroup $U\le S$, and so $P=\9\xi U$ for some $\xi\in\5H$. Hence for each $g\in 
U$, we have $[g,\xi]\in H$, and so $g(\xi)-\xi\in\map(S,\F_q)$ has 
finite support. Since $U$ is finite, this means that $\xi$ is constant on 
all but finitely many cosets of $U$, and hence that $\xi\in H+C_{\5H}(U)$. 
So $P=\9\xi U$ is conjugate to $U$ by an element of $H$. Since every finite 
$p$-subgroup of $G$ has this form (for some finite $U\le S$), this proves 
that every finite $p$-subgroup is conjugate to a subgroup of $S$.

Now let $T<S$ be a proper infinite subgroup. We will construct 
$\xi\in\5H$ such that $\9\xi T$ is a maximal $p$-subgroup in $G$ and hence 
not conjugate in $G$ to a subgroup of $S$. To do this, let 
$1=T_0<T_1<T_2<T_3<\cdots$ be a strictly increasing sequence of finite 
subgroups of $T$ such that $T=\bigcup_{i=1}^\infty T_i$. For each $i\ge1$, 
choose $g_i\in T_i\sminus T_{i-1}$ and set $X_i=T_{i-1}g_i$. Define 
$\xi\in\5H=\map(S,\F_q)$ by setting $\xi(g)=1$ if $g\in\bigcup_{i=1}^\infty 
X_i$ and $\xi(g)=0$ otherwise. Set $P=\9\xi T$. 

We first check that $P\le G$ by showing that $[T,\xi]\le H$. To see this, 
fix $g\in T$, and let $i$ be such that $g\in T_i$. Then $\xi$ is constant 
on left $g$-orbits in $T\sminus T_i$, so $g(\xi)-\xi$ has support contained 
in the finite subgroup $T_i$, and hence lies in $H$. Thus $[g,\xi]\in H$, 
and since $g\in T$ was arbitrary, we get $[T,\xi]\le H$. 

It remains to prove that $P$ is a maximal $p$-subgroup of $G$. Assume 
otherwise: then there are $U\le S$ and $\eta\in\5H$ such that $U>T$ and 
$\9\xi T<\9\eta U\le G$. Thus $[\eta,U]\le H$ and $[\eta-\xi,T]=1$, and in 
particular, $\eta-\xi$ is constant on cosets of $T$. Fix an element $g\in 
U\sminus T$; then by construction, $g(\xi)-\xi$ is nonzero on infinitely 
many elements of $T$ and zero on infinitely many elements of $T$. Since 
$g(\eta-\xi)-(\eta-\xi)$ is constant on $T$, the element $g(\eta)-\eta$ has 
infinite support, contradicting the assumption that $[\eta,U]\le H$. We 
conclude that $P$ is maximal. 

Thus $G$ can have Sylow $p$-subgroups only if $S$ has no proper 
infinite subgroups; i.e., only if $S\cong\Z/p^\infty$.
\end{proof}

The following well known property of finite $p$-groups will also be needed 
for discrete $p$-toral groups. 

\begin{Lem} \label{l:P^Z(S)}
Let $S$ be a discrete $p$-toral group, and let $1\ne P\nsg S$ be a nontrivial 
normal subgroup. Then $P\cap\Omega_1(Z(S))\ne1$. 
\end{Lem}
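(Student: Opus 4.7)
The plan is to mimic the classical proof for finite $p$-groups: produce a finite, nontrivial, $S$-invariant subgroup $Q$ of $P$, and then apply the class equation to the conjugation action of $S$ on $Q$.

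To produce $Q$, let $T\nsg S$ be the identity component. I would split into two cases based on $P\cap T$. If $P\cap T=1$, then $P$ embeds into $S/T$, which is a finite $p$-group by the definition of discrete $p$-toral group, so $P$ itself is finite and one may take $Q=P$. If instead $P\cap T\ne1$, then $P\cap T$ is a nontrivial subgroup of the discrete $p$-torus $T$, so
\[ Q\;:=\;\Omega_1(P\cap T)\;\le\;\Omega_1(T)\;\cong\;(\Z/p)^{\rk(T)} \]
is nontrivial and finite. Since $\Omega_1$ is characteristic in any group and $P\cap T$ is $S$-invariant (both $P$ and $T$ are), this $Q$ is an $S$-invariant finite nontrivial subgroup of $P$.

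With $Q$ in hand, the standard argument goes through: the conjugation homomorphism $S\to\Aut(Q)$ has finite image (because $\Aut(Q)$ is finite), so $C_S(Q)$ has finite index in $S$, and $S/C_S(Q)$, being a quotient of a $p$-group, is a finite $p$-group. Each $S$-orbit in $Q$ therefore has $p$-power size. The class equation
\[ |Q|\;=\;|Q^S|+\sum_i\,[S:C_S(x_i)], \]
with the sum ranging over representatives of the non-trivial orbits, shows $|Q^S|\equiv|Q|\equiv0\pmod{p}$. Thus $Q\cap Z(S)=Q^S$ is nontrivial, and any element of order $p$ in the resulting finite subgroup of $Z(S)$ lies in $P\cap\Omega_1(Z(S))$.

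The only subtle step is producing the finite $S$-invariant subgroup $Q$: for an infinite $P$ the naive attempt of taking $\gen{x^S}$ for some $1\ne x\in P$ can fail, because a single $S$-conjugacy class in $P$ may be infinite and therefore generate an infinite subgroup of the locally finite group $S$. Passing first to the identity component and then to $\Omega_1$ exploits the two essential features of the discrete $p$-toral setting — the finiteness of $S/T$ and the finiteness of $\Omega_1$ of a discrete $p$-torus — after which the class equation runs verbatim as in the classical finite case.
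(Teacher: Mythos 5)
Your proof is correct and takes essentially the same approach as the paper's: both proofs reduce to producing a finite nontrivial $S$-invariant subgroup of $P$ and then apply the fixed-point/class-equation argument for a finite $p$-group acting on a finite $p$-group. The paper splits on whether $P$ itself is finite or infinite (using $\Omega_1$ of the identity component of $P$ in the infinite case), whereas you split on whether $P\cap T$ is trivial (using $\Omega_1(P\cap T)$ otherwise); these are minor variations of the same reduction.
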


\begin{proof} If $P$ is finite, then $P\cap Z(S)$ is the fixed subgroup of 
the action of the finite $p$-group $S/C_S(P)$ on $P$, and hence is 
nontrivial. So $P\cap\Omega_1(Z(S))=\Omega_1(P\cap Z(S))\ne1$.

If $P$ is infinite, let $U\nsg P$ be its identity component. Then 
$\Omega_1(U)\nsg S$, and so $P\cap\Omega_1(Z(S))\ge 
\Omega_1(U)\cap\Omega_1(Z(S))\ne1$ since $\Omega_1(U)$ is finite.
\end{proof}

We next recall some more definitions. 

\begin{Defi} 
A \emph{fusion system} $\calf$ over a discrete $p$-toral group $S$ is a 
category whose objects are the subgroups of $S$, where 
	\[ \Hom_S(P,Q) \subseteq \homf(P,Q) \subseteq \Inj(P,Q) \]
for each $P,Q\le S$, and such that $\varphi\in\homf(P,Q)$ implies 
$\varphi^{-1}\in\homf(\varphi(P),P)$. Here, $\Inj(P,Q)$ is the set of 
injective homomorphisms from $P$ to $Q$. 
\end{Defi}

When $\calf$ is a fusion system over $S$, then for $P\le S$ and $x\in S$ we 
set
	\[ P^\calf=\{\varphi(P)\,|\,\varphi\in\homf(P,S)\} 
	\qquad\textup{and}\qquad 
	x^\calf=\{\varphi(x)\,|\,\varphi\in\homf(\gen{x},S)\}: \]
the \emph{$\calf$-conjugacy classes} of $P$ and $x$. We also, for 
each $P\le S$, write 
	\[ \autf(P)=\homf(P,P) \qquad\textup{and}\qquad 
	\outf(P)=\autf(P)/\Inn(P), \]
and refer to $\autf(P)$ as the \emph{automizer} of $P$ in $\calf$.

\begin{Defi} \label{d:sfs}
Let $\calf$ be a fusion system over a discrete $p$-toral group $S$.
\begin{enuma}

\item A subgroup $P\le{}S$ is \emph{fully centralized in $\calf$} if
$|C_S(P)|\ge|C_S(P^*)|$ for all $P^*\in P^\calf$. 

\item A subgroup $P\le{}S$ is \emph{fully normalized in $\calf$} if
$|N_S(P)|\ge|N_S(P^*)|$ for all $P^*\in P^\calf$. 

\item A subgroup $P\le S$ is \emph{fully automized in $\calf$} if 
$\outf(P)=\autf(P)/\Inn(P)$ is finite and $\Out_S(P)\in\sylp{\outf(P)}$.

\item A subgroup $P\le S$ is \emph{receptive in $\calf$} if for each $Q\in 
P^\calf$ and each $\varphi\in\homf(Q,P)$, if we set
	\[ N_\varphi = \{ g\in{}N_S(P) \,|\, \varphi c_g\varphi^{-1} \in 
	\Aut_S(\varphi(P)) \}, \]
then there is $\widebar{\varphi}\in\homf(N_\varphi,S)$ such that
$\widebar{\varphi}|_P=\varphi$.

\item The fusion system $\calf$ is \emph{saturated} if the following
three conditions hold:
\begin{itemize} \smallskip

\item (Sylow axiom) Each subgroup $P\le{}S$ fully normalized in 
$\calf$ is also fully automized and fully centralized in $\calf$.

\item (Extension axiom) Each subgroup $P\le{}S$ fully centralized in 
$\calf$ is also receptive in $\calf$.

\item (Continuity axiom) If $P\le S$, and $\varphi\in\Hom(P,S)$ is an 
injective homomorphism such that $\varphi|_R\in\homf(R,S)$ for each 
$R\in\Fin(P)$, then $\varphi\in\homf(P,S)$.

\end{itemize}
\end{enuma}
\end{Defi}

When $\calf$ is a fusion system over a finite $p$-group $S$, it follows 
directly from the definition that every subgroup of $S$ is 
$\calf$-conjugate to a fully normalized and a fully centralized subgroup. 
When $\calf$ is a fusion system over an infinite discrete $p$-toral group 
$S$, then this is still true, and is a consequence of \cite[Lemma 
1.6]{BLO3}.

\begin{Defi} \label{d:FS(G)}
When $G$ is a discrete group and $S\le G$ is a discrete $p$-toral subgroup, 
let $\calf_S(G)$ be the fusion system over $S$ where for each $P,Q\le S$, 
	\[ \Hom_{\calf_S(G)}(P,Q) = \{c_g \,|\, g\in G,~ \9gP\le Q \}. \]
\end{Defi}

The following are some of the basic definitions for subgroups in a fusion 
system. Recall that for a finite group $G$, a proper subgroup $H<G$ 
is \emph{strongly $p$-embedded} if $p\mid|H|$, and for each $x\in G\sminus 
H$ we have $p\nmid|H\cap\9xH|$. 

\begin{Defi} \label{d:subgroups}
Let $\calf$ be a fusion system over a discrete $p$-toral group $S$. For a 
subgroup $P\le S$, 
\begin{itemize} 

\item $P$ is \emph{$\calf$-centric} if $C_S(Q)\le Q$ for each $Q\in 
P^\calf$;

\item $P$ is \emph{$\calf$-radical} if $O_p(\outf(P))=1$; 

\item $P$ is \emph{$\calf$-essential} if it is $\calf$-centric and fully 
normalized in $\calf$, and $\outf(P)$ contains a strongly $p$-embedded 
subgroup; 

\item $P$ is \emph{weakly closed in $\calf$} if $P^\calf=\{P\}$; and 

\item $P$ is \emph{strongly closed in $\calf$} if $x^\calf\subseteq P$ for 
each $x\in P$. 

\end{itemize}
We let $\calf^{rc}\subseteq\calf^c$ denote the sets of subgroups of $S$ that 
are $\calf$-centric and $\calf$-radical, or $\calf$-centric, respectively. 
\end{Defi}

The next proposition gives some of the basic finiteness properties of 
fusion systems in this context.

\begin{Prop}[{\cite[Lemma 2.5 and Corollary 3.5]{BLO3}}] 
\label{p:Frc-finite}
Let $\calf$ be a saturated fusion system over a discrete $p$-toral group 
$S$. Then 
\begin{enuma} 

\item $\homf(P,Q)/\Inn(Q)$ is finite for each $P,Q\le S$; and 

\item $\calf^{rc}$ is the union of finitely many $S$-conjugacy classes of 
subgroups. 

\end{enuma}
\end{Prop}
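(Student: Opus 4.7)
Write $P = RU$ and $Q = R'V$ where $U, V$ are the identity components of $P, Q$ (characteristic, hence preserved by every $\calf$-morphism) and $R \in \Fin(P)$, $R' \in \Fin(Q)$. Restriction defines a map
\[
\rho \colon \homf(P,Q)/\Inn(Q) \too \homf(U,V)/\Inn(Q),
\]
and I would show both its image and its fibers are finite. Finiteness of $\homf(U,V)$ reduces to two saturation-based facts: $\autf(U)$ is finite (apply the Sylow axiom to a fully normalized $\calf$-conjugate of $U$, noting that $\outf(U)=\autf(U)$ since $U$ is abelian), and the set of $\calf$-conjugates of $U$ sitting inside $V$ is finite. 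Passing to the quotient by $\Inn(Q)$, which acts on $V$ through the finite group $Q/C_Q(V)$, keeps it finite. For the fibers of $\rho$, two morphisms agreeing on $U$ are determined up to $\Inn(Q)$ by their restrictions to $R$, and $\Inj(R,Q)/\Inn(Q)$ is finite for finite $R$ by \cite[Lemma 1.4(a)]{BLO3}.

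\textbf{Plan for (b).} I would classify $\calf^{rc}$ in two steps: (1) bound the possible identity components $U_P$ of $P \in \calf^{rc}$ as subtori of $T = S^\circ$, and (2) bound the extensions $U_P \le P \le N_S(U_P)$. Step~(2) is routine: once $U = U_P$ is fixed, $P/U$ is a subgroup of the finite $p$-group $N_S(U)/U$, leaving only finitely many extensions up to $S$-conjugacy. For step~(1), I would exploit the action of the finite group $\autf(T)$ (finite by saturation) on the sub-$p$-tori of $T$: centricity forces $U_P$ to be large enough that $C_S(U_P)^\circ \le P$, while the radical condition $O_p(\outf(P)) = 1$ prevents $P$ from strictly normalizing a larger subtorus without being absorbed into it, leaving only finitely many $\autf(T)$-orbits of possible identity components.

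\textbf{Main obstacle.} Step~(1) of part~(b) is the crux. A priori, $T \cong (\Z/p^\infty)^r$ has uncountably many sub-$p$-tori of each rank (parametrised by $\Z_p$-lattices), so without saturation-driven input these could not be classified. The plan is to invoke an Alperin-style fusion theorem for discrete $p$-toral fusion systems, combined with the artinian property from Proposition \ref{p:artin-loc.f.} and part~(a), to produce a finite list of $\calf$-essential ``generators'' whose $\calf$-automorphisms together with $\auts$-conjugations determine all of $\calf$. The technical heart is showing that every centric-radical $U_P$ is $S$-conjugate into the normalizer of one such generator, reducing the classification to finitely many $S$-orbits.
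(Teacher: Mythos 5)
Your plan for part (a) can be made to work, but two steps need more care than the write-up gives them. First, the claim that the set of $\calf$-conjugates of $U$ lying inside $V$ is finite is not an immediate consequence of the saturation axioms; it is the content of \cite[Lemma 2.4]{BLO3}, which says every $\calf$-morphism between subgroups of $T=S^\circ$ extends to an element of $\autf(T)$ (proved by iterating the extension axiom and using that $T$ is artinian). Second, the fiber bound is stated slightly incorrectly but is salvageable: after normalizing a representative of an $\Inn(Q)$-class so that $\varphi|_U=\psi_0$, two normalized morphisms are $\Inn(Q)$-conjugate precisely when they differ by $C_Q(\psi_0(U))$-conjugation, so the fiber of $\rho$ over $[\psi_0]$ injects into $\Inj(R,Q)/C_Q(\psi_0(U))$ (not $\Inj(R,Q)/\Inn(Q)$). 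This is still finite because $V=Q^\circ\le C_Q(\psi_0(U))$ gives $[Q:C_Q(\psi_0(U))]<\infty$, and each $\Inn(Q)$-orbit on $\Inj(R,Q)$ splits into at most $[Q:C_Q(\psi_0(U))]$ many $C_Q(\psi_0(U))$-orbits.

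Your plan for part (b) has genuine gaps. Step (2) is not routine: since $T\le C_S(U_P)\le N_S(U_P)$, the quotient $N_S(U_P)/U_P$ is infinite whenever $U_P<T$, so fixing the identity component does not a priori leave finitely many extensions. The claim in Step (1) that centricity forces $C_S(U_P)^\circ\le P$ is simply false: in the $3$-fusion system of $\PSU(3)$, for instance, $\calf^{rc}$ contains finite elementary abelian subgroups $P$ with $U_P=1$, for which $C_S(U_P)^\circ=T\not\le P$. Most seriously, the resolution sketched in your "Main obstacle" paragraph is circular: Alperin's fusion theorem \cite[Theorem 3.6]{BLO3} says $\calf$ is generated by automizers of fully normalized subgroups in $\calf^{rc}$, but it produces no a priori finite list --- the finiteness of that list is exactly statement (b), and in \cite{BLO3} Corollary 3.5 is proved \emph{before} Theorem 3.6. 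The idea your outline is missing is the $\bullet$-construction of \cite[\S 3]{BLO3}: to each $P\le S$ one attaches a canonical overgroup $P^\bullet=P\cdot I(P)$, where $I(P)\le T$ is cut out by a centralizer condition inside the finite group $\autf(T)$. One then shows that only finitely many subgroups of $T$ arise as $I(P)$, hence that $\bullet$-closed subgroups fall into finitely many $S$-conjugacy classes, and finally that every $\calf$-centric radical subgroup satisfies $P=P^\bullet$. This gives (b) without any appeal to Alperin's theorem.
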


\begin{Lem} \label{l:NSP->NSQ}
Let $\calf$ be a saturated fusion system over a discrete $p$-toral group 
$S$, and assume $P\le S$ is fully normalized in $\calf$. Then for each 
$Q\in P^\calf$, there is $\varphi\in\homf(N_S(Q),N_S(P))$ such that 
$\varphi(Q)=P$.
\end{Lem}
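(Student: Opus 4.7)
The plan is to apply the extension (receptive) axiom after first adjusting a chosen isomorphism $\psi\colon Q\to P$ so that it carries $\Aut_S(Q)$ into $\Aut_S(P)$. This is the standard strategy for the analogous lemma in finite fusion systems, and the discrete $p$-toral setting adds only the need to verify that the relevant finiteness and Sylow-type statements are available.

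First I would choose any isomorphism $\psi\in\isof(Q,P)$, which exists since $Q\in P^\calf$. Consider the subgroup $A=\psi\Aut_S(Q)\psi^{-1}\le\autf(P)$. Its image $\bar A$ in $\outf(P)$ is a $p$-subgroup, since every element of the discrete $p$-toral group $N_S(Q)$ has $p$-power order, and $\bar A$ is finite because the Sylow axiom applied to the fully normalized subgroup $P$ gives that $\outf(P)$ itself is finite. By the Sylow axiom, $\Out_S(P)\in\sylp{\outf(P)}$, so an ordinary Sylow argument inside the finite group $\outf(P)$ yields $\bar\alpha\in\outf(P)$ with $\bar\alpha\bar A\bar\alpha^{-1}\le\Out_S(P)$; lifting, there is $\alpha\in\autf(P)$ with $\alpha A\alpha^{-1}\le\Aut_S(P)\cdot\Inn(P)=\Aut_S(P)$ (using $\Inn(P)\le\Aut_S(P)$). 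Set $\varphi_0=\alpha\circ\psi\in\isof(Q,P)$; then $\varphi_0 c_g\varphi_0^{-1}\in\Aut_S(P)$ for every $g\in N_S(Q)$.

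Next I would apply the receptive condition. Since $P$ is fully normalized, the Sylow axiom makes $P$ also fully centralized, and then the extension axiom makes $P$ receptive. Translated to $\varphi_0$, the previous paragraph shows that $N_{\varphi_0}=N_S(Q)$, so receptivity provides $\bar\varphi\in\homf(N_S(Q),S)$ extending $\varphi_0$. For each $g\in N_S(Q)$, the element $\bar\varphi(g)$ normalizes $\bar\varphi(Q)=\varphi_0(Q)=P$, so $\bar\varphi(N_S(Q))\le N_S(P)$, giving the required $\varphi=\bar\varphi\in\homf(N_S(Q),N_S(P))$ with $\varphi(Q)=P$.

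The only place where the discrete $p$-toral setting could cause an obstacle is the Sylow argument in $\outf(P)$: one needs to know that $\bar A$ is a genuine finite $p$-subgroup of a genuine finite group. Both points come for free from the Sylow axiom applied to the fully normalized subgroup $P$, so no extra work is required beyond invoking Definition~\ref{d:sfs}.
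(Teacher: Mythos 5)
Your proof is correct and, up to unwinding the citation, is the same argument the paper delegates to \cite[Lemma 1.7(c)]{BLO6}: use that $P$ is fully automized to run a Sylow argument in the finite group $\outf(P)$ and replace $\psi$ by $\varphi_0=\alpha\psi$ so that $N_{\varphi_0}=N_S(Q)$, then invoke receptivity of $P$ (coming from fully normalized $\Rightarrow$ fully centralized $\Rightarrow$ receptive) to extend, and finally observe that the image lands in $N_S(P)$. Your closing remark correctly identifies the only point specific to the discrete $p$-toral setting, namely that the Sylow argument requires $\outf(P)$ to be finite, which is exactly what the fully automized condition supplies; the paper simply cites the ready-made lemma rather than re-deriving it.
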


\begin{proof} By definition, every fully normalized subgroup 
is also fully automized and receptive. So the lemma follows from 
\cite[Lemma 1.7(c)]{BLO6}. 
\end{proof}

We will be using the following version of Alperin's fusion theorem for 
fusion systems over discrete $p$-toral groups.

\begin{Thm}[{\cite[Theorem 3.6]{BLO3}}] \label{t:AFT}
Let $\calf$ be a saturated fusion system over a discrete $p$-toral group 
$S$. Then each morphism in $\calf$ is a composite of restrictions of 
elements in $\autf(Q)$ for fully normalized subgroups $Q\in\calf^{rc}$. 
\end{Thm}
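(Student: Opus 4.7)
The plan is to adapt the classical Alperin--Puig strategy to the discrete $p$-toral setting, reducing an arbitrary $\varphi\in\homf(P,Q)$ through three stages: first to automorphisms of $\calf$-centric subgroups, then to automorphisms of fully normalized $\calf$-centric subgroups, and finally to automorphisms of $\calf$-essential subgroups. Every $\calf$-essential subgroup automatically lies in $\calf^{rc}$, since a nontrivial normal $p$-subgroup $N$ of $\outf(P)$ would be contained in every conjugate of a strongly $p$-embedded subgroup $H<\outf(P)$, contradicting $p\nmid|H\cap\9gH|$ for $g\notin H$. In the finite case the induction runs on the index $[S:P]$; here, I induct downward on the lexicographic order $|P|\in\N^2$ of Definition \ref{d:d.p-toral}, with base case $P=S$. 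Well-foundedness follows from Proposition \ref{p:artin-loc.f.} (any strictly ascending chain of subgroups of $S$ is finite) together with the finiteness of $\calf^{rc}/{\sim_S}$ in Proposition \ref{p:Frc-finite}(b).

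For the first reduction, if $P$ is not $\calf$-centric, I pass via an $\calf$-isomorphism to a fully centralized $\calf$-conjugate $P^*$ (which exists by standard arguments using \cite[Lemma 1.6]{BLO3}), and observe that $Z^*:=Z(C_S(P^*))$ is not contained in $P^*$. The extension axiom applied at the fully centralized $P^*$ then extends the relevant isomorphisms with source or target $P^*$ to ones defined on the strictly larger subgroup $P^*Z^*$, so $|P^*Z^*|>|P^*|$. Iteration must terminate since $|{\cdot}|$ is bounded above by $|S|$, producing an $\calf$-centric overgroup. The translating $\calf$-isomorphism $P\cong P^*$ is itself built from restrictions of morphisms defined on strictly larger subgroups (obtained via Lemma \ref{l:NSP->NSQ}), and is handled by the downward induction.

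For the second reduction, Lemma \ref{l:NSP->NSQ} produces an $\calf$-morphism defined on $N_S(P)\supsetneq P$ carrying $P$ to a fully normalized $\calf$-conjugate, again handled by the induction. For the third reduction, if $P$ is fully normalized and $\calf$-centric but not $\calf$-essential, then $\outf(P)$ (finite by Proposition \ref{p:Frc-finite}(a)) has no strongly $p$-embedded subgroup, so by a classical group-theoretic fact it is generated by the normalizers $N_{\outf(P)}(R)$ of its nontrivial $p$-subgroups. Each element of such a normalizer lifts to an element of $\autf(P)$ that extends, via the extension axiom (receptivity at the fully normalized, hence fully centralized, subgroup $P$), to an automorphism of a strictly larger subgroup $P_R\le N_S(P)$ of the form $\pi^{-1}(R)$ under $\pi\colon N_S(P)\to\outs(P)$. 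These overgroup automorphisms are handled by the induction, and composing with inner automorphisms from $\Aut_S(P)$ recovers all of $\autf(P)$.

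The main obstacle will be ensuring that this inductive setup, relying on the order $|P|\in\N^2$ rather than on the finite index $[S:P]$, is genuinely well-founded and closes up. Each reduction step introduces several new subgroups --- fully centralized or normalized conjugates, central extensions $P^*Z^*$, preimage overgroups $P_R$ --- and although any individual strictly ascending chain in $S$ is artinian, one must verify that the total collection of subgroups invoked across all reductions remains controlled. This is precisely where Proposition \ref{p:Frc-finite}(b) is essential, bounding the number of $S$-conjugacy classes in $\calf^{rc}$; the continuity axiom further enters to ensure that morphisms on non-finite subgroups arising as directed unions of finite ones are themselves composites of the morphisms produced by the induction on finite stages.
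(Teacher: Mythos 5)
Your step-by-step reductions follow the classical Alperin--Goldschmidt--Puig pattern, but the inductive framework does not close, and the reason you give for well-foundedness is a misreading of the cited result. Proposition~\ref{p:artin-loc.f.} asserts that $S$ is \emph{artinian}, i.e., satisfies the \emph{descending} chain condition on subgroups. It does not say ascending chains are finite; for an infinite discrete $p$-toral group the ACC fails outright, as $1<\Z/p<\Z/p^2<\cdots$ inside $\Z/p^\infty$ shows. But a downward induction on $|P|$ that begins at $P=S$ and at each step replaces the subgroup in question by a strictly \emph{larger} one needs exactly the ACC (equivalently, that every nonempty collection of subgroups encountered has a maximal element). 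Your reduction steps produce strictly larger overgroups such as $P\cdot C_S(P)$ --- nothing forces these to lie in $\calf^{rc}$ --- as well as fully normalized conjugates and preimages $\pi^{-1}(R)\le N_S(P)$, and nothing bounds how far this process climbs. (Incidentally, $Z(C_S(P^*))\le P^*$ can happen for $P^*$ fully centralized but not $\calf$-centric, e.g.\ $P^*=Z(S)$ with $S$ extraspecial, so $P^*Z^*=P^*$ gives no progress; the correct overgroup in that step is $P^*C_S(P^*)$.) Neither the finiteness of $\calf^{rc}/{\sim_S}$ nor the continuity axiom controls the intermediate subgroups, because they need not lie in $\calf^{rc}$, and the continuity axiom only governs whether a homomorphism belongs to $\calf$, not whether it is expressible as a composite of restricted automorphisms. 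Your closing paragraph correctly diagnoses this as the obstacle but does not resolve it.

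This is precisely the difficulty that the proof of \cite[Theorem~3.6]{BLO3} is engineered to avoid, via the ``bullet'' construction $P\mapsto P^\bullet$ introduced in \cite[\S\,3]{BLO3}. That operation assigns to each $P\le S$ a canonical overgroup $P^\bullet\ge P$ with the two key properties that (i) every $\calf$-morphism $P\to Q$ extends (uniquely) to $P^\bullet\to Q^\bullet$, so the whole argument can be carried out among the subgroups $P^\bullet$, and (ii) the set $\calf^\bullet=\{P^\bullet : P\le S\}$ has only finitely many $S$-conjugacy classes --- this is the actual content behind Proposition~\ref{p:Frc-finite}(b). Downward induction is then genuinely well-founded because it takes place inside a finite poset modulo $S$-conjugacy. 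Without that device, or an equivalent mechanism for confining the argument to a finite family of subgroups up to conjugacy, the proposal has a real gap.
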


We now turn our attention to quotient fusion systems. 

\begin{Defi} \label{d:F/Q}
Let $\calf$ be a fusion system over a discrete $p$-toral group 
$S$, and assume $Q\le S$ is weakly closed in $\calf$. In particular, $Q\nsg 
S$. Let $\calf/Q$ be the fusion system over $S/Q$ defined by setting, for 
each $P,R\le S$ containing $Q$, 
	\[ \Hom_{\calf/Q}(P/Q,R/Q) = \{\varphi/Q \,|\, \varphi\in\homf(P,R) 
	\}; \]
where $\varphi/Q\in\Hom(P/Q,R/Q)$ sends $xQ$ to $\varphi(x)Q$ for all 
$x\in P$.
\end{Defi}

\begin{Lem} \label{l:F(G)/Q}
Let $\calf$ be a fusion system over a discrete $p$-toral group $S$, and 
assume $Q\le S$ is weakly closed in $\calf$. If $\calf=\calf_S(G)$ for some 
discrete group $G$ with $S\le G$, then $\calf/Q=\calf_{S/Q}(N_G(Q)/Q)$.
\end{Lem}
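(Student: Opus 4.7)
The plan is to show directly that the two fusion systems have the same morphism sets on each pair of objects $P/Q,R/Q \le S/Q$. Since the objects of both $\calf/Q$ and $\calf_{S/Q}(N_G(Q)/Q)$ are exactly the subgroups of $S/Q$, i.e., the subgroups $P/Q$ for $Q \le P \le S$, it suffices to compare
\beq
\Hom_{\calf/Q}(P/Q,R/Q) = \{\varphi/Q \mid \varphi \in \Hom_{\calf_S(G)}(P,R)\}
\eeq
with $\Hom_{N_G(Q)/Q}(P/Q,R/Q)$ for all such $P,R$.

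The central observation to establish first is that the weak closure of $Q$ in $\calf_S(G)$ forces every relevant $G$-conjugation to come from $N_G(Q)$. More precisely, if $g \in G$ satisfies $\9gP \le R$ with $Q \le P$, then $c_g|_Q \in \homf(Q,S)$, so $\9gQ \in Q^\calf = \{Q\}$, whence $g \in N_G(Q)$. Once this is noted, the homomorphism $\varphi/Q$ (for $\varphi = c_g$) is literally $c_{gQ}$, the conjugation by $gQ \in N_G(Q)/Q$ acting on $P/Q$. This identifies elements of $\Hom_{\calf/Q}(P/Q,R/Q)$ with certain conjugations by elements of $N_G(Q)/Q$.

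The remaining task is the routine check that the defining conditions on $g$ match up. For the inclusion ``$\subseteq$'', if $\9g P \le R$ and $g \in N_G(Q)$, then plainly $\9{(gQ)}(P/Q) = (\9gP)Q/Q \le R/Q$, so $c_{gQ}$ is a morphism in $\calf_{S/Q}(N_G(Q)/Q)$. For the reverse inclusion, if $gQ \in N_G(Q)/Q$ satisfies $\9{(gQ)}(P/Q) \le R/Q$, then $\9gP \cdot Q \le R$, and since $Q \le R$ by assumption (so $R = RQ$), we get $\9gP \le R$; thus $c_g \in \homf(P,R)$ and $c_{gQ} = c_g/Q$ lies in $\Hom_{\calf/Q}(P/Q,R/Q)$.

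The only step requiring attention — though not really an obstacle — is the weak-closure argument, which is what makes the identification of $N_G(Q)/Q$-conjugations with $\calf/Q$-morphisms work cleanly; everything else is just unravelling Definitions \ref{d:FS(G)} and \ref{d:F/Q}.
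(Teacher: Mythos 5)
Your proof is correct and follows essentially the same route as the paper: the easy inclusion $\calf_{S/Q}(N_G(Q)/Q)\le\calf/Q$ is handled by direct unwinding, and the key point in the other direction is exactly the weak-closure observation that any $g\in G$ inducing a morphism between subgroups containing $Q$ must lie in $N_G(Q)$. The paper's proof is just a more compressed version of the same argument.
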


\begin{proof} The inclusion $\calf_{S/Q}(N_G(Q)/Q)\le \calf_S(G)/Q=\calf/Q$ 
is clear. Conversely, for each $P,R\le S$ containing $Q$ and each $g\in G$ 
such that $\9gP\le R$, since $c_g\in\homf(P,R)$ and $Q$ is weakly closed in 
$\calf$, we have $g\in N_G(Q)$ and hence 
$c_{gQ}\in\Hom_{\calf/Q}(P/Q,R/Q)$. So $\calf/Q=\calf_{S/Q}(N_G(Q)/Q)$.
\end{proof}

The proof that quotient systems of saturated fusion systems are again 
saturated is also elementary. 


\begin{Lem} \label{l:F/Q}
Let $\calf$ be a saturated fusion system over a discrete $p$-toral group 
$S$, and assume $Q\le S$ is weakly closed in $\calf$. Then $\calf/Q$ is a 
saturated fusion system over $S/Q$. 
\end{Lem}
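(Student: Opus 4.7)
The plan is to verify each of the three saturation axioms for $\calf/Q$ by transferring the corresponding axiom from $\calf$ through the correspondence $P \leftrightarrow P/Q$ defined for $Q \le P \le S$. Two preliminary observations underpin everything: since $Q$ is weakly closed, every $\varphi \in \homf(P,R)$ with $Q \le P, R$ satisfies $\varphi(Q) = Q$ (because $Q^\calf = \{Q\}$), so $\calf$-conjugacy classes of subgroups containing $Q$ biject with $\calf/Q$-conjugacy classes of subgroups of $S/Q$; and $N_{S/Q}(P/Q) = N_S(P)/Q$, while $\varphi \mapsto \varphi/Q$ induces surjections $\autf(P) \twoheadrightarrow \Aut_{\calf/Q}(P/Q)$ and $\outf(P) \twoheadrightarrow \Out_{\calf/Q}(P/Q)$ carrying $\Aut_S(P)$ and $\Out_S(P)$ onto $\Aut_{S/Q}(P/Q)$ and $\Out_{S/Q}(P/Q)$ respectively.

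I would begin by proving that for $Q \le P \le S$, the subgroup $P$ is fully normalized in $\calf$ if and only if $P/Q$ is fully normalized in $\calf/Q$: one direction is immediate from $N_{S/Q}(P/Q) = N_S(P)/Q$ and the fact that all $\calf$-conjugates of $P$ contain $Q$, and the other uses Lemma \ref{l:NSP->NSQ} to produce, for each $\calf$-conjugate $P'$, a morphism $N_S(P') \to N_S(P)$ in $\calf$ that injects normalizers. Assuming $P/Q$ fully normalized, $P$ is fully normalized in $\calf$, and saturation of $\calf$ makes $P$ fully automized and fully centralized. The Sylow property transfers because the image of $\Out_S(P) \in \sylp{\outf(P)}$ under the surjection onto $\Out_{\calf/Q}(P/Q)$ is $\Out_{S/Q}(P/Q)$, which is therefore Sylow in the target. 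For full centralization, the same morphism $\rho \in \homf(N_S(P'), N_S(P))$ from Lemma \ref{l:NSP->NSQ} sends $\{x \in N_S(P') : [x, P'] \le Q\}$ into $\{y \in N_S(P) : [y, P] \le Q\}$, since $\rho(Q) = Q$; dividing by $Q$ yields $|C_{S/Q}(P/Q)| \ge |C_{S/Q}(P'/Q)|$.

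Next I verify the Extension axiom. Assume $P/Q$ is fully centralized in $\calf/Q$, let $\bar\psi: P'/Q \xrightarrow{\sim} P/Q$ be an iso in $\calf/Q$, and lift $\bar\psi$ to $\psi \in \homf(P',P)$. The preimage of $N_{\bar\psi}$ in $N_S(P')$ is $M = \{x \in N_S(P') : \psi c_x \psi^{-1} \in \Aut_S(P) \cdot K_P\}$, where $K_P$ is the kernel of $\autf(P) \twoheadrightarrow \Aut_{\calf/Q}(P/Q)$. Choosing $P$ fully normalized in its $\calf$-class (using the equivalence above) so that $\Aut_S(P)$ is a Sylow $p$-subgroup of $\autf(P)$, I would adjust $\psi$ by an element of $\autf(P)$ through a Sylow-type conjugation argument inside $\Aut_S(P) \cdot K_P$ so as to force $\psi c_x \psi^{-1} \in \Aut_S(P)$ for all $x \in M$, i.e., $M \le N_\psi$. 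Receptivity of $P$ in $\calf$ then extends $\psi$ to $M$, and reducing modulo $Q$ gives the required extension of $\bar\psi$ to $N_{\bar\psi}$.

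The Continuity axiom is the main obstacle. Given an injective $\bar\varphi: P/Q \to S/Q$ (with $Q \le P \le S$) whose restriction to every finite subgroup of $P/Q$ lies in $\calf/Q$, I must produce $\varphi \in \homf(P,S)$ with $\varphi/Q = \bar\varphi$. By the continuity axiom of $\calf$ it suffices to build an injective $\varphi: P \to S$ whose restriction to each finite subgroup of $P$ lies in $\calf$. Writing $P = \bigcup_i R_i$ with $R_i = T_i Q$ for an exhausting chain $T_1 \le T_2 \le \cdots$ of finite subgroups of $P$, the sets
\[ \Psi_i = \{ \psi \in \homf(R_i, S) \bmid \psi/Q = \bar\varphi|_{R_i/Q} \} \]
are nonempty by hypothesis and form an inverse system under restriction. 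The plan is to apply Lemma \ref{l:invlim} to extract a compatible sequence $(\psi_i) \in \lim_i \Psi_i$; each $\Psi_i$ is a torsor for the group $K_i = \{\alpha \in \Aut_\calf(\til{R_i}) : \alpha/Q = \Id\}$ where $\til{R_i}$ is the (unique) lift of $\bar\varphi(R_i/Q)$ containing $Q$, and identifying a group $\Gamma$ that acts coherently on the whole tower with finite orbit spaces $\Psi_i/\Gamma$ --- built from centralizer data in $S$ and the finiteness supplied by Proposition \ref{p:Frc-finite}(a) --- is the technical heart of the argument. Once the compatible sequence is obtained, the glued map $\varphi = \bigcup_i \psi_i : P \to S$ is an injective homomorphism with $\varphi/Q = \bar\varphi$; each finite $T \le P$ lies in some $R_i$, so $\varphi|_T \in \homf(T,S)$, and the continuity axiom of $\calf$ upgrades $\varphi$ to a morphism in $\homf(P,S)$, completing the verification.
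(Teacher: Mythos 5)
Your overall architecture matches the paper's: you transfer the axioms across the correspondence $P\leftrightarrow P/Q$, use a Frattini-style Sylow argument for the extension axiom, and reduce the continuity axiom to an inverse-limit construction. One structural difference: the paper invokes \cite[Corollary 1.8]{BLO6} to reduce the problem to showing that every subgroup of $S/Q$ is $\calf/Q$-conjugate to one that is \emph{fully automized and receptive}, plus the continuity axiom, and so never needs to verify full centralization. You verify the Sylow axiom in its original form, which forces you to prove that full centralization descends; your argument for this (push $\{x\in N_S(P'):[x,P']\le Q\}$ forward along $\rho\in\homf(N_S(P'),N_S(P))$ from Lemma \ref{l:NSP->NSQ} and divide by $Q$) does work, but it is an extra step the paper's reduction sidesteps.

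The continuity axiom is where the genuine gap lies, and it is the part you yourself flag as unfinished. You note that $\Psi_i$ is a torsor for $K_i=\{\alpha\in\autf(\til R_i):\alpha/Q=\Id\}$, and if $K_i$ were always finite this would immediately give a nonempty inverse limit of finite sets. But $Q$ is only assumed weakly closed, so $Q$ (and hence $\til R_i$) may be infinite, and then $\Aut_\calf(\til R_i)$ contains $\Inn(\til R_i)$ which can be infinite; only $\outf(\til R_i)$ is controlled by Proposition \ref{p:Frc-finite}(a). So $K_i$ can be infinite, and the torsor observation alone does not bound $\Psi_i$. You then write that "identifying a group $\Gamma$ that acts coherently on the whole tower with finite orbit spaces\dots is the technical heart of the argument," which is exactly right, but you stop there. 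The paper's choice is explicit: take $\Gamma\le S$ with $Q\nsg\Gamma$ and $\Gamma/Q=C_{S/Q}(\5\varphi(P/Q))$, let it act on $\Phi_i$ by $\psi\mapsto c_\gamma\circ\psi$ once $i$ is large enough that $C_{S/Q}(\5\varphi(P_i/Q))$ has stabilized (possible since $S/Q$ is artinian), and prove $\Phi_i/\Gamma$ is finite by showing that two elements of $\Phi_i$ lying in the same $\Inn(S)$-orbit must already lie in the same $\Gamma$-orbit, so that $\Phi_i/\Gamma$ injects into the finite set $\homf(P_i,S)/\Inn(S)$. Without this construction and finiteness check, Lemma \ref{l:invlim} cannot be applied and the proof is incomplete.
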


\begin{proof} By \cite[Corollary 1.8]{BLO6}, it suffices to show that 
\begin{enumi} 

\item every subgroup of $S/Q$ is $\calf/Q$-conjugate to one that is fully 
automized and receptive; and 

\item the continuity axiom holds for $\calf/Q$. 

\end{enumi}

\noindent\textbf{(i) } (The following argument is based on the proof of 
\cite[Lemma II.5.4]{AKO}.) Fix a subgroup $P/Q\le S/Q$, and choose $R\in 
P^\calf$ such that $R$ is fully normalized in $\calf$. Then $R$ is fully 
automized and receptive in $\calf$ since $\calf$ is saturated. Also, $R\ge 
Q$ and $R/Q\in(P/Q)^{\calf/Q}$, so it will suffice to prove that $R/Q$ is 
fully automized and receptive in $\calf/Q$.

For each $U,V\le S$ containing $Q$, let 
	\[ \Psi_{U,V}\: \homf(U,V) \Right4{} \Hom_{\calf/Q}(U/Q,V/Q) \]
be the natural map that sends $\varphi$ to $\varphi/Q$. By definition of 
$\calf/Q$, $\Psi_{U,V}$ is surjective for all $U$ and $V$. Also, 
$\Psi_{U,V}(\Hom_S(U,V))=\Hom_{S/Q}(U/Q,V/Q)$ in all cases. 
Since $\Aut_S(R)\in\sylp{\autf(R)}$ (recall $R$ is assumed fully 
automized), we have $\Aut_{S/Q}(R/Q)\in\sylp{\Aut_{\calf/Q}(R/Q)}$, 
and hence $R/Q$ is fully automized in $\calf/Q$.

To see that $R/Q$ is receptive in $\calf/Q$, fix an isomorphism 
$\5\varphi\in\Iso_{\calf/Q}(U/Q,R/Q)$, and choose 
$\varphi\in\Psi_{U,R}^{-1}(\5\varphi)\subseteq\Iso_\calf(U,R)$. Let 
$N_\varphi\le N_S(U)$ and $N_{\5\varphi}\le N_{S/Q}(U/Q)$ be as in 
Definition \ref{d:sfs}(d). For each $g\in N_\varphi$, we have $\varphi 
c_g\varphi^{-1}\in\Aut_S(R)$, and hence $\5\varphi 
c_{gQ}\5\varphi{}^{-1}\in\Psi_{R,R}(\Aut_S(R))=\Aut_{S/Q}(R/Q)$, proving 
that $gQ\in N_{\5\varphi}$ and hence that $N_\varphi/Q\le N_{\5\varphi}$. 

Let $N\le N_S(U)$ be such that $N/Q=N_{\5\varphi}$. Then 
	\[ \Psi_{R,R}(\varphi\Aut_N(U)\varphi^{-1}) = 
	\5\varphi\Aut_{N_{\5\varphi}}(U/Q)\5\varphi{}^{-1} \le 
	\Aut_{S/Q}(R/Q) = \Psi_{R,R}(\Aut_S(R)). \]
Since $R$ is fully automized, $\Aut_S(R)\in\sylp{\autf(R)}$, and hence 
$\Aut_S(R)\in\sylp{\Aut_S(R)\Ker(\Psi_{R,R})}$. So there is 
$\psi\in\Ker(\Psi_{R,R})$ such that $\9{\psi\varphi}\Aut_N(U)\le\Aut_S(R)$. 
Upon replacing $\varphi$ by $\psi\varphi$, we get 
$N_{\5\varphi}=N_\varphi/Q$, and still have $\5\varphi=\varphi/Q$. 

Since $R$ is receptive in $\calf$, the isomorphism $\varphi$ extends to 
$\4\varphi\in\homf(N_\varphi,S)$. So 
$\4\varphi/Q\in\Hom_{\calf/Q}(N_{\5\varphi},S/Q)$ is an extension of 
$\5\varphi$, and we conclude that $R/Q$ is receptive in $\calf/Q$.

\smallskip

\noindent\textbf{(ii) } Assume $P\ge Q$ and $\5\varphi\in\Hom(P/Q,S/Q)$ are 
such that $\5\varphi|_{R/Q}\in\Hom_{\calf/Q}(R/Q,S/Q)$ for each 
$R/Q\in\Fin(P/Q)$. Choose $Q\le P_1\le P_2\le\dots\le P$ such that 
$|P_i/Q|<\infty$ for each $i$ and $P=\bigcup_{i=1}^\infty P_i$. For each 
$i$, set 
	\[ \Phi_i = \{ \psi\in\homf(P_i,S) \,|\, \psi/Q=\5\varphi|_{P_i/Q} 
	\} . \] 
Thus $(\Phi_i)_{i\ge1}$ is an inverse system of sets via 
restriction of morphisms, and $\Phi_i\ne\emptyset$ for each $i$ since 
$\5\varphi|_{P_i/Q}\in\Mor(\calf/Q)$. We claim that 
$\lim_i(\Phi_i)\ne\emptyset$. 

Let $\Gamma\le S$ be such that $Q\nsg\Gamma$ and 
$\Gamma/Q=C_{S/Q}(\5\varphi(P/Q))$. Since $S/Q$ is artinian and the 
centralizers $C_{S/Q}(\5\varphi(P_i/Q))$ form a descending sequence 
intersecting in $\Gamma/Q$, there is $m\ge1$ such that 
$\Gamma/Q=C_{S/Q}(\5\varphi(P_i/Q))$ for each $i\ge m$. Also, for each 
$i\ge1$, and each $\psi\in\Phi_i$ and $\gamma\in\Gamma$, we have 
$c_\gamma\circ\psi\in\Phi_i$ since $[\gamma,\psi(P_i)]\le Q$ (since $\gamma 
Q\in C_{S/Q}(\5\varphi(P_i/Q))$). Thus $\gamma\in\Gamma$ acts on $\Phi_i$ 
via composition with $c_\gamma$. 

Assume $i\ge m$, $\psi\in\Phi_i$, and $x\in S$ are such that 
$c_x\psi\in\Phi_i$. Then $[x,\psi(P_i)]\le Q$, so 
$[xQ,\5\varphi(P_i/Q)]=1$, and hence $x\in\Gamma$. Thus two elements of 
$\Phi_i$ in the same $\Inn(S)$-orbit are in the same $\Gamma$-orbit. So the 
natural map $\Phi_i/\Gamma\too\homf(P_i,S)/\Inn(S)$ is injective, and hence 
$\Phi_i/\Gamma$ is finite since $\homf(P_i,S)/\Inn(S)$ is finite by 
Proposition \ref{p:Frc-finite}(a). 

We thus have an inverse system $(\Phi_i)_{i\ge m}$ of nonempty 
$\Gamma$-sets with finite orbit sets, and hence 
$\lim_i(\Phi_i)\ne\emptyset$ by Lemma \ref{l:invlim}. Choose 
$(\psi_i)_{i\ge m}\in\lim_i(\Phi_i)$. So  
$\psi_i|_{P_{i-1}}=\psi_{i-1}$ for each $i>m$, and we can define 
$\psi=\bigcup_{i=m}^\infty\psi_i\in\Hom(P,S)$. Then $\psi\in\homf(P,S)$ by 
the continuity axiom for $\calf$, and 
$\5\varphi=\psi/Q\in\Hom_{\calf/Q}(P/Q,S/Q)$ since 
$(\psi/Q)_{P_i/Q}=\psi_i/Q=\5\varphi|_{P_i/Q}$ for each $i\ge m$.
\end{proof}

We also need to work with isomorphisms between fusion systems. Recall, in 
the following definition, that we write composition from right to left.

\begin{Defi} \label{d:F1=F2}
Let $\calf_i$ be a fusion system over the discrete $p$-toral group $S_i$ 
for $i=1,2$. An \emph{isomorphism} $(\rho,\5\rho)\:\calf_1\too\calf_2$ 
consists of an isomorphism of groups $\rho\:S_1\xto{~\cong~}S_2$ and an 
isomorphism of categories $\5\rho\:\calf_1\xto{~\cong~}\calf_2$ such that 
$\5\rho$ sends an object $P$ to $\rho(P)$, and sends a morphism 
$\varphi\in\Hom_{\calf_1}(P,Q)$ to the morphism $\5\rho(\varphi)\in 
\Hom_{\calf_2}(\rho(P),\rho(Q))$ such that 
	\[ \5\rho(\varphi)\circ(\rho|_P)= (\rho|_Q)\circ\varphi \in 
	\Hom(P,\rho(Q)). \]
\end{Defi}

Note that in the above definition, the functor $\5\rho$ is uniquely 
determined by the isomorphism $\rho\:S_1\too S_2$. So in practice, we 
regard an isomorphism of fusion systems as an isomorphism between the Sylow 
groups that satisfies the extra conditions needed for there to exist an 
isomorphism of categories.

When $\calf$ is a saturated fusion system over a discrete $p$-toral group 
$S$, a \emph{centric linking system} associated to $\calf$ is a category 
$\call$ whose objects are the $\calf$-centric subgroups of $S$, together 
with a functor $\pi\:\call\too\calf$ that is the inclusion on objects and 
is surjective on morphism sets and satisfies certain additional conditions. 
The following definition describes one way to construct centric linking 
systems under certain conditions on a discrete group $G$ and a 
discrete $p$-toral subgroup $S\le G$. 

\begin{Defi} \label{d:LSc(G)}
Let $G$ be a discrete group. 
\begin{enuma} 

\item A $p$-subgroup $P\le G$ is \emph{$p$-centric in $G$} if $Z(P)$ is the 
unique Sylow $p$-subgroup of $C_G(P)$ (equivalently, $C_G(P)/Z(P)$ has no 
elements of order $p$). 

\item If $G$ is locally finite and $S\le G$ is a discrete $p$-toral 
subgroup, let $\call_S^c(G)$ be the category whose objects are the 
subgroups of $S$ that are $p$-centric in $G$, and where for each pair of 
objects $P,Q\le S$ we set
	\[ \Mor_{\call_S^c(G)}(P,Q) = \{g\in G \,|\, \9gP\le Q \} \big/ 
	O^p(C_G(P)). \]
Here, $O^p(C_G(P))$ means the subgroup generated by all elements of order 
prime to $p$ in $C_G(P)$.

\end{enuma}
\end{Defi}

Since linking systems play a relatively minor role throughout most of this 
paper (mostly mentioned as ``additional information''), we don't give the 
precise definition here, but instead refer to \cite[Definition 4.1]{BLO3}. 
The exception to this is when we look at fusion systems of compact Lie 
groups (Section \ref{s:cpt.Lie}) and those of $p$-compact groups (Section 
\ref{s:p-cpt}). Linking systems do play an important role in those two 
sections, and we list there the precise properties of linking systems that 
we need to use.

When $\call$ is a centric linking system associated to a fusion system 
$\calf$, we let $|\call|$ denote its geometric realization (see, e.g., 
\cite[\S\,III.2.2]{AKO}). We regard $|\call|\pcom$ as a ``classifying 
space'' for $\calf$, where $(-)\pcom$ denotes $p$-completion in the sense 
of Bousfield and Kan (see \cite[\S\,III.1.4]{AKO} for a brief summary of 
some of its elementary properties). By \cite[Theorem 7.4]{BLO3}, two 
saturated fusion systems are isomorphic if their classifying spaces are 
homotopy equivalent, and this is the basis for showing in Section 
\ref{s:cpt.Lie} that fusion systems of compact Lie groups can all be 
realized by certain discrete linear groups.


\section{Sequential realizability}
\label{s:seq.real.}

When working with fusion systems over finite $p$-groups, it is natural to 
say that a fusion system is realizable if it is isomorphic to the fusion 
system of a finite group, and is exotic otherwise. When we turn to fusion 
systems over discrete $p$-toral groups, it is less clear what is the most 
natural condition for realizability. In this section, we define 
sequential realizability and look at its basic properties. For example, we 
show in Corollary \ref{c:f.s.union} that the fusion system of a countable 
locally finite group with Sylow $p$-subgroups is sequentially realizable if 
it is saturated.

No fusion system over an infinite discrete $p$-toral group $S$ can be the 
union \emph{as categories} of fusion subsystems over finite subgroups of 
$S$, since $S$ itself can't be an object in any such union. So before we 
define sequential realizability, we need to make precise what we mean by 
an infinite union of fusion systems. The following definition can be 
thought of as a simplified version of the definition in \cite[Definition 
3.1]{Gonzalez} of a finite approximation of linking systems. 

\begin{Defi} \label{d:f.s.union}
Assume $\calf_1\le\calf_2\le\calf_3\le\cdots$ is an increasing sequence of
saturated fusion systems over finite subgroups $S_1\le 
S_2\le S_3\le\cdots$ of a discrete $p$-toral group $S=\bigcup_{i=1}^\infty 
S_i$. Define $\bigcup_{i=1}^\infty\calf_i$ to be the fusion system over $S$ 
where for each $P,Q\le S$, 
	\begin{multline*} 
	\Hom_{\bigcup_{i=1}^\infty\calf_i}(P,Q) = 
	\bigl\{ \varphi\in\Hom(P,Q) \bmid \forall\, R\in\Fin(P)~ 
	\exists\,i\ge1~ \\ \textup{such that $R\le S_i$ and 
	$\varphi|_R\in\Hom_{\calf_i}(R,Q\cap S_i)$} \bigr\}. 
	\end{multline*}
\end{Defi}

In other words, we define the union of an increasing sequence of fusion 
systems $\calf_i$ over $S_1\le S_2\le\dots$ to be the smallest fusion 
system over $\bigcup_{i=1}^\infty S_i$ that contains the $\calf_i$ and 
satisfies the continuity axiom (see Definition \ref{d:sfs}(e)).

\begin{Defi} \label{d:seq.real.}
A fusion system $\calf$ over a discrete $p$-toral group $S$ 
is \emph{sequentially realizable} if there is an increasing sequence 
$\calf_1\le\calf_2\le\dots$ of saturated fusion subsystems of $\calf$ over 
finite subgroups $S_1\le S_2\le\dots$ of $S$, such that 
$S=\bigcup_{i=1}^\infty S_i$ and $\calf=\bigcup_{i=1}^\infty\calf_i$, and 
such that each $\calf_i$ is realizable by a finite group.
\end{Defi}

Note that we do not assume in the definition that $\calf$ is saturated. We 
will show in a later paper that sequentially realizable fusion systems over 
discrete $p$-toral groups are always saturated. In this paper, when we work 
with sequentially realizable fusion systems, we always say explicitly 
whether or not we assume that they are saturated.

In all examples where we can prove that a fusion system is sequentially 
realizable, we do so by showing that it is realized by a linear torsion 
group, as defined in the next section. 

We next look at fusion systems over finite $p$-groups.

\begin{Thm} \label{t:fin.real.}
Let $\calf$ be a fusion system over a finite $p$-group $S$. 
Assume that either 
\begin{enumi} 

\item $\calf$ is sequentially realizable; or 

\item $\calf\cong\calf_S(G)$ for some locally finite group $G$ that contains 
$S$ as a maximal $p$-subgroup.

\end{enumi}
Then $\calf\cong\calf_S(G_0)$ for some finite subgroup $G_0\le G$ that 
contains $S$ as a Sylow $p$-subgroup.
\end{Thm}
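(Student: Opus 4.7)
My plan is to handle the two hypotheses separately; in both cases the crucial ingredient is that $S$ being finite forces $\calf$ to have only finitely many morphisms in total (finitely many pairs $(P,Q)\le S$, each with $\homf(P,Q)$ finite), so that a bit of stabilization brings us down to a finite group.

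Under hypothesis (ii), I would enumerate all morphisms of $\calf=\calf_S(G)$ and, for each $\varphi\in\homf(P,Q)$, choose an element $g_\varphi\in G$ that induces $\varphi$ by conjugation. Then I set $G_0=\gen{S,\{g_\varphi\}}\le G$; this is finitely generated, hence finite by local finiteness of $G$. By construction $\calf\subseteq\calf_S(G_0)$, and the reverse inclusion is automatic from $G_0\le G$ and $\calf=\calf_S(G)$. To upgrade ``$S\le G_0$'' to ``$S\in\sylp{G_0}$'', I observe that any $p$-subgroup of $G_0$ containing $S$ is a $p$-subgroup of $G$ containing $S$, hence equals $S$ by maximality of $S$ in $G$; since $G_0$ is finite, this makes $S$ a Sylow $p$-subgroup.

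Under hypothesis (i), write $\calf=\bigcup_{i\ge 1}\calf_i$ where each $\calf_i$ is saturated over a finite subgroup $S_i\le S$, realized by a finite group, with $\bigcup_i S_i=S$. Since $S$ is finite and the $S_i$ are increasing, there is $N$ with $S_i=S$ for all $i\ge N$. Now apply Definition \ref{d:f.s.union} with the choice $R=P$ (which is legitimate since each $P\le S$ is finite): every $\varphi\in\homf(P,Q)$ then lies in $\Hom_{\calf_i}(P,Q\cap S_i)$ for some $i\ge N$. Because there are only finitely many morphisms in $\calf$ to account for, a single index $M\ge N$ captures them all, yielding $\calf\subseteq\calf_M$. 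The reverse inclusion is built into the definition of the union, so $\calf=\calf_M$, and the finite group realizing $\calf_M$ has $S=S_M$ as a Sylow $p$-subgroup by the very definition of ``realizable''.

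There is no genuine obstacle: once $S$ is known to be finite, the argument reduces to elementary stabilization of finite data in a countable increasing union, together with the standard conjugacy-generation trick for locally finite groups. The one small definitional subtlety is unwinding Definition \ref{d:f.s.union} so as to extract a \emph{single} index $i$ containing a given morphism $\varphi$; this works precisely because we are free to take $R=P$ itself, which is allowed only because $S$ (and hence $P$) is finite. This is also the precise point at which the analogous statement would fail for fusion systems over infinite discrete $p$-toral groups --- where, as the rest of the paper shows, much more subtle phenomena can occur.
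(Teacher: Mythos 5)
Your proof is correct and follows essentially the same route as the paper: part (ii) is identical (adjoin a conjugating element for each of the finitely many morphisms to $S$ and use local finiteness plus maximality of $S$), and part (i) is a more explicit unwinding of the paper's terse observation that a finite fusion system equals some $\calf_i$ in any increasing union.
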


\begin{proof} \textbf{(i) } If $\calf$ is sequentially realizable, then 
$\calf=\bigcup_{i=1}^\infty\calf_i$ for some increasing sequence 
$\calf_1\le\calf_2\le\cdots$ of finite subsystems, each of which is 
realized by a finite group. But since $\calf$ is finite, we have 
$\calf=\calf_i$ for some $i$, and hence it is realized by a finite group.

\smallskip

\noindent\textbf{(ii) } Assume $\calf=\calf_S(G)$ for some locally finite 
group $G$ which contains $S$ as a maximal $p$-subgroup. Let 
$\rho\:\Mor(\calf)\too G$ be a map of sets that sends a morphism 
$\varphi\in\homf(P,Q)$ to an element $\rho(\varphi)$ such that 
$\varphi=c_{\rho(\varphi)}|_P$. Set $G_0=\gen{S,\Im(\rho)}$. Then $G_0$ is 
finitely generated since $\Mor(\calf)$ is finite, and is finite since $G$ 
is locally finite. Also, $S\in\sylp{G_0}$ since it is a maximal 
$p$-subgroup of $G$. 

By construction, $\calf\le\calf_S(G_0)\le\calf_S(G)=\calf$. So 
$\calf$ is realized by $G_0$. 
\end{proof}

The next proposition shows that fusion systems of certain locally finite 
groups are sequentially realizable. By ``countable'' we always mean ``at 
most countable'' (i.e., possibly finite).

\begin{Prop} \label{p:f.s.union} 
Let $\calf$ be a saturated fusion system over a discrete $p$-toral 
group $S$. Assume $G$ is a locally finite group containing $S$ such 
that $\calf=\calf_S(G)$. Then the following hold.
\begin{enuma} 

\item There is a countable subgroup $G_*\le G$ such that $S\le G_*$ and 
$\calf=\calf_S(G_*)$.

\item If $G$ is countable and $S\in\sylp{G}$, then there is an increasing 
sequence of finite subgroups $\{G_i\}_{i\ge1}$ of $G$ such that $S_i\defeq 
S\cap G_i \in \sylp{G_i}$ for each $i$, and 
$G=\bigcup\nolimits_{i=1}^\infty G_i$. For each such sequence, 
\begin{enumerate}[\rm(b.1) ] \medskip

\item $S=\bigcup_{i=1}^\infty S_i$, and 

\item $\calf=\bigcup_{i=1}^\infty\calf_{S_i}(G_i)$ in the sense of 
Definition \ref{d:f.s.union}.

\end{enumerate}
\end{enuma}
\end{Prop}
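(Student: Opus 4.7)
For part (a), the plan is to apply Alperin's fusion theorem (Theorem \ref{t:AFT}) to reduce realization of $\calf$ to countably many generators in $G$. By Proposition \ref{p:Frc-finite}(b), $\calf^{rc}$ is a union of finitely many $S$-conjugacy classes; I choose fully normalized representatives $Q_1,\dots,Q_m$. For each $Q_k$, Proposition \ref{p:Frc-finite}(a) says $\autf(Q_k)/\Inn(Q_k)$ is finite, and since $Q_k\le S$ is countable, so is $\Inn(Q_k)$, and therefore $\autf(Q_k)$ itself. For each $\varphi\in\autf(Q_k)$, I pick $g_\varphi\in G$ realizing $\varphi$, and take $G_*$ to be the subgroup of $G$ generated by $S$ together with all these $g_\varphi$; as a countable set of generators is adjoined to a countable group, $G_*$ is countable. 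The inclusion $\calf_S(G_*)\subseteq\calf_S(G)=\calf$ is immediate, while the reverse follows from Theorem \ref{t:AFT}: every morphism of $\calf$ is a composite of restrictions of maps in $\Aut_S(-)$ (realized by $S\le G_*$) and maps in $\autf(Q)$ for fully normalized $Q\in\calf^{rc}$, and each such $Q$ is $S$-conjugate to some $Q_k$, so its automizer is conjugate to $\autf(Q_k)$ and is thus realized inside $G_*$.

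For part (b), the construction of $\{G_i\}$ is the heart of the argument. Enumerate $G=\{h_1,h_2,\dots\}$ using countability, and build $G_i$ inductively from $G_0=1$. Given $G_{i-1}$ finite with $S\cap G_{i-1}\in\sylp{G_{i-1}}$, set $K_0=\gen{G_{i-1},h_i}$, which is finite by local finiteness, and iterate: at stage $j$, choose $P_j\in\sylp{K_j}$ with $S\cap K_j\le P_j$ (finite Sylow theorem); if $P_j\le S$, equivalently $P_j=S\cap K_j$, terminate and set $G_i=K_j$; otherwise pick $x_j\in G$ with $\9{x_j}P_j\le S$ (possible as $S\in\sylp{G}$) and set $K_{j+1}=\gen{K_j,x_j}$. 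One then checks $|S\cap K_{j+1}|\ge|\9{x_j}P_j|=|P_j|>|S\cap K_j|$, and that as soon as the Sylow order of $K_j$ stabilizes (i.e., $|P_{j+1}|=|P_j|$), the inclusion $\9{x_j}P_j\le P_{j+1}$ becomes an equality, $S\cap K_{j+1}$ contains this Sylow, and the process terminates. The main obstacle is proving that this termination always occurs, i.e., that the Sylow orders of the $K_j$ cannot grow strictly forever; I expect this to require a closer analysis exploiting the Sylow property of $S$ in $G$ and the artinian structure of $S$ (Proposition \ref{p:artin-loc.f.}).

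The properties (b.1) and (b.2) then follow easily from the construction. For (b.1), any $s\in S\subseteq G=\bigcup_i G_i$ lies in some $G_i$, hence in $S\cap G_i=S_i$. For (b.2), the inclusion $\bigcup_i\calf_{S_i}(G_i)\subseteq\calf_S(G)=\calf$ is clear from $S_i\le S$ and $G_i\le G$. Conversely, given $\varphi\in\homf(P,Q)$ and a finite subgroup $R\le P$, write $\varphi|_R=c_g|_R$ with $g\in G$ and $\9gR\le Q$, then choose $i$ large enough that $R\le S_i$, $\varphi(R)\le S_i$, and $g\in G_i$. The restriction $\varphi|_R=c_g|_R$ is conjugation by an element of $G_i$ carrying $R\le S_i$ onto $\9gR\le Q\cap S_i$, and so lies in $\Hom_{\calf_{S_i}(G_i)}(R,Q\cap S_i)$, matching exactly the criterion of Definition \ref{d:f.s.union}.
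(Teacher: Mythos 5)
Your proof of part (a) is essentially the paper's argument: both invoke Proposition \ref{p:Frc-finite} to deduce that $\calf^{rc}$ has only countably many morphisms, choose countably many realizing elements of $G$, adjoin them to $S$ to form a countable $G_*$, and use Alperin's fusion theorem (Theorem \ref{t:AFT}) for the containment $\calf\le\calf_S(G_*)$. Your slight variant (taking only automorphisms of chosen representatives, rather than all morphisms of the full subcategory $\calf^{rc}$) is an inessential optimization; note only that one must pick the representatives $Q_k$ to be fully normalized, which is possible because $|N_S(-)|$ is constant on $S$-conjugacy classes.

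For part (b), however, your construction has a genuine gap, which you yourself flag: you cannot show that the inner iteration over $j$ terminates. Each time you adjoin a conjugating element $x_j$, the group $K_{j+1}=\gen{K_j,x_j}$ may be much larger than $K_j$ (the new generator has its own $p$-part, and its interaction with $K_j$ is uncontrolled), and since $S$ is an infinite discrete $p$-toral group its finite $p$-subgroups have unbounded order, so there is no a priori bound on $|P_j|$. I do not see how to repair this without a fundamentally different idea. The paper sidesteps the difficulty by working globally rather than adjusting one step at a time: enumerate $G=\gen{h_i\,|\,i\ge1}$, set $H_m=\gen{h_1,\dots,h_m}$ (finite by local finiteness), choose by the finite Sylow theorem a compatible chain $T_1\le T_2\le\cdots$ with $T_i\in\sylp{H_i}$, let $T=\bigcup_iT_i$ (a $p$-subgroup of $G$), and only then invoke $S\in\sylp{G}$ \emph{once} to find $x\in G$ with $\9xT\le S$; setting $G_i=\9xH_i$ and $S_i=\9xT_i$ gives $S_i=S\cap G_i\in\sylp{G_i}$ directly, with no convergence issue. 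Your verifications of (b.1) and (b.2) from a correctly constructed sequence $\{G_i\}$ are fine, except that the inclusion $\bigcup_i\calf_{S_i}(G_i)\le\calf$ is not purely formal: limit morphisms in the sense of Definition \ref{d:f.s.union} belong to $\calf$ only because the saturated fusion system $\calf$ satisfies the continuity axiom, which should be said explicitly.
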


\begin{proof} \textbf{(a) } By Proposition \ref{p:Frc-finite} and since 
$\calf$ is saturated, there are finitely many $S$-conjugacy classes of 
subgroups in $\calf^{rc}$, and $\homf(P,Q)/\Inn(Q)$ is finite for each 
$P,Q\le S$. Since $S$ is countable, this implies that the object set 
$\calf^{rc}$ is countable, and that $\homf(P,Q)$ is countable for each 
$P,Q\le S$. Hence the category $\calf^{rc}$ has countably many morphisms. 

Choose a map of sets $\rho\:\Mor(\calf^{rc})\too G$ that sends a morphism 
$\varphi\in\homf(P,Q)$ (for $P,Q\in\calf^{rc}$) to an element 
$\rho(\varphi)$ such that $\varphi=c_{\rho(\varphi)}|_P$. Set 
$G_*=\gen{S\cup\Im(\rho)}$. Thus $G_*$ is countably generated since $S$ and 
$\Mor(\calf^{rc})$ are countable, and hence is countable. Also, 
$\calf\le\calf_S(G_*)\le\calf_S(G)$ where the first inclusion holds by 
Theorem \ref{t:AFT}, and the inclusions are equalities since 
$\calf=\calf_S(G)$.

\smallskip

\noindent\textbf{(b) } Now assume $G$ is countable and $S\in\sylp{G}$. 
Choose a sequence of elements $h_1,h_2,h_3,\dots$ in $G$ such that 
$G=\gen{h_i\,|\,i\ge1}$, and set $H_m=\gen{h_1,h_2,\dots,h_m}$ for each 
$m\ge1$. Then each $H_m$ is finite since $G$ is locally finite (Definition 
\ref{d:LT-real.}), and we can choose subgroups $T_1\le T_2\le T_3\le\dots$ 
such that $T_i\in\sylp{H_i}$ for each $i$. Set $T=\bigcup_{i=1}^\infty 
T_i$. Then $\9x\,T\le S$ for some $x\in G$ (since $S\in\sylp{G}$), and upon 
setting $G_i=\9xH_i$ and $S_i=\9x\,T_i$, we get $S_i\le S\cap G_i$ with 
equality since $S_i\in\sylp{G_i}$. Also, $\bigcup_{i=1}^\infty G_i = 
\bigcup_{i=1}^\infty H_i = G$ by construction (and since $x\in G$). 

\smallskip

For the rest of the proof, $G_1\le G_2\le\dots$ and $S_1\le S_2\le\dots$ 
are arbitrary finite subgroups of $G$ and $S$, respectively, such that 
$G=\bigcup_{i=1}^\infty G_i$ and $S_i\in\sylp{G_i}$. Note that $S_i=S\cap 
G_i$ for each $i$ since $S_i$ is a maximal $p$-subgroup of $G_i$.

\smallskip

\noindent\textbf{(b.1) } By definition, $\bigcup_{i=1}^\infty S_i = 
\bigcup_{i=1}^\infty(G_i\cap S) =\bigl(\bigcup_{i=1}^\infty G_i\bigr)\cap S 
= S$.

\smallskip

\noindent\textbf{(b.2) } Set $\calf_i=\calf_{S_i}(G_i)$ for each $i$; thus 
$\calf_i\le\calf$ by construction. To prove that $\calf$ is the union of 
the $\calf_i$, fix a morphism $\varphi\in\homf(P,Q)$ for some $P,Q\le S$, 
and let $g\in G$ be such that $\9gP\le Q$ and $\varphi=c_g|_P$. Choose 
$n\ge1$ such that $g\in G_n$. For each $i\ge n$, if we set $P_i=P\cap G_i$ 
and $Q_i=Q\cap G_i$, then $c_g^{P_i}\in\Hom_{\calf_i}(P_i,Q_i)$ is the 
restriction of $\varphi$. Also, $P=\bigcup_{i=1}^\infty P_i$, and so 
$\calf$ is contained in the union of the $\calf_i$, with equality since it 
satisfies the continuity axiom. 
\end{proof}

\begin{Cor} \label{c:f.s.union}
Let $G$ be a locally finite group, and let $S\le G$ be a discrete $p$-toral 
subgroup. Assume 
\begin{enumi} 

\item $S\in\sylp{G_*}$ for some countable $G_*\le G$ such that 
$\calf_S(G_*)=\calf_S(G)$; and 


\item $\calf_S(G)$ is saturated.

\end{enumi}
Then $\calf_S(G)$ is sequentially realizable.
\end{Cor}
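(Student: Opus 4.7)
The plan is to reduce immediately to Proposition \ref{p:f.s.union}. By hypothesis (i), the countable subgroup $G_*\le G$ satisfies $\calf_S(G_*)=\calf_S(G)$ and contains $S$ as a Sylow $p$-subgroup, while hypothesis (ii) says that this fusion system is saturated. Moreover, $G_*$ is locally finite as a subgroup of the locally finite group $G$. So the hypotheses of Proposition \ref{p:f.s.union}(b) are satisfied with $G$ there replaced by $G_*$.

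Applying that proposition, I get an increasing sequence of finite subgroups $G_1\le G_2\le\cdots$ of $G_*$ with $S_i\defeq S\cap G_i\in\sylp{G_i}$, with $S=\bigcup_{i=1}^\infty S_i$, and with
\[
\calf_S(G)=\calf_S(G_*)=\bigcup_{i=1}^\infty\calf_{S_i}(G_i)
\]
in the sense of Definition \ref{d:f.s.union}. For each $i$, the subsystem $\calf_i\defeq\calf_{S_i}(G_i)$ is realized by the finite group $G_i$ (with $S_i$ a Sylow $p$-subgroup), hence is a saturated fusion system over the finite $p$-group $S_i$. This verifies every clause of Definition \ref{d:seq.real.}, so $\calf_S(G)$ is sequentially realizable.

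There is no real obstacle here; the argument is a direct composition of Proposition \ref{p:f.s.union}(b) with the definition of sequential realizability. The only thing to keep straight is that saturation of each $\calf_i$ is automatic from the classical (finite) theory since $S_i\in\sylp{G_i}$, and that the overall saturation of $\calf_S(G)$ — needed only for the sequential-realizability viewpoint to be coherent — is supplied by hypothesis (ii) rather than derived.
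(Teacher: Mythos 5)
Your proof is correct and is essentially the same argument as the paper's: both apply Proposition \ref{p:f.s.union}(b) to the countable subgroup $G_*$, using (ii) for the saturation hypothesis and (i) for countability, $S\in\sylp{G_*}$, and the identification of fusion systems, then read off sequential realizability directly from Definition \ref{d:seq.real.}. Your additional remark that each $\calf_{S_i}(G_i)$ is saturated by the finite theory is a correct (and harmless) elaboration.
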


\begin{proof} Set $\calf=\calf_S(G)$, and let $G_*\le G$ be as in (i). Thus 
$\calf$ is a saturated fusion system by (ii), $G_*$ is countable, $S\in\sylp{G_*}$, 
and $\calf_S(G_*)=\calf$. By Proposition \ref{p:f.s.union}(b) and since 
$S\in\sylp{G_*}$, there is an increasing sequence of finite subgroups 
$G_1\le G_2\le\dots$ of $G_*$ such that $S\cap G_i\in\sylp{G_i}$ for each 
$i$, $S=\bigcup_{i=1}^\infty(S\cap G_i)$, and 
$\calf=\bigcup_{i=1}^\infty\calf_{S\cap G_i}(G_i)$. So $\calf=\calf_S(G)$ 
is sequentially realizable. 
\end{proof}

Let $\calf_1$ and $\calf_2$ be fusion systems over discrete $p$-toral 
groups $S_1$ and $S_2$, respectively, set $S=S_1\times S_2$, and let 
$\pr_i\:S\too S_i$ be the projection for $i=1,2$. The product 
$\calf_1\times\calf_2$ is defined to be the fusion 
system over $S$, where for each $P,Q\le S$ we have 
	\[ \Hom_{\calf_1\times\calf_2}(P,Q) = \bigl\{(\varphi_1,\varphi_2)|_P 
	\,\big|\, \varphi_i\in\Hom_{\calf_i}(\pr_i(P),\pr_i(Q)), ~ 
	(\varphi_1,\varphi_2)(P)\le Q \bigr\}. \]
In other words, it is the smallest fusion system over $S$ that contains all 
morphisms $\varphi_1\times\varphi_2$ for $\varphi_i\in\Mor(\calf_i)$. It is 
also the largest fusion system $\calf$ over $S$ in which $S_1$ and $S_2$ 
are strongly closed, and such that $\calf/S_1\le\calf_2$ and 
$\calf/S_2\le\calf_1$. One can show that a product of fusion systems is 
saturated if each factor is, but we won't need to use 
that here. 

We next check that sequential realizability of fusion systems is preserved by 
products and quotients.

\begin{Prop} \label{p:F1xF2.real}
Let $\calf$ be a fusion system over a discrete $p$-toral group $S$. 
\begin{enuma} 

\item If $\calf$ is sequentially realizable, then so is $\calf/Q$ for each 
subgroup $Q\nsg S$ strongly closed in $\calf$. 

\item If $\calf=\calf_1\times\calf_2$ for some pair of fusion subsystems 
$\calf_1$ and $\calf_2$, then $\calf$ is sequentially realizable if and 
only if each of the factors $\calf_i$ is sequentially realizable.

\end{enuma}
\end{Prop}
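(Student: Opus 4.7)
The plan is to prove part (a) directly, and then deduce (b) from (a) together with the product construction.

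\emph{For part (a):} Start from a sequential realization $\calf = \bigcup_{i\ge1}\calf_i$, with each $\calf_i$ a saturated fusion system over a finite $p$-group $S_i \le S$ realized by a finite group $G_i$ with $S_i \in \sylp{G_i}$, and set $Q_i = Q \cap S_i$. Verify directly that $Q_i$ is strongly closed in $\calf_i$, since $\Mor(\calf_i) \subseteq \Mor(\calf)$ and strong closure of $Q$ in $\calf$ forces $\calf_i$-images of elements of $Q_i$ into $Q \cap S_i = Q_i$. Note that on a discrete $p$-toral group, strong closure implies weak closure: any $\varphi\in\homf(Q,S)$ maps $Q$ into itself by strong closure, and the order function from Definition \ref{d:d.p-toral} forces equality. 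Hence Lemma \ref{l:F/Q} shows $\calf_i/Q_i$ is saturated over $S_i/Q_i$, and Lemma \ref{l:F(G)/Q} identifies it with $\calf_{S_i/Q_i}(N_{G_i}(Q_i)/Q_i)$, the fusion system of a finite group with Sylow $S_i/Q_i$ (since $S_i \le N_{G_i}(Q_i)\le G_i$ and $S_i \in \sylp{G_i}$).

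Next, $\{S_i/Q_i\}$ forms an increasing chain of finite subgroups of $S/Q$ exhausting $S/Q$: each inclusion $S_i/Q_i \hookrightarrow S/Q$ is injective because $S_i \cap Q = Q_i$, and $\bigcup_i S_i/Q_i = S/Q$ because $\bigcup_i S_i = S$. It remains to show $\calf/Q = \bigcup_i (\calf_i/Q_i)$. The inclusion $\supseteq$ is immediate. For the reverse, given $\5\varphi = \varphi/Q \in \Hom_{\calf/Q}(P/Q,R/Q)$ with $\varphi \in \homf(P,R)$, and a finite $\5T \le P/Q$, lift $\5T$ to a finite subgroup $T \le P$; then by the definition of $\bigcup_i \calf_i$ applied to $\varphi|_T$ there exists $i$ with $T \le S_i$ and $\varphi|_T \in \Hom_{\calf_i}(T,S_i)$, whence $\varphi|_T/Q_i \in \Hom_{\calf_i/Q_i}(TQ_i/Q_i, S_i/Q_i)$ realizes the required restriction of $\5\varphi$ to $\5T$. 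This exhibits $\calf/Q$ as sequentially realizable.

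\emph{For part (b), direction $(\Leftarrow)$:} Given sequential realizations $\calf_j = \bigcup_i \calf_{j,i}$ with each $\calf_{j,i}$ realized by a finite group $G_{j,i}$ having $S_{j,i} \in \sylp{G_{j,i}}$, the direct product $G_{1,i}\times G_{2,i}$ realizes $\calf_{1,i} \times \calf_{2,i}$ over $S_{1,i}\times S_{2,i}$ with this product as a Sylow subgroup; the product system is automatically saturated, being the fusion system of a finite group, so the general product-of-saturated theorem is not needed. Then $\bigcup_i (\calf_{1,i}\times \calf_{2,i}) = \calf_1\times\calf_2$ over $\bigcup_i(S_{1,i}\times S_{2,i}) = S$: containment $\supseteq$ is formal, and the reverse follows from the definition of the product and the continuity axiom, since each generating morphism $\varphi_1\times\varphi_2$, restricted to any finite subgroup, has factorwise projections captured in $\calf_{j,i}$ for $i$ large.

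\emph{For direction $(\Rightarrow)$ of (b):} Apply part (a) with $Q = S_2$, viewed as $\{1\}\times S_2 \le S_1 \times S_2$. This subgroup is strongly closed in $\calf_1\times\calf_2$ because morphisms in a product preserve the factor decomposition, and the obvious isomorphism $(\calf_1\times\calf_2)/S_2 \cong \calf_1$ combined with part (a) yields sequential realizability of $\calf_1$; symmetrically for $\calf_2$. The main obstacle in the whole proposition is the continuity-based argument establishing $\calf/Q \subseteq \bigcup_i \calf_i/Q_i$ in part (a); the rest consists of routine composition of the lemmas already in the excerpt, together with the elementary observation that a direct product of finite groups realizes the product fusion system.
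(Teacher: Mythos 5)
Your proof follows the same route as the paper's own, but with considerably more detail; the observation that strong closure implies weak closure (needed to invoke Lemma \ref{l:F/Q} for the $\calf_i$) is a point the paper also tacitly relies on.

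One step in part (a) should be tightened. You write that $\varphi|_T/Q_i\in\Hom_{\calf_i/Q_i}(TQ_i/Q_i,S_i/Q_i)$, but $T$ need not contain $Q_i$, and by Definition \ref{d:F/Q} the morphisms in $\calf_i/Q_i$ are exactly the maps $\sigma/Q_i$ for $\sigma\in\Hom_{\calf_i}(P',R')$ with $P',R'\ge Q_i$. So one must first produce some $\sigma\in\Hom_{\calf_i}(TQ_i,S_i)$ that agrees with $\varphi|_T$ modulo $Q_i$ on $T$; this is exactly what Lemma \ref{l:F/st.cl.}(a) supplies when applied to the saturated system $\calf_i$ and the finite strongly closed subgroup $Q_i$. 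With that insertion the containment $\calf/Q\subseteq\bigcup_i\calf_i/Q_i$ goes through as you describe. This is a matter of rigor rather than a difference of method --- the paper's own proof simply asserts that ``$\calf_i/Q_i$ is isomorphic to the image of $\calf_i$ in $\calf/Q$'' without elaboration --- so your argument and the paper's are essentially the same; both parts (b), via the direct-product construction and the reduction to part (a), also match.
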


\begin{proof} \textbf{(a) } Assume $Q\nsg S$ is strongly closed in $\calf$ 
and $\calf$ is sequentially realizable. Thus there are finite subgroups 
$S_1\le S_2\le S_3\le\dots$ of $S$, fusion subsystems 
$\calf_1\le\calf_2\le\dots$ of $\calf$, and finite groups $G_1,G_2,\dots$ 
such that $S=\bigcup_{i=1}^\infty S_i$ and 
$\calf=\bigcup_{i=1}^\infty\calf_i$; and also $S_i\in\sylp{G_i}$ and 
$\calf_i=\calf_{S_i}(G_i)$ for each $i$. 

For each $i$, set $Q_i=S_i\cap Q$. Then $Q_i$ is strongly closed in 
$\calf_i$ since $Q$ is strongly closed in $\calf$, and 
$\calf_i/Q_i=\calf_{S_i/Q_i}(N_{G_i}(Q_i)/Q_i)$ by Lemma \ref{l:F(G)/Q}. 
Also, $\calf_i/Q_i$ is isomorphic to the image of $\calf_i\le\calf$ in 
$\calf/Q$, and so $\calf/Q$ is the union of fusion subsystems isomorphic to 
the $\calf_i/Q_i$. Thus $\calf/Q$ is sequentially realizable. 

\smallskip

\noindent\textbf{(b) } If $\calf=\calf_1\times\calf_2$, where $\calf_i$ is 
a fusion system over $S_i$ and $S=S_1\times S_2$, then by (a), 
$\calf_1\cong\calf/S_2$ and $\calf_2\cong\calf/S_1$ are sequentially 
realizable if $\calf$ is. The converse is clear: if $\calf_1$ and $\calf_2$ 
are both sequentially realizable, then so is their product. 
\end{proof}

We finish the section with the following example of an increasing sequence 
of finite saturated fusion systems that alternate between being realizable 
and exotic. Thus the union of the fusion systems is sequentially realizable 
(in fact, realized by a linear torsion group), but it is also the union of 
an increasing sequence of finite exotic subsystems.

\begin{Ex} \label{ex:seq.real-exo}
Fix an odd prime $p$ and a prime $q\equiv1$ (mod $p^2$). Set 
$K=\bigcup_{i=0}^\infty\F_{q^{p^i}} \subseteq \4\F_q$. Set 
$G=\PSL_p(K)$, and $G_i=\PSL_p(q^{p^i})$ for all $i\ge0$. Then there are Sylow 
$p$-subgroups $S_i\in\sylp{G_i}$ and $S\in\sylp{G}$ such that 
$S_i=G_i\cap S$ for each $i\ge0$, and such that if we set $\calf=\calf_S(G)$ and 
$\calf_{2i}=\calf_{S_i}(G_i)$ for all $i\ge0$, then there are 
exotic saturated fusion systems $\calf_{2i+1}$ over $S_{i+1}$ for all $i$ 
such that $\calf_{2i}\le\calf_{2i+1}\le\calf_{2i+2}$. Thus 
$\calf=\bigcup_{i=0}^\infty\calf_i$, where the $\calf_i$ are realizable for 
even $i$ and exotic for odd $i$.
\end{Ex}

\begin{proof} Set $e=v_p(q-1)$; thus $e\ge2$ by assumption. Then 
$v_p(q^{p^i}-1)=e+i$ for each $i\ge0$ (see, e.g., \cite[Lemma 1.13]{BMO2}). 
Let $\5A\le\SL_p(K)$ be the group of all diagonal matrices of $p$-power 
order, and set $Z=Z(\SL_p(K))=\gen{\zeta\cdot\Id}$ where 
$\zeta\in\F_q^\times$ has order $p$. Set $\5A_i=\5A\cap\SL_p(q^{p^i})$ for 
each $i\ge0$, and also $A=\5A/Z\le G$ and $A_i=\5A_i/Z\le G_i$. Thus 
$\5A_i$ is homocyclic of rank $p-1$ and exponent $p^{e+i}$, and hence $A_i$ 
has rank $p-1$, exponent $p^{e+i}$, and order $p^{(e+i)(p-1)-1}$.

Let $x\in G_0=\SL_p(q)$ be a permutation matrix of order $p$ (i.e., a 
matrix with a unique entry $1$ in each row an column and otherwise $0$). 
Set $S=A\gen{x}$ (the subgroup of $G$ generated by $A$ and $x$), and set 
$S_i=A_i\gen{x}$ for each $i$. Then $S_i\in\sylp{G_i}$ (see 
\cite[\S\,2]{Weir}), and hence $S=\bigcup_{i=0}^\infty S_i\in\sylp{G}$ by 
Proposition \ref{p:Pi<Q} (every finite $p$-subgroup of $G$ is contained in 
$G_i$ for some $i$, and hence is conjugate to a subgroup of $S$). By 
definition, $S_i=S\cap G_i$ for each $i$. 

For each $i\ge0$, let $\calh^{(i)}$ be the set of subgroups $E\le S_i$ such 
that $E\cong C_p\times C_p$ and $E\cap A_i=Z(S_i)\cong C_p$. By \cite[Lemma 
2.2(f)]{indp2}, there are exactly $p$ $S_i$-conjugacy classes of subgroups 
in $\calh^{(i)}$, and for $x\in S_i\sminus A_i$ and $a\in A_i$, 
$Z(S_i)\gen{x}$ is $S_i$-conjugate to $Z(S_i)\gen{ax}$ if and only if 
$a\in[S_i,S_i]=[A_i,x]$. (Note that $[S_i,S_i]$ has index $p$ in $A_i$.)

Now assume $i\ge1$. Since $A_{i-1}\le[S_i,S_i]$, the members of 
$\calh^{(i-1)}$ are all contained in one of the $S_i$-conjugacy classes 
$\calh^{(i)}_0\subseteq\calh^{(i)}$. By case (iii) of \cite[Theorem 
2.8]{indp2}, or case (a.i) of \cite[Theorem 2.8]{indp1} if $p=3$, there is 
a unique saturated fusion system $\calf_{2i-1}\le\calf_{2i}$ over $S_i$ whose 
essential subgroups are the members of $\calh^{(i)}_0$, together with $A_i$ 
if $p\ge5$. Furthermore, this fusion system is exotic by \cite[Table 
2.2]{indp2} or the last statement in \cite[Theorem 2.8]{indp1}. 

It remains only to check that $\calf_{2i-1}\ge\calf_{2i-2}$. In both fusion 
systems, the members of $\calh^{(i-1)}$ are essential, and we claim that 
they all have automizer $\SL_2(p)$. This is shown in case (a.i) of 
\cite[Theorem 2.8]{indp1} when $p=3$, and follows by a similar argument 
when $p\ge5$. 

Since $\calf_{2i-1}$ and $\calf_{2i-2}$ are generated by 
$N_{\calf_{2i}}(T_i)\ge N_{\calf_{2i-2}}(T_{i-1})$ and these automizers, 
this shows that $\calf_{2i-1}\ge\calf_{2i-2}$. 
\end{proof}


\section{Linear torsion groups and LT-realizability}
\label{s:LT-gps}

In this section, we recall the definition of linear torsion groups and 
prove that every fusion system that is realized by such a group is also 
sequentially realizable. In fact, this is our only tool so far for showing 
this: whenever we prove that a fusion system over an infinite discrete 
$p$-toral group is sequentially realizable, we do so by showing that it's 
realized by a linear torsion group. 


\begin{Defi} \label{d:LT-real.}
\begin{enuma} 

\item A \emph{linear torsion group in characteristic $q$} is a (discrete) 
group $G$, each of whose elements has finite order, that is isomorphic to a 
subgroup of $\GL_n(K)$ for some $n\ge1$ and some field $K$ of 
characteristic $q$. 

\item A fusion system $\calf$ over a discrete $p$-toral group $S$ is 
\emph{\textup{LT}-realizable} (in characteristic $q$) if 
$\calf\cong\calf_{S^*}(G)$ for some linear torsion group $G$ in 
characteristic $q\ne p$ with $S^*\in\sylp{G}$. 

\end{enuma}
\end{Defi}

Our interest in this class of groups is due mostly to the following 
proposition, shown in \cite{BLO3}. Recall the definition of $\call_S^c(G)$ 
in Definition \ref{d:LSc(G)}.

\begin{Prop} \label{p:LT-real.}
Fix a prime $p$, a field $K$ with $\chr(K)\ne p$, and a linear torsion 
group $G\le\GL_n(K)$ (some $n\ge1$). Then $G$ is locally finite, all 
$p$-subgroups of $G$ are discrete $p$-toral, $\sylp{G}\ne\emptyset$, and 
every maximal $p$-subgroup of $G$ is a Sylow $p$-subgroup. For each 
$S\in\sylp{G}$, the fusion system $\calf_S(G)$ is saturated, $\call_S^c(G)$ 
is a centric linking system associated to $\calf_S(G)$, and 
$|\call_S^c(G)|\pcom\simeq BG\pcom$.
\end{Prop}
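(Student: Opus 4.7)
} The plan is to proceed in five stages, gradually upgrading from a set-theoretic structure on $G$ to a homotopical one.

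\textbf{Stage 1: Local finiteness.} I would first invoke the classical theorem of Schur (in characteristic $0$) and its extension by Kaplansky/Kegel--Wehrfritz (in positive characteristic): a torsion subgroup of $\GL_n$ over any field is locally finite, because any finitely generated torsion linear group is finite. (The core ingredient is that finitely many torsion matrices generate a group whose traces lie in a finitely generated subring, which is then Artinian-like enough to force finiteness.)

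\textbf{Stage 2: $p$-subgroups are discrete $p$-toral.} By Stage 1, every $p$-subgroup $P\le G$ is a locally finite $p$-group. By Proposition \ref{p:artin-loc.f.}, it suffices to prove $P$ is artinian. The key input is that the rank of finite $p$-subgroups of $\GL_n(K)$ is bounded by a function of $n$, provided $\chr(K)\neq p$: every finite $p$-element is semisimple, so one reduces to a diagonalizable abelian subgroup of bounded index (Jordan-type argument), giving a uniform bound on the $p$-rank. A locally finite $p$-group of bounded rank is Chernikov, hence artinian.

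\textbf{Stage 3: Existence, conjugacy, and characterization of Sylow $p$-subgroups.} Existence of maximal $p$-subgroups follows from Zorn's lemma, using Stage 2 to see that the union of an ascending chain of discrete $p$-toral subgroups of uniformly bounded rank is again discrete $p$-toral. For Sylow-ness (in the strong sense of Definition \ref{d:loc.fin.}(c)), I would fix a maximal $p$-subgroup $S$, and argue that every finite $p$-subgroup $Q\le G$ is $G$-conjugate into $S$: enumerate a cofinal chain of finite subgroups $G_1\le G_2\le\cdots$ with $G=\bigcup G_i$, so $S_i:=S\cap G_i$ is a finite $p$-subgroup of $G_i$, and choose $g_i\in G_i$ with $\9{g_i}Q\le S_{i_0}$ inside a finite Sylow containing $S_{i_0}$ in some $G_j$ (ordinary finite Sylow theory). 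A compactness argument using Lemma \ref{l:invlim} on the sets of finite conjugators then yields the desired $G$-conjugacy. Once every finite $p$-subgroup is conjugate into $S$, Proposition \ref{p:Pi<Q} (combined with Lemma \ref{l:P<Q}) upgrades any maximal $p$-subgroup to a Sylow.

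\textbf{Stage 4: Saturation of $\calf_S(G)$.} Here I would use Proposition \ref{p:f.s.union}(b) (really its proof, which only needs countability reduction and a chain with $S_i\in\sylp{G_i}$) to write $\calf_S(G)=\bigcup_{i\ge 1}\calf_{S_i}(G_i)$ as a union of finite saturated fusion systems, where each $\calf_{S_i}(G_i)$ is saturated by classical Puig/BLO finite theory. I then check the three axioms of Definition \ref{d:sfs}(e) for the union: the Sylow and extension axioms follow by choosing representatives in sufficiently large $\calf_{S_i}(G_i)$ and using that a subgroup fully normalized in $\calf_S(G)$ must be so in all large enough $\calf_{S_i}(G_i)$ (using finiteness of $\calf$-conjugacy classes in each $\calf^{rc}$, cf.\ Proposition \ref{p:Frc-finite}); the continuity axiom holds essentially by construction.

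\textbf{Stage 5: The linking system and classifying space.} To see that $\call_S^c(G)$ is a centric linking system for $\calf_S(G)$, I would verify the axioms of \cite[Definition 4.1]{BLO3} directly from the definition in \ref{d:LSc(G)}(b), noting that the quotient $\{g\,|\,\9gP\le Q\}/O^p(C_G(P))$ is exactly what is needed because $p$-centricity of $P$ forces $C_G(P)=Z(P)\times O^p(C_G(P))$ (splitting of the maximal $p$-subgroup). For the homotopy type $|\call_S^c(G)|\pcom\simeq BG\pcom$, I would follow the approach used for compact Lie groups (Section \ref{s:cpt.Lie}): write $BG\simeq\hocolim_i BG_i$ for a cofinal chain $\{G_i\}$ of finite subgroups (a telescope, using $G=\bigcup G_i$), apply the finite-group identification $B G_i\pcom\simeq|\call_{S_i}^c(G_i)|\pcom$ of Broto--Levi--Oliver, and pass to a compatible telescope of linking systems; $p$-completion commutes with the relevant (simply-connected in $\F_p$-homology) homotopy colimits.

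The main obstacle I expect is Stage 5: carefully matching finite linking systems $\call_{S_i}^c(G_i)$ inside $\call_S^c(G)$ (since $p$-centricity in $G_i$ and in $G$ need not coincide), and then controlling $p$-completion through the resulting telescope. Stages 2--3 are technically delicate but essentially standard Chernikov/linear-group theory; the Lemma \ref{l:invlim} compactness trick is the unifying tool throughout.
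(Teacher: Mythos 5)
Your Stages 1, 2, and 5 capture the right ideas, but Stages 3 and 4 each contain a genuine gap, and together these gaps are the heart of the proposition. Note first that the paper's own proof is entirely a sequence of citations: Propositions 8.8--8.9 of \cite{BLO3} for local finiteness, discrete $p$-torality, and a chain condition on centralizers of abelian $p$-subgroups; \cite[Theorem 3.4]{KW} for conjugacy of maximal $p$-subgroups; and Theorems 8.7 and 8.10 of \cite{BLO3} for saturation, the linking system, and the homotopy type. You are attempting to reprove these results, which is a reasonable exercise, but it exposes where the difficulty really lies.

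In Stage 3, the appeal to Proposition \ref{p:Pi<Q} is circular: that proposition \emph{assumes} $G$ already has a Sylow $p$-subgroup, which is precisely what you are trying to produce. More seriously, your underlying strategy --- show every finite $p$-subgroup is conjugate into a maximal $p$-subgroup $S$ and then promote $S$ to Sylow --- cannot succeed on its own. The paper's Example \ref{ex:FqS.S} constructs a locally finite group in which every finite $p$-subgroup is conjugate into a fixed discrete $p$-toral subgroup $S$, yet $G$ has no Sylow $p$-subgroup at all and $S$ is not maximal among $p$-subgroups up to conjugacy. The missing ingredient is a chain condition: Proposition 8.9 of \cite{BLO3} shows that for a linear torsion group in characteristic $\ne p$, any ascending chain of centralizers of finite abelian $p$-subgroups stabilizes, and this is exactly the hypothesis that \cite[Theorem 3.4]{KW} needs to conclude that all maximal $p$-subgroups are conjugate. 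Without invoking something of this sort, the compactness argument via Lemma \ref{l:invlim} alone cannot rule out the pathology of Example \ref{ex:FqS.S}.

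Stage 4 is also circular. Proposition \ref{p:f.s.union} takes \emph{as hypothesis} that $\calf_S(G)$ is saturated, and its proof of part (a) uses Proposition \ref{p:Frc-finite}, which is stated only for saturated fusion systems; you cannot use either to establish saturation. Moreover, the claim that a union of saturated fusion subsystems is automatically saturated is genuinely nontrivial: the paper explicitly defers the corresponding statement (that sequentially realizable fusion systems are saturated) to a later paper, and the Sylow axiom in particular does not obviously pass to unions since ``fully normalized in $\calf$'' need not be detected by finitely many $\calf_i$. The paper sidesteps all of this by citing the direct proofs in \cite[Theorems 8.7 and 8.10]{BLO3}, which establish saturation for linear torsion groups by arguments specific to that setting (working inside the ambient algebraic group), not by a formal limit argument over finite fusion systems.
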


\begin{proof} By \cite[Proposition 8.8]{BLO3}, $G$ is locally finite and 
all $p$-subgroups of $G$ are discrete $p$-toral. By Proposition 8.9 in the 
same paper, for each increasing sequence $A_1\le A_2\le\cdots$ of finite 
abelian $p$-subgroups of $G$, there is $r\ge1$ such that 
$C_G(A_i)=C_G(A_r)$ for all $i\ge1$. So all maximal $p$-subgroups of $G$ 
are conjugate to each other by \cite[Theorem 3.4]{KW}, and hence they are 
Sylow $p$-subgroups. (Since the union of an increasing sequence of 
$p$-subgroups of $G$ is again a $p$-subgroup, every $p$-subgroup is 
contained in a maximal $p$-subgroup.)

For each $S\in\sylp{G}$, the fusion system $\calf_S(G)$ is saturated, 
$\call_S^c(G)$ is a centric linking system, and $|\call_S^c(G)|\pcom\simeq 
BG\pcom$ by Theorem 8.7 or 8.10 in \cite{BLO3}. The last statement (the 
homotopy equivalence) was affected by an error we found in the proof of 
\cite[Lemma 5.12]{BLO3}, but this error has now been fixed in \cite[Theorem 
3.7]{O-Lambdas}. 
\end{proof}

The results in the last section imply that LT-realizable fusion systems are 
sequentially realizable. 

\begin{Prop} \label{p:LT=>s.real.}
Every LT-realizable fusion system is sequentially realizable. 
\end{Prop}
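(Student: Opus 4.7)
The plan is to reduce to Corollary \ref{c:f.s.union} by extracting a countable subgroup. Suppose $\calf\cong\calf_S(G)$ for a linear torsion group $G$ in characteristic $q\ne p$ with $S\in\sylp G$. By Proposition \ref{p:LT-real.}, $G$ is locally finite and $\calf_S(G)$ is saturated, so hypothesis (ii) of Corollary \ref{c:f.s.union} holds automatically. It remains to supply hypothesis (i): a countable $G_*\le G$ with $S\le G_*$, $\calf_S(G_*)=\calf_S(G)$, and $S\in\sylp{G_*}$.

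Proposition \ref{p:f.s.union}(a) directly furnishes a countable $G_*\le G$ containing $S$ with $\calf_S(G_*)=\calf_S(G)$. The only point to verify is that $S$ is still Sylow in $G_*$, not merely a $p$-subgroup. This is where I would use that $G_*$ inherits from $G$ the property of being a linear torsion group in characteristic $q$ (since $G\le\GL_n(K)$ gives $G_*\le\GL_n(K)$, and every element of $G_*$ has finite order). Applying Proposition \ref{p:LT-real.} to $G_*$, its maximal $p$-subgroups coincide with its Sylow $p$-subgroups; so I only need to verify that $S$ is maximal among $p$-subgroups of $G_*$.

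Maximality of $S$ in $G_*$ follows from maximality of $S$ in $G$: if $S\le P\le G_*$ with $P$ a $p$-subgroup, then $P$ is a $p$-subgroup of $G$, and since $S\in\sylp G$ some conjugate of $P$ lies in $S$, forcing $|P|\le|S|$ and hence $P=S$. Therefore $S\in\sylp{G_*}$, and Corollary \ref{c:f.s.union} applies to give that $\calf_S(G)=\calf_S(G_*)$ is sequentially realizable.

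There is essentially no obstacle here, since the substantive content is already in Propositions \ref{p:f.s.union} and \ref{p:LT-real.} and Corollary \ref{c:f.s.union}; the one small check is simply that passing to a countable subgroup does not destroy the Sylow property of $S$, and this is immediate from the maximality characterization of Sylow subgroups in Proposition \ref{p:LT-real.}.
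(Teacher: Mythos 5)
Your proof is correct and follows essentially the same route as the paper's: Proposition \ref{p:LT-real.} for saturation, Proposition \ref{p:f.s.union}(a) for the countable subgroup $G_*$, Proposition \ref{p:LT-real.} again (applied to $G_*$, which is still a linear torsion group) to upgrade maximality of $S$ in $G_*$ to the Sylow property, and then Corollary \ref{c:f.s.union}. The only difference is that you spell out the maximality-of-$S$-in-$G_*$ step in detail, whereas the paper treats it as immediate; your elaboration is correct.
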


\begin{proof} If $\calf=\calf_S(G)$ for a linear torsion group $G$ in 
characteristic different from $p$ and for $S\in\sylp{G}$, then $\calf$ is 
saturated by Proposition \ref{p:LT-real.}. By Proposition 
\ref{p:f.s.union}(a), there is a countable subgroup $G_*\le G$ such that 
$S\le G_*$ and $\calf=\calf_S(G_*)$. Then $S$ is a maximal $p$-subgroup of 
$G_*$ and hence a Sylow $p$-subgroup by Proposition \ref{p:LT-real.} again. 
So $\calf$ is sequentially realizable by Corollary \ref{c:f.s.union}. 
\end{proof}

It seems unlikely that the converse of Proposition \ref{p:LT=>s.real.} 
holds, but we know of no example of a sequentially realizable fusion system 
that is not LT-realizable, and it seems very difficult to construct one.

We now focus attention on the question of in which characteristics a given 
fusion system is LT-realizable. 

When $G$ is a group and $p$ is a prime, the 
\emph{$p$-rank} of $G$ is defined by setting
	\[ \rk_p(G) = \sup\{ \rk(P) \,|\, P\le G ~\textup{a finite abelian 
	$p$-subgroup}\}. \]
We note the following:

\begin{Lem} \label{l:rk_q(G/P)}
Let $G$ be a (discrete) group, and let $N\nsg G$ be a finite normal 
subgroup. Then $\rk_q(G/N)=\rk_q(G)$ for each prime $q$ with $q\nmid|N|$.
\end{Lem}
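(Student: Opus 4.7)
The plan is to prove both inequalities $\rk_q(G/N)\le\rk_q(G)$ and $\rk_q(G)\le\rk_q(G/N)$ by directly comparing finite abelian $q$-subgroups on the two sides.

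For the inequality $\rk_q(G)\le\rk_q(G/N)$, I would start with a finite abelian $q$-subgroup $A\le G$. Since $A\cap N$ is a subgroup of both $A$ (whose order is a power of $q$) and $N$ (whose order is coprime to $q$), we have $A\cap N=1$. Hence the projection $\pi\:G\too G/N$ restricts to an injection of $A$ into $G/N$, so $\pi(A)$ is a finite abelian $q$-subgroup of $G/N$ with $\rk(\pi(A))=\rk(A)$. Taking the supremum over all such $A$ gives the desired inequality.

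For the reverse inequality $\rk_q(G/N)\le\rk_q(G)$, fix a finite abelian $q$-subgroup $\widebar A\le G/N$ and let $E=\pi^{-1}(\widebar A)$. Since $N$ and $\widebar A$ are both finite, $E$ is a finite group fitting into a short exact sequence $1\to N\to E\to\widebar A\to 1$. Because $\gcd(|N|,|\widebar A|)=1$, the Schur--Zassenhaus theorem produces a complement $A\le E$ with $A\cong\widebar A$. Then $A$ is a finite abelian $q$-subgroup of $G$, and $\pi|_A\:A\to\widebar A$ is an isomorphism, so $\rk(A)=\rk(\widebar A)$. Taking suprema yields $\rk_q(G)\ge\rk_q(G/N)$.

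There is no real obstacle here: the only slightly nontrivial input is Schur--Zassenhaus, applied to the finite extension $\pi^{-1}(\widebar A)$, where the coprimality of $|N|$ and the $q$-power $|\widebar A|$ is exactly what one needs. Combining the two inequalities gives $\rk_q(G/N)=\rk_q(G)$.
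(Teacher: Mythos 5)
Your proof is correct and reaches the same conclusion as the paper, but via a slightly heavier tool. For the direction $\rk_q(G/N)\le\rk_q(G)$, you invoke Schur--Zassenhaus on the finite extension $1\to N\to E\to\widebar A\to 1$ to produce a complement; the paper instead simply takes a Sylow $q$-subgroup $S$ of $E=\pi^{-1}(\widebar A)$. Since $|E|=|N|\cdot|\widebar A|$ with $q\nmid|N|$ and $|\widebar A|$ a power of $q$, one has $|S|=|\widebar A|$ and $S\cap N=1$, so $\pi|_S$ is an isomorphism onto $\widebar A$ --- no need for the full Schur--Zassenhaus machinery, just Sylow's existence theorem. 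This works precisely because the quotient here is a $q$-group, so ``complement'' and ``Sylow $q$-subgroup'' coincide; Schur--Zassenhaus would be needed if $\widebar A$ had mixed order. Your easy direction matches what the paper dismisses as ``clear.'' Both arguments are valid; the Sylow route is just the more economical one for this particular setting.
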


\begin{proof} If $H/N$ is a finite abelian $q$-subgroup of $G/N$, then $H$ 
is finite, and each $S\in\syl{q}{H}$ is a finite abelian $q$-subgroup of 
$G$ isomorphic to $H/N$. Thus $\rk_q(G)\ge\rk_q(G/N)$, and the opposite 
inequality is clear.
\end{proof}

It is not hard to see that Lemma \ref{l:rk_q(G/P)} also holds if one 
assumes that $N$ be a normal discrete $p$-toral subgroup for some prime 
$p\ne q$.

\begin{Defi} \label{d:srk(G)}
For every group $G$ and every prime $p$, the \emph{sectional 
$p$-rank} of $G$ is defined by setting
	\begin{align*} 
	\srk_p(G) &= \sup\bigl\{ \rk(P/\Phi(P)) \,\big|\, P\le G ~ 
	\textup{a finite $p$-subgroup of $G$} \bigr\} \\
	&= \sup\bigl\{ r \,\big|\, \exists \, N\nsg H\le G ~ 
	\textup{such that $H$ finite,} ~ H/N\cong E_{p^r} \bigr\}. 
	\end{align*}
\end{Defi}

\begin{Lem} \label{l:srk(GLn)}
Fix a locally finite group $G$, a field $K$, and $n\ge1$ such that 
$G\le\GL_n(K)$. Then for each prime $p$, 
\begin{enuma} 

\item if $p\ne\chr(K)$, then $\rk_p(G)\le n$ and 
$\srk_p(G)\le pn/(p-1)$; and 

\item if $\chr(K)=p$, then $G$ has no elements of order $p^k$ if 
$n\le p^{k-1}$. 

\end{enuma}
\end{Lem}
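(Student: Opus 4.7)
The plan is to handle each bound by a direct computation; for (a) the key reduction is to finite $p$-subgroups of $\GL_n(\widebar{\F}_\ell)$, while for (b) the calculation is immediate from unipotency. In both parts only finite $p$-subgroups of $G$ are involved, so the hypothesis that $G$ is locally finite plays no direct role. The first inequality of (a) follows from the observation that any finite abelian $p$-subgroup $A\le G$ consists of commuting semisimple matrices (their orders being prime to $\chr(K)$), so $A$ is simultaneously diagonalizable over $\widebar{K}$; this embeds $A$ into the $p$-torsion $(\Z/p^\infty)^n$ of the diagonal torus of $\GL_n(\widebar{K})$, giving $\rk(A)\le n$.

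For the $\srk_p$ bound in (a), let $P\le G$ be a finite $p$-subgroup. First I would replace the ambient field: since $P$ is a finite $p$-group and $\chr(K)\ne p$, the $n$-dimensional faithful representations of $P$ coincide over any algebraically closed coefficient field of characteristic prime to $|P|$, so without loss $P\le\GL_n(\widebar{\F}_\ell)$ for some prime $\ell\ne p$. In this locally finite linear torsion group, Proposition~\ref{p:LT-real.} lets me embed $P$ into a Sylow $p$-subgroup $\calp$, and I would identify $\calp$ as $T_p\rtimes W$, where $T_p\cong(\Z/p^\infty)^n$ is the $p$-torsion of the diagonal torus and $W$ is a Sylow $p$-subgroup of the symmetric group $\Sigma_n = N(T)/T$ acting by coordinate permutation. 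The bound then follows from the elementary inequality $\srk_p(H)\le\srk_p(A)+\srk_p(Q)$ for any group extension $1\to A\to H\to Q\to 1$ (an elementary abelian section $H_1/N$ has elementary abelian subsection $(H_1\cap A)/(N\cap A)$, a section of $A$, with elementary abelian quotient a section of $Q$), combined with $\srk_p(T_p)=n$ and $\srk_p(W)\le\log_p|W|=(n-s_p(n))/(p-1)\le n/(p-1)$ by Legendre's formula for the $p$-part of $n!$. Summing these gives $\srk_p(P)\le pn/(p-1)$.

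For (b) the argument is direct. Over a field of characteristic $p$, any element $u\in\GL_n(K)$ of $p$-power order is unipotent, so $u=1+N$ with $N$ nilpotent and $N^n=0$. The identity $u^{p^m}=1+N^{p^m}$ (valid since $\binom{p^m}{i}\equiv0\pmod p$ for $0<i<p^m$) shows that if $n\le p^{k-1}$ then $N^{p^{k-1}}=0$, hence $u^{p^{k-1}}=1$, so $u$ has order at most $p^{k-1}$ and in particular no element of $G$ has order $p^k$. The main obstacle is the explicit identification of a Sylow $p$-subgroup of $\GL_n(\widebar{\F}_\ell)$ as $T_p\rtimes W$ together with the confirmation of the numerical bound $\srk_p(W)\le n/(p-1)$; the reduction from $\GL_n(K)$ to $\GL_n(\widebar{\F}_\ell)$ is standard representation theory but worth stating carefully.
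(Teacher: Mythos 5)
Your proof is correct and follows essentially the same outline as the paper's: for (a), reduce to the normalizer of a maximal torus and combine the bound $n$ for the diagonal part with the $p$-valuation of $n!$ via Legendre's formula; for (b), exploit unipotency.

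The one place you diverge is the detour through $\GL_n(\widebar{\F}_\ell)$ in part (a), and it is worth noting that this step is unnecessary and slightly delicate. To justify it you need that a finite $p$-group $P$ with a faithful $n$-dimensional representation over $\widebar K$ also has one over $\widebar{\F}_\ell$; this is true (the irreducibles, with their dimensions and kernels, match up over any algebraically closed field of characteristic prime to $|P|$), but it is a nontrivial piece of modular representation theory invoked only to set up the ambient field. The paper avoids this entirely: it replaces $K$ by $\widebar K$ at the outset and cites Serre (\cite[§8.5, Theorem 16]{Serre}) for the fact that any finite $p$-subgroup of $\GL_n(\widebar K)$ is conjugate into $M=N_{\GL_n(\widebar K)}(D)$, the normalizer of the diagonal torus; then $\srk_p(P)\le \srk_p(D)+\srk_p(\Sigma_n)\le n+v_p(n!)$ follows from the extension $1\to D\cap P\to P\to \overline P\to 1$ with $\overline P\le\Sigma_n$, without needing the extension to split or $P$ to sit inside a nice Sylow $p$-subgroup. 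Your invocation of Proposition~\ref{p:LT-real.} and the explicit semidirect product structure $T_p\rtimes W$ both work, but they are heavier machinery than Serre's conjugacy theorem applied directly over $\widebar K$. Your subadditivity lemma for $\srk_p$ under extensions is correct and is also implicit in the paper's estimate. For (b), your binomial-expansion argument $u^{p^m}=1+N^{p^m}$ is a clean alternative to the paper's minimal-polynomial argument; both are fine.
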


\begin{proof} We can assume without loss of generality that $K$ is 
algebraically closed. 

\smallskip

\noindent\textbf{(a) } Let $D\le\GL_n(K)$ be the subgroup of diagonal 
matrices. If $p\ne\chr(K)$, then by elementary representation theory, all 
irreducible $K$-representations of a finite abelian group of order not 
divisible by $\chr(K)$ are 1-dimensional. Thus every finite abelian 
$p$-subgroup of $\GL_n(K)$ is conjugate to a subgroup of $D$, and hence 
$\rk_p(G)\le\rk_p(D)=n$.

Set $M=N_{\GL_n(K)}(D)$; thus $M/D\cong\Sigma_n$. 
By \cite[\S8.5, Theorem 16]{Serre}, every finite $p$-subgroup of $\GL_n(K)$ 
is conjugate to a subgroup of $M$. So $\srk_p(G)\le n+m$, where $m$ is the 
sectional $p$-rank of $M/D\cong\Sigma_n$. Since $\Sigma_n$ has Sylow 
$p$-subgroups of order $p^e$ where $e=[n/p]+[n/p^2]+[n/p^3]+\cdots$, we have
	\[ \srk_p(G)\le n+m \le n+e \le 
	n\bigl(1+\tfrac1p+\tfrac1{p^2}+\tfrac1{p^3}+\cdots\bigr) 
	= pn/(p-1). \]

\smallskip

\noindent\textbf{(b) } If $\chr(K)=p>0$, and $g\in G\le\GL_n(K)$ has order 
$p^k$, then the minimal polynomial for $g$ divides 
$(X^{p^k}-1)=(X-1)^{p^k}$, hence has the form $(X-1)^m$ for some $m\le n$, 
where $m>p^{k-1}$ since otherwise $|g|\le p^{k-1}$. 
\end{proof}

\begin{Rmk} \label{rmk:srk(GL)}
In fact, one can show that for $n\ge1$ and $K$ an algebraically closed 
field with $\chr(K)\ne p$, we have $\srk_p(\GL_n(K))=n$ if $p$ is odd, and 
$\srk_p(\GL_n(K))=[3n/2]$ if $p=2$. But the above bound is good enough for 
our purposes here. When $p=2$, the bound $[3n/2]$ is realized by taking a 
direct product of $[n/2]$ copies of $C_4\circ D_8$, with an 
additional factor $C_2$ if $n$ is odd.
\end{Rmk}

Thus if $G$ is a linear torsion group in characteristic $q$, then 
$\rk_p(G)<\infty$ and $\srk_p(G)<\infty$ for every prime $p\ne q$, while 
if $q>0$, then there is a bound on the orders of elements of $q$-power 
order in $G$. 


\begin{Ex} 
For each prime $q$, the group $\bigoplus_{i=1}^\infty\Z/q$ is a linear 
torsion group in characteristic $q$ (a subgroup of $\SL_2(\4\F_q)$), but 
not in any characteristic different from $q$. The group 
$\mu_\infty\le\C^\times$ of all complex roots of unity is a linear torsion 
group in characteristic $0$ (a subgroup of $\GL_1(\C)$), but not in any 
other characteristic.
\end{Ex}

The following example shows how one particular fusion system can be 
LT-realizable in certain characteristics but not in others. (The 
$2$-fusion system of $\SO(3)$ is LT-realizable in all odd characteristics 
by Theorem \ref{t:cpt.cn.Lie}.) More examples of fusion systems that 
are not LT-realizable in characteristic $0$ are given in Proposition 
\ref{p:no.char0} and Theorem \ref{t:cn.p-cpt}(b). 

\begin{Ex} \label{ex:no.char0}
Set $p=2$, choose $S\in\syl2{\SO(3)}$, and set $\calf=\calf_S(\SO(3))$. 
Then $\calf$ is not isomorphic to the fusion system of any linear torsion 
group in characteristic $0$.
\end{Ex}

\begin{proof} Assume otherwise: assume $\calf=\calf_S(G)$ for some torsion 
subgroup $G\le\GL_n(K)$ for $n\ge1$ and $K$ a field of characteristic $0$. 
Upon replacing $G$ by $G/O_{p'}(G)$, we can assume that $O_{p'}(G)=1$. By 
Proposition \ref{p:f.s.union}, there is an increasing sequence of finite 
subgroups $\{G_i\}_{i\ge1}$ such that upon setting $S_i=S\cap G_i$ and 
$\calf_i=\calf_{S_i}(G_i)$, we have $S_i\in\sylp{G_i}$, 
$S=\bigcup_{i=1}^\infty S_i$, and $\calf=\bigcup_{i=1}^\infty\calf_i$. 

Since $S$ is a Sylow $2$-subgroup of $\SO(3)$, we have 
$S\cong(\Z/2^\infty)\rtimes C_2$. So there is $m\ge1$ such that $S_i$ is 
dihedral of order at least $16$ for all $i\ge m$. Choose $T\le S_m$ with 
$T\cong E_4$; then $T$ is $\calf$-radical (all subgroups of $S$ isomorphic 
to $E_4$ are $\calf$-radical), so $\Aut_G(T)\cong\Sigma_3$, and there is 
$n\ge m$ such that $\Aut_{\calf_i}(T)=\Aut_{G_i}(T)\cong\Sigma_3$ for all 
$i\ge n$. So $T$ is $\calf_i$-radical for all $i\ge n$.

Thus for $i\ge n$, $S_i\cong D_{2^k}$ for $k\ge4$, and at least one of the 
two conjugacy classes of subgroups of $S_i$ isomorphic to $E_4$ is 
$\calf_i$-radical. So by the Gorenstein-Walter theorem 
\cite[\S16.3]{Gorenstein}, $G_i/O_{2'}(G_i)$ is isomorphic to 
$\PSL_2(q_i)$ or $\PGL_2(q_i)$ for some odd prime power $q_i$, or is an 
extension of one of these by a group of field automorphisms of odd order. 

For each $i\ge n$, the group $G_i/(G_i\cap O_{2'}(G_{i+1}))$ is isomorphic 
to a subgroup of $G_{i+1}/O_{2'}(G_{i+1})$, and is nonsolvable of order a 
multiple of $16$. The subgroups of $\PSL_2(q_{i+1})$ were described by 
Dickson (see \cite[Theorem 6.5.1]{GLS3}), and from his list we see that 
each nonsolvable subgroup of $\PGL_2(q_{i+1})$ of order a multiple of $16$ 
is isomorphic to $\PSL_2(q')$ or $\PGL_2(q')$ for some $q'$ of which 
$q_{i+1}$ is a power. It follows that $q_{i+1}$ is a power of $q_i$.

Thus there is a prime $q$ of which each $q_i$ is a power. The $q_i$ must 
include arbitrarily large powers of $q$ since the Sylow subgroups $S_i$ 
get arbitrarily large. Hence the sectional $q$-rank of $G_i$ becomes 
arbitrarily large for large $i$. But since $G\le\GL_n(K)$ where 
$\chr(K)=0$, we have $\srk_q(G)\le pn/(p-1)$ by Lemma \ref{l:srk(GLn)}(a), 
which is impossible.
\end{proof}

More generally, we will show in Theorem \ref{t:cn.p-cpt}(b) that a similar 
result holds whenever $\calf$ is the fusion system of a compact connected 
Lie group $G$ over a Sylow $p$-subgroup and $p$ divides the order of the 
Weyl group of $G$: such an $\calf$ cannot be realized by a linear torsion 
group in characteristic $0$. However, in contrast to the above result, the 
proof of Theorem \ref{t:cn.p-cpt}(b) depends on the classification of 
finite simple groups.

Using results and arguments similar to those in Section \ref{s:fin.p-gp} 
(and the classification of finite simple groups), we can also prove that 
\begin{itemize} 

\item if $p=3$ and $\calf$ is the fusion system of the $3$-compact group 
$X_{12}$, then $\calf$ is realized by a linear torsion group only 
in characteristic $2$; and 

\item if $p=5$ and $\calf$ is the fusion system of the $5$-compact group 
$X_{31}$, then $\calf$ is realized by a linear torsion group only in 
characteristic $q$ for $q\equiv\pm2$ (mod $5$). 

\end{itemize} Here, $X_n$ denotes a connected $p$-compact group whose Weyl 
group is the $n$-th group in the Shephard-Todd list \cite[Table VII]{ST}. 
The classifying spaces $BX_{12}$ and $BX_{31}$ were first constructed by 
Zabrodsky \cite[\S\,4.3]{Zabrodsky}, and a different construction of these 
spaces as well as of $BX_{29}$ and $BX_{34}$ was given by Aguad\'e 
\cite{Aguade}.

The following is an obvious question, but one that seems quite difficult.

\begin{Quest} 
Is there a fusion system over a discrete $p$-toral group that is realized 
by a linear torsion group in characteristic $0$ but not by one in any positive 
characteristic? 
\end{Quest}


\section{Fusion systems of compact Lie groups}
\label{s:cpt.Lie}

The main result in this section is Theorem \ref{t:cpt.Lie} (Theorem 
\ref{ThB}): for every prime $p$ and every compact Lie group $G$, the 
$p$-fusion system of $G$ is realized by a linear torsion group $\Gamma$ in 
characteristic different from $p$ (hence is also sequentially realizable). 
We begin by reducing this to a question of showing that the $p$-completed 
classifying spaces of $G$ and $\Gamma$ are homotopy equivalent. Here, 
``$p$-completed'' means in the sense of Bousfield and Kan \cite{BK}.

\begin{Lem} \label{l:BG=BGamma}
Fix a prime $p$, a compact Lie group $G$, and a linear torsion group 
$\Gamma$ in characteristic different from $p$ such that $BG\pcom\simeq 
B\Gamma\pcom$. Then for $S\in\sylp{G}$ and $S_\Gamma\in\sylp\Gamma$, we 
have $\calf_S(G)\cong\calf_{S_\Gamma}(\Gamma)$. \\
\end{Lem}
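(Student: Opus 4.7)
The plan is to recognise both $BG\pcom$ and $B\Gamma\pcom$ as classifying spaces of centric linking systems, and then invoke the general fact (cited in the excerpt right before Section \ref{s:seq.real.}) that a homotopy equivalence between classifying spaces of saturated fusion systems induces an isomorphism of the fusion systems.

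First, by the discussion at the end of Section \ref{s:background} and the results of \cite{BLO3} referenced there, for $S\in\sylp{G}$ there is a centric linking system $\call$ associated to $\calf_S(G)$ whose $p$-completed geometric realization satisfies $|\call|\pcom\simeq BG\pcom$; this is exactly the content of \cite[Theorem 9.10]{BLO3}. On the other side, since $\Gamma$ is a linear torsion group in characteristic different from $p$, Proposition \ref{p:LT-real.} tells us that $\calf_{S_\Gamma}(\Gamma)$ is saturated, that $\call_{S_\Gamma}^c(\Gamma)$ is a centric linking system associated to it, and that $|\call_{S_\Gamma}^c(\Gamma)|\pcom\simeq B\Gamma\pcom$.

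Combining these two identifications with the hypothesis $BG\pcom\simeq B\Gamma\pcom$, we obtain a homotopy equivalence
\[
|\call|\pcom \;\simeq\; BG\pcom \;\simeq\; B\Gamma\pcom \;\simeq\; |\call_{S_\Gamma}^c(\Gamma)|\pcom
\]
between the classifying spaces of the two saturated fusion systems $\calf_S(G)$ and $\calf_{S_\Gamma}(\Gamma)$. Then \cite[Theorem 7.4]{BLO3} (quoted in the paragraph preceding Section \ref{s:seq.real.}) immediately yields an isomorphism $\calf_S(G)\cong\calf_{S_\Gamma}(\Gamma)$.

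The proof is therefore essentially a matter of assembling the right previously-established theorems; there is no real obstacle beyond verifying that the assumptions of \cite[Theorem 7.4]{BLO3} are met (namely saturation of both fusion systems and existence of associated centric linking systems), both of which are supplied by \cite[Theorem 9.10]{BLO3} on the Lie-group side and by Proposition \ref{p:LT-real.} on the linear-torsion side.
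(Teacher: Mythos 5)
Your proof is correct and follows essentially the same route as the paper: identify $BG\pcom$ and $B\Gamma\pcom$ as $p$-completed classifying spaces of centric linking systems via \cite[Theorems 9.10 and 8.10]{BLO3} (the latter repackaged through Proposition \ref{p:LT-real.}), and then apply \cite[Theorem 7.4]{BLO3}. The only cosmetic difference is that the paper cites \cite[Theorem 8.10]{BLO3} directly rather than through Proposition \ref{p:LT-real.}.
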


\begin{proof} By Proposition \ref{p:LT-real.} and \cite[Theorem 
9.10]{BLO3}, there are centric linking systems $\call_S^c(G)$ and 
$\call_{S_\Gamma}^c(\Gamma)$, associated to $\calf_S(G)$ and 
$\calf_{S_\Gamma}(\Gamma)$, respectively, such that 
	\[ |\call_S^c(G)|\pcom \simeq BG\pcom \simeq B\Gamma\pcom \simeq 
	|\call_{S_\Gamma}^c(\Gamma)|\pcom. \]
So $\calf_S(G)\cong\calf_{S_\Gamma}(\Gamma)$ by \cite[Theorem 7.4]{BLO3}. 
\end{proof}

We next look at the special case of Theorem \ref{ThB} where we assume that 
the Lie groups are connected. This follows from a theorem of Friedlander 
and Mislin. 

\begin{Thm} \label{t:cpt.cn.Lie}
Let $p$ be a prime, let $G$ be a compact connected Lie group, and fix 
$S\in\sylp{G}$. Then $\calf_S(G)$ is \textup{LT}-realizable. More 
precisely, if $G$ is a maximal compact subgroup of $\gg(\C)$ where $\gg$ is 
a connected, reductive algebraic group scheme over $\Z$, then $\calf_S(G)$ 
is realized by the linear torsion group $\gg(\4\F_q)$ for each prime $q\ne 
p$. 
\end{Thm}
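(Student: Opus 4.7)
The plan is to reduce the statement to an application of Lemma \ref{l:BG=BGamma}, using as input a classical theorem of Friedlander--Mislin comparing classifying spaces of compact connected Lie groups with those of reductive groups over algebraic closures of finite fields.

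First I would reduce to the ``more precisely'' part of the statement. By the standard classification of compact connected Lie groups (Chevalley's existence theorem for split reductive $\Z$-forms), any compact connected Lie group $G$ is isomorphic to a maximal compact subgroup of $\gg(\C)$ for some connected reductive algebraic group scheme $\gg$ over $\Z$. So it suffices to treat the case where $G$ already arises this way.

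Second, I would invoke the Friedlander--Mislin theorem: for any prime $q\ne p$, there is a mod-$p$ equivalence of classifying spaces
\beq
BG\pcom \simeq B\gg(\fqbar)\pcom.
\eeq
This is exactly the comparison of the complex and ``algebraic closure of finite field'' incarnations of the same reductive group scheme, and it is the main substance of the proof (though all the work is done in the Friedlander--Mislin papers, so nothing original needs to be said here).

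Third, since $\gg$ is an affine group scheme of finite type over $\Z$, we have $\gg(\fqbar)\le\GL_N(\fqbar)$ for some $N$, and every element of $\gg(\fqbar)$ has finite order because every finitely generated subfield of $\fqbar$ is a finite field. Hence $\gg(\fqbar)$ is a linear torsion group in characteristic $q\ne p$. Applying Lemma \ref{l:BG=BGamma} with $\Gamma=\gg(\fqbar)$ then yields
\beq
\calf_S(G) \cong \calf_{S_\Gamma}(\gg(\fqbar))
\eeq
for $S_\Gamma\in\sylp{\gg(\fqbar)}$, proving that $\calf_S(G)$ is LT-realizable.

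The only nontrivial ingredient is the Friedlander--Mislin equivalence; with that in hand, the remainder is bookkeeping. No obstacle arises beyond locating and citing the correct form of their comparison theorem, which is valid for every prime $q\ne p$ and hence gives the stronger conclusion that $\calf_S(G)$ is LT-realizable in every characteristic other than $p$.
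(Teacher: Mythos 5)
Your proposal is correct and follows essentially the same path as the paper's proof: reduce to the case where $G$ is a maximal compact subgroup of $\gg(\C)$ for a reductive $\Z$-group scheme $\gg$, apply the Friedlander--Mislin equivalence $B\gg(\C)\pcom\simeq B\gg(\4\F_q)\pcom$, and invoke Lemma \ref{l:BG=BGamma}. The only point glossed over is that the chain $BG\pcom\simeq B\gg(\4\F_q)\pcom$ really passes through $BG\simeq B\gg(\C)$ (because $\gg(\C)/G$ is contractible), which the paper makes explicit via a reference to Hochschild; this is a standard fact and not a genuine gap.
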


\begin{proof} 
Let $G$ be a compact connected Lie group. By \cite[Propositions 
III.8.2--4]{BtDieck}, there is a complex connected algebraic group $G_{\C}$ 
containing $G$ such that $L(G_\C)\cong\C\otimes_\R L(G)$ (the Lie 
algebras). Hence $G$ is a maximal compact subgroup of $G_\C$, and 
$BG_\C\simeq BG$ since $G_\C/G$ is diffeomorphic to a Euclidean space by 
\cite[Theorem XV.3.1]{Hochschild}. Also, $G_{\C}\cong\gg(\C)$ for some 
connected reductive algebraic group scheme over $\Z$, and 
$B\gg(\C)\pcom\simeq B\gg(\4\F_q)\pcom$ for each prime $q\ne p$ by a 
theorem of Friedlander and Mislin \cite[Theorem 1.4]{FM}. So $\calf_S(G)$ 
is realized by $\gg(\4\F_q)$ for each such $q$ by Lemma \ref{l:BG=BGamma}. 
\end{proof}

We are now ready to prove Theorem \ref{ThB}, in the following slightly more 
precise form.

\begin{Thm} \label{t:cpt.Lie} 
Let $p$ be a prime, let $G$ be a compact Lie group, and fix $S\in\sylp{G}$. 
Then $\calf_S(G)$ is 
\textup{LT}-realizable. More precisely, if $G_e\nsg G$ denotes the identity 
connected component of $G$, then for each prime $q\ne p$, there are 
linear torsion groups 
$\Gamma_e\nsg\Gamma\le\GL_n(\4\F_q)$ (for some $n$) such that 
$\Gamma/\Gamma_e\cong G/G_e$, and for $S_\Gamma\in\sylp{\Gamma}$, we have 
$\calf_S(G)\cong\calf_{S_\Gamma}(\Gamma)$ and 
$\calf_{S\cap G_e}(G_e)\cong\calf_{S_\Gamma\cap\Gamma_e}(\Gamma_e)$. 
\end{Thm}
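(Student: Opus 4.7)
The plan is to reduce to the connected case already handled in Theorem \ref{t:cpt.cn.Lie} by lifting the action of $G/G_e$ on $G_e$ to an analogous action on a reductive algebraic group scheme over $\Z$. First, apply Theorem \ref{t:cpt.cn.Lie} to $G_e$ to obtain a connected reductive group scheme $\gg$ over $\Z$ such that $G_e$ is a maximal compact subgroup of $\gg(\C)$, with $BG_e \simeq B\gg(\C)$ and $B\gg(\C)\pcom \simeq B\gg(\4\F_q)\pcom$ for every prime $q\ne p$ (Friedlander--Mislin). By Lemma \ref{l:BG=BGamma}, it suffices to produce a linear torsion group $\Gamma$ in characteristic $q$, with normal subgroup $\Gamma_e$ so that $\Gamma/\Gamma_e\cong G/G_e$, $B\Gamma\pcom\simeq BG\pcom$, and $B\Gamma_e\pcom\simeq BG_e\pcom$.

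Set $\pi=G/G_e$. Conjugation gives a homomorphism $\pi\to\Out(G_e)$, and since $\Out(G_e)$ of a compact connected Lie group is captured by automorphisms of the root datum (Dynkin diagram symmetries together with center-related data), it agrees with $\Out(\gg)$ for the algebraic group scheme $\gg$. The next step is to realize $\pi\to\Out(\gg)$ by an extension of algebraic group schemes
\[
1 \too \gg \too \tilde\gg \too \pi \too 1
\]
over $\Z$ (with $\pi$ the constant finite group scheme), classified by the same cohomology class in $H^2(\pi;Z(\gg))$ as the original extension $G_e\to G\to \pi$. Then $\tilde\gg(\C)$ is a Lie group with identity component $\gg(\C)$ containing $G$ as maximal compact subgroup and $B\tilde\gg(\C)\simeq BG$. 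Set $\Gamma=\tilde\gg(\4\F_q)$ and $\Gamma_e=\gg(\4\F_q)$. Since $\gg$ is connected and $\4\F_q$ algebraically closed, Lang's theorem forces $H^1(\4\F_q;\gg)=1$, so $1\to\Gamma_e\to\Gamma\to\pi\to 1$ is exact and $\Gamma/\Gamma_e\cong\pi\cong G/G_e$. As $\tilde\gg$ is a linear algebraic group scheme, $\Gamma\le\GL_n(\4\F_q)$ for some $n$, and every element of $\GL_n(\4\F_q)$ has finite order, so $\Gamma$ is a linear torsion group in characteristic $q$.

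To conclude $BG\pcom \simeq B\Gamma\pcom$, view both as the total spaces of fibrations over $B\pi$ with fibers $BG_e\pcom$ and $B\Gamma_e\pcom$ respectively. The Friedlander--Mislin equivalence provides a homotopy equivalence of fibers, and the two classifying maps $B\pi\to B\Aut(BG_e\pcom)$ and $B\pi\to B\Aut(B\Gamma_e\pcom)$ are both induced from the common map $\pi\to\Out(\gg)$ acting naturally on the $p$-completion of $B\gg(\C)\simeq B\gg(\4\F_q)$. Hence the two fibrations are equivalent after $p$-completion, yielding $B\Gamma\pcom\simeq BG\pcom$; Lemma \ref{l:BG=BGamma} then gives $\calf_{S_\Gamma}(\Gamma)\cong\calf_S(G)$, and applying the same argument to $G_e$ in place of $G$ yields $\calf_{S_\Gamma\cap\Gamma_e}(\Gamma_e)\cong\calf_{S\cap G_e}(G_e)$.

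The main obstacle is the second step: constructing the algebraic extension $\tilde\gg$ over $\Z$ (or over $\Z[1/N]$ with $N$ prime to $q$) whose complex points model the topological extension $G_e\to G\to\pi$, and verifying that the induced map $\pi\to\Out(\gg)$ on outer automorphism groups is naturally identified with the topological one. This requires a careful comparison between automorphisms of a compact connected Lie group and algebraic automorphisms of its split Chevalley form, together with control over the relevant cohomology groups classifying the two extensions. Once the algebraic model $\tilde\gg$ is in hand, everything else reduces to Friedlander--Mislin plus standard fibration arguments.
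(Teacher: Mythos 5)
Your plan has the right overall shape -- reduce to the connected case via Friedlander--Mislin and then glue the fibrations over $B\pi$ -- but the step you flag as ``the main obstacle'' is not a deferrable detail; it is where the real content lies, and as stated there are at least three genuine gaps.

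First, the extension $1\to\gg\to\tilde\gg\to\pi\to1$ over $\Z$ ``classified by the same class in $H^2(\pi;Z(\gg))$'' is not obviously available. The topological extension $G_e\to G\to\pi$ is classified (once a lift of $\pi\to\Out(G_e)$ has been chosen, modulo an $H^3$ obstruction) by a class in $H^2(\pi;Z(G_e))$ where $Z(G_e)$ is a compact abelian Lie group (possibly with a torus factor), whereas $Z(\gg)$ is a group scheme; these are different coefficient objects, and the comparison map is nontrivial when $Z(G_e)^\circ\ne1$. The paper never constructs such a $\tilde\gg$: it first kills $O_{p'}(Z(G_s))$ so that $Z(G_s)$ is a $p$-group, splits off the torus part, handles $Z(G_e)=1$ and $G_e$ a torus separately, and then builds $\Gamma$ as an abstract group by pullbacks and a central extension, verifying linearity afterwards (using Steinberg's theorem that universal central extensions of simple algebraic groups over $\4\F_q$ are again algebraic).

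Second, even if $\tilde\gg$ existed over $\Z$, Friedlander--Mislin as cited applies to connected reductive group schemes; there is no off-the-shelf statement $B\tilde\gg(\C)\pcom\simeq B\tilde\gg(\4\F_q)\pcom$ for non-connected $\tilde\gg$. Deducing it requires exactly the fiberwise-completion argument over $B\pi$ that you sketch -- which brings us to the third gap: your claim that the two fibrations over $B\pi$ agree because both classifying maps ``are induced from the common map $\pi\to\Out(\gg)$'' is only a complete argument when $B\Aut((BG_e)\pcom)$ is aspherical, which by \cite[Theorem 7.1]{BLO3} happens precisely when $Z(G_e)=1$. In general $B\Aut((BG_e)\pcom)$ has a nontrivial $\pi_2$ coming from the center, and matching the two classifying maps requires the obstruction-theoretic comparison done in the paper via the bijection \eqref{e:cpt.Lie3} and the diagram \eqref{e:cpt.Lie4}. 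There is also a subtlety you elide in the ``common $\Out$'' identification itself: $\Out(G_e)\cong\Isom(R)/W$ while $\Out(\Gamma_e)\cong\Isom^+(R)$, and commutativity of the resulting square uses naturality of the Friedlander--Mislin equivalence in the group scheme (via Friedlander--Parshall), not merely that both $\Out$'s abstractly match. In short: your outline is compatible in spirit with the paper's Case~1, but you have collapsed the genuinely hard parts -- the reduction when $Z(G_e)\ne1$, the torus factor, and the integral model -- into a single unproven assertion.
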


\begin{proof} Set $\pi=G/G_e=\pi_0(G)$, and let $\delta_G\:G\too\pi$ be the 
natural surjective homomorphism. Set $G_s=[G_e,G_e]$: the ``semisimple 
part'' of $G_e$. Since $Z(G_s)$ is finite, we can replace $G$ by 
$G/O_{p'}(Z(G_s))$, and arrange that $Z(G_s)$ be a finite abelian $p$-group 
(without changing the fusion system of $G$ or of $G_e$).

In general, in what follows, when $X\too B\pi$ is a fibration, we let $\4X$ 
denote the fiberwise $p$-completion of $X$ as defined in 
\cite[\S\,I.8]{BK}. Thus if $F$ is the fiber of $X\too B\pi$, then 
$F\pcom$ is the fiber of $\4X\too B\pi$. Also, when $F$ is $p$-good, 
the natural map $X\too\4X$ is a mod $p$ homology equivalence by the Serre 
spectral sequences for the fibrations and since $H^*(F;\F_p)\cong 
H^*(F\pcom;\F_p)$, and so $\4X\pcom\simeq X\pcom$ by \cite[Lemma 
I.5.5]{BK}.

In each of the cases considered below, we construct a linear torsion group 
$\Gamma$, together with a surjective homomorphism 
$\delta_\Gamma\:\Gamma\too\pi$ and a fiber homotopy equivalence 
	\[ (\4{B\Gamma}\too B\pi ) \simeq (\4{BG} \too B\pi) . \]
In particular, this implies that $B\Gamma\pcom\simeq BG\pcom$, and hence by 
Lemma \ref{l:BG=BGamma} that $\calf_S(G)\cong\calf_{S_\Gamma}(\Gamma)$ (for 
$S_\Gamma\in\sylp\Gamma$). So $\calf_S(G)$ is LT-realizable. Also, 
$(B\Gamma_e)\pcom\simeq(BG_e)\pcom$, so $\calf_{S\cap G_e}(G_e)\cong 
\calf_{S_\Gamma\cap\Gamma_e}(\Gamma_e)$.

\smallskip

\noindent\boldd{Case 1: $Z(G_e)=1$.} In this case, $G_e$ is a product of 
simple groups with trivial center. As in the proof of Theorem 
\ref{t:cpt.cn.Lie}, let $\gg$ be a connected, semisimple 
group scheme over $\Z$ such that $G_e$ is a maximal compact subgroup of 
$\gg(\C)$, and set $\Gamma_e=\gg(\4\F_q)$. Thus $\Gamma_e$ is a product of 
simple groups with $Z(\Gamma_e)=1$. By the Friedlander-Mislin theorem 
\cite[Theorem 1.4]{FM}, there is a homotopy equivalence 
	\[ \psi\:(B\Gamma_e)\pcom \Right4{\simeq} 
	B\gg(\C)\pcom\simeq(BG_e)\pcom. \]

Let $R$ denote the root system of $G_e$ and of $\Gamma_e$. We regard the 
roots as elements in the dual $V^*$ of a real vector space $V$ that can be 
identified with the universal cover of a maximal torus in $G_e$. We fix a 
Weyl chamber $C\subseteq V$ and let $R^+$ denote the corresponding set of 
positive roots. Thus $R^+$ is the set of all $r\in R$ such that $r(x)>0$ 
for $x\in C$, while $C$ is the set of $x\in V$ such that $r(x)>0$ for all 
$r\in R^+$. Also, $R=\{\pm r\,|\,r\in R^+\}$. Let $\Isom(R)$ be the group 
of all isometries of $R$, and let $\Isom^+(R)$ be the subgroup of those 
isometries that permute the positive roots; equivalently, those that send 
the Weyl chamber $C$ to itself.

Let $\Aut(\Gamma_e)$ be the group of all automorphisms of $\Gamma_e$ as an 
algebraic group, and set $\Out(\Gamma_e)=\Aut(\Gamma_e)/\Inn(\Gamma_e)$. By 
\cite[Theorem 1.15.2]{GLS3}, the group $\Out(\Gamma_e)$ is isomorphic to 
$\Isom^+(R)$. By \cite[\S\,4.10, Proposition 17]{Bourbaki9}, 
$\Out(G_e)\cong\Isom(R)/W$, where $W\nsg\Isom(R)$ is the Weyl group. Since 
the Weyl group permutes the Weyl chambers simply and transitively 
\cite[\S\,VI.1.5, Th\'eor\`eme 2]{Bourbaki4-6}, this shows that there is a 
natural isomorphism 
	\[ \theta\:\Out(G_e) \cong \Isom(R)/W \Right4{\cong} \Isom^+(R) \cong 
	\Out(\Gamma_e). \] 

Let $\rho_G\:\Out(G_e)\too\Out((BG_e)\pcom)$ and 
$\rho_\Gamma\:\Out(\Gamma_e)\too\Out((B\Gamma_e)\pcom)$ be the 
homomorphisms induced 
by the functor from topological groups to their $p$-completed classifying 
spaces. Let $\eta_G\:\pi\too\Out(G_e)$ be induced by conjugation, set 
$\eta_\Gamma=\theta\circ\eta_G$, and consider the following diagram 
	\beqq \vcenter{\xymatrix@C=40pt@R=25pt{ 
	\pi \ar@{=}[d] \ar[r]^-{\eta_G} & \Out(G_e) \ar[r]^-{\rho_G} 
	\ar[d]_{\cong}^{\theta} & \Out((BG_e)\pcom) 
	\ar[d]_{\cong}^{c_\psi^{-1}} \\
	\pi \ar[r]^-{\eta_\Gamma} & \Out(\Gamma_e) \ar[r]^-{\rho_\Gamma} & 
	\Out((B\Gamma_e)\pcom) \rlap{.}
	}} \label{e:cpt.Lie0} \eeqq
The left hand square commutes by definition. 

To see that the right hand square in \eqref{e:cpt.Lie0} commutes, fix 
$\alpha\in\Aut(G_e)$. We just saw that its class $[\alpha]\in\Out(G_e)$ is 
induced by an isometry of the root system of $G$, and hence 
$[\alpha]=[\beta(\C)|_G]$ for some $\beta\in\Aut(\gg)$. Also, 
$\theta([\alpha])=\theta([\beta(\C)|_G])=[\beta(\4\F_q)]$ by the above 
definition of $\theta$. The equivalence $\psi\:B\gg(\4\F_q)\pcom\too 
B\gg(\C)\pcom$ is natural as a map of functors from reductive group schemes 
over $\Z$ to the homotopy category, since it is induced by the projection 
of the ring of Witt vectors $\W(\4\F_q)$ onto $\4\F_q$ together with a 
choice of embedding of $\W(\4\F_q)$ into $\C$ (see \cite[Corollary 2]{FP}). 
So $(B\beta(\C)|_{BG})\pcom \circ \psi = \psi \circ 
(B\beta(\4\F_q))\pcom$, and hence 
	\[ c_\psi^{-1}(\rho_G([\alpha])) = 
	c_\psi^{-1}([(B\beta(\C)|_{BG})\pcom]) = [(B\beta(\4\F_q))\pcom] = 
	\rho_\Gamma ([\beta(\4\F_q)]) = \rho_\Gamma 
	\theta([\alpha])\,. \]

Consider the commutative diagram
	\beqq \vcenter{\xymatrix@C=20pt@R=20pt{ 
	1 \ar[r] & G_e \ar[r]^-{\incl} \ar[d]_{c_1}^{\cong} & G 
	\ar[r]^-{\delta_G} \ar[d]_{c_2} & \pi \ar[r] \ar[d]_{\eta_G} & 1 \\
	1 \ar[r] & \Inn(G_e) \ar[r]^-{\incl} & \Aut(G_e) \ar[r] 
	& \Out(G_e) \ar[r] & 1
	}} \label{e:cpt.Lie5} \eeqq
where the rows are short exact sequences and $c_1$ and $c_2$ are induced by 
conjugation. Thus $c_1$ is an isomorphism since $Z(G_e)=1$. So $G$ is 
isomorphic to a pullback of $\pi$ and $\Aut(G_e)$ over $\Out(G_e)$. Define 
$\Gamma$ to be the analogous pullback of $\pi$ and $\Aut(\Gamma_e)$ over 
$\Out(\Gamma_e)$; then a similar diagram (but with $G$ replaced by 
$\Gamma$) shows that we can identify $\Gamma_e$ with a subgroup of $\Gamma$ 
such that $\Gamma/\Gamma_e\cong\pi$.

By \cite[Theorem 7.1]{BLO3} and since $Z(G_e)=1=Z(\Gamma_e)$, 
the space $B\Aut((BG_e)\pcom)$ is an Eilenberg-MacLane space with 
fundamental group $\Out((BG_e)\pcom)$. Hence the fibration sequence 
$(BG_e)\pcom\too\4{BG}\too B\pi$ is classified by the map 
	\[ B\pi \Right7{B(\rho_G\eta_G)} B\Out((BG_e)\pcom) 
	\simeq B\Aut((BG_e)\pcom). \]
(see \cite[Theorem IV.5.6]{BGM}). By the commutativity of 
\eqref{e:cpt.Lie0}, $c_\psi^{-1}\rho_G\eta_G=\rho_\Gamma\eta_\Gamma$, where 
$c_\psi$ is an isomorphism. Hence $((BG_e)\pcom\too\4{BG}\too B\pi)$ is 
fiberwise homotopy equivalent to the fibration sequence classified by 
$B(\rho_\Gamma\eta_\Gamma)$, and that sequence is the fiberwise 
completion of the fibration sequence $B\Gamma_e\too B\Gamma\too B\pi$ 
defined above. So $BG\pcom\cong B\Gamma\pcom$, and the fusion systems of 
$G$ and $\Gamma$ are isomorphic by Lemma \ref{l:BG=BGamma}. 

Finally, $\Gamma$ has a finite dimensional representation over $\4\F_q$ 
since $\Gamma_e$ does, and since $|\Gamma/\Gamma_e|=|\pi|<\infty$. 

\smallskip

\noindent\boldd{Case 2: $G_e$ is a torus.} Set $T=G_e$ to simplify 
notation, and let $T_f\le T$ be the subgroup of elements of finite order. 
Then $T/T_f$ is uniquely divisible, so 
$H^2(\pi;T/T_f)=0$, and the group $G/T_f$ is a semidirect product of 
$T/T_f$ with $\pi=G/T$. 

Let $s\:\pi\too G/T_f$ be a splitting of the projection $G/T_f\too\pi$, and 
set $G_f/T_f=s(\pi)$. Thus $G_f\le G$, where $T_f=T\cap G_f\nsg G_f$ and 
$G_f/T_f\cong\pi$. Also, $TG_f=G$. Set $\Gamma=G_f/O_{p'}(T_f)$ and 
$\Gamma_e=T_f/O_{p'}(T_f)$, where $O_{p'}(T_f)$ denotes the 
subgroup of all elements of order prime to $p$. Thus $\Gamma_e$ is a 
discrete $p$-torus, and $\Gamma/\Gamma_e\cong\pi$.

The natural homomorphisms $T\hookleftarrow T_f\too\Gamma_e$ induce mod $p$ 
homology equivalences $BT\fromm BT_f\too B\Gamma_e$, and they induce 
weak equivalences 
	\[ (BT)\pcom \simeq (BT_f)\pcom \simeq (B\Gamma_e)\pcom. 
	\]
So after fiberwise completion of $BG$, $BG_f$, and $B\Gamma$ over 
$B\pi$, we also get weak equivalences 
	\[ \4{BG} \simeq \4{BG_f} \simeq \4{B\Gamma}. \]
Hence $BG\pcom\simeq B\Gamma\pcom$.

\smallskip

\noindent\boldd{Case 3: $Z(G_s)=1$.} Set $G_1=G/Z(G_e)$ and $G_2=G/G_s$, 
and let $G_{1e}=G_e/Z(G_e)$ and $G_{2e}=G_e/G_s$ be their identity 
components. Then $G_e=Z(G_e)G_s$: every compact connected Lie group is 
a central product of a semisimple group with a torus (see \cite[Corollary 
5.5.31]{MToda}), and the torus factor is clearly contained in the center. 
Also, $Z(G_e)\cap G_s=Z(G_s)=1$ by assumption. So $G$ is isomorphic to the 
pullback of $G_1$ and $G_2$ over $\pi=G/G_e$, and this in turn 
restricts to an isomorphism $G_e\cong G_{1e}\times G_{2e}$.

Since $Z(G_{1e})=1$ and $G_{2e}$ is a torus, by Cases 1 and 2, there are 
pairs of groups $\Gamma_{1e}\nsg\Gamma_1$ and 
$\Gamma_{2e}\nsg\Gamma_2$ such that for $i=1,2$, the fibration sequences 
	\[ (B\Gamma_{ie})\pcom \Right2{} \4{B\Gamma_i} \Right2{} B\pi 
	\qquad\textup{and}\qquad 
	(BG_{ie})\pcom \Right2{} \4{BG_i} \Right2{} B\pi \]
are fiberwise homotopy equivalent and $\Gamma_i/\Gamma_{ie}\cong\pi$. So 
if we let $\Gamma$ be the pullback 
of $\Gamma_1$ and $\Gamma_2$ over $\pi$, and set 
$\Gamma_e=\Gamma_{1e}\times\Gamma_{2e}$, then 
	\[ (B\Gamma_{e})\pcom \Right2{} \4{B\Gamma} \Right2{} B\pi 
	\qquad\textup{and}\qquad 
	(BG_{e})\pcom \Right2{} \4{BG} \Right2{} B\pi \]
are also fiberwise homotopy equivalent and $\Gamma/\Gamma_e\cong\pi$. 
Here, $\4{(-)}$ again denotes fiberwise $p$-completion over $B\pi$.
So $B\Gamma\pcom\simeq BG\pcom$, and the fusion systems are isomorphic by 
Lemma \ref{l:BG=BGamma}.

By construction, $\Gamma_e$ is the product of a semisimple algebraic group 
over $\4\F_q$ (some prime $q\ne p$) with a discrete $p$-torus. So 
$\Gamma_e$ and $\Gamma$ are linear torsion groups over $\4\F_q$, and 
$\calf=\calf_S(G)$ is LT-realizable.

\smallskip

\noindent\textbf{General case: } Now assume $G$ is arbitrary. Set 
$Z=Z(G_s)$: a finite abelian $p$-group by the assumption at the beginning 
of the proof. Set $G^*=G/Z$ and $G^*_e=G_e/Z$. By Case 3 and since 
$Z((G^*)_s)=Z(G_s/Z)=1$ (see \cite[\S\,III.9.8, Proposition 
29]{Bourbaki2-3}), there is a pair of linear torsion groups 
$\Gamma^*_e\nsg\Gamma^*$ such that $\Gamma^*/\Gamma^*_e\cong\pi$, and such 
that the fibration sequences 
	\[ (B\Gamma_{e}^*)\pcom \Right2{} \4{B\Gamma^*} \Right2{} B\pi 
	\qquad\textup{and}\qquad 
	(BG_{e}^*)\pcom \Right2{} \4{BG^*} \Right2{} B\pi \]
are fiberwise homotopy equivalent. By the construction in Case 3, we can 
also assume that $\Gamma_e^*$ is a product of a semisimple algebraic 
group over $\4\F_q$ (some $q\ne p$) with a discrete $p$-torus.

The extensions $1\too Z\too G_e\too G^*_e\too1$ and $1\too 
Z\too G\too G^*\too1$ induce a commutative diagram of spaces
	\beqq \vcenter{\xymatrix@C=30pt@R=25pt{ 
	BZ \ar[r] \ar@{=}[d] & (BG_e)\pcom \ar[r] \ar[d] 
	& (BG^*_e)\pcom \ar[d] \\
	BZ \ar[r] & \4{BG} \ar[r] & \4{BG^*}.
	}} \label{e:cpt.Lie1} \eeqq
Here $BZ$ is $p$-complete since $Z$ is a finite $p$-group. So the top row 
in \eqref{e:cpt.Lie1} is a fibration sequence by \cite[Lemma II.5.1]{BK} 
and since $Z\le Z(G_e)$. Hence the bottom row is also a fibration sequence 
after fiberwise $p$-completion over $B\pi$. Similarly, if $\Gamma$ is any 
discrete group together with a surjection $\chi\:\Gamma\too\Gamma^*$ such 
that $\Ker(\chi)=Z\le Z(\chi^{-1}(\Gamma^*_e))$, and we define 
$\Gamma_e=\chi^{-1}(\Gamma^*_e)$, then we get the following commutative 
diagram whose rows are fibration sequences:
	\beqq \vcenter{\xymatrix@C=30pt@R=25pt{ 
	BZ \ar[r] \ar@{=}[d] & (B\Gamma_e)\pcom \ar[r] \ar[d] & 
	(B\Gamma^*_e)\pcom \ar[d] \\
	BZ \ar[r] & \4{B\Gamma} \ar[r]^-{B\chi} & \4{B\Gamma^*}.
	}} \label{e:cpt.Lie2} \eeqq

It remains to choose the pair $(\Gamma,\chi)$ so that the bottom rows in 
\eqref{e:cpt.Lie1} and \eqref{e:cpt.Lie2} are fiber homotopy equivalent. By 
\cite[Theorem IV.5.6]{BGM}, fibrations over a space $B$ with fiber $BZ$ are 
classified by homotopy classes of maps $B\too B\Aut(BZ)$. Let 
$\Aut_*(BZ)\subseteq\map_*(BZ,BZ)$ be the spaces of pointed self 
equivalences and pointed self maps, respectively. Since $BZ$ is an 
Eilenberg-MacLane space, we have 
	\[ \pi_i(\map_*(BZ,BZ))\cong [BZ,\Omega^i(BZ)]_* = 
	\begin{cases} 
	H^1(Z;Z) \cong \Hom(Z,Z) & \textup{if $i=0$} \\
	0 & \textup{if $i>0$.}
	\end{cases} \]
From this, together with the fibration sequence $\Aut_*(BZ)\too 
\Aut(BZ)\too BZ$, we see that $\pi_1(B\Aut(BZ))\cong\Aut(Z)$, 
$\pi_2(B\Aut(BZ))\cong Z$, and $\pi_i(B\Aut(BZ))=0$ for all $i\ge3$.

Let $[B\Gamma^*,B\Aut(BZ)]_0$ and $[\4{B\Gamma^*},B\Aut(BZ)]_0$ denote the 
sets of homotopy classes of maps for which the induced homomorphism in 
$\pi_1$ factors through $\Gamma^*/\Gamma^*_e\cong\pi$. Since 
$H^*(\4{B\Gamma^*};A)\cong H^*(B\Gamma^*;A)$ for every finite abelian 
$p$-group $A$ with action of $\pi$ (since the map 
$B\Gamma_e^*\to(B\Gamma_e^*)\pcom$ between the $\pi$-covers is a mod $p$ 
homology equivalence), the natural map 
	\beqq [\4{B\Gamma^*},B\Aut(BZ)]_0 \Right4{} [B\Gamma^*,B\Aut(BZ)]_0 
	\label{e:cpt.Lie3} \eeqq 
is a bijection. 

Since $\4{B\Gamma^*}\simeq\4{BG^*}$, there is a fibration 
$X\xto{~\nu~}\4{B\Gamma^*}$ with fiber $BZ$ that is fiberwise homotopy 
equivalent to the fibration $\4{BG}\too\4{BG^*}$. Since $Z\le Z(G_e)$, the 
fibration $\nu$ is classified by a map in $[\4{B\Gamma^*},B\Aut(BZ)]_0$. So 
by \eqref{e:cpt.Lie3}, there is also a fibration $Y\too B\Gamma^*$ with 
fiber $BZ$ whose fiberwise $p$-completion over $B\pi$ is 
$X\too\4{B\Gamma^*}$. Upon putting these together, we get the following 
commutative diagram, each of whose rows is a fibration sequence:
	\beqq \vcenter{\xymatrix@C=30pt@R=25pt{ 
	BZ \ar[r] \ar@{=}[d] & \4{BG} \ar[r] \ar[d]_{\simeq} & \4{BG^*} 
	\ar[d]_{\simeq} \\
	BZ \ar[r] & X \ar[r]^-{\nu} & \4{B\Gamma^*} \\ 
	BZ \ar@{=}[u] \ar[r] & Y \ar[u] \ar[r]^-{\nu_0} & B\Gamma^* \ar[u]. 
	}} \label{e:cpt.Lie4} \eeqq
Set $\Gamma=\pi_1(Y)$; then there is a surjection 
$\chi=\pi_1(\nu_0)\:\Gamma\too\Gamma^*$ 
with kernel $Z$. Set $\Gamma_e=\chi^{-1}(\Gamma^*_e)$; then after 
fiberwise completion we have 
$\4{B\Gamma}\simeq \4Y\simeq X \simeq \4{BG}$. It now follows that 
$B\Gamma\pcom\simeq BG\pcom$, and hence by Lemma \ref{l:BG=BGamma} that 
$\calf_S(G)\cong\calf_{S_\Gamma}(\Gamma)$. 

By construction, $Z\le Z(\Gamma_e)$, and 
	\[ \Gamma_e^* = \Gamma_e/Z = \Gamma_1^*\times\cdots\times 
	\Gamma_k^* \times T^*, \]
where $T^*$ is a discrete $p$-torus and each $\Gamma_i^*$ is a simple 
algebraic group over $\4\F_q$. By Theorems 3.1--3.3 and 4.1 in 
\cite{Steinberg}, for each $i$, the universal central extension of 
$\Gamma_i^*$ is itself an algebraic group over $\4\F_q$. (It's important 
here that we are working over an algebraic extension of a finite field.) So 
$\Gamma_e$ is a central product of simple algebraic groups over $\4\F_q$ 
and a discrete $p$-torus, where a discrete $p$-torus of rank $r$ is 
contained in the algebraic group $(\4\F_q^\times)^r$. Thus $\Gamma_e$ is 
contained in a central product of algebraic groups over $\4\F_q$, and 
this in turn is an 
algebraic group over $\4\F_q$ by \cite[Proposition 5.5.10]{Springer} (the 
quotient of a linear algebraic group by a closed normal subgroup is again a 
linear algebraic group). So $\Gamma_e$ is a linear torsion group over 
$\4\F_q$. Since $\Gamma_e$ has finite index in $\Gamma$, the group 
$\Gamma$ is also linear over $\4\F_q$, finishing the proof that 
$\calf=\calf_S(G)$ is LT-realizable.
\end{proof}


\section{Increasing sequences of finite fusion subsystems}
\label{s:incr.seq.}

In this section and the next, we show some preliminary results that will be 
needed in Section \ref{s:realiz} to prove that certain saturated fusion 
systems are not sequentially realizable. In this section, we mostly look at 
the question of how a saturated fusion system $\calf$ over an infinite 
discrete $p$-toral group $S$ is approximated by sufficiently large finite 
fusion subsystems. The following lemma is a first step towards doing that.

\begin{Lem} \label{l:autf(A)}
Let $\calf$ be a saturated fusion system over an infinite discrete 
$p$-toral group $S$, and let $T\nsg S$ be the identity component. Assume 
$C_S(T)=T$. Then there is $n\ge1$ such that $C_S(\Omega_n(T))=T$, and 
such that for each $A\le T$ that contains $\Omega_n(T)$: 
\begin{enuma} 

\item $A$ is fully centralized in $\calf$, and $\varphi(A)\le T$ 
for each $\varphi\in\homf(A,S)$; and 

\item if $A$ is $\autf(T)$-invariant, then it is weakly 
closed in $\calf$, and restriction induces an isomorphism
	$ \rho_A\: \autf(T) \Right4{\cong} \autf(A)$. 

\end{enuma}
\end{Lem}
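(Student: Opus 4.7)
The approach is to fix $n$ large enough that the descending chain $\{C_S(\Omega_k(T))\}$ has stabilized at $T$, that the restriction $\autf(T)\to\autf(\Omega_n(T))$ is injective, and that a "buffer" condition related to the exponent of $S/T$ holds. The first condition is attained because $S$ is artinian (Proposition \ref{p:artin-loc.f.}) and $\bigcap_k C_S(\Omega_k(T))=C_S(T)=T$, so the chain stabilizes. The second uses that $\autf(T)$ is finite by Proposition \ref{p:Frc-finite}(a) (with $\Inn(T)=1$) and sits inside $\Aut(T)\cong\GL_r(\Z_p)$ with $r=\rk(T)$; the principal congruence kernel of $\GL_r(\Z_p)\to\GL_r(\Z/p^k)$ is torsion-free once $k\ge 1$ ($p$ odd) or $k\ge 2$ ($p=2$), so the injectivity is automatic for $k$ large. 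For the third condition I will take $n\ge n_1+e$, where $n_1$ handles (i)--(ii) and $p^e$ is the exponent of $S/T$.

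For part (a), full centralizedness is immediate: $C_S(A)$ is sandwiched between $T$ (since $A\le T$ is abelian) and $C_S(\Omega_n(T))=T$. For the image claim, given $\varphi\in\homf(A,S)$, I will first analyze $B=\varphi(\Omega_n(T))$. Setting $A_T=\varphi^{-1}(B\cap T)$, the quotient $\Omega_n(T)/A_T\cong B/(B\cap T)$ embeds in $S/T$, hence has exponent dividing $p^e$; consequently the subgroup of $p^e$-th powers of $\Omega_n(T)$, namely $\Omega_{n-e}(T)$, lies in $A_T$. Thus $\varphi(\Omega_{n-e}(T))\le B\cap T\le T$. Since the only subgroup of $T$ isomorphic to $(\Z/p^{n-e})^r$ is $\Omega_{n-e}(T)$ itself, $\varphi(\Omega_{n-e}(T))=\Omega_{n-e}(T)\supseteq\Omega_{n_1}(T)$. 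Because $B$ is abelian and now contains $\Omega_{n_1}(T)$, $B\le C_S(\Omega_{n_1}(T))=T$, and then $B=\Omega_n(T)$ by the same uniqueness. Finally $\varphi(A)\supseteq\varphi(\Omega_n(T))=\Omega_n(T)$ is abelian, so $\varphi(A)\le C_S(\Omega_n(T))=T$.

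For part (b), assume $A$ is $\autf(T)$-invariant, so in particular $A\nsg S$ (since $\Aut_S(T)\le\autf(T)$). For $\varphi\in\homf(A,S)$, part (a) places $\varphi(A)$ inside $T$, and the same centralizer argument shows $\varphi(A)$ is fully centralized, hence receptive by the extension axiom. I apply receptivity to the isomorphism $\varphi\:A\to\varphi(A)$: since every $g\in T$ acts trivially on $A$, one has $T\le N_\varphi$, and $\varphi$ extends to some $\bar\varphi$ defined on $T$. Then $\bar\varphi|_T\:T\to S$ is injective with image a rank-$r$ discrete $p$-torus in $S$, necessarily equal to $T$ itself, so $\bar\varphi|_T\in\autf(T)$. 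By invariance, $\bar\varphi|_T(A)=A$; since $\bar\varphi$ extends $\varphi$, this gives $\varphi(A)=A$, proving weak closure. The same extension construction shows $\rho_A\:\autf(T)\to\autf(A)$ is surjective; injectivity follows because $\rho_A$ factors through the injective restriction $\autf(T)\to\autf(\Omega_n(T))$ given by condition (ii).

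The main technical step is the image claim in (a): showing that $\varphi(\Omega_n(T))\le T$. One must use the bounded exponent of $S/T$ together with the uniqueness of a rank-$r$, $p^m$-torsion subgroup of $T$ to recover a sufficiently large $\Omega_m(T)$ inside the image, and only then does the centralizer condition $C_S(\Omega_m(T))=T$ close the loop. Without this bootstrap one cannot prevent $\varphi(\Omega_n(T))$ from being a "skew" abelian subgroup of $S$ strictly outside $T$; the rest of the lemma then follows from a standard application of the extension axiom plus the $p$-adic reduction calculation already used in condition (ii).
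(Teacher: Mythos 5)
Your proof is correct and follows the same overall strategy as the paper's: choose $n$ large enough that the descending chain of centralizers $C_S(\Omega_i(T))$ has stabilized at $T$ and the restriction map $\autf(T)\to\autf(\Omega_n(T))$ is injective, add a buffer accounting for the exponent $p^e$ of $S/T$, use that buffer to force $\varphi(\Omega_{n-e}(T))\le T$ and hence (via uniqueness of the rank-$r$ homocyclic subgroup of exponent $p^{n-e}$) $\varphi(A)\le T$, and then deduce (b) by extending $\varphi$ to all of $T$.

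Two small points of divergence are worth noting, though neither affects correctness. First, for the injectivity of the restriction map, the paper stabilizes the descending chain $\{\Ker(\rho_{\Omega_i(T)})\}_{i\ge n_1}$ inside the finite group $\autf(T)$, while you instead quote the fact that the principal congruence subgroup $I+p^kM_r(\Z_p)$ of $\GL_r(\Z_p)$ is torsion-free for $k\ge1$ ($p$ odd) or $k\ge2$ ($p=2$). Your argument gives a cleaner, explicit bound and is the content of Lemma~\ref{l:CS(Omega1)}, but it needs the $p$-adic structure theory; the paper's chain argument is more elementary and avoids it here. Second, in part~(b), the paper invokes \cite[Lemma 2.4(b)]{BLO3} to assert directly that $\varphi\in\homf(A,T)$ is the restriction of an element of $\autf(T)$; you instead re-derive this special case from scratch via the extension axiom (observing $\varphi(A)$ is fully centralized, hence receptive, $T\le N_\varphi$, and the extension to $T$ must have image $T$ by the rank argument). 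This makes your proof self-contained at the cost of a few extra lines, but it is the same underlying mechanism.

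Also for part (a) your argument via $B=\varphi(\Omega_n(T))$, $A_T=\varphi^{-1}(B\cap T)$, and $\Omega_n(T)/A_T\hookrightarrow S/T$ is slightly more roundabout than the paper's one-line observation that $\varphi(\Omega_{n_0}(T))=\{\varphi(a)^{p^k}\,|\,a\in\Omega_{n_1}(T)\}\subseteq T$; both are just different ways of exploiting that $p^k$-th powers in $S$ land in $T$.
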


\begin{proof} Since $\{C_S(\Omega_i(T))\}_{i\ge1}$ is a descending sequence 
of subgroups with intersection $C_S(T)=T$ (and since $S$ is artinian), 
there is $n_0\ge1$ such that $C_S(\Omega_{n_0}(T))=T$. Let $k$ be such that 
$S/T$ has exponent $p^k$; i.e., such that $s^{p^k}\in T$ for all $s\in S$. 
Set $n_1=n_0+k$. 

Let $A$ be such that $\Omega_{n_1}(T)\le A\le T$. For each morphism 
$\varphi\in\homf(A,S)$, 
	\[ \varphi(\Omega_{n_0}(T))\le\varphi(\{a^{p^k}\,|\,a\in A\})\le T, 
	\]
so $\varphi(\Omega_{n_0}(T))=\Omega_{n_0}(T)$, and $\varphi(A)\le 
C_S(\Omega_{n_0}(T))=T$. Furthermore, $C_S(\varphi(A))=T=C_S(A)$, and 
since $\varphi\in\homf(A,S)$ is arbitrary, $A$ is 
fully centralized in $\calf$. This proves (a). 

Assume in addition that $A$ is $\autf(T)$-invariant. By \cite[Lemma 
2.4(b)]{BLO3}, $\varphi$ is the restriction of some element of 
$\autf(T)$, and so $\varphi(A)=A$ by the assumption. Thus $A$ is weakly 
closed in $\calf$, and restriction defines a surjective homomorphism 
$\rho_A$ from $\autf(T)$ to $\autf(A)$.

Consider the descending sequence $\{\Ker(\rho_{\Omega_{i}(T)})\}_{i\ge 
n_1}$. This sequence is constant for $i$ large since $\autf(T)$ is 
finite, and the intersection of its terms is the trivial subgroup. So 
there is $n\ge n_1$ such that $\rho_{\Omega_n(T)}$ is injective. Hence 
$\rho_A$ is injective for every $\autf(T)$-invariant subgroup $A\le T$ 
containing $\Omega_n(T)$, and this finishes the proof of (b). 
\end{proof}

We now look more closely at unions of increasing sequences of finite 
fusion subsystems, and make more precise what we mean by a subsystem being 
``large enough''.

\begin{Lem} \label{l:autf(A)-2}
Let $\calf$ be a saturated fusion system over an infinite discrete 
$p$-toral group $S$, let $T\nsg S$ be the identity component, and assume 
$C_S(T)=T$. Let $\calf_1\le\calf_2\le\calf_3\le\cdots$ be fusion 
subsystems of $\calf$ over finite subgroups $S_1\le S_2\le S_3\le\cdots$ in 
$S$ such that $\calf=\bigcup_{i=1}^\infty\calf_i$, and set $T_i=T\cap S_i$. 
Then there is $n\ge1$ such that for each $i\ge n$ and each 
$\autf(T)$-invariant subgroup $A$ such that $T_n\le A\le T_i$, 
\begin{enuma} 

\item $A$ is weakly closed in $\calf$ and hence in $\calf_i$, 

\item restriction to $A$ induces an isomorphism 
$\rho_A\:\autf(T)\xto{~\cong~}\autf(A)$, and 

\item $\autf(A)=\Aut_{\calf_i}(A)$. 

\end{enuma}
Furthermore, $n$ can be chosen so that $T_i$ is $\autf(T)$-invariant for 
all $i\ge n$.
\end{Lem}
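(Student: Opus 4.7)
The plan is to combine Lemma \ref{l:autf(A)} applied to $\calf$ itself with a finiteness argument on $\autf(T)$ and the extension axiom in the saturated fusion systems $\calf_i$. The key idea is that an element of $\autf(A)$ is rigid in the sense that it is determined by its restriction to any finite $\autf(T)$-invariant subgroup of $A$ containing $\Omega_{n_1}(T)$, where $n_1$ is the threshold produced by Lemma \ref{l:autf(A)}; consequently elements of $\autf(A)$ can be recovered inside $\calf_i$ by extending from such a subgroup using receptiveness.

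First, apply Lemma \ref{l:autf(A)} to $\calf$ to obtain $n_1\ge1$ with $C_S(\Omega_{n_1}(T))=T$ such that every $\autf(T)$-invariant $A$ with $\Omega_{n_1}(T)\le A\le T$ is weakly closed in $\calf$ and $\rho_A\:\autf(T)\xto{\cong}\autf(A)$. Since $T=\bigcup_{i\ge1}T_i$, pick $n_2$ with $\Omega_{n_1}(T)\le T_{n_2}$, and let $U$ be the $\autf(T)$-closure of $T_{n_2}$ in $T$, a finite $\autf(T)$-invariant subgroup containing $\Omega_{n_1}(T)$. Enlarging $n_2$ if necessary, arrange $U\le T_{n_2}$. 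By the definition of $\calf=\bigcup_i\calf_i$ applied to elements of the finite group $\autf(U)$, one has $\autf(U)=\bigcup_{i\ge n_2}\Aut_{\calf_i}(U)$; hence there exists $n_3\ge n_2$ with $\Aut_{\calf_i}(U)=\autf(U)$ for every $i\ge n_3$. Set $n=n_3$. For any $\autf(T)$-invariant $A$ with $T_n\le A\le T_i$ we have $\Omega_{n_1}(T)\le U\le A$, so parts (a) and (b) follow immediately from Lemma \ref{l:autf(A)}, noting that weak closure in $\calf$ implies weak closure in every subsystem $\calf_i$.

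For (c), the containment $\Aut_{\calf_i}(A)\le\autf(A)$ is immediate from $\calf_i\le\calf$. For the reverse, take $\psi\in\autf(A)$ and let $\tilde\psi=\rho_A^{-1}(\psi)\in\autf(T)$, so that $\tilde\psi|_U\in\autf(U)=\Aut_{\calf_i}(U)$. Weak closure of $U$ in $\calf$ forces $U$ to be weakly closed in $\calf_i$, hence trivially fully centralized in $\calf_i$, and therefore receptive in $\calf_i$ by the extension axiom. Because $T$ is abelian and $A,U\le T$, $A$ centralizes $U$, so $A\le N_{\tilde\psi|_U}$ (the extender computed in $\calf_i$). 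Receptiveness then gives $\tilde\varphi\in\Hom_{\calf_i}(A,S_i)$ extending $\tilde\psi|_U$; by Lemma \ref{l:autf(A)}(a) applied in $\calf$ we have $\tilde\varphi(A)\le T$, and weak closure of $A$ in $\calf$ forces $\tilde\varphi(A)=A$, so $\tilde\varphi\in\Aut_{\calf_i}(A)\subseteq\autf(A)$. The two elements $\tilde\varphi$ and $\psi$ of $\autf(A)$ agree on $U$; since the composite $\rho_U\circ\rho_A^{-1}\:\autf(A)\too\autf(U)$ is an isomorphism by Lemma \ref{l:autf(A)}(b), they coincide, proving $\psi\in\Aut_{\calf_i}(A)$.

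For the final clause, since $\autf(T)$ is finite, every finite subgroup of $T$ has finite $\autf(T)$-closure. We refine the given sequence $(\calf_i)$ by passing to a cofinal subsequence in which each $T_i$ equals its own $\autf(T)$-closure in $T$: having chosen $T_{i_k}$ to be $\autf(T)$-invariant, its closure inside the next $T_j$ is again finite and $\autf(T)$-invariant, and we pick $i_{k+1}$ accordingly; the union still equals $T$. After this refinement and relabeling, $T_i$ is $\autf(T)$-invariant for all $i\ge n$, and the stabilization argument above applies verbatim. The main technical obstacle is in managing this refinement of the sequence while retaining the identity $\calf=\bigcup_i\calf_i$ and the saturation of each $\calf_i$; once handled, the conclusions (a), (b), (c) together with the invariance of each $T_i$ follow uniformly.
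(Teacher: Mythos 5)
Your treatment of parts (a)--(c) is essentially the paper's argument, and in fact slightly cleaner: by taking $U$ to be the $\autf(T)$-closure of some $T_{n_2}$ you guarantee that $U$ is $\autf(T)$-invariant, hence weakly closed in $\calf$, which is exactly what is needed to make $U$ receptive in each $\calf_i$ and run the extension argument. (The phrase ``Enlarging $n_2$ if necessary, arrange $U\le T_{n_2}$'' is circularly worded since $U$ depends on $T_{n_2}$; what is meant, and what you should say, is: compute $U$ from a fixed $n_2$, then pick a larger $m$ with $U\le T_m$ and replace $n_2$ by $m$ while keeping $U$.) Modulo that wording, (a)--(c) are fine.

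The proof of the final clause is wrong, in two ways. First, the statement to be proved concerns the \emph{given} sequence $(T_i)$; passing to a cofinal subsequence proves at best a different statement. (The lemma is invoked later, in the proof of Theorem \ref{t:seq-exotic}, precisely to assert that the $T_i$ coming from a prescribed sequence of realizable subsystems are eventually $\autf(T)$-invariant, so one cannot freely replace the sequence.) Second, and more fundamentally, the asserted refinement need not exist without already having the conclusion in hand: given $T_{i_k}$ $\autf(T)$-invariant, the $\autf(T)$-closure of the next $T_j$ is some finite $\autf(T)$-invariant $U_j\le T$, but there is no reason for $U_j$ to equal some later $T_{i_{k+1}}$; one only knows $U_j\le T_m$ for some $m$, and $T_m$ itself may fail to be invariant. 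You are implicitly assuming the very thing you set out to prove. The paper instead derives invariance of $T_i$ from parts (a)--(c): fix $\Omega_m(T)$ with $T_n\le\Omega_m(T)\le T_{n'}$ and $C_S(\Omega_m(T))=T$; for $\alpha\in\autf(T)$, $\alpha|_{\Omega_m(T)}\in\Aut_{\calf_i}(\Omega_m(T))$ by (c), extends by receptiveness to $\beta\in\Hom_{\calf_i}(T_i,S_i)$, and $\beta(T_i)\le C_{S_i}(\Omega_m(T))=T_i$; extending $\beta$ to $\gamma\in\autf(T)$ and using (b) gives $\gamma=\alpha$, hence $\alpha(T_i)=T_i$. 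You should replace your refinement argument with an extension-axiom argument of this type.
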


\begin{proof} Let $\rho_A\:\autf(T)\too\autf(A)$ be the homomorphism 
induced by restriction for each $\autf(T)$-invariant subgroup $A\le T$. By 
Lemma \ref{l:autf(A)}(b), there is $n_0\ge1$ such that for each 
$\autf(T)$-invariant subgroup $\Omega_{n_0}(T)\le A\le T$, $A$ is weakly 
closed in $\calf$ and $\rho_A$ is an isomorphism. Choose $n_1\ge1$ such 
that $T_{n_1}\ge\Omega_{n_0}(T)$; then the same conclusion holds whenever 
$T_{n_1}\le A\le T$. 

Since $\calf$ is the union of the $\calf_i$, we have 
$\autf(T_{n_1})=\bigcup_{i=n_1}^\infty\Aut_{\calf_i}(T_{n_1})$. This is an 
increasing union, and $\autf(T_{n_1})$ is finite since 
$T_{n_1}$ is finite. Hence there is $n\ge n_1$ 
such that $\autf(T_{n_1})=\Aut_{\calf_i}(T_{n_1})$ for all $i\ge n$. 

Now assume $i\ge n$ and $T_n\le A\le T_i$, where $A$ is 
$\autf(T)$-invariant. We already showed that (a) and (b) hold in this 
situation ($A$ is weakly closed in $\calf_i$ since it is weakly closed in 
$\calf$). Since $A$ and $T_{n_1}$ are both weakly closed in $\calf$ and in 
$\calf_i$, they are both fully centralized in $\calf_i$. For each 
$\alpha\in\autf(A)$, we have $\alpha|_{T_{n_1}}\in 
\autf(T_{n_1})=\Aut_{\calf_i}(T_{n_1})$, and by the extension axiom for 
$\calf_i$ (and since $A\le C_S(T_{n_1})$), this extends to 
$\alpha'\in\Aut_{\calf_i}(A)\le\autf(A)$. Since $\rho_A$ and 
$\rho_{T_{n_1}}$ are isomorphisms, restriction induces an isomorphism from 
$\autf(A)$ to $\autf(T_{n_1})$, and hence $\alpha'=\alpha$. Thus 
$\Aut_{\calf_i}(A)=\autf(A)$, and (c) holds. 

It remains to prove the last statement. Choose $m,n'\ge1$ such that 
$T_n\le\Omega_m(T)\le T_{n'}$. By Lemma \ref{l:autf(A)}, we can do this 
so that $C_S(\Omega_m(T))=T$, and so that each subgroup of $T$ containing 
$\Omega_m(T)$ is fully centralized in $\calf$. Set $A=\Omega_m(T)$, and 
note that $A$ is $\autf(T)$-invariant and hence (a), (b), and (c) 
hold for $A$. Fix $i\ge n'$ and $\alpha\in\autf(T)$. Then 
$\alpha|_A\in\autf(A)=\Aut_{\calf_i}(A)$ by (c), and since $A$ is weakly 
closed (hence fully centralized) in $\calf_i$ by (a), this extends to 
some $\beta\in\Hom_{\calf_i}(T_i,S_i)$ by the extension axiom. Also, 
$\beta(T_i)\le C_{S_i}(A)= C_{S_i}(\Omega_m(T))=T_i$. Since $T_i$ is 
fully centralized in $\calf$, $\beta$ extends to 
$\gamma\in\autf(T)$ by the extension axiom again. Then 
$\gamma|_A=\alpha|_A$, so $\alpha=\gamma$ by (b), and $\alpha(T_i)=T_i$. 
This proves that $T_i$ is $\autf(T)$-invariant for all $i\ge n'$, and so 
the last statement holds upon replacing $n$ by $n'$. 
\end{proof}

We next focus on strongly closed subgroups in fusion systems over discrete 
$p$-toral groups.

\begin{Defi} \label{d:minsc}
For each fusion system $\calf$ over a discrete $p$-toral group $S$, let 
$\calf\scl$ be the set of all \emph{nontrivial} subgroups $1\ne R\le S$ 
strongly closed in $\calf$. Set $\minsc\calf=\bigcap_{R\in\calf\scl}R$. 
\end{Defi}

Clearly, $\minsc\calf$ is always strongly closed in $\calf$. But it can be 
trivial.

\begin{Lem} \label{l:minsc}
Let $\calf$ be a saturated fusion system over a discrete $p$-toral group 
$S$, and let $\{\calf_i\}_{i\ge1}$ be an increasing sequence of saturated 
fusion subsystems of $\calf$ over $S_1\le S_2\le\cdots$ such that 
$S=\bigcup_{i=1}^\infty S_i$ and $\calf=\bigcup_{i=1}^\infty\calf_i$. Then 
there is $n\ge1$ such that $S_n\ge\Omega_1(Z(S))$ and $C_S(S_n)=Z(S)$. 
For each such $n$, we have $\minsc{\calf_i}\le\minsc{\calf_{i+1}}$ for 
all $i\ge n$, and $\minsc\calf=\bigcup_{i=n}^\infty\minsc{\calf_i}$. 
\end{Lem}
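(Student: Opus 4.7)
The plan is to reduce both numerical claims to an elementary commutation of a finite intersection with an increasing union of subgroups. The key observation is the following characterization: for any saturated fusion system $\cale$ over a nontrivial discrete $p$-toral group $P$, Lemma~\ref{l:P^Z(S)} forces every nontrivial strongly closed subgroup of $\cale$ to meet $\Omega_1(Z(P))$ nontrivially, hence to contain $\langle y^\cale\rangle$ for some $y\in\Omega_1(Z(P))\sminus\{1\}$. Since $\Omega_1(Z(P))$ is a finite elementary abelian $p$-group and each $\langle y^\cale\rangle$ is itself nontrivial and strongly closed in $\cale$, this yields the formula
\[ \minsc\cale \;=\; \bigcap_{y\in\Omega_1(Z(P))\sminus\{1\}}\langle y^\cale\rangle. \]

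First I would produce the integer $n$. Because $\Omega_1(Z(S))$ is finite and $S=\bigcup_i S_i$ is an increasing union, some $S_m$ already contains $\Omega_1(Z(S))$; and because $S$ is artinian, the descending sequence of centralizers $\{C_S(S_i)\}_i$ stabilizes, necessarily at $\bigcap_i C_S(S_i)=C_S(S)=Z(S)$, so $C_S(S_k)=Z(S)$ for some $k$. Taking $n=\max(m,k)$ handles both conditions. The crucial payoff is that for each $i\ge n$, the inclusions $S_n\le S_i\le S$ together with $C_S(S_n)=Z(S)$ force $Z(S_i)=Z(S)\cap S_i$, after which the containment $\Omega_1(Z(S))\le S_n\le S_i$ gives $\Omega_1(Z(S_i))=\Omega_1(Z(S))$. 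Thus the finite indexing set $\Omega_1(Z(S))\sminus\{1\}$ simultaneously parametrizes the characterization above both for $\calf$ and for every $\calf_i$ with $i\ge n$.

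Next, fix $y\in\Omega_1(Z(S))\sminus\{1\}$. Since $\gen y$ is finite, Definition~\ref{d:f.s.union} gives $y^\calf=\bigcup_{i\ge n}y^{\calf_i}$, and because the classes $y^{\calf_i}$ increase in $i$ we obtain
\[ \langle y^\calf\rangle \;=\; \bigcup_{i\ge n}\langle y^{\calf_i}\rangle \qquad\text{as subgroups.} \]
Monotonicity in $i$ of $\langle y^{\calf_i}\rangle$ immediately yields the second assertion,
\[ \minsc\calf_i = \bigcap_y\langle y^{\calf_i}\rangle \le \bigcap_y\langle y^{\calf_{i+1}}\rangle = \minsc\calf_{i+1}. \]
For the third assertion I would commute the finite intersection with the increasing union:
\[ \minsc\calf = \bigcap_y\langle y^\calf\rangle = \bigcap_y\bigcup_{i\ge n}\langle y^{\calf_i}\rangle = \bigcup_{i\ge n}\bigcap_y\langle y^{\calf_i}\rangle = \bigcup_{i\ge n}\minsc\calf_i, \]
the middle equality being valid because $\Omega_1(Z(S))\sminus\{1\}$ is finite and the inner chain is monotone increasing.

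The main obstacle I would anticipate is ensuring the identity $Z(S_i)=Z(S)\cap S_i$ for all $i\ge n$: without the choice of $n$ making $C_S(S_n)$ already equal to $Z(S)$, the center of $S_i$ could be strictly larger than $Z(S)\cap S_i$, and then $\Omega_1(Z(S_i))$ would grow with $i$, breaking the uniform indexing set on which both the characterization and the final commutation rely. Once that identity is in hand, the proof reduces to the elementary fact that a finite intersection commutes with a directed union.
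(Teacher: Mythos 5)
Your approach is genuinely different from the paper's, and cleaner if correct. The paper first proves the easy containment $\bigcup_{i\ge n}\minsc{\calf_i}\le\minsc\calf$ and that this union is strongly closed, and then must separately rule out the possibility that $\minsc{\calf_i}=1$ for all finite $i$ while $\minsc\calf\ne1$; for that case it runs an inverse-limit compactness argument over the finite sets $\scrc_i^{(V)}$. You instead encode $\minsc$ as an intersection over the finite, stabilized index set $\Omega_1(Z(S))\sminus\{1\}$ and commute this finite intersection with the directed union, which sidesteps the case split entirely.

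There is, however, a real gap in your key characterization. You assert that each $\gen{y^\cale}$ is strongly closed in $\cale$, and that is precisely what your argument needs in order to conclude $\minsc\cale\le\gen{y^\cale}$. But the subgroup generated by an $\cale$-conjugacy class is only normal in $P$; it is not obviously strongly closed, since a morphism of $\cale$ defined on a cyclic group $\gen{z_1\cdots z_k}$ (with each $z_j\in y^\cale$) has no reason to respect the factorization, and the extension axiom does not give an extension to $\gen{z_1,\dots,z_k}$. The repair is to replace $\gen{y^\cale}$ throughout by the smallest strongly closed subgroup of $\cale$ containing $y$ — call it $\mathrm{scl}_\cale(y)$ — which exists because any intersection of normal, strongly closed subgroups is again normal and strongly closed, and is nontrivial since all of them contain $y$. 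Then $\minsc\cale=\bigcap_y\mathrm{scl}_\cale(y)$ holds by your argument: the inclusion $\le$ is now immediate since each $\mathrm{scl}_\cale(y)\in\cale\scl$, and $\ge$ follows from Lemma~\ref{l:P^Z(S)} since every $R\in\cale\scl$ contains some $\mathrm{scl}_\cale(y)$. You then also need to justify, in place of the one-line identity $\gen{y^\calf}=\bigcup_i\gen{y^{\calf_i}}$, that the chain $(\mathrm{scl}_{\calf_i}(y))_{i\ge n}$ is increasing with union $\mathrm{scl}_\calf(y)$; this is elementary but not tautological — $R$ strongly closed in $\calf$ (or $\calf_j$ for $j>i$) forces $R\cap S_i$ strongly closed in $\calf_i$, giving monotonicity and one inclusion, while a union of an increasing chain of $\calf_i$-strongly closed subgroups is $\calf$-strongly closed because a morphism of $\calf$ on a cyclic subgroup of the union already lies in some $\calf_j$, giving the other. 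With these changes your finite-intersection-commutes-with-directed-union conclusion is correct.
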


\begin{proof} Since $\Omega_1(Z(S))$ is finite, we have 
$S_n\ge\Omega_1(Z(S))$ for $n$ large enough. Since 
$Z(S)=\bigcap_{i=1}^\infty C_S(S_i)$, and $S$ is artinian, we have 
$C_S(S_n)=Z(S)$ for $n$ large enough.

Now fix $n\ge1$ such that $S_n\ge\Omega_1(Z(S))$ 
and $C_S(S_n)=Z(S)$. It will be convenient to set $\calf_\infty=\calf$ and 
$S_\infty=S$, and then refer to indices $n\le i\le\infty$. We first claim 
that for each $n\le j<i\le\infty$, 
	\beqq S_j\ge \Omega_1(Z(S_i)), \quad
	\calf_j\scl \supseteq \{ R\cap S_j \,|\, R\in\calf_i\scl \}, 
	\quad\textup{and}\quad \minsc{\calf_j}\le\minsc{\calf_i}. 
	\label{e:minsc-1} \eeqq
The first statement holds since $S_j\ge 
S_n\ge\Omega_1(Z(S))=\Omega_1(C_S(S_i))\ge\Omega_1(Z(S_i))$. For each 
$R\in\calf_i\scl$, the subgroup $R\cap S_j$ is strongly closed in $\calf_j$ 
since $R$ is strongly closed in $\calf_i$, and $R\cap S_j\ge 
R\cap\Omega_1(Z(S_i))\ne1$ where the last inequality holds by Lemma 
\ref{l:P^Z(S)} and since $R\nsg 
S_i$. This proves the second statement, and the third holds since 
	\[ \minsc{\calf_i} = \bigcap\nolimits_{R\in\calf_i\scl}R 
	\ge \bigcap\nolimits_{R\in\calf_i\scl}(R\cap S_j) \ge 
	\bigcap\nolimits_{R\in\calf_j\scl}R = \minsc{\calf_j}. \]

By \eqref{e:minsc-1} when $i=\infty$, we have 
$\bigcup_{i=n}^\infty\minsc{\calf_i}\le\minsc\calf$, and it 
remains only to show that this is an equality. 
For each $x\in\bigcup_{i=n}^\infty\minsc{\calf_i}$ and each $y\in x^\calf$, 
there is $i\ge n$ such that $x\in\minsc{\calf_i}$, $x,y\in S_i$, 
and $y\in x^{\calf_i}$. Then $y\in\minsc{\calf_i}$ since that subgroup is 
strongly closed. Thus $\bigcup_{i=n}^\infty\minsc{\calf_i}$ is 
also strongly closed in $\calf$. So either $\minsc\calf$ is the union of 
the $\minsc{\calf_i}$, or $\minsc{\calf_i}=1$ for all $n\le i<\infty$ while 
$\minsc\calf\ne1$. It remains to show that this last situation cannot 
occur. 

For the rest of the proof, we assume that $\minsc\calf\ne1$, while 
$\minsc{\calf_i}=1$ for all $n\le i<\infty$. Set $U=\Omega_1(Z(S))$ for 
short. Thus $\minsc\calf\cap U\ne1$ since 
$\minsc\calf\nsg S$ (see Lemma \ref{l:P^Z(S)}). For each $i\ge1$ and $V\le 
U$, let $\scrc_i^{(V)}$ be the set of all $R\in\calf_i\scl$ such that 
$R\cap U=V$. Thus $\calf_i\scl$ is the (finite) union of the 
$\scrc_i^{(V)}$ for $V\subseteq U$. Also, 
	\beqq R\in\scrc_i^{(V)} ~ \implies ~ 
	R\cap S_j\in\scrc_j^{(V)} \quad \forall\,n\le j<i 
	\qquad\textup{and}\qquad \scrc_i^{(1)}=\emptyset \quad 
	\forall\,i\ge n\,: \label{e:minsc-2} \eeqq
the implication holds by \eqref{e:minsc-1} and the second statement since 
for each $1\ne P\nsg S_i$, 
	\[ P\cap U = P\cap\Omega_1(Z(S)) = P\cap\Omega_1(C_S(S_i)) 
	\ge P\cap\Omega_1(Z(S_i))\ne1. \]

Set $W=\minsc\calf\cap U\ne1$. If, for some finite $i\ge n$, we have $R\ge 
W$ for each $R\in\calf_i\scl$, then $\minsc{\calf_i}\ge W\ne1$, 
contradicting our assumption. Hence for each $n\le i<\infty$, there is 
$V_i\le U$ such that $V_i\ngeq W$ and $\scrc_i^{(V_i)}\ne\emptyset$. Since 
$U$ has only finitely many subgroups, there is $V\le U$ such 
that $V\ngeq W$ and $\scrc_i^{(V)}\ne\emptyset$ for infinitely many $i$, 
and hence by \eqref{e:minsc-2} for all $m\le i<\infty$ (some $m\ge n$). By 
\eqref{e:minsc-2} again, $V\ne1$. 

For this fixed subgroup $V$, the $\scrc_i^{(V)}$ form an inverse system of 
sets, nonempty for all $i\ge m$, where $\scrc_i^{(V)}\too\scrc_{i-1}^{(V)}$ 
sends $R$ to $R\cap S_{i-1}$. Also, $\scrc_i^{(V)}$ is finite for each $i$ 
since its members are subgroups of the finite $p$-group $S_i$. So the 
inverse limit is nonempty: there is a sequence of subgroups $R_m\le 
R_{m+1}\le\cdots$ with $R_i\in\scrc_i^{(V)}$ for each $i$. Set 
$R=\bigcup_{i=m}^\infty R_i$. Then $R\cap U=V$, and $R$ is strongly closed 
in $\calf_i$ for each $m\le i<\infty$ and hence in $\calf$. In particular, 
$R\in\calf\scl$ and $R\ngeq\minsc\calf$, a contradiction. 
\end{proof}

The first part of the following lemma implies that if $Q$ is a finite 
strongly closed subgroup in a saturated fusion system $\calf$ over a 
discrete $p$-toral group $S$, then there is a morphism of fusion systems 
from $\calf$ onto $\calf/Q$. For fusion systems over finite $p$-groups, 
this is originally due to Puig \cite[Proposition 6.3]{Puig}, and the proof 
below is based on that of \cite[Theorem II.5.14]{Craven}.  

\begin{Lem} \label{l:F/st.cl.}
Let $\calf$ be a saturated fusion system over a discrete $p$-toral group 
$S$, and let $Q\nsg S$ be a finite subgroup strongly closed in $\calf$. 
\begin{enuma} 

\item For each $P,R\le S$ and each $\varphi\in\homf(P,R)$, there is 
$\5\varphi\in\homf(PQ,RQ)$ such that $\varphi(g)\equiv\5\varphi(g)$ (mod 
$Q$) for all $g\in P$. 

\item If $R\le S$ is strongly closed in $\calf$, then $RQ/Q$ is strongly 
closed in $\calf/Q$. Conversely, if $R\ge Q$ and $R/Q$ is strongly closed 
in $\calf/Q$, then $R$ is strongly closed in $\calf$. In particular, if 
$R\le S$ is strongly closed in $\calf$, then so is $RQ$.

\end{enuma}
\end{Lem}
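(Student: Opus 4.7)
Part (b) is formal given (a). For the forward direction, if $R\le S$ is strongly closed in $\calf$ and $\5\varphi\in\homf(P',P'')$ with $P',P''\ge Q$ represents a morphism in $\calf/Q$, then any $x\in P'$ with $xQ\in RQ/Q$ decomposes as $x=rq$ for some $r\in R\cap P'$ and $q\in Q$; strong closure of both $R$ and $Q$ in $\calf$ give $\5\varphi(r)\in R$ and $\5\varphi(q)\in Q$, so $\5\varphi(x)\in RQ$. For the converse, assume $R\ge Q$ with $R/Q$ strongly closed in $\calf/Q$; given $\varphi\in\homf(P,S)$ and $x\in P\cap R$, apply (a) to produce $\5\varphi\in\homf(PQ,S)$ with $\5\varphi(x)\in\varphi(x)Q$. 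Then $\5\varphi/Q\in\Hom_{\calf/Q}(PQ/Q,S/Q)$ sends $xQ\in R/Q$ into $R/Q$, so $\5\varphi(x)\in R$, and hence $\varphi(x)\in\5\varphi(x)Q\subseteq R$ (using $Q\le R$). The ``in particular'' statement follows by combining both directions applied to $R$ and to $RQ$.

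For (a), the plan is to let $\calf^0\subseteq\calf$ be the collection of morphisms $\varphi\:P\to R$ admitting some lift $\5\varphi\in\homf(PQ,RQ)$ with $\5\varphi\equiv\varphi\pmod Q$ on $P$, and to prove $\calf^0=\calf$ by Alperin's fusion theorem (Theorem \ref{t:AFT}). Closure of $\calf^0$ under restriction is immediate, and closure under composition follows because strong closure of $Q$ forces every lift $\5\varphi$ to satisfy $\5\varphi(Q)\le Q$, so the composite of two lifts is congruent modulo $Q$ to the composite of the originals on the starting subgroup. Theorem \ref{t:AFT} then reduces (a) to showing that, for each fully normalized $P\in\calf^{rc}$ and each $\varphi\in\autf(P)$, the automorphism $\varphi$ lifts to some $\5\varphi\in\autf(PQ)$ with $\5\varphi|_P\equiv\varphi\pmod Q$.

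For such $P$ and $\varphi$, the plan is to construct the lift iteratively along the strictly ascending chain $P_0:=P$, $P_{i+1}:=N_{PQ}(P_i)$, which terminates at $P_N=PQ$ after finitely many steps because $PQ$ is a finite $p$-group and normalizer sequences strictly grow in finite $p$-groups. Given a partial lift $\varphi_i\:P_i\to S$ congruent to $\varphi$ modulo $Q$ on $P$, extending to $\varphi_{i+1}\:P_{i+1}\to S$ is an instance of the extension axiom at $P_i$: after transporting via Lemma \ref{l:NSP->NSQ} to a fully normalized $\calf$-conjugate $P_i^*$, where $\Aut_S(P_i^*)$ is Sylow in $\autf(P_i^*)$ by full automization, a Sylow adjustment of $\varphi_i^*$ by an element of $\Aut_{N_Q(P_i^*)}(P_i^*)$ brings the $p$-subgroup $\varphi_i^*\Aut_{P_{i+1}^*}(P_i^*)(\varphi_i^*)^{-1}$ inside $\Aut_S(\varphi_i^*(P_i^*))$, so the receptivity hypothesis applies and the extension exists; transporting back yields $\varphi_{i+1}$. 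The main obstacle is showing that the Sylow adjustment can always be chosen inside $\Aut_Q$: only then does the adjustment act trivially modulo $Q$ on $P$ (because $[P,Q]\le Q$ since $Q\nsg S$) and preserve the running congruence $\varphi_i\equiv\varphi\pmod Q$ along the chain. Strong closure of $Q$ is precisely what supplies enough control on the relevant $p$-subgroups of $\autf(P_i^*)$ to make this restricted Sylow argument viable.
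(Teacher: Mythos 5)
Your part (b) and the Alperin reduction of part (a) to automorphisms of fully normalized $P\in\calf^{rc}$ are both correct, and in the same spirit as the paper's proof. The gap is in the lifting step. You correctly locate the difficulty — the Sylow adjustment must not disturb the congruence modulo $Q$ — but your proposed fix, that the adjustment ``can always be chosen inside $\Aut_Q$'' (i.e., by an element of $\Aut_{N_Q(P_i^*)}(P_i^*)$), is unsupported and in general false: Sylow's theorem supplies a conjugating element in the ambient group, not one that happens to lie in the given $p$-subgroup $\Aut_Q(P_i^*)$, and strong closure by itself imposes no such restriction. The sentence ``Strong closure of $Q$ is precisely what supplies enough control\dots'' names the hypothesis that should do the work, but there is no argument behind it.

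The missing idea is to work with the normal subgroup $K=\Ker\bigl[\autf(P)\to\Aut(PQ/Q)\bigr]$ of $\autf(P)$ rather than with $\Aut_Q(P)$. Any $\sigma\in K$ (not only $\sigma\in\Aut_Q(P)$) acts trivially modulo $Q$, so it suffices to adjust by some $\sigma\in K$ — and this \emph{is} achievable because $\Aut_Q(P)\le K$, hence $\varphi\Aut_Q(P)\varphi^{-1}\le K$ by normality of $K$, while $K\cap\Aut_S(P)\in\sylp{K}$ since $P$ is fully automized; Sylow's theorem inside $K$ then gives the adjustment. The paper packages this as a Frattini argument, $\autf(P)=K\cdot N_{\autf(P)}(K\cap\Aut_S(P))$, factors $\varphi=\chi\psi$ with $\chi\in K$, extends $\psi$ (still congruent to $\varphi$ modulo $Q$) over $P_1=PN_S^K(P)$ by receptivity, and inducts on $|P\cap Q|$ — a cleaner parameter than your normalizer chain, since $PN_S^K(P)$ may be strictly larger than $N_{PQ}(P)$ and all that matters is that $|P_1\cap Q|>|P\cap Q|$. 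Without identifying $K$ as the relevant normal subgroup, your outline does not close.
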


\begin{proof} \textbf{(a) } By Alperin's fusion theorem (Theorem 
\ref{t:AFT}), it suffices to prove this when $\varphi\in\autf(P)$ and $P$ 
is fully normalized in $\calf$. We will do so by induction on $|P\cap Q|$. 
If $P\ge Q$, then there is nothing to prove, so we assume $P\ngeq Q$ and 
hence $|P\cap Q|<|Q|$.

Set 
	\[ K = \Ker\bigl[ \autf(P) \Right2{} \Aut(PQ/Q) \bigr] 
	\qquad\textup{and}\qquad
	N_S^K(P)=\{g\in N_S(P) \,|\, c_g^P\in K\}. \] 
Set $P_1=PN_S^K(P)$. We will show that $|P_1\cap Q|>|P\cap Q|$, and also 
that there is $\varphi_1\in\autf(P_1)$ such that 
$\varphi_1(g)\equiv\varphi(g)$ (mod $Q$) for each $g\in P$. Since $Q$ is 
finite, we can continue this procedure, and after finitely many steps 
construct a morphism $\5\varphi\in\autf(PQ)$ whose restriction to $P$ is 
congruent to $\varphi$ modulo $Q$. 

Now, $PQ>P$ since $P\ngeq Q$, so $PN_Q(P)=N_{PQ}(P)>P$ (see \cite[Lemma 
1.8]{BLO3}), and hence $N_Q(P)\nleq P$. Choose $x\in 
N_Q(P)\sminus P$; then $c_x^P\in K$, and so $x\in(P_1\cap Q)\sminus P$. 
This proves that $P_1\cap Q>P\cap Q$.

Since $K\nsg\autf(P)$, we have $K\cap\Aut_S(P)\in\sylp{K}$. So by the 
Frattini argument (see \cite[Theorem 1.3.7]{Gorenstein}), 
	\[ \autf(P) = K\cdot N_{\autf(P)}(K\cap\Aut_S(P)). \]
Thus $\varphi=\chi\psi$, where $\chi\in K$ and $\psi\in 
N_{\autf(P)}(K\cap\Aut_S(P))$. In particular, $\psi(g)\equiv\varphi(g)$ 
(mod $Q$) for each $g\in P$. By the extension axiom, $\psi$ extends to some 
$\varphi_1\in\autf(P_1)$ (recall $P_1=PN_S^K(P)$), which is what we needed 
to show. 

\smallskip

\noindent\textbf{(b) } If $R\le S$ and $RQ/Q$ is not strongly closed in 
$\calf/Q$, then there are elements $x\in R$ and $y\in S\sminus RQ$, 
and a map $\varphi\in\Hom_{\calf/Q}(\gen{xQ},\gen{yQ})$ that sends $xQ$ 
to $yQ$. Hence there is $\5\varphi\in\homf(Q\gen{x},Q\gen{y})$ such that 
$\5\varphi(x)\in yQ$, and in particular, $\5\varphi(x)\notin R$. So $R$ is 
not strongly closed in $\calf$.

Conversely, assume $R\ge Q$ is not strongly closed in $\calf$. Thus there 
are elements $x\in R$ and $y\in x^\calf\sminus R$, and 
$\varphi\in\homf(\gen{x},\gen{y})$ with $\varphi(x)=y$. By (a), there is 
$\psi\in\homf(Q\gen{x},Q\gen{y})$ with $\psi(x)\in yQ$, and in particular, 
$\psi(x)\notin R$. Then $(\psi/Q)(xQ)=yQ$ in $\calf/Q$, so $R/Q$ is not 
strongly closed in $\calf/Q$. 

The last statement ($R$ strongly closed implies $RQ$ strongly closed) 
follows immediately from the first two.
\end{proof}

In the following lemma, we show among other things that under certain 
conditions on a fusion system over $S$, the strongly closed subgroups 
properly contained in $S$ are all finite of bounded order.

\begin{Lem} \label{l:no.str.cl.}
Let $\calf$ be a saturated fusion system over an infinite discrete 
$p$-toral group $S$, and let $T\nsg S$ be the identity component. Assume 
that $S>T$ and $C_S(T)=T$. 
\begin{enuma} 

\item Assume there are elements $s_1,\dots,s_k\in S\sminus T$ such that 
$s_i^\calf\cap T\ne\emptyset$ for each $i$ and $S=T\gen{s_1,\dots,s_k}$. 
Then there is no proper subgroup $R<S$ containing $T$ that is strongly 
closed in $\calf$. 

\item Assume that
\begin{enumi} 
\item there is no proper subgroup $T\le R<S$ that is 
strongly closed in $\calf$; and 
\item no infinite proper subgroup of 
$T$ is invariant under the action of $\autf(T)$. 
\end{enumi} \smallskip
Let $Q\le S$ be the subgroup generated by all proper subgroups of $S$ 
that are strongly closed in $\calf$. Then $Q\le T$, $Q$ is finite 
and strongly closed in $\calf$, and $\minsc{\calf/Q}=S/Q$. 

\end{enuma}
\end{Lem}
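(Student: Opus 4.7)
The plan is to prove in sequence that every proper strongly closed subgroup $R<S$ of $\calf$ is a finite subgroup of $T$; that consequently $Q\le T$ is $\autf(T)$-invariant, finite, and strongly closed; and finally that $\minsc{\calf/Q}=S/Q$.

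First I would show that for any proper strongly closed $R<S$, the intersection $R\cap T$ is $\autf(T)$-invariant: for each $\alpha\in\autf(T)$, the restriction $\alpha|_{R\cap T}$ is a morphism in $\calf$ with image in $T$, so strong closure of $R$ forces $\alpha(R\cap T)\le R\cap T$. By (ii), $R\cap T$ is either finite or equals $T$; the latter would give $R\ge T$, which together with $R<S$ contradicts (i). Hence $R\cap T$ is finite, and $R$ itself is finite since $R/(R\cap T)$ embeds into the finite group $S/T$.

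The crucial step is $R\le T$. Hypothesis (i) implies the strong closure of $T$ in $\calf$ equals $S$ (otherwise it would be a proper strongly closed subgroup containing $T$), so one can find $s_1,\dots,s_k$ with $s_i^\calf\cap T\ne\emptyset$ and $S=T\gen{s_1,\dots,s_k}$, which is exactly the hypothesis of part (a). Suppose for contradiction $R\not\le T$; then $RT>T$. In the subcase $RT<S$, the argument of (a) applied to $RT$ (with suitable generators for $RT/T$ drawn from the $s_i$) forces $RT$ to be strongly closed in $\calf$, contradicting (i). In the subcase $RT=S$, the finite $R$ provides representatives $r_i\in R$ for each coset $s_iT$; by pairing each $r_i$ with the $\calf$-conjugate $t_i\in T$ of $s_i$ and invoking strong closure of $R$ iteratively, one deduces that $R\cap T$ must be infinite, contradicting its finiteness. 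Carrying out this case analysis is the main obstacle of the proof.

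With every proper strongly closed $R$ lying in $T$, $Q$ is a subgroup of $T$ and $\autf(T)$-invariant (generated by $\autf(T)$-invariant subgroups). By (ii), $Q$ is finite or $Q=T$. If $Q=T$, then $T$ is a directed union of finite joins $R_{i_1}\cdots R_{i_n}$ of proper strongly closed subgroups; each such join remains strongly closed in $\calf$ (for $r_j\in R_{i_j}\le T$ and $x=r_1\cdots r_n$, saturation extends any $\varphi\in\homf(\gen{x},S)$ to the abelian subgroup $\gen{r_1,\dots,r_n}\le T\le N_\varphi$, where $T\le N_\varphi$ uses $\gen{x}\le T$ abelian and $C_S(T)=T$; strong closure of each $R_{i_j}$ places $\bar\varphi(r_j)$ back in $R_{i_j}$, hence $\varphi(x)=\bar\varphi(x)$ in the join), so the continuity axiom would force $T$ itself to be strongly closed, contradicting (i). Hence $Q$ is finite and, by the same argument, strongly closed in $\calf$. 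Finally, by Lemma \ref{l:F/st.cl.}(b), a proper nontrivial strongly closed subgroup of $\calf/Q$ corresponds to a strongly closed subgroup $P<S$ of $\calf$ with $Q\le P$; since $P$ is then a proper strongly closed subgroup of $\calf$, the definition of $Q$ forces $P\le Q$, and hence $P=Q$. So the only nontrivial strongly closed subgroup of $\calf/Q$ is $S/Q$, giving $\minsc{\calf/Q}=S/Q$.
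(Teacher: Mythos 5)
Your outline diverges from the paper at the crucial step of showing $R\le T$ for every proper strongly closed $R<S$, and this is where there is a genuine gap (which you acknowledge). In the subcase $RT<S$, you claim ``the argument of (a) applied to $RT$ \dots\ forces $RT$ to be strongly closed, contradicting (i).'' But part (a) only asserts that proper subgroups containing $T$ are \emph{not} strongly closed; it never concludes that any subgroup \emph{is} strongly closed, and $RT$ was never assumed to be, so there is no contradiction. In the subcase $RT=S$ the reasoning is too vague to amount to a proof. The idea you are missing is short: for $x\in R\sminus T$, one has $[x,T]\ne1$ since $C_S(T)=T$, and because $T$ is divisible abelian and normalized by $x$, the map $t\mapsto[x,t]$ is a homomorphism $T\to T$, so $[x,T]$ is a nontrivial divisible $p$-subgroup of $T$ and hence infinite. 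But $R,T\nsg S$ gives $[x,T]\le[R,T]\le R\cap T$, contradicting the finiteness of $R\cap T$ you already established. This one-line divisibility argument replaces your entire case analysis.

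There is also a defect in your argument that finite joins of strongly closed subgroups of $T$ are strongly closed via the extension axiom: extending $\varphi\in\homf(\gen{x},S)$ requires first conjugating onto a fully centralized target, after which one controls a further $\calf$-conjugate of $\varphi(x)$, not $\varphi(x)$ itself. The paper avoids this by invoking Lemma~\ref{l:F/st.cl.}: part (a) of that lemma with the finite strongly closed $R_1$ in the role of $Q$ lifts any $\varphi\in\homf(\gen{x},S)$ to $\bar\varphi\in\homf(\gen{x}R_1,S)$ with $\bar\varphi(x)\equiv\varphi(x)\pmod{R_1}$; then $r_2=r_1^{-1}x\in\gen{x}R_1$, and strong closure gives $\bar\varphi(r_1)\in R_1$, $\bar\varphi(r_2)\in R_2$, hence $\varphi(x)\in\bar\varphi(x)R_1\subseteq R_1R_2$ --- this is exactly the ``in particular'' clause of Lemma~\ref{l:F/st.cl.}(b). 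Finally, the paper proves finiteness of $Q$ differently too, by showing that the full set of strongly closed subgroups of $T$ is finite (otherwise an inductive diagonal choice would produce an infinite strongly closed subgroup of $T$), whereas you argue by contradiction against $Q=T$. Your route could be made to work, but only after the join argument above is repaired.
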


\begin{proof} \textbf{(a) } Let $s_1,\dots,s_k\in S\sminus T$ be as 
assumed. If $R\ge T$ and is strongly closed in $\calf$, then 
for each $1\le i\le k$, we have $s_i\in R$ since it is $\calf$-conjugate to 
an element in $T\le R$, and hence $R\ge T\gen{s_1,\dots,s_k}=S$. 

\smallskip

\noindent\textbf{(b) } Assume (i) and (ii). If $R<S$ is strongly closed in 
$\calf$, then $[R,T]\le R\cap T$ since $R,T\nsg S$, and $R\cap T$ is 
invariant under the action of $\autf(T)$ since $R$ is strongly closed. 
Also, $R\cap T<T$ since $R\ngeq T$ by (i), so $R\cap T$ is finite by (ii). 

If $R<S$ is strongly closed in $\calf$ and $R\nleq T$, then for 
$x\in R\sminus T$, we have $[x,T]\ne1$ since $C_S(T)=T$, and $[x,T]$ is 
infinitely divisible since $T$ is abelian and infinitely divisible and 
normalized by $x$. Thus $R\cap T\ge[x,T]$ is infinite, which we just saw is 
impossible. So $R\le T$, and $R=R\cap T$ is finite.

Let $\scrc$ be the set of all subgroups of $T$ strongly closed in $\calf$. 
For each $n\ge1$, let $\scrc_n$ be the set of members of $\scrc$ contained 
in $\Omega_n(T)$. Thus each set $\scrc_n$ is finite, 
$\scrc=\bigcup_{i=1}^\infty\scrc_i$ since all members of $\scrc$ are 
finite, and there are maps $\omega_n\:\scrc\too\scrc_n$ that send 
$R\in\scrc$ to $R\cap\Omega_n(T)\in\scrc_n$. If $\scrc$ is infinite, then 
we can inductively choose indices $1\le n_0<n_1<n_2<\cdots$ together 
with subgroups $R_i\in\scrc_{n_i}$ such that $\omega_{n_i}^{-1}(R_i)$ is 
infinite, and such that $\omega_{n_i}(R_{i+1})=R_{i}$ and $R_{i+1}>R_i$ for 
each $i$. But then $\bigcup_{i=1}^\infty R_i\le T<S$ is infinite and 
strongly closed in $\calf$, which we just showed is impossible. 

Thus the set $\scrc$ is finite, and hence the subgroup $Q=\gen\scrc$ is 
finite. By Lemma \ref{l:F/st.cl.}(b), $Q$ is strongly closed in $\calf$ and 
no proper nontrivial subgroup of $S/Q$ is strongly closed in $\calf/Q$. 
So $\minsc{\calf/Q}=S/Q$.
\end{proof}


\section{Large abelian subgroups of finite simple groups}
\label{s:fin.p-gp}

\newcommand{\subex}{\textbf{\underline{\textup{ex}}}}
\tdef{expt}

The main results in this section are Propositions \ref{p:ex_p}, 
\ref{p:ex_p-le3}, and \ref{p:Weyl}, which together with the classification 
of finite simple groups (CFSG) are our main tools for proving that certain 
fusion systems are exotic. In Propositions \ref{p:ex_p} and 
\ref{p:ex_p-le3}, we show that if a known finite simple group $G$ has a 
``large abelian $p$-subgroup'' (in a sense made precise in 
Definition \ref{d:largeabel}), then $G$ must be of Lie type in 
characteristic different from $p$. In Proposition \ref{p:Weyl}, we 
show that if $G$ is of Lie type in characteristic different from $p$ and 
has a large abelian $p$-subgroup $A$, then $\Aut_G(A)$ must be one of the 
groups appearing in a very short list. All of these results are summarized 
by Corollary \ref{c:Weyl}.

All results in this section are independent of CFSG.
We begin with the following definition. 

\begin{Defi} \label{d:ex_p(S)}
Fix a prime $p$. For each finite $p$-group $S$, set $\subex(S)=p^n$ 
where $n\ge1$ is the smallest integer for which there is a sequence of 
subgroups $1=S_0<S_1<S_2<\cdots<S_k=S$, all of them normal in $S$, such 
that $|S_i|=p^i$ for each $i$, and such that for each $1\le i\le k$ there 
is an element $g_i\in S_i\sminus S_{i-1}$ of order at most $p^n$. For each 
finite group $G$, set $\subex_p(G)=\subex(S)$ if $S\in\sylp{G}$. We call 
$\subex(S)$ the \emph{subexponent} of the $p$-group $S$, and call 
$\subex_p(G)$ the $p$-subexponent of $G$. 
\end{Defi}

Note that the conditions on the $S_i$ in Definition \ref{d:ex_p(S)} are 
equivalent to requiring that they form a chief series for $S$. Note also 
that $\subex(S)=1$ if and only if $S=1$, and hence $\subex_p(G)=1$ if and 
only if $p\nmid|G|$. 


We first list some of the basic properties of $\subex(-)$. All of them 
follow easily from the definition. Let $\expt(S)$ denote the 
\emph{exponent} of a finite $p$-group $S$.

\begin{Lem} \label{l:ex_p(G)}
Fix a prime $p$, and let $S$ be a finite $p$-group.
\begin{enuma} 

\item In all cases, $\subex(S)\le \expt(S)$, with equality if $S$ is abelian. 

\item If $S=T\times U$, then $\subex(S)=\max\{\subex(T),\subex(U)\}$.

\item If $T\nsg S$, then $\subex(S/T)\le\subex(S)\le 
\expt(T)\cdot\subex(S/T)$.

\item If $S=TU$ where $T\nsg S$, then 
$\subex(S)\le\max\{\expt(T),\subex(U)\}$.

\item If $S\cong D_{2^k}$, $Q_{2^k}$, or $\SD_{2^k}$ for $k\ge4$, then 
$\subex(S)=2^{k-2}$.

\end{enuma}
\end{Lem}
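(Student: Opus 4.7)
The plan is to verify each part directly from the definition by constructing chief series with appropriately bounded representatives, or by obstructing any such series. For (a), any chief series representing $\subex(S)$ provides representatives $g_i$ of order at most $\expt(S)$, yielding $\subex(S) \le \expt(S)$. When $S$ is abelian, the identity $S_i = \langle S_{i-1}, g_i\rangle$ (forced by $|S_i/S_{i-1}|=p$) gives $S = \langle g_1, \ldots, g_k\rangle$, so $\expt(S) = \max_i|g_i| \le \subex(S)$ since all orders are powers of $p$. For (c), the inequality $\subex(S/T) \le \subex(S)$ follows by pushing a chief series realizing $\subex(S)$ forward to $S/T$: the distinct terms of the image form a chief $S/T$-series, and the nontrivial images of the $g_i$ serve as representatives of order $\le \subex(S)$. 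For the converse, take any chief $S$-series of $T$ with representatives $t_i$ of order $\le \expt(T)$ (such a series exists because $T\triangleleft S$ is a $p$-group, so $Z(S)\cap T\ne1$ gives an inductive refinement), and extend it by lifts $h_i\in S$ of a chief series of $S/T$ realizing $\subex(S/T) = p^{n'}$; since $h_i^{p^{n'}}\in T$, each lift satisfies $|h_i| \le \expt(T)\cdot p^{n'}$.

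Part (b) follows by concatenating chief series of $T$ and of $U$ via the inclusions into $T\times U$ to get $\subex(T\times U) \le \max\{\subex(T), \subex(U)\}$, together with the monotonicity in (c) applied to the two projections. For (d), take a chief $S$-series of $T$ with representatives of order $\le \expt(T)$, then extend via the chain $T = TU_0 \le TU_1 \le \cdots \le TU_m = S$, where $U_0 < \cdots < U_m = U$ is a chief series of $U$ realizing $\subex(U)$. Each $TU_i$ is $S$-normal: for $t\in T$ and $u\in U_i$, the identity $tut^{-1} = [t,u]\cdot u$ (with $[t,u]\in T$ since $T\triangleleft S$) shows $tTU_it^{-1} = TU_i$, while $uTU_iu^{-1} = TU_i$ for $u\in U$ since $U_i\triangleleft U$. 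After removing repetitions, chief representatives for the second segment can be taken to be the $u_i$'s themselves, of order $\le \subex(U)$, which gives the required bound.

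For (e), when $k\ge4$, let $r$ generate the unique cyclic subgroup of $S$ of index $2$. A short direct computation of the commutator $[r, r^js]$ (which lies in $\langle r\rangle$ and either equals $r^2$, in the dihedral and quaternion cases, or still generates $\langle r^2\rangle$, in the semidihedral case) shows that any $S$-normal subgroup not contained in $\langle r\rangle$ must contain $\langle r^2\rangle$ of order $2^{k-2}$. Consequently, for each $0\le i\le k-2$ the $S$-normal subgroup of order $2^i$ is unique, namely $\langle r^{2^{k-1-i}}\rangle$, and every chief series is forced to pass through these; in particular $S_{k-2} = \langle r^2\rangle$, and every element of $\langle r^2\rangle \sminus \langle r^4\rangle$ has order exactly $2^{k-2}$, yielding $\subex(S) \ge 2^{k-2}$. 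For the matching upper bound, extend this forced cyclic segment through the $S$-normal subgroup $\langle r^2, s\rangle$ of index $2$ and then to $S$, choosing representatives outside $\langle r\rangle$ such as $s$ and $rs$, both of order at most $4 \le 2^{k-2}$ in all three families. The main subtlety lies in the $S$-normality verification in (d) and the identification of the normal subgroup lattice in (e), but neither presents a serious obstacle.
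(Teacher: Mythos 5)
Your proof is correct and follows essentially the same approach as the paper's. For (c) and (d) your arguments are the paper's (lift a chief $S/T$-series by elements $h_i$ with $h_i^{p^{n'}}\in T$; extend a chief $S$-series of $T$ by the $TU_i$ and use that $TU_i=TU_{i-1}\gen{u_i}$ forces $u_i\notin TU_{i-1}$ at non-collapsing steps), and you supply the short verifications for (a), (b), and the normal-subgroup-lattice analysis for (e) that the paper leaves to the reader.
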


\begin{proof} Point (a) follows directly from Definition \ref{d:ex_p(S)}, 
and point (b) is clear. In point (e), $\subex(S)\ge 2^{k-2}$ since $S$ 
contains a unique normal subgroup of index $4$ and it is cyclic, and the 
opposite inequality is easily checked. 

The first inequality in (c) is clear. To see the second, set 
$p^n=\subex(S/T)$ and $p^m=\expt(T)$, and let 
$1=S_0<S_1<S_2<\cdots<S_\ell=T$ be an arbitrary sequence of subgroups 
normal in $S$ such that $|S_i|=p^i$. Let 
$T=S_\ell<S_{\ell+1}<\cdots<S_{\ell+k}=S$ be normal subgroups such that for 
each $i$, $|S_{\ell+i}/T|=p^i$ and $(S_{\ell+i}/T)\sminus(S_{\ell+i-1}/T)$ 
contains an element of order at most $p^n$. Then for 
each $1\le i\le \ell+k$, the set $S_i\sminus S_{i-1}$ contains an element 
of order at most $p^{m+n}$, and hence $\subex(S)\le 
p^{m+n}=\expt(T)\cdot\subex(S/T)$. 

Now assume $S=TU$ where $T\nsg S$. Let $1=S_0<S_1<\cdots<S_\ell=T$ be 
subgroups normal in $S$ such that $|S_i|=p^i$. Set $\subex(U)=p^n$, and 
let $1=U_0<U_1<\cdots<U_k=U$ be subgroups normal in $U$ such that 
$U_i\sminus U_{i-1}$ contains an element of order at most $p^n$ for each 
$1\le i\le k$. Set $S_{\ell+i}=TU_i$ for each $1\le i\le k$. For each such 
$i$, the subgroup $S_{\ell+i}$ contains $T$ and is normalized by $U$, and 
hence is normal in $S=TU$. Also, $U_i=U_{i-1}\gen{x_i}$ for some $x_i\in 
U_i$ of order at most $p^n$, so 
$S_{\ell+i}=TU_i=TU_{i-1}\gen{x_i}=S_{\ell+i-1}\gen{x_i}$. Finally, 
$|S_i|\le p^i$ for each $i$, so upon removing duplicated terms, we get a 
sequence of the form given in Definition \ref{d:ex_p(S)}, proving that 
$\subex(S)\le\max\{\expt(T),\subex(U)\}$.
\end{proof}

The main property of these functions $\subex(-)$ that we need here is the 
following:

\begin{Prop} \label{p:ex_p}
Fix a prime $p$, let $S$ be a finite nonabelian $p$-group, and assume 
$A\nsg S$ is a normal abelian subgroup. 
\begin{enuma}

\item If $\subex(S)=p^n$, then there is $x\in S\sminus A$ of order at 
most $p^n$ such that $[x,S]\le\Omega_n(A)$. 

\item If $1\le k\le m$ are such that $C_S(\Omega_k(A))=A$ and $\Omega_m(A)$ 
is homocyclic of exponent $p^m$ (i.e., every element of $\Omega_1(A)$ is a 
$p^{m-1}$-st power in $A$), then $\subex(S)\ge p^{m-k+1}$.

\end{enuma}
\end{Prop}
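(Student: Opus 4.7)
For part (a), the plan is to exploit any chief series of $S$ that realizes $\subex(S)=p^n$. Pick such a sequence $1=S_0<S_1<\cdots<S_k=S$ of subgroups normal in $S$ with $|S_i/S_{i-1}|=p$, together with elements $g_i\in S_i\sminus S_{i-1}$ of order at most $p^n$. Since $S$ is nonabelian and $A$ is abelian we have $A<S$, so there is a largest $j$ with $S_j\le A$. Because $|S_{j+1}/S_j|=p$ and $S_{j+1}\nleq A$, the only possibility is $A\cap S_{j+1}=S_j$, so $g_{j+1}\notin A$; this will be our $x$. To check that $[x,S]\le\Omega_n(A)$ I will combine two standard facts: first, since chief factors of a $p$-group are central in the corresponding quotient, $[S_{j+1},S]\le S_j$; second, $S_j=\gen{g_1,\dots,g_j}$ sits inside the abelian group $A$ and is generated by elements of order at most $p^n$, which forces $S_j\le\Omega_n(A)$. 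Putting these together gives $[x,S]\le S_j\le\Omega_n(A)$, as required.

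For part (b), the plan is to argue by contradiction using part (a). Assume $\subex(S)=p^n$ with $n\le m-k$, and let $x\in S\sminus A$ be the element produced by (a), so $|x|\le p^n$ and $[x,A]\le\Omega_n(A)$. Since $x\notin A=C_S(\Omega_k(A))$, there exists $y\in\Omega_k(A)$ with $[x,y]\ne1$. The key step is to use the homocyclic hypothesis on $\Omega_m(A)$: since $k\le m$, one has $\Omega_k(A)=\{z^{p^{m-k}}\,|\,z\in\Omega_m(A)\}$, so I may write $y=z^{p^{m-k}}$ for some $z\in A$. Because $A$ is abelian, $[x,z]$ lies in $A$ and commutes with $z$, so the commutator identity yields $[x,y]=[x,z^{p^{m-k}}]=[x,z]^{p^{m-k}}$. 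But $[x,z]\in\Omega_n(A)$ has order dividing $p^n\le p^{m-k}$, so $[x,z]^{p^{m-k}}=1$, contradicting $[x,y]\ne1$. Hence $n\ge m-k+1$.

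The main content is in (a); once that is established, (b) reduces to a short commutator calculation. The one subtlety worth isolating in (a) is the observation that although the definition of $\subex(S)$ only guarantees a single element of order $\le p^n$ per chief factor, the segment $S_j=\gen{g_1,\dots,g_j}$ up to where the series first leaves $A$ is automatically contained in $\Omega_n(A)$ thanks to the abelianness of $A$; this is what upgrades the weak bound $[x,S]\le S_j\le A$ to the sharper $[x,S]\le\Omega_n(A)$ needed in (b).
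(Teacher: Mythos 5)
Your proof follows essentially the same route as the paper's. In (a) you locate the first index $j$ at which a chosen chief series realizing $\subex(S)$ leaves $A$, observe that $S_j\le\Omega_n(A)$ (because $S_j\le A$ is abelian and generated by the elements $g_1,\dots,g_j$ of order $\le p^n$) and that $[S_{j+1},S]\le S_j$ since $S_{j+1}/S_j$ is a central chief factor; the paper does exactly this, quoting Lemma~\ref{l:ex_p(G)}(a) in place of your direct "abelian subgroup generated by elements of small order" remark, which is a purely cosmetic difference. In (b) the paper argues directly — for any $x\in S\sminus A$ it pulls a noncentralizing $a_0\in\Omega_k(A)$ back to $a\in A$ with $a^{p^{m-k}}=a_0$ and shows $[a,x]$ has order $\ge p^{m-k+1}$, then invokes (a) — whereas you run the same commutator computation (using that $a\mapsto[a,x]$ is a homomorphism on the abelian normal subgroup $A$) in contrapositive form, assuming $\subex(S)\le p^{m-k}$ and deriving a contradiction. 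The two are logically equivalent; your version is fine.
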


\begin{proof} \textbf{(a) } By assumption, there is a sequence 
$1=S_0<S_1<\cdots<S_k=S$ of subgroups normal in $S$, where $|S_i|=p^i$ 
for each $i=1,\dots,k$, and where there is an element of 
order at most $p^n$ in $S_i\sminus S_{i-1}$ for each $i=1,\dots,k$. Let 
$\ell\le k-1$ be such that $S_\ell\le A$ but $S_{\ell+1}\nleq A$ (recall 
that $S$ is nonabelian, so $S\ne A$). Then 
$\expt(S_\ell)=\subex(S_\ell)\le p^n$ by Lemma \ref{l:ex_p(G)}(a) and since 
$S_\ell$ is abelian, and hence $S_\ell\le\Omega_n(A)$. 

Now, $S_{\ell+1}/S_\ell$ is normal of order $p$ in $S/S_\ell$, hence lies in 
$Z(S/S_\ell)$, and so $[S,S_{\ell+1}]\le S_\ell$. By assumption, there is 
$x\in S_{\ell+1}\sminus S_\ell$ of order at most $p^n$. Then $x\notin A$ 
since $S_{\ell+1}\cap A=S_\ell$, and $[x,S]\le[S_{\ell+1},S]\le S_\ell\le 
\Omega_n(A)$. 

\smallskip

\noindent\textbf{(b) } For each $x\in S\sminus A$, since 
$C_S(\Omega_k(A))=A$, there is $a_0\in\Omega_k(A)$ such that $[a_0,x]\ne1$. 
Choose $a\in A$ such that $a^{p^{m-k}}=a_0$; then $[a,x]$ has order at 
least $p^{m-k+1}$ and lies in $[A,x]$. Hence $[A,x]\nleq\Omega_{m-k}(A)$, 
so $\subex(S)\ge p^{m-k+1}$ by (a). 
\end{proof}

We will use $\subex_p(-)$ to characterize certain simple groups of Lie 
type in characteristic different from $p$. The next proposition is the key 
to doing this.

\begin{Prop} \label{p:ex_p-le3}
Fix a prime $p$. Let $G$ be an alternating group, a simple group of Lie 
type in defining characteristic $p$, the Tits group $\lie2F4(2)'$ (if 
$p=2$), or a sporadic simple group. Then $\subex_p(G)\le p^3$, and 
$\subex_p(G)\le p^2$ if $p$ is odd. 
\end{Prop}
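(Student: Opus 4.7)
The plan is to treat the four families separately, exhibiting for a Sylow $p$-subgroup $S\in\sylp{G}$ a chief series whose successive factors admit representatives of order at most $p^2$ (or $p^3$ when $p=2$). The main tool throughout will be Lemma \ref{l:ex_p(G)}, especially parts (b) and (d), which reduce subexponent bounds to those of direct factors and of exponents along normal series.

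For the alternating groups $A_n$, Lemma \ref{l:ex_p(G)}(b) reduces the problem to the iterated wreath product $W_k = C_p\wr\cdots\wr C_p$ ($k$ factors), which is the Sylow $p$-subgroup of $\Sigma_{p^k}$. I would argue by induction on $k$ using the depth filtration $W_k = F_0 > F_1 > \cdots > F_{k-1} > F_k = 1$, where $F_i$ is the pointwise stabilizer of the first $i$ levels of the natural $p$-ary tree on which $W_k$ acts. The bottom piece $F_{k-1} \cong C_p^{p^{k-1}}$ is elementary abelian, and each chief factor of $F_i/F_{i+1}$ admits a representative of the form $(1,\ldots,1,g,1,\ldots,1)$ with $g$ a generator of the top $C_p$ of one of the nested copies of $W_{k-i}$, hence of order $p$. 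This yields $\subex(W_k) = p$ for $p$ odd. For $p=2$ the same scheme applies but with one extra level of exponent growth from the unavoidable order-$4$ elements in $D_8$, still within the bound $2^3$.

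For simple groups of Lie type in defining characteristic $p$, a Sylow $p$-subgroup is (a fixed subgroup of a Frobenius or Steinberg endomorphism acting on) the unipotent radical $U$ of a Borel subgroup of the ambient simple algebraic group. The filtration of $U$ by root height yields a normal series with elementary abelian quotients, each refined to a chief series in which every chief factor has a representative lying in a root subgroup isomorphic to the additive group of a field of characteristic $p$, hence of order $p$. The only deviations are the Suzuki and Ree families $\lie2B2(q)$, $\lie2G2(q)$, $\lie2F4(q)$, where certain short-root subgroup elements have order $p^2$, but this remains within the stated bounds. The Tits group $\lie2F4(2)'$ is then handled by direct inspection of its Sylow $2$-subgroup of order $2^{11}$.

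The main obstacle is the sporadic case, which will proceed through all $26$ sporadic groups and each prime dividing the order. For each pair $(G,p)$, the plan is to locate an explicit description of a Sylow $p$-subgroup in the Atlas or in \cite{GLS3} and apply Lemma \ref{l:ex_p(G)}(c)(d) to a suitable normal series extracted from that description. When $p$ divides $|G|$ only to a small power the Sylow is abelian or near-abelian and the bound is immediate; the real work is concentrated at $p \in \{2,3\}$ for the larger sporadic groups, where the Sylow is genuinely nonabelian but has a sufficiently explicit structure (central products of extraspecial groups, wreath-like products with small alternating groups, or normalizer chains tabulated in \cite{GLS3}) to exhibit representatives of order at most $p^3$ for $p=2$ and $p^2$ for $p=3$.
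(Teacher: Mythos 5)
The overall plan is the same as the paper's---the three cases (alternating, Lie type in characteristic $p$, sporadic) are treated separately and in essentially the same spirit---but there are both a structural gap and a couple of local errors that should be noted.

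The gap is the sporadic case, which you acknowledge is ``the main obstacle'' and then outline without carrying out. This is not a minor verification: there are $26$ sporadic groups and typically several relevant primes each, and for the larger ones (the Fischer groups, the Monster, $J_3$, $F_3$, $F_5$) the Sylow subgroups at $p=2,3$ are genuinely complicated, so the bound $\subex_p(G)\le p^3$ (resp.\ $p^2$) is not immediate from general structure and must be exhibited case by case. The paper's proof does exactly this, recording the witnesses in a table of subgroups $H_p\le G$ of index prime to $p$ with explicitly described Sylow $p$-subgroups, and for the few entries where the bound is not read off by a single application of Lemma~\ref{l:ex_p(G)} (e.g.\ $J_3$ at $p=3$, $F_3$ at $p=3$, $\Fi_{23}$ at $p=3$, $F_5$ at $p=2$) it spells out the chief series directly. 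Until you actually produce these witnesses, the sporadic case is a plan rather than a proof.

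There are also two smaller flaws. In the alternating case with $p=2$, the reduction to $W_k=C_2\wr\cdots\wr C_2$ is not valid as stated: for odd $p$ the Sylow $p$-subgroups of $A_n$ and $\Sigma_n$ coincide, but for $p=2$ a Sylow $2$-subgroup of $A_n$ is only an index-$2$ subgroup of a Sylow $2$-subgroup of $\Sigma_n$, so one has to identify its structure separately (the paper writes it as $E\rtimes T$ with $E$ elementary abelian). Moreover, the claim of ``one extra level of exponent growth from the unavoidable order-$4$ elements in $D_8$'' is simply wrong: a chief series $1<Z(D_8)<V_4<D_8$ admits representatives of order $2$ at every stage, so $\subex(D_8)=2$, and more generally the paper's argument gives $\subex_2(A_n)=2$, not merely $\le 2^3$. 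Your weaker bound does not break the proposition, but the reasoning behind it is incorrect. Finally, in the Lie-type case you list the Suzuki and Ree families as the only source of root groups of exponent $p^2$, but the odd-dimensional unitary groups $\PSU_{2n+1}(q)$ also have nonabelian long-root subgroups with exponent $p^2$ and should be included in that exception list (again not fatal, since the bound $p^2$ still holds, but the enumeration is incomplete).
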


\begin{proof} We consider the three cases separately. Fix $S\in\sylp{G}$. 

\smallskip

\noindent\textbf{Case 1: } Assume $G=A_n$ and $S\in\sylp{G}$. If $p$ is 
odd, then $S$ is a product of iterated wreath products $C_p\wr\cdots\wr 
C_p$, so $\subex_p(A_n)=\subex(S)=p$ by Lemma \ref{l:ex_p(G)}(b,d). If 
$p=2$, then $S\cong E\rtimes T$ where $E$ is elementary abelian of rank 
$[n/2]-1$ and $T$ is a product of iterated wreath products $C_2\wr\cdots\wr 
C_2$, so $\subex_2(S)=2$ by Lemma \ref{l:ex_p(G)}(b,d) again. 

\smallskip

\noindent\textbf{Case 2: } If $S\in\sylp{G}$ where $G$ is a finite group of 
Lie type in defining characteristic $p$, then $S$ has a normal series where 
each term is the semidirect product of the previous term with a root group. 
This follows from \cite[Theorem 3.3.1]{GLS3} when $G$ is a Chevalley group 
or a Steinberg group; from \cite[(3.4,3.8)]{Ree} when $G\cong\lie2F4(q)$ 
(and $p=2$); and holds when $G\cong\Sz(q)$ ($p=2$) or $G\cong\lie2G2(q)$ 
($p=3$) since $S$ is a root group in those cases. By \cite[Table 
2.4]{GLS3}, these root groups are all elementary abelian, except when $G$ 
is an odd dimensional unitary group or a Suzuki or Ree group in which case 
they can have exponent $p^2$. So $\subex_p(G)\le p^2$ in all cases 
(and $\subex_p(G)=p$ if $G$ is a Chevalley group, or a Steinberg group 
other than an odd dimensional unitary group).

By Lemma 1 and Section 3 in \cite{Parrott}, the Tits group $\lie2F4(2)'$ 
contains a Sylow 2-subgroup of the form $T=J\gen{x}$, where $J$ is an 
extension of $E=[J,J]\cong E_{2^5}$ by $E_{2^4}$ and $|x|=4$. So 
$\subex_2(\lie2F4(2)')\le4$. 

\smallskip

\noindent\textbf{Case 3: } In Table \ref{tbl:subex-spor}, for each sporadic 
group $G$, and each prime $p$ such that $S\in\sylp{G}$ is neither 
elementary abelian nor extraspecial of exponent $p$, we give an upper bound 
for $\subex_p(G)$, based on a chosen subgroup $H_p\le G$ containing $S$. 
These subgroups are all listed in the Atlas \cite{atlas}, and more precise 
references in many cases are given in Tables 2.1--2.2 and 
3.2--3.3 in \cite{sportame}. In most cases, the bound follows immediately from 
the description of $H_p$ together with Lemma \ref{l:ex_p(G)}. 

By \cite[\S\,5]{Janko-J3} or \cite[\S\,13]{A-overgp}, if $G=J_3$ and $p=3$, 
then $\Omega_1(S)\cong E_{3^3}$ and $S/\Omega_1(S)\cong E_9$, so 
$\subex_3(J_3)=9$. If $G=F_3$ and $p=3$, then by 
\cite[14.2(1,5)]{A-overgp}, $S$ is an extension of $E_{3^5}$ by 
$C_3\times(C_3\wr C_3)$, so $\subex_3(F_3)\le9$ by Lemma 
\ref{l:ex_p(G)}(c). If $G=\Fi_{23}$, then $H_3\cong 
O_8^+(3){:}3$ where a Sylow 3-subgroup of $O_8^+(3)$ has exponent at most $9$ 
by Lemma \ref{l:srk(GLn)}(b), so $\subex_3(\Fi_{23})\le9$ by Lemma 
\ref{l:ex_p(G)}(d).

If $G=F_5$ and $p=2$, then $H_2$ is an extension of the group $2^{1+8}_+$ 
of exponent $4$ by $A_5\wr C_2$. Since $\subex_2(A_5\wr C_2)=2$, we have 
$\subex_2(F_5)\le8$. 
\end{proof}

\begin{table}[ht] 
\begin{center}
\renewcommand{\arraystretch}{1.2}
\newcommand{\Sm}[1]{\text{\Small{$#1$}}}
\[ \begin{array}{c|cccccccccc}
G & M_{11} & M_{12} & M_{22} & M_{23} & M_{24} & J_1 & J_2 & J_3 & J_4 & 
\Co_3 \\
\hline
\subex_2(G) & 4 & \le4 & 2 & 2 & 2 & 2 & \le4 & \le4 & 2 & \le4 \\
H_2 & \Sm{\SD_{16}} & \Sm{4^2{:}D_{12}} & \Sm{2^4{:}A_6} & \Sm{2^4{:}A_7} & 
\Sm{2^4{:}A_8} & \Sm{2^3} & \Sm{2^{2+4}{:}S_3} & \Sm{2^{2+4}{:}S_3} & 
\Sm{2^{11}{:}M_{24}} & \Sm{2^4{\cdot}A_8} \\ 
\subex_3(G) & 3 & 3 & 3 & 3 & 3 & 3 & 3 & 9 & 3 & 3 \\
H_3 & \Sm{3^2} & \Sm{3^{1+2}_+} & \Sm{3^2} & \Sm{3^2} & \Sm{3^{1+2}_+} & 
\Sm{3} & \Sm{3^{1+2}_+} & \Sm{3^3.3^2} & \Sm{3^{1+2}_+} & 
\Sm{3^5{:}M_{11}} 
\end{array} \] 

\[ \begin{array}{c|ccccccccc}
G & \Co_2 & \Co_1 & \HS & \McL & \Suz & \He & \Ly & \Ru & \ON \\
\hline
\subex_2(G) & 2 & 2& 4 & 2 & \le4 & 2 & \le4 & \le4 & \le8 \\
H_2 & \Sm{2^{10}{:}M_{22}{:}2} & \Sm{2^{11}{:}M_{24}} & \Sm{4^3{:}L_3(2)} & 
\Sm{M_{22}} & \Sm{2^{4+6}{:}3A_6} & \Sm{2^6{:}3S_6} & \Sm{2A_{11}} & 
\Sm{2^{3+8}{:}L_3(2)} & \Sm{4^3{\cdot}L_3(2)} \\ 
\subex_3(G) & 3 & 3 & 3 & 3 & 3 & 3 & 3 & 3 & 3 \\
H_3 & \Sm{\McL} & \Sm{3^6{:}2M_{12}} & \Sm{S_8} & \Sm{3^4{:}A_6} & 
\Sm{3^5{:}M_{11}} & \Sm{3^{1+2}_+} & \Sm{3^5{:}M_{11}} & \Sm{3^{1+2}_+} & 
\Sm{3^4} \\ 
\subex_5(G) &5 & 5 & 5 & 5 & 5 & 5 & 5 & 5 & 5 \\
H_5 & \Sm{5^{1+2}_+} & \Sm{5^3{:}A_5} & \Sm{5^{1+2}_+} & \Sm{5^{1+2}_+} & 
\Sm{5^2} & \Sm{5^2} & \Sm{G_2(5)} & \Sm{5^{1+2}_+} & \Sm{5}  
\end{array} \] 

\[ \begin{array}{c|ccccccc}
G & \Fi_{22} & \Fi_{23} & \Fi_{24}' & F_5 & F_3 & F_2 & F_1 \\
\hline
\subex_2(G) & 2&\le4&\le4&\le8&\le4&\le8&\le8 \\
H_2 & \Sm{2^{10}{:}M_{22}} & \Sm{2^{11}{\cdot}M_{23}} & 
\Sm{2^{11}{\cdot}M_{24}} & \Sm{2^{1+8}_+.(A_5\wr2)} & 
\Sm{2^5{\cdot}L_5(2)} & \Sm{2^{1+22}_+{\cdot}\Co_2} & 
\Sm{2^{1+24}_+{\cdot}\Co_1} \\ 
\subex_3(G) & 3&\le9&\le9&3&\le9&\le9&\le9 \\
H_3 & \Sm{O_7(3)} & \Sm{O_8^+(3){:}3} & \Sm{3^7{\cdot}O_7(3)} & 
\Sm{3^4{:}2(A_4\times A_4)} & \Sm{3^5.3^4.\GL_2(3)} & 
\Sm{\Fi_{23}} & \Sm{3^8{\cdot}O_8^-(3)} \\ 
\subex_5(G) & 5&5&5&5&5&5&5 \\
H_5 & \Sm{5^2} & \Sm{5^2} & \Sm{5^2} & \Sm{5^{1+4}_+{:}(2^{1+4}_-.5)} 
& \Sm{5^{1+2}_+} & \Sm{F_5} & \Sm{5^{1+6}_+{:}4J_2} \\
\subex_7(G) & 7&7&7&7&7&7&7 \\
H_7 & \Sm{7} & \Sm{7} & \Sm{7^{1+2}_+} & \Sm{7} & \Sm{7^2} & 
\Sm{7^2} & \Sm{7^{1+4}_+{:}2S_7} 
\end{array} \] 
\end{center}
\caption{Upper bounds for $\subex_p(G)$ for sporadic groups $G$. In each 
case, $H_p$ is a subgroup of $G$ of index prime to $p$, except when in 
brackets in which case it is some group whose Sylow $p$-subgroups are 
isomorphic to those of $G$. The groups are described using \cite{atlas} 
notation; in particular, $H{:}K$ and $H{\cdot}K$ denote split and nonsplit 
extensions, respectively.} 
\label{tbl:subex-spor}
\end{table}


Propositions \ref{p:ex_p}(b) and \ref{p:ex_p-le3} motivate the following 
definition.

\begin{Defi} \label{d:largeabel}
A \emph{large abelian subgroup} of a finite $p$-group $S$ is a normal 
abelian subgroup $A\nsg S$ such that $C_S(\Omega_2(A))=A$ and $\Omega_5(A)$ 
is homocyclic of exponent $p^5$. A large abelian $p$-subgroup of an 
arbitrary finite group $G$ is a large abelian subgroup of a Sylow 
$p$-subgroup of $G$. 
\end{Defi}

Our next main result, Proposition \ref{p:Weyl}, describes the possible 
automizers of a large abelian subgroup of a simple group $G$. Some 
technical lemmas are first needed.

\begin{Lem} \label{l:expt(G)}
Let $B\le A$ be finite abelian groups, and set 
	\[ G = \{ \alpha\in\Aut(A) \,|\, \alpha|_B=\Id, ~ [\alpha,A]\le B 
	\}. \]
Then $G$ is abelian, and $\expt(G)=\gcd(\expt(B),\expt(A/B))$.
\end{Lem}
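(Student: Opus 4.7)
The plan is to identify $G$ with the abelian group $\Hom(A/B,B)$ via an explicit isomorphism; this simultaneously shows that $G$ is abelian and reduces the exponent calculation to a standard fact about Hom groups. Writing $A$ additively (which I may do since $A$ is abelian), for $\alpha\in G$ I define $f_\alpha\:A\too B$ by $f_\alpha(a)=\alpha(a)-a$. The image lies in $B$ because $[\alpha,A]\le B$, and $f_\alpha$ is a homomorphism precisely because $A$ is abelian: $f_\alpha(a+a')=(\alpha(a)-a)+(\alpha(a')-a')$. Since $\alpha|_B=\Id$, $f_\alpha$ vanishes on $B$ and descends to a homomorphism $\phi_\alpha\in\Hom(A/B,B)$.

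Next I verify that $\Phi\:G\too\Hom(A/B,B)$, $\alpha\mapsto\phi_\alpha$, is a group isomorphism. An inverse is provided by $f\mapsto\alpha_f$, where $\alpha_f(a)=a+f(a+B)$; this has explicit inverse $a\mapsto a-f(a+B)$ (meaningful because $f(a+B)\in B$, so $a$ and $\alpha_f(a)$ represent the same coset of $B$ in $A$), hence $\alpha_f\in\Aut(A)$, and clearly $\alpha_f\in G$ with $\Phi(\alpha_f)=f$. For the homomorphism property, a direct computation gives
\[ f_{\alpha\beta}(a)=\alpha(\beta(a))-a=f_\alpha(\beta(a))+f_\beta(a), \]
and since $\beta(a)-a=f_\beta(a)\in B$ and $f_\alpha$ vanishes on $B$, we have $f_\alpha(\beta(a))=f_\alpha(a)$. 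Hence $f_{\alpha\beta}=f_\alpha+f_\beta$, which shows both that $\Phi$ is a homomorphism and that $G$ is abelian.

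To compute the exponent I work one prime $p$ at a time, using that $\Hom$ from a $p$-group to a $p'$-group is trivial so $\Hom(A/B,B)$ splits over primes. Decomposing $(A/B)_{(p)}\cong\bigoplus_i\Z/p^{a_i}$ and $B_{(p)}\cong\bigoplus_j\Z/p^{b_j}$ and using $\Hom(\Z/p^a,\Z/p^b)\cong\Z/p^{\min(a,b)}$, the $p$-part of $\expt(\Hom(A/B,B))$ equals $p^{\max_{i,j}\min(a_i,b_j)}$. The elementary identity
\[ \max_{i,j}\min(a_i,b_j)=\min\bigl(\max_i a_i,\max_j b_j\bigr), \]
valid for any finite sequences of nonnegative integers, then identifies this exponent with $p^{\min(v_p(\expt(A/B)),v_p(\expt(B)))}$, which is the $p$-part of $\gcd(\expt(A/B),\expt(B))$.

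There is no real obstacle; the only point requiring care is the cocycle identity $f_{\alpha\beta}=f_\alpha+f_\beta$, which both pins down the abelianness of $G$ and makes the exponent calculation reduce cleanly to a Hom-group computation.
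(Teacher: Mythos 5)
Your proof is correct and takes essentially the same route as the paper: both identify $G$ with $\Hom(A/B,B)$ via the correspondence $\alpha\leftrightarrow\alpha-\Id_A$, with the key observation (your $f_\alpha(\beta(a))=f_\alpha(a)$; the paper's $\rho\circ\sigma=0$ for $\rho,\sigma\in D$) that this correspondence is additive, and then reduce to the exponent of a $\Hom$ group. You simply spell out more explicitly the $\gcd$ formula for $\expt(\Hom(A/B,B))$, which the paper invokes as standard.
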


\begin{proof} We write the groups $A$ and $B$ additively. Set 
	\[ D = \{ \varphi\in\End(A) \,|\, \Im(\varphi)\le B\le \Ker(\varphi) \} 
	\cong \Hom(A/B,B), \]
regarded as an additive group. For all $\rho,\sigma\in D$, we have 
$\rho\circ\sigma=0$, so 
$(\rho+\Id_A)\circ(\sigma+\Id_A)=(\rho+\sigma+\Id_A)$. In particular, this 
shows that $\rho+\Id_A\in G$ for all $\rho\in D$, so there is an injective 
homomorphism $\chi\:D\too G$ defined by $\chi(\rho)=\rho+\Id_A$. Also, 
$\alpha-\Id_A\in D$ for each $\alpha\in G$ by definition of $G$, so $\chi$ 
is an isomorphism, and hence 
	\beq \expt(G) = \expt(D) = \expt(\Hom(A/B,B)) = 
	\gcd(\expt(A/B),\expt(B)). \qedhere \eeq
\end{proof}

\begin{Lem} \label{l:Ai<|S}
Fix a prime $p$ and an integer $m\ge1$. Let $S$ be a finite, nonabelian 
$p$-group, and let $A\nsg S$ be a normal abelian subgroup such that 
\begin{enumi} 

\item $A=C_S(\Omega_m(A))$, and 

\item $\Omega_{2m}(A)$ is homocyclic of exponent $p^{2m}$.

\end{enumi}
Then $A$ is the only normal abelian subgroup of $S$ that satisfies \rm{(i)}.
\end{Lem}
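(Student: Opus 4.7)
The plan is to show that any other abelian normal subgroup $B \nsg S$ satisfying (i) must centralize $A$, hence be contained in $A$, and then to derive $A = B$ by a symmetric application of (i). Set $C = A \cap B$. Since $A, B$ are both normal and abelian, $[A, B] \le C$, so each $b \in B$ induces an automorphism $M_b = \Id + N_b$ of $A$, where $N_b \: A \to A$ satisfies $N_b(A) \le C$ and $N_b|_C = 0$ (the latter since $B$ is abelian and $C \le B$); in particular $N_b^2 = 0$ and $M_b^{p^k} = \Id + p^k N_b$ for all $k$.

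The first key step is an \emph{order-matching lemma}: for each $b \in B$, the order of $M_b$ in $\Aut(A)$, the order of $M_b|_{\Omega_m(A)}$ in $\Aut(\Omega_m(A))$, and the order of $bC$ in $B/C$ all coincide. Here (i) for $A$ is used in the form $C_S(A) = C_S(\Omega_m(A)) = A$: on the one hand, if $b^{p^n} \in C \le A$ then it acts trivially on $A$, and a fortiori on $\Omega_m(A)$; on the other hand, if $b^{p^n} \notin C$, then $b^{p^n} \in B \sminus A$ fails to centralize $\Omega_m(A)$, so $M_b^{p^n}$ is nontrivial both as an automorphism of $A$ and of $\Omega_m(A)$.

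The second step uses (ii) to produce an upper bound $|M_b|_{\Omega_m(A)} \le p^{\max(0,\, j-m)}$, where $p^j = |M_b|_A$. Since $\Omega_{2m}(A)$ is homocyclic of exponent $p^{2m}$, one has $\Omega_m(A) = p^m\Omega_{2m}(A)$, so every $x \in \Omega_m(A)$ may be written as $x = p^m y$ for some $y \in \Omega_{2m}(A)$, and the standard commutator identity in the abelian group $A$ gives $N_b(x) = p^m N_b(y)$. Since $N_b(A) \le \Omega_j(C)$ (this being the content of $|M_b|_A = p^j$), it follows that $N_b(\Omega_m(A)) = p^m N_b(\Omega_{2m}(A)) \le \Omega_{\max(0,\, j-m)}(C)$, from which the claimed bound is immediate.

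Combining the two steps, the equality $|M_b|_A = |M_b|_{\Omega_m(A)}$ forces $j = \max(0,\, j - m)$, which is only possible if $j = 0$ (the case $j \ge m$ giving the contradiction $m \le 0$). Thus $M_b = \Id$ for every $b \in B$, meaning $B \le C_S(A) = A$. Finally (i) for $B$ enters: since $A$ is abelian and centralizes $\Omega_m(B) \le B \le A$, one has $A \le C_S(\Omega_m(B)) = B$, and hence $A = B$. The main obstacle is setting up the order-matching lemma cleanly; once that is in hand, the $p^m$-contraction coming from (ii) delivers the contradiction almost immediately.
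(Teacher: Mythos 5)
Your argument is correct, though it runs a genuinely different path than the paper's. You bound, element by element, the order of the conjugation automorphism that each $b\in B$ induces on $A$: your order-matching lemma (which uses hypothesis (i) for $A$ in the form $C_S(A)=C_S(\Omega_m(A))=A$) forces the order of $M_b$ on $A$, its order on $\Omega_m(A)$, and the order of $bC$ in $B/C$ to coincide, while the $p^m$-contraction coming from (ii) caps the second of these at $p^{\max(0,\,j-m)}$ when the first is $p^j$; together these force $j=0$, giving $B\le C_S(A)=A$. The paper instead considers the action of $A$ on $\Omega_m(B)$ and invokes its Lemma~\ref{l:expt(G)} to bound $\expt\bigl(A/(A\cap B)\bigr)\le p^m$ once and for all; it then applies (ii) to push $\Omega_m(A)$ into $A\cap B$, and concludes $B\le A$ via $C_S(\Omega_m(A))\ge C_S(\Omega_m(B))$. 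Both arguments use (ii) at the same moral point (exploiting $\Omega_m(A)=p^m\Omega_{2m}(A)$), and both finish by invoking the hypothesis for $B$. The paper's route is shorter because Lemma~\ref{l:expt(G)} packages the commutator bookkeeping at the level of the quotient $A/(A\cap B)$, whereas your $M_b=\Id+N_b$, $N_b^2=0$ calculation re-derives an instance of the same mechanism by hand; on the other hand, your version makes the role of hypothesis (i) for $A$ especially transparent in the order-matching step. Both are sound.
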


\begin{proof} Let $A^*$ be an arbitrary normal abelian subgroup of 
$S$ such that $A^*=C_S(\Omega_m(A^*))$. Set $B=A\cap A^*$. Then 
$[A,\Omega_m(A^*)]\le A\cap\Omega_m(A^*)=\Omega_m(B)$ since $A$ and 
$\Omega_m(A^*)$ are both normal. So $\Aut_{A}(\Omega_m(A^*))$ has exponent 
at most $p^m$ by Lemma \ref{l:expt(G)}, applied with 
$\Omega_m(B)\le\Omega_m(A^*)$ in the role of $B\le A$. Also, 
$\Aut_{A}(\Omega_m(A^*))\cong A/C_{A}(\Omega_m(A^*))=A/(A\cap A^*)=A/B$ by 
assumption, so $A/B$ has exponent at most $p^m$.

Since $\Omega_{2m}(A)$ is homocyclic of exponent $p^{2m}$ by (ii), every 
element of $\Omega_m(A)$ is a $p^m$-power in $A$, and hence lies in 
$B$. So $\Omega_m(A)=\Omega_m(B)\le\Omega_m(A^*)$, and hence $A= 
C_S(\Omega_m(A))\ge C_S(\Omega_m(A^*))=A^*$ by (i). So 
$\Omega_m(A)=\Omega_m(A^*)$, and hence $A=A^*$.
\end{proof}

In particular, Lemma \ref{l:Ai<|S} shows that if $A\nsg S$ is a large 
abelian subgroup, and $B\nsg S$ is such that $C_S(\Omega_2(B))=B$, then 
$B=A$.

\begin{Lem} \label{l:CS(Omega1)}
Let $T$ be a discrete $p$-torus, and let $G\le\Aut(T)$ be a finite group of 
automorphisms. Then $G$ acts faithfully on $\Omega_1(T)$ if $p$ is odd, 
and on $\Omega_2(T)$ if $p=2$.
\end{Lem}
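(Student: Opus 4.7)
The plan is to identify $\Aut(T)$ with $\GL_r(\Z_p)$, where $r=\rk(T)$, and reduce the statement to the standard fact that the principal congruence subgroup $\Ker\bigl(\GL_r(\Z_p)\to\GL_r(\Z/p^n)\bigr)$ is torsion-free for $n\ge 1$ when $p$ is odd and for $n\ge 2$ when $p=2$.

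To set up the identification, I would use that $\Z/p^\infty=\varinjlim_n\Z/p^n$, whence $\End(\Z/p^\infty)\cong\varprojlim_n\Z/p^n=\Z_p$. Writing $T\cong(\Z/p^\infty)^r$ then yields a ring isomorphism $\End(T)\cong M_r(\Z_p)$ and in particular $\Aut(T)\cong\GL_r(\Z_p)$. Under this isomorphism the action of $\Aut(T)$ on $\Omega_n(T)\cong(\Z/p^n)^r$ corresponds to reduction modulo $p^n$, so the kernel of the action is the congruence subgroup $\Gamma_n=I+p^nM_r(\Z_p)$. Since $G\le\Aut(T)$ is finite, showing faithfulness of its action on $\Omega_n(T)$ amounts to showing that $\Gamma_n$ contains no nontrivial element of finite order, and hence it suffices to prove that $\Gamma_n$ is torsion-free.

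For this last point I would take any $A=I+p^nM\in\Gamma_n$ with $M\ne 0$, write $M=p^sM'$ with $M'\not\equiv 0\pmod p$, and for each prime $q$ expand $A^q-I=\sum_{j=1}^{q}\binom{q}{j}p^{(n+s)j}(M')^j$, tracking $p$-adic valuations entrywise. When $q\ne p$ the $j=1$ term has valuation $n+s$ while all others have valuation at least $2(n+s)$, so $A^q\ne I$. When $q=p$ the only competitor to the $j=1$ term (of valuation $1+(n+s)$) is the $j=p$ term (of valuation $p(n+s)$, since $\binom{p}{p}=1$ contributes no factor of $p$); the required inequality $p(n+s)>1+(n+s)$ holds precisely when $n\ge 1$ if $p$ is odd and $n\ge 2$ if $p=2$, so again $A^p\ne I$. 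Hence $\Gamma_n$ contains no element of prime order and is therefore torsion-free.

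The main subtlety, and the reason for the dichotomy in the statement, is precisely the borderline case $p=2$, $n=1$: the element $-I=I+2(-I)\in\Gamma_1\sminus\Gamma_2$ has order two and acts trivially on $\Omega_1(T)$ but faithfully on $\Omega_2(T)$. This both explains why the $p=2$ hypothesis must jump to $n\ge 2$ and shows that the bound in the statement is sharp.
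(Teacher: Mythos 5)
Your proof is correct and follows essentially the same route as the paper: identify $\Aut(T)$ with $\GL_r(\Z_p)$, reduce to showing that the congruence subgroup $I+p^nM_r(\Z_p)$ contains no nontrivial element of finite order, and establish this by a binomial expansion with $p$-adic valuation bookkeeping, with the breakdown at $p=2$, $n=1$ coming from $-I$ exactly as in the paper's closing remark. Your reduction to elements of prime order is a small streamlining of the paper's direct treatment of a general finite-order element $(I+qX)^m$ with $m=p^j\ell$, since it avoids estimating valuations of the binomial coefficients $\binom{p^j\ell}{t}$, but the underlying argument is the same.
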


\begin{proof} Set $q=p^k$, where $k=1$ if $p$ is odd and $k=2$ if $p=2$. 
Let $\theta\:\Aut(T)\too\Aut(\Omega_k(T))$ be the homomorphism induced by 
restriction; we must show that $\theta|_G$ is injective. Upon identifying 
$\Aut(T)=\GL_n(\Z_p)$ and $\Aut(\Omega_k(T))=\GL_n(\Z/q)$, we see that it 
suffices to show that the multiplicative group $\Ker(\theta)=I+qM_n(\Z_p)$ 
contains no nonidentity elements of finite order. Note that this is not 
true when $q=2$, since $-I\in I+2M_n(\Z_p)$. 

Assume otherwise: let $0\ne X\in M_n(\Z_p)$ and $n>1$ be such that 
$(I+qX)^n=I$. Let $i\ge k$ and $Y\in M_n(\Z_p)$ be such that $qX=p^iY$ and 
$p\nmid Y$. Write $n=p^jm$ where $p\nmid m$. Then
	\begin{align*} 
	I = (I+qX)^n &= (I+p^iY)^{p^jm} = I+p^{i+j}mY + 
	p^{2i+j}m\bigl(\tfrac{n-1}2\bigr)Y^2 + \dots \\
	&\equiv I+p^{i+j}mY \pmod{p^{i+j+1}M_n(\Z_p)}, 
	\end{align*} 
which is impossible. (This argument does not work when $p=2$ and 
$i=1$, since $p^{2i+j}=p^{i+j+1}$ and the factor $\frac{n-1}2$ need not be in 
$\Z_p$.) 
\end{proof}

\begin{Not} \label{d:ST}
For all $4\le i\le37$, let $\ST{i}$ denote the $i$-th group in the 
Shephard-Todd list \cite[Table VII]{ST} of irreducible unitary reflection 
groups. For each prime $p$, each $k\mid m\mid(p-1)$, and each $n\ge2$, 
set 
	\begin{multline*} 
	G(m,k,n) = \Gen{ \diag(u_1,\dots,u_n) \,\big|\, 
	u_1^m=\cdots=u_n^m=1 ,~ (u_1u_2\dots u_n)^{m/k}=1 } 
	\, \textup{Perm}_n \\
	\le \GL_n(\F_p);  
	\end{multline*}
i.e., the group of all monomial matrices in $\GL_n(\F_p)$ whose nonzero 
entries are $m$-th roots of unity with product an $(m/k)$-th root of unity.
\end{Not}

Thus for each $m$, $k$, and $n$ as above, $G(m,k,n)$ is normal of index $k$ 
in $G(m,1,n)\cong C_m\wr\Sigma_n$.

We are now ready to apply Lemma \ref{l:Ai<|S} to show that if a finite 
simple group $G$ of Lie type contains a large abelian $p$-subgroup 
$A$, then its automizer is one of the groups on a very short list. 

\newcommand{\cptG}{\textup{\textbf{G}}}
\newcommand{\cptH}{\textup{\textbf{H}}}
\newcommand{\cptT}{\textup{\textbf{T}}}

\begin{Prop} \label{p:Weyl}
Fix a prime $p$, and let $G$ be a finite simple group of Lie type in 
characteristic different from $p$. Assume 
$G$ has a large abelian $p$-subgroup $A\le G$. 
Set $k=2$ if $p=2$, or $k=1$ if $p$ is odd. Then 
$(\Aut_G(A),\Omega_k(A))\cong(W,M)$ where either 
\begin{enuma} 

\item $W\cong\Aut_{\cptG}(\cptT)$ for some simple compact connected Lie 
group $\cptG$ with maximal torus $\cptT$, and $M$ is the group of elements 
of order dividing $p^k$ in $\cptT$; or

\item $W\cong G(m,1,n)\cong C_m\wr\Sigma_n$ where $3\le m\mid(p-1)$, $n\ge 
p$, and $M\cong(\F_p)^n$ has the natural action of $W\le\GL_n(p)$; or 

\item $W\cong G(2m,2,n)$ where $2\le m\mid(p-1)/2$, $n\ge p$, and 
$M\cong(\F_p)^n$ has the natural action of $W\le\GL_n(p)$; or 

\item $p=3$, $W\cong\ST{12}\cong\GL_2(3)$, and $M\cong(\F_3)^2$ has the 
natural action of $W$; or 

\item $p=5$, $W\cong\ST{31}$ (an extension of the form 
$(C_4\circ2^{1+4}).\Sigma_6$), and $M$ is the unique faithful 
$\F_5W$-module with $\rk(M)=4$. 

\end{enuma} 
\end{Prop}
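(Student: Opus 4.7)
The plan is to show that the hypotheses force $A$ to be (essentially) the Sylow $p$-subgroup of an $F$-stable maximal torus of $G$, after which the classification of $(\Aut_G(A),\Omega_k(A))$ reduces to enumerating, over all finite simple groups $G$ of Lie type in characteristic $\ne p$ and all $F$-conjugacy classes of maximal tori, the relative Weyl groups and their action on the $p$-part.

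First, I would apply Lemma \ref{l:Ai<|S} with $m=2$: the condition $C_S(\Omega_2(A))=A$ comes directly from the large-abelian hypothesis, and $\Omega_4(A)$ is homocyclic of exponent $p^4$ since it equals the $p^4$-torsion inside the homocyclic group $\Omega_5(A)\cong(\Z/p^5)^r$. The lemma then guarantees that $A$ is the unique normal abelian subgroup of $S$ satisfying $C_S(\Omega_2(A))=A$. This rigidity is the key leverage for pinning down $A$ inside $G$.

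Next, I would realize $G$ as a central quotient of $\cptG^F$, where $\cptG$ is a simple reductive algebraic group over $\overline{\F}_r$ (some prime $r\ne p$) and $F$ is a Steinberg endomorphism. Every $p$-element of $G$ is semisimple, so $A$ lies inside some $F$-stable maximal torus $\cptT\le\cptG$. Using the homocyclic condition (which forces $A$ to have large $p$-rank) together with the self-centralizing condition $C_S(\Omega_2(A))=A$, I would then argue that $A$ is precisely the Sylow $p$-subgroup of $\cptT^F$ for a suitable $\cptT$. Consequently $\Aut_G(A)$ is the image of the relative Weyl group $W(\cptT,F)=N_{\cptG^F}(\cptT)/\cptT^F$ in $\Aut(A)$, and this image acts faithfully on $\Omega_k(A)$ by Lemma \ref{l:CS(Omega1)}.

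The remaining task is a case-by-case enumeration over the $F$-conjugacy classes of maximal tori in the various families of finite simple groups of Lie type. Split ($F$-rational) tori give relative Weyl groups equal to the ordinary Weyl group of a compact simple connected Lie group, producing case (a). In $\PSL_n(q^a)$ and $\PSU_n(q^a)$, tori of product type (cyclic of order $q^a-1$ or $q^a+1$ to the $n$-th power, acted on by wreath-product-type Weyl groups that incorporate Frobenius-twist permutations) yield cases (b) and (c); the divisibility conditions $m\mid(p-1)$ and $m\mid(p-1)/2$ and the bound $n\ge p$ translate the requirements that $\Omega_5(A)$ be homocyclic of exponent $p^5$ and $C_S(\Omega_2(A))=A$. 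The two sporadic cases (d) (the $\GL_2(3)$-action at $p=3$) and (e) (the $\ST{31}$-action at $p=5$) correspond to specific tori in certain low-rank groups of Lie type at these small primes whose relative Weyl groups happen to be the two exotic complex reflection groups in the list.

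\emph{The main obstacle} is this final enumeration: working through every family of finite simple groups of Lie type and every conjugacy class of maximal tori, verifying precisely which ones yield both an abelian $p$-subgroup satisfying the large-abelian conditions and a relative Weyl group matching the stated list. The sharp bounds and divisibility conditions in cases (b)--(e), and the necessity of excluding every pair not in the list, require detailed bookkeeping with the orders of maximal tori and with the $p$-adic valuations $v_p(q^a\pm1)$.
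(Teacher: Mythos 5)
Your opening move is exactly the paper's: $\Omega_5(A)$ homocyclic of exponent $p^5$ gives $\Omega_4(A)$ homocyclic of exponent $p^4$, and $C_S(\Omega_2(A))=A$ is part of the large-abelian hypothesis, so Lemma~\ref{l:Ai<|S} applies with $m=2$. The paper then uses \cite[Theorem 4.10.2]{GLS3} to produce a $\sigma$-stable maximal torus $\4T$ with $S\le N_{\4G}(\4T)$, checks that $S_T=S\cap\4T$ also satisfies $C_S(\Omega_2(S_T))=S_T$ (via Lemma~\ref{l:CS(Omega1)}), and invokes the uniqueness in Lemma~\ref{l:Ai<|S} to conclude $A=S_T$. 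Your plan gets to the same place, though ``every $p$-element is semisimple, so $A$ lies in a torus'' is noticeably weaker than the precise normalizer setup actually needed to apply the uniqueness lemma.

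The genuine gap is what you yourself flag as the ``main obstacle.'' You propose a brute-force enumeration over all families of Lie type and all $F$-conjugacy classes of maximal tori, and this is not how the paper proceeds; indeed, as stated the task would be a substantial undertaking. The paper's crucial intermediate observation is that the proposition is invariant under replacing $G$ by any finite group $G^*$ with an isomorphic $p$-fusion system. This, together with the $p$-fusion equivalence results of \cite{BMO2} (Table~0.1 for $p=2$, Proposition~1.10 for $p$ odd), lets one reduce to a very short list: untwisted and ${}^2$-twisted groups with $q\equiv1\pmod{p^k}$ (where the relative Weyl group is computed via \cite[Lemmas 5.3, 6.1]{BMO2} and matched to compact Lie groups, giving case (a)); $\PSL_n(q)$ and $\POmega_{2n}^\pm(q)$ with $q\not\equiv0,1\pmod p$ (giving cases (b), (c)); and a handful of small exceptions ($\lie2F4(q)$, $\lie3D4(q)$ at $p=3$; $E_8(q)$ at $p=5$) producing cases (d), (e) and one more instance of (a). Without this reduction you would have to rederive, torus by torus, the content of those cited lemmas. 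Also a minor point: your attribution of cases (b) and (c) to ``$\PSL_n(q^a)$ and $\PSU_n(q^a)$'' is off --- the unitary groups with $q\equiv-1\pmod{p^k}$ fall under the Steinberg-group analysis (and land in case (a)); it is $\PSL_n(q)$ and the even-dimensional orthogonal groups with $q\not\equiv0,1\pmod p$ that produce the $G(m,1,n)$ and $G(2m,2,n)$ cases.
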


\begin{proof} Let $S\in\sylp{G}$ be such that $A\nsg S$ is a large abelian 
subgroup. Thus $A=C_S(\Omega_2(A))$, and $\Omega_5(A)$ is homocyclic of 
exponent $p^5$.

By definition, and since $G$ is a finite group of Lie type in 
characteristic $r$ for some prime $r\ne p$, there is a simple algebraic 
group $\4G$ over $\4\F_r$, and an algebraic 
endomorphism $\sigma\in\End(\4G)$, such that $C_{\4G}(\sigma)$ is finite 
(the group of elements fixed by $\sigma$) and $G\cong 
O^{r'}(C_{\4G}(\sigma))$. See, e.g., \cite[\S\,2.2]{GLS3} for more detail. 
From now on, we identify $G$ with $O^{r'}(C_{\4G}(\sigma))$. By 
\cite[Theorem 4.10.2(a,b)]{GLS3}, there is a maximal torus $\4T\le\4G$ 
(denoted $\4T_w$ there) and a Sylow $p$-subgroup $S\in\sylp{G}$ such that 
$\sigma(\4T)=\4T$ and $S\le N_{\4G}(\4T)$, and such that if we set 
$S_T=S\cap\4T$, then $S/S_T$ is isomorphic to a subgroup of the Weyl group 
of $\4G$. (The relation $S_T=S\cap\4T$ isn't stated explicitly in that 
theorem, but it appears in its proof.) 

Thus $\Aut_S(S_T)$ consists of the restrictions to $S_T$ of a finite group 
of automorphisms of a discrete $p$-torus (the $p$-power torsion in $\4T$). 
Hence it acts faithfully on $\Omega_2(S_T)$ by Lemma \ref{l:CS(Omega1)}, 
and so $C_S(\Omega_2(S_T))=C_S(\4T)=S_T$. The hypotheses of Lemma 
\ref{l:Ai<|S} thus apply with $A$ and $S_T$ in the roles of $A_1$ and $A_2$ 
(and $m=2$). So $A=S_T$ by that lemma. 

Assume $G^*$ is a finite group whose $p$-fusion system is isomorphic to 
that of $G$. Choose a fusion preserving isomophism 
$\rho\:S\xto{~\cong~}S^*$ for some $S^*\in\sylp{G^*}$, and set 
$A^*=\rho(A)\nsg S^*$. Then $A^*=C_{S^*}(\Omega_2(A^*))$ and 
$\Omega_4(A^*)$ is homocyclic of exponent $p^4$, and 
$\Aut_{G^*}(A^*)\cong\Aut_G(A)$. So the proposition holds for $G$ if it 
holds for $G^*$.

We first handle two special cases: certain Chevalley groups and Steinberg 
groups. Afterwards, we deal with the remaining 
cases: first when $p=2$ and then when $p$ is odd.

\smallskip

\noindent\textbf{Case 1: } Assume $G\cong\gg(q)$, where $q\equiv1$ (mod 
$p^k$) (and $q$ is a power of $r$). By \cite[Lemma 6.1]{BMO2}, we can 
choose $\4G=\gg(\4\F_r)$ and $\sigma$ such that \cite[Hypotheses 5.1, case 
(III.1)]{BMO2} holds. So by \cite[Lemma 5.3]{BMO2}, $\Aut_G(A)= 
\Aut_{\4G}(A)$ is isomorphic to the Weyl group $\gg$, and so 
$(\Aut_G(A),\Omega_k(A))\cong (\Aut_{\4G}(\4T),\Omega_k(O_p(\4T)))$. 


Let $\cptG$ be a maximal compact subgroup of $\gg(\C)$, let $\cptT$ be a 
maximal torus in $\cptG$, and let $O_p(\cptT)\le\cptT$ be the subgroup of 
elements of $p$-power order. Then $(\Aut_{\cptG}(\cptT),O_p(\cptT)) \cong 
(\Aut_{\4G}(\4T),O_p(\4T))$, by Theorem \ref{t:cpt.cn.Lie}, and hence 
$(\Aut_{\cptG}(\cptT),\Omega_k(O_p(\cptT)))\cong (\Aut_G(A),\Omega_k(A))$. 

\smallskip

\noindent\textbf{Case 2: } Now assume $G\cong{}^2\,\gg(q)$, where $\gg\cong 
A_n$, $D_n$, or $E_6$, and $q\equiv1$ (mod $p^k$) (and $q$ is a power of 
$r$). Again by \cite[Lemma 6.1]{BMO2}, we can choose $\4G=\gg(\4\F_r)$ and 
$\sigma$ such that \cite[Hypotheses 5.1, case (III.1)]{BMO2} holds. So by 
\cite[Lemma 5.3]{BMO2}, $\Aut_G(A)=\Aut_{W_0}(A)$, where 
$W_0=C_{W(\4G)}(\tau)$, and where $\tau\in\Aut(\4G)$ is a graph 
automorphism of order $2$ that permutes the root groups for positive 
roots. Also, $\Aut_G(A)$ acts faithfully on $\Omega_k(A)$ by Lemma 
\ref{l:CS(Omega1)}, and $\Omega_k(A)=\Omega_k(C_{\4T}(\tau))$ since 
$q\equiv1$ (mod $p^k$).

By \cite[\S\,13.3]{Carter}, $W_0$ (denoted $W^1$ in \cite{Carter}) is 
isomorphic to the Weyl group of $\hh$, where $\hh\cong B_{n/2}$ or 
$C_{(n+1)/2}$, $B_{n-1}$, or $F_4$ when $\4G$ has type $A_n$ ($n$ even or 
odd), $D_n$, or $E_6$, respectively. Furthermore, the arguments in 
\cite[\S\,13.3]{Carter} show that the pair $(W_0,C_{\4T}(\tau))$ is 
isomorphic to the Weyl group and maximal torus of $\hh(\4\F_r)$. So if we 
let $\cptH$ be a maximal compact subgroup of $\hh(\C)$ and let 
$\cptT_0\le\cptH$ be a maximal torus, then $(\Aut_G(A),\Omega_k(A))$ is 
isomorphic to $(\Aut_{\cptH}(\cptT_0),\Omega_k(O_p(\cptT_0)))$ by Theorem 
\ref{t:cpt.cn.Lie}.

\smallskip

\noindent\boldd{General case when $p=2$: } By \cite[Table 0.1]{BMO2} and 
the remark just before the table, the $2$-fusion system of $G$ is 
isomorphic to that of a group $G^*$, where either $G^*$ is one of the 
Chevalley or Steinberg groups considered in Cases 1 and 2, or $G^*\cong 
G_2(3)$ or $\lie2G2(q)$. But neither $G_2(3)$ nor $\lie2G2(q)$ satisfies 
the hypotheses of the proposition, since neither contains elements of order 
$2^4=16$.

\smallskip

\noindent\boldd{General case when $p$ is odd: } By \cite[Proposition 
1.10]{BMO2}, the $p$-fusion system of $G$ is isomorphic to that of a group 
$G^*$, where either $G^*$ is one of the groups handled in Cases 1 and 2, or 
it is one of the following groups:

\noindent\boldd{$G^*\cong\PSL_n(q)$ or $\POmega_{2n}^\pm(q)$, where $n\ge 
p$ and $q\not\equiv0,1$ (mod $p$).} By \cite[Lemma 
6.5]{BMO2}, we can choose a $\sigma$-setup for $G^*$ such that 
\cite[Hypotheses 5.1, case (III.3)]{BMO2} holds. In particular, 
$\Aut_{G^*}(A)=\Aut_{W_0}(A)$, where $W_0$ is a certain subgroup of the 
Weyl group of $\POmega_{2n}$ described, together with its action on a 
maximal torus, in \cite[Table 6.1]{BMO2}. By that table, there are 
$\mu\ge1$ and $\kappa\ge p$ such that $\rk(A)=\kappa$, and $\Aut_{W_0}(A)$ 
has index at most $2$ in a group $\Aut_{W_0^*}(A)\cong 
C_{2\mu}\wr\Sigma_\kappa\cong G(2\mu,1,\kappa)$. More precisely, from the 
proof of that lemma, one sees that $\Aut_{W_0}(A)\cong G(2\mu,1,\kappa)$ or 
$G(2\mu,2,\kappa)$. 

\noindent\boldd{$G^*\cong\lie2F4(q)$, where $p=3$.} 
By \cite[Section 1.2]{Malle}, $G^*$ contains a maximal subgroup 
$H\cong(C_{q+1}\times C_{q+1})\rtimes\GL_2(3)$, and it 
has index prime to $3$ in $G^*$. So we can take 
$A=O_3(H)$. Then $H=N_{G^*}(A)$ since it is maximal, and thus 
$\Aut_{G^*}(A)\cong\GL_2(3)$ with its natural action on 
$\Omega_1(A)\cong(\F_3)^2$. 

\noindent\boldd{$G^*\cong\lie3D4(q)$, where $p=3$.} By \cite[10-1(4)]{GL}, 
$S$ is a semidirect product $A\rtimes C_3$, where $A\cong C_{3^a}\times 
C_{3^{a+1}}$ for $a=v_3(q^2-1)$. Thus $\Aut_{G^*}(A)$ is isomorphic to a 
subgroup of $C_2\times\Sigma_3$, which we identify with the group of upper 
triangular matrices in $\GL_2(3)$. Also, by the main theorem in 
\cite{Kleidman}, $G^*$ contains a maximal subgroup 
$H\cong(C_{q^2+q+1}\circ\SL_3(q)).\Sigma_3$ (if $q\equiv1$ (mod 3)) or 
$H\cong(C_{q^2-q+1}\circ\SL_3(q)).\Sigma_3$ (if $q\equiv2$ (mod 3)), in 
either case of index prime to $3$, and using these we see that 
$|N_{G^*}(S)/S|=4$ and hence $\Aut_{G^*}(A)\cong C_2\times\Sigma_3\cong 
W(G_2)$. Since this is the normalizer of a Sylow $3$-subgroup in 
$\GL_2(3)$, its action on $\Omega_1(A)\cong(\F_3)^2$ is unique up to 
isomorphism. (The $3$-fusion system of $G^*$ is described in detail in 
case (a.ii) of \cite[Theorem 2.8]{indp1}.) 

\smallskip

\noindent\boldd{$G^*\cong E_8(q)$, where $p=5$ and $q\equiv\pm2$ (mod 
$5$).} In this case, $\Aut_{G^*}(A)\cong\ST{31}$ by \cite[Lemma 6.7]{BMO2} 
and its proof. More precisely, a subgroup $W_1\nsg \Aut_{G^*}(A)$ was 
defined in that proof and shown to be isomorphic to $C_4\circ2^{1+4}$, and 
$\Aut_{G^*}(A)/W_1$ was shown to be isomorphic to $\Sigma_6$. 

Let $\Out_0(W_1)$ be the subgroup of elements that are the identity on 
$Z(W_1)$; then $\Out_0(W_1)\cong\Sp_4(2)\cong\Sigma_6$. Since $W_1$ is 
irreducible in $\GL_4(\4\F_5)$, we have 
$C_{\GL_4(5)}(W_1)\cong\F_5^\times$, and hence $\Aut_{G^*}(A)\cong 
N_{\GL_4(5)}(W_1)$. 

Since $\GL_4(5)$ has only one conjugacy class of subgroups isomorphic to 
$W_1$, this shows that it also contains only one class of extensions of the 
form $(C_4\circ2^{1+4}).\Sigma_6$. On the other hand, Shephard \cite[p. 
275]{Shephard} constructed explicit matrices in $\GL_4(\C)$ representing 
reflections that generate $\ST{31}$, and those matrices have entries in 
$\Z[\frac12,i]$ and hence reduce to matrices over $\F_5$. Thus $\ST{31}$ 
embeds in $\GL_4(5)$, so $\Aut_{G^*}(A)\cong\ST{31}$. See also Aguad\'e's 
paper \cite[\S6]{Aguade} for more discussion. \qedhere

\end{proof}

The following corollary combines the three main propositions in this 
section. 

\begin{Cor} \label{c:Weyl}
Fix a prime $p$, and set $k=2$ if $p=2$, or $k=1$ if $p$ is odd. 
If $G$ is a known finite simple group and has a large 
abelian $p$-subgroup $A\le G$, then $G$ is a group of Lie type in 
characteristic different from $p$, and $(\Aut_G(A),\Omega_k(A))\cong(W,M)$ 
where $(W,M)$ is one of the modules listed in cases (a)--(e) of Proposition 
\ref{p:Weyl}. 
\end{Cor}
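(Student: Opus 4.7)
The corollary is essentially the combination of the three preceding propositions, once one has reduced to the Lie-type case in non-defining characteristic, and the reduction uses the classification of finite simple groups (CFSG). My plan is as follows.

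First, I will extract from the hypothesis on $A$ a lower bound on the subexponent of $G$. Let $S\in\sylp{G}$ contain $A\nsg S$. By Definition \ref{d:largeabel}, $C_S(\Omega_2(A))=A$ and $\Omega_5(A)$ is homocyclic of exponent $p^5$. If $S$ is nonabelian, Proposition \ref{p:ex_p}(b) applied with $k=2$ and $m=5$ yields $\subex_p(G)=\subex(S)\ge p^{5-2+1}=p^4$. If $S$ is abelian, then $A=C_S(\Omega_2(A))=S$, and $\subex(S)=\expt(S)\ge p^5$, so the same bound holds a fortiori.

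Next, I invoke CFSG to list the possibilities for $G$: cyclic of prime order, alternating, of Lie type, one of the $26$ sporadic groups, or the Tits group $\lie2F4(2)'$. Cyclic simple groups have $\subex_p\le p$, and Proposition \ref{p:ex_p-le3} shows that each alternating group, each sporadic group, the Tits group (when $p=2$), and each simple group of Lie type in defining characteristic $p$ satisfies $\subex_p(G)\le p^3$. This is incompatible with the bound $\subex_p(G)\ge p^4$ established above, so $G$ must be a finite simple group of Lie type in characteristic different from $p$.

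Finally, with $G$ thus identified as Lie type in cross characteristic, Proposition \ref{p:Weyl} applies verbatim and identifies the pair $(\Aut_G(A),\Omega_k(A))$, where $k=2$ if $p=2$ and $k=1$ if $p$ is odd, as isomorphic to one of the five families (a)--(e) listed there. The only non-routine aspect of the argument is verifying that no case slips through the bookkeeping in Proposition \ref{p:ex_p-le3} --- in particular, that every non-Lie-type simple group and every Lie-type group in defining characteristic satisfies $\subex_p\le p^3$ --- but this is exactly the content of that proposition together with Table \ref{tbl:subex-spor}, so the corollary follows by a short chain of citations.
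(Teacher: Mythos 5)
Your proof is correct and follows essentially the same route as the paper: apply Proposition \ref{p:ex_p}(b) with $k=2$, $m=5$ to get $\subex_p(G)\ge p^4$, rule out all non-Lie-type possibilities (and defining characteristic) via Proposition \ref{p:ex_p-le3} and CFSG, then invoke Proposition \ref{p:Weyl}. Your explicit handling of the case where $S$ is abelian (so that Proposition \ref{p:ex_p}(b), stated for nonabelian $S$, does not literally apply) is a small completeness point that the paper elides; it is correctly disposed of by noting $\subex(S)=\expt(S)\ge p^5$ there.
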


\begin{proof} Let $S\in\sylp{G}$ be such that $A\nsg S$ is a large abelian 
subgroup. Then $C_S(\Omega_2(A))=A$ and $\Omega_5(A)$ 
is homocyclic of exponent $p^5$, so by Proposition \ref{p:ex_p}(b), 
$\subex_p(G)=\subex(S)\ge p^4$. Hence by Proposition \ref{p:ex_p-le3} and 
since $G$ is a known simple group, $G$ must be of Lie type in 
characteristic different from $p$. The conclusion now follows from Proposition 
\ref{p:Weyl}.
\end{proof}

The following lemma will be useful when applying Proposition 
\ref{p:Weyl} in the proof of Theorem \ref{t:index.p}. 

\begin{Lem} \label{l:V.simple}
Fix an odd prime $p$, and let $(W,M)$ be one of the pairs that appears in 
cases {\rm(a)--(e)} of Proposition \ref{p:Weyl}. Then $M$ is a simple 
$\F_pW$-module, \emph{except} in case {\rm(a)} when $W=\Aut_G(T)$ and 
$M=\Hom(C_p,T)$, and either $G\cong\PSU(n)$ and $p\mid n$, or $G\cong 
G_2$ or $E_6$ and $p=3$.
\end{Lem}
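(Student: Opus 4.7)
The plan is to handle the five cases of Proposition \ref{p:Weyl} separately, with cases (b)--(e) reducing to direct checks and case (a) requiring a type-by-type analysis of the reflection representation modulo $p$.

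For case (b), with $W \cong C_m \wr \Sigma_n$ acting on $M = \F_p^n$ and $3 \le m \mid p-1$: given $0 \ne v = \sum a_i e_i$ with (say) $a_1 \ne 0$, pick a primitive $m$-th root $\zeta \in \F_p^\times$ (which exists and is $\ne 1$ since $m \ge 3$); the difference $\diag(\zeta,1,\dots,1)\cdot v - v = (\zeta-1)a_1 e_1$ shows $e_1 \in \Gen{v}_W$, and permuting by $\Sigma_n$ gives $M = \Gen{v}_W$. Case (c) is the same argument using $\diag(\zeta^2,1,\dots,1) \in G(2m,2,n)$, with product $\zeta^2 \ne 1$ a primitive $m$-th root. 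Case (d) is immediate: $\GL_2(3)$ acts transitively on $\F_3^2 \setminus \{0\}$. In case (e), the subgroup $W_1 \cong C_4 \circ 2^{1+4} \nsg W$ already acts faithfully on $M$; since an extraspecial $2$-group of order $2^{1+4}$ has a unique faithful irreducible representation over any field of characteristic $\ne 2$, and it is $4$-dimensional, $M$ is simple already as an $\F_5 W_1$-module.

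For case (a), we write $M = L^\vee \otimes \F_p$ where $L^\vee$ is the cocharacter lattice of the maximal torus of $\cptG$. I will go through each Dynkin type. The guiding principle is that a $W$-invariant subspace $U \subsetneq M$ forces, for each root $\alpha$, either $\alpha^\vee \bmod p \in U$ or $\alpha$ to vanish on $U$; and if the image of one coroot lies in $U$, then the entire $W$-orbit does. For types $B_n, C_n, D_n$, the cocharacter lattice is $\F_p^n$ with signed-permutation action for odd $p$ (all relevant indices are powers of $2$); a double sign-change in $W(D_n)$ combined with permutations reduces any nonzero vector to a single coordinate. For $E_7, E_8, F_4$ with odd $p$, the index $|P^\vee/Q^\vee| \in \{1,2\}$ is invertible mod $p$, so isogeny does not matter and the $W$-orbit of any coroot still spans $L^\vee \otimes \F_p$, giving simplicity. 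For $A_{n-1}$ with compact form $\PSU(n)$ and $p \nmid n$, $P^\vee \otimes \F_p \cong Q^\vee \otimes \F_p$ is the standard simple $\Sigma_n$-reflection representation; when $p \mid n$, the short exact sequence $0 \to Q^\vee \to P^\vee \to \Z/n \to 0$ tensored with $\F_p$ yields a surjection $P^\vee \otimes \F_p \twoheadrightarrow \F_p$ with trivial quotient, so $M$ is reducible. Similarly for $E_6$: when $p = 3$, the sequence $0 \to Q^\vee \to P^\vee \to \Z/3 \to 0$ produces a trivial quotient of $P^\vee \otimes \F_3$; for $p \ne 3$ the reflection representation stays irreducible.

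The subtle case is $G_2$. The reflection $s_{\alpha_1}$ (with $\alpha_1$ the short simple root) satisfies $s_{\alpha_1}(\alpha_2) = \alpha_2 + 3\alpha_1$, and more generally all long roots of $G_2$ have $\alpha_1$-coefficient divisible by $3$ (they are $\pm\alpha_2, \pm(\alpha_2 + 3\alpha_1), \pm(2\alpha_2 + 3\alpha_1)$). Hence the entire $W$-orbit of $\alpha_2$ collapses into the line $\F_3 \alpha_2 \subset M$ mod $3$, giving a proper invariant subspace and reducibility at $p=3$. For odd $p \ne 3$, the order-$6$ rotation $s_{\alpha_1}s_{\alpha_2}$ has eigenvalues which are primitive $6$-th roots of unity (distinct mutual inverses), and the reflections in $W(G_2)$ conjugate the rotation to its inverse, hence swap the two eigenspaces, so no proper invariant subspace exists. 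The main obstacle I anticipate is keeping the case analysis in (a) systematic: one must check, for each Dynkin type and each prime $p$, both whether a lattice-quotient collapse produces a trivial sub- or quotient module, and whether a non-simply-laced long/short root orbit collapses modulo $p$ into a proper subspace.
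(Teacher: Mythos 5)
Your cases (b)--(e) are correct and track the paper's proof closely: in (b),(c) the paper restricts to the diagonal subgroup $W_0$ and uses the uniqueness of the eigenspace decomposition permuted by $\Sigma_n$, while you instead show directly that any nonzero vector generates $M$; in (e) the paper writes the action of a $2^{1+4}_+$ subgroup as explicit matrices, whereas you quote the abstract fact that an extraspecial group $2^{1+4}$ has a unique faithful irreducible (of dimension $4$) in odd characteristic. Both routes give the same conclusion.

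In case (a) your route is genuinely different. The paper treats $A_{n-1}$ by citing James, treats $B_n,C_n,D_n$ by reducing to the arguments of (b)/(c), treats $F_4$ by restricting to a subgroup $G(2,1,4)\le W(F_4)$, and disposes of $E_6,E_7,E_8$ by appealing to ordinary and Brauer character tables. You instead argue uniformly via the ``guiding principle'' (a nonzero invariant $U\subsetneq M$ must either contain the span of a $W$-orbit of coroots or be annihilated by every root, hence lie in the radical of the Cartan pairing mod $p$) together with the invertibility of $|P^\vee/Q^\vee|$ mod $p$. The principle is correct, but the way you apply it to $F_4,E_7,E_8$ leaves the key hypothesis unverified: you assert that ``the $W$-orbit of any coroot still spans $L^\vee\otimes\F_p$,'' and your own $G_2$ computation shows that exactly this can fail for non-simply-laced types (the long-coroot orbit of $G_2$ spans only a line mod $3$). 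For $E_7,E_8$ (and $E_6$, $p\ne3$) the orbit-spanning is automatic since there is a single orbit generating $Q^\vee$ over $\Z$; for $F_4$ it is true for odd $p$ (e.g.\ because $2e_i$ reduces to an invertible multiple of $e_i$), but you should say why, or else follow the paper and restrict to $G(2,1,4)\le W(F_4)$, which is cleaner. Similarly, your phrase for $D_n$ that ``a double sign-change \ldots reduces any nonzero vector to a single coordinate'' is not literally true after one step; a double sign-change produces a vector supported on two coordinates, and a second one is needed to isolate a single $e_i$. Finally, a genuine strength of your writeup: the paper's own proof runs through $A_{n-1}$, $B_n/C_n/D_n$, $F_4$, and $E_6/E_7/E_8$, but never mentions $G_2$, even though $G_2$ at $p=3$ is explicitly listed as an exception in the lemma statement; your two-pronged analysis of $G_2$ (the mod-$3$ collapse of the long-root orbit, and the Coxeter-element eigenline argument for $p\ne3$) is correct and fills that gap.
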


\begin{proof} We refer to the five cases (a)--(e) listed in the statement 
of Proposition \ref{p:Weyl}. In cases (b) and (c), we have $W\cong 
G(m,k,n)$ for some $k\mid m\mid(p-1)$ and some $n\ge p$, and $M$ is the 
natural $n$-dimensional $\F_pW$-module. Let $W_0\nsg W$ be the subgroup of 
diagonal matrices (in the notation of \ref{d:ST}); then $M|_{W_0}$ splits 
in a unique way as a direct sum of $1$-dimensional submodules, and these 
summands are permuted transitively by $W/W_0\cong\Sigma_n$. So $M$ is 
simple in this case.

In case (d), we have $W\cong\GL_2(3)$, and $M\cong(\F_3)^2$ is clearly 
simple. In case (e), there is a subgroup $H\cong2^{1+4}_+$ of index $2$ 
in $O_2(W)$ whose action on $M$ is generated by the diagonal matrices
$-\Id$, $\diag(1,1,-1,-1)$, and $\diag(1,-1,1,-1)$, 
together with the permutation matrices for the permutations $(1\,2)(3\,4)$ 
and $(1\,3)(2\,4)$, and $M|_H$ is irreducible. So $M$ is also irreducible 
as an $\F_5W$-module. 

Now assume we are in case (a), where $W\cong\Aut_G(T)$ and $M$ is the 
$p$-torsion in $T$ for some simple compact connected Lie group $G$ with 
maximal torus $T$. If $G$ has type $A_{n-1}$, so $W\cong\Sigma_n$ and 
$M\cong(\F_p)^{n-1}$, then $M$ is simple except when $p\mid n$ (see 
\cite[Example 5.1]{James}). If $G$ has type $B_n$, $C_n$, or $D_n$, then 
$W\cong G(2,1,n)$ or $G(2,2,n)$, and $M$ is simple by a similar argument to 
that used in cases (b) and (c). If $G$ has type $F_4$, then $W$ contains a 
subgroup isomorphic to $G(2,1,4)$ (with the natural action on 
$M\cong(\F_p)^4$), and hence $M$ is simple. If $G$ is of type $E_n$ for 
$n=6,7,8$, then by the character tables in \cite{atlas} (when $p\nmid|W|$) 
and \cite{braueratlas} (when $p\mid|W|$), we have that $M$ is a simple 
$\F_pW$-module in all cases except when $G$ has type $E_6$ and $p=3$. 
\end{proof}


\section{Realizability of fusion systems over discrete 
\texorpdfstring{$p$-toral}{p-toral} groups}
\label{s:realiz}

We are now ready to prove our main nonrealizability results. Theorem 
\ref{t:seq-exotic} (a slightly more general version of Theorem \ref{ThC}) 
says that saturated fusion systems over infinite discrete $p$-toral 
groups satisfying certain conditions are not sequentially realizable. 
In Proposition \ref{p:no.char0}, we give a criterion for showing that 
certain fusion systems are not realized by linear torsion groups in 
characteristic $0$. All of the results in this section (except Lemmas 
\ref{l:F/Q-seq.real} and \ref{l:CT(G)}) depend on the classification of 
finite simple groups.

We first note that a quotient of a sequentially realizable fusion system 
(see Definition \ref{d:F/Q}) is again sequentially realizable.

\begin{Lem} \label{l:F/Q-seq.real}
Let $\calf$ be a sequentially realizable fusion system over a discrete 
$p$-toral group $S$, and let $Q\nsg S$ be a finite subgroup that is weakly 
closed in $\calf$. Then $\calf/Q$ is also sequentially realizable. If 
$T\nsg S$ is the identity component of $S$ and $Q\le T$, then 
$C_{S/Q}(T/Q)=C_S(T)/Q$ and $\Aut_{\calf/Q}(T/Q)\cong\autf(T)$.
\end{Lem}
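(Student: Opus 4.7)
The plan is to dispatch the two claims separately. Sequential realizability of $\calf/Q$ will be obtained by quotienting a realizing sequence for $\calf$ by $Q$, with the one nonobvious step (matching $\calf/Q$ with the union $\bigcup_i(\calf_i/Q)$) handled by an inverse-limit argument. The identifications in part two are essentially algebraic: both will reduce to the fact that a $\Z_p$-linear endomorphism of a discrete $p$-torus with finite image must vanish.

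For part one, I would write $\calf=\bigcup_{i=1}^\infty\calf_i$ with $\calf_i=\calf_{S_i}(G_i)$, $G_i$ finite, and $S_i\in\sylp{G_i}$. Once $i$ is large enough that $Q\le S_i$, the subgroup $Q$ is weakly closed and normal in $\calf_i$ because $\Mor(\calf_i)\subseteq\Mor(\calf)$; hence $S_i\in\sylp{N_{G_i}(Q)}$, and Lemma \ref{l:F(G)/Q} identifies $\calf_i/Q$ with $\calf_{S_i/Q}(N_{G_i}(Q)/Q)$, which is saturated by Lemma \ref{l:F/Q} and realized by a finite group. The inclusion $\calf/Q\subseteq\bigcup_i(\calf_i/Q)$ is immediate, since the restriction of $\varphi/Q\in\calf/Q$ to any finite subgroup $P_0/Q\le P/Q$ lifts to $\varphi|_{P_0}\in\calf_i$ for some $i$ ($P_0\le P$ is finite because $Q$ is). For the opposite inclusion, given $\5\varphi\in\bigcup_i(\calf_i/Q)$ with domain $P/Q$, pick an increasing chain $Q\le P_1\le P_2\le\dots$ of finite subgroups of $P$ with $P=\bigcup_j P_j$, and set $\Phi_j=\{\psi\in\homf(P_j,S)\mid\psi/Q=\5\varphi|_{P_j/Q}\}$. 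Each $\Phi_j$ is nonempty by hypothesis on $\5\varphi$, and finite because any lift $\psi$ sends each $x\in P_j$ into the coset $\5\varphi(xQ)$ of size $|Q|$. Restriction gives well-defined maps $\Phi_{j+1}\to\Phi_j$ (restrictions of fusion-system morphisms are themselves morphisms), so Lemma \ref{l:invlim} with trivial $\Gamma$ yields a compatible family $(\psi_j)$ whose union is the desired $\varphi\in\homf(P,S)$ with $\varphi/Q=\5\varphi$.

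For part two, I first observe that $T$ is weakly closed in $\calf$: any $\varphi\in\homf(T,S)$ has image a discrete $p$-torus of rank $\rk(T)$ in $S$, necessarily equal to the identity component $T$. Identifying $T\cong(\Q_p/\Z_p)^n$ gives $\Aut(T)=\GL_n(\Z_p)$ and $\End(T)=M_n(\Z_p)$. For $xQ\in C_{S/Q}(T/Q)$, the endomorphism $M\defeq c_x|_T-\Id\in M_n(\Z_p)$ satisfies $M(T)\le Q$; if some entry $M_{ij}$ were nonzero, applying $M$ to $e_j/p^k$ would produce an element with $i$-th coordinate of order $p^{k-v_p(M_{ij})}$, contradicting finiteness of $Q$ as $k\to\infty$. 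Hence $M=0$, giving $x\in C_S(T)$ and $C_{S/Q}(T/Q)=C_S(T)/Q$. The identical computation, applied to $\varphi\in\autf(T)$ with $\varphi/Q=\Id$, shows that the restriction map $\autf(T)\to\Aut_{\calf/Q}(T/Q)$ is injective; surjectivity is built into the definition of $\calf/Q$, since weak closure of $T$ forces every $\5\alpha\in\Aut_{\calf/Q}(T/Q)$ to lift to some element of $\homf(T,T)=\autf(T)$.

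The principal technical step is the inverse-limit construction in part one; once the finiteness of $\Phi_j$ is noted via the coset-counting argument, the rest is routine.
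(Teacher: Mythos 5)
Your proof is correct and takes essentially the same route as the paper's: quotient the realizing sequence by $Q$, identify each $\calf_i/Q$ with $\calf_{S_i/Q}(N_{G_i}(Q)/Q)$ via Lemma~\ref{l:F(G)/Q}, and show that the restriction map $\autf(T)\to\Aut(T/Q)$ has trivial kernel. The place where you supply genuinely more than the paper is the verification of $\calf/Q=\bigcup_{i=1}^\infty(\calf_i/Q)$, which the paper simply asserts. You correctly identified that the inclusion $\bigcup_i(\calf_i/Q)\subseteq\calf/Q$ requires a lifting argument, and your inverse-limit construction does the job: it is in effect the continuity-axiom check carried out for $\calf/Q$ in Lemma~\ref{l:F/Q}(ii), but with the $\Gamma$-action there rendered unnecessary because finiteness of $Q$ already makes each $\Phi_j$ finite, so Lemma~\ref{l:invlim} applies with $\Gamma$ trivial --- a welcome simplification that also avoids appealing to saturation of $\calf$, which the lemma does not assume. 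For the second part, the paper argues that if $Q$ has exponent $p^m$ and $\alpha(t)\in tQ$ for all $t\in T$, then $\alpha(t^{p^m})=t^{p^m}$, so $\alpha=\Id$ by divisibility of $T$; your entry-by-entry $p$-adic valuation argument for $c_x|_T-\Id\in M_n(\Z_p)$ is an equivalent, slightly more computational rendering of the same fact.
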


\begin{proof} Since $\calf$ is sequentially realizable, there is an 
increasing sequence $\calf_1\le\calf_2\le\dots$ of realizable fusion 
subsystems of $\calf$ over finite subgroups $S_1\le S_2\le\dots$ of $S$, 
such that $S=\bigcup_{i=1}^\infty S_i$ and 
$\calf=\bigcup_{i=1}^\infty\calf_i$. Since $Q$ is finite, we have $Q\le 
S_k$ for some $k\ge1$, and by removing the first $(k-1)$ terms in the 
sequence, we can arrange that $Q\le S_i$, and hence $Q$ is weakly closed in 
$\calf_i$, for all $i\ge1$. 

For each $i$, since $\calf_i$ is realizable, there is a finite group $G_i$ 
with $S_i\in\sylp{G_i}$ and $\calf_i=\calf_{S_i}(G_i)$. Then 
$\calf_i/Q=\calf_{S_i/Q}(N_{G_i}(Q)/Q)$ by Lemma \ref{l:F(G)/Q}, where 
$S_i/Q\in\sylp{N_{G_i}(Q)/Q}$, so $\calf_i/Q$ is realizable. Also, 
$\calf/Q=\bigcup_{i=1}^\infty\calf_i/Q$, 
so $\calf/Q$ is sequentially realizable.

If $Q\le T$ where $T$ is the identity component of $S$, then by definition 
and since $Q$ is weakly closed in $\calf$, 
$\Aut_{\calf/Q}(T/Q)$ is the image of the natural 
homomorphism $\chi\:\autf(T)\too\Aut(T/Q)$. Let $m$ be such that $Q$ 
has exponent $p^m$. If $\alpha\in\Ker(\chi)$, then $\alpha(t)\in tQ$ for 
each $t\in T$, so $\alpha(t^{p^m})=t^{p^m}$, and $\alpha=\Id_T$ since $T$ 
is infinitely divisible. Thus $\Ker(\chi)=1$, and so 
$\Aut_{\calf/Q}(T/Q)\cong\autf(T)$ and $C_{S/Q}(T/Q)=C_S(T)/Q$. 
\end{proof}

The following elementary lemma will also be needed. 

\begin{Lem} \label{l:CT(G)}
Let $T$ be a discrete $p$-torus, and let $G\le\Aut(T)$ be a finite group of 
automorphisms of $T$. Set $\ell=v_p(|G|)$; thus $p^\ell\mid|G|$ but 
$p^{\ell+1}\nmid|G|$. Then either $C_T(G)$ is infinite, or it has exponent 
at most $p^\ell$.
\end{Lem}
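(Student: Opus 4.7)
The plan is to analyze $C_T(G) = T^G$ by lifting the problem from the torsion module $T$ to an associated $\Q_p$-vector space, so that standard facts about group cohomology of $\Q_p[G]$-modules apply. Specifically, writing $r = \rk(T)$, I would use the inclusion $G \le \Aut(T) = \GL_r(\Z_p)$ to identify $T$ with the cokernel $\Q_p^r/\Z_p^r$, fitting it into the short exact sequence of $\Z_p[G]$-modules
\[ 0 \to \Z_p^r \to \Q_p^r \to T \to 0. \]

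The key step is then to take $G$-fixed points and examine the resulting long exact cohomology sequence
\[ 0 \to (\Z_p^r)^G \to (\Q_p^r)^G \to T^G \to H^1(G, \Z_p^r) \to H^1(G, \Q_p^r). \]
The term $H^1(G, \Q_p^r)$ vanishes because multiplication by $|G|$ on it is simultaneously an isomorphism (as $|G|$ is a unit in $\Q_p$) and zero (since $|G|$ annihilates positive-degree group cohomology). The argument then splits into two cases. If $(\Q_p^r)^G \ne 0$, I would choose a $\Q_p$-basis of $(\Q_p^r)^G$ lying in $(\Z_p^r)^G$ by clearing denominators, so that $(\Z_p^r)^G$ is a full-rank $\Z_p$-lattice in $(\Q_p^r)^G$; the quotient is then isomorphic to $(\Q_p/\Z_p)^s$ with $s \ge 1$, hence infinite, and injects into $T^G$, yielding the first alternative of the conclusion.

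In the remaining case $(\Q_p^r)^G = 0$, the connecting map gives an isomorphism $T^G \cong H^1(G, \Z_p^r)$, and the standard fact that $|G|$ annihilates higher group cohomology yields $|G| \cdot T^G = 0$. Writing $|G| = p^\ell m$ with $\gcd(p, m) = 1$ and using that $T^G$ is a $p$-group (as a subgroup of $T$), this forces the exponent of $T^G$ to divide $p^\ell$. There is no serious obstacle in the proof: the choice of the short exact sequence above is the one natural step, and everything else falls out from elementary long-exact-sequence bookkeeping together with two standard cohomological principles.
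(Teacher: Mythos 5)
Your argument is correct, and it takes a genuinely different route from the paper's. The paper works entirely inside $T$ using the norm element: it sets $P_1 = C_T(G)$ and $P_2 = \Ker\bigl(x \mapsto \prod_{g\in G}gx\bigr)$, shows $T = P_1P_2$ via the identity $x^{|G|} = \bigl(\prod_g gx\bigr)\bigl(\prod_g x(gx)^{-1}\bigr)$ and divisibility of $T$, and then observes that if $P_1$ is finite then $P_2$ has finite index, hence $P_2 = T$ (a divisible group has no proper finite-index subgroups), whence $x^{|G|} = \prod_g gx = 1$ for $x\in P_1$. Your approach instead lifts $T$ into the short exact sequence $0\to\Z_p^r\to\Q_p^r\to T\to 0$ and runs the long exact sequence in group cohomology, splitting on whether $(\Q_p^r)^G$ vanishes; in the vanishing case $T^G\cong H^1(G,\Z_p^r)$ is killed by $|G|$, and in the non-vanishing case the image of a fixed $\Q_p$-line already gives an infinite $\Q_p/\Z_p$ inside $T^G$. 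Both proofs are really Tate-cohomology arguments in disguise — the paper's $P_2$ is the kernel of the norm and the paper is effectively bounding $\hat H^0(G,T)$, while you work with $H^1(G,\Z_p^r)$, which is the same thing up to dimension shift — but the paper's version is more self-contained and elementary (no cohomology machinery, no choice of integral lattice), whereas yours makes the conceptual source of the bound (vanishing of $H^*(G;\Q_p^r)$ in positive degrees) transparent and ties in naturally with the surrounding use of $\Hom(\Q_p,T)$ in Lemmas \ref{l:Hom(Qp,T)} and \ref{l:Theta}. One small point worth making explicit in your write-up: to identify the image of $(\Q_p^r)^G$ in $T^G$ you should note that $(\Z_p^r)^G = (\Q_p^r)^G\cap\Z_p^r$, so the cokernel is exactly $(\Q_p^r)^G/(\Z_p^r)^G\cong(\Q_p/\Z_p)^s$ with $s=\dim_{\Q_p}(\Q_p^r)^G$; you gesture at the scaling argument, which is the right idea.
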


\begin{proof} Set $P_1=C_T(G)$ for short, and set 
	\[ P_2 = \bigl\{ x\in T \,\big|\, 
	\textstyle\prod_{g\in G}g(x)=1 \bigr\}. \]
For each $x\in T$, we have $\prod_{g\in G}g(x)\in P_1$. Also, 
$x(g(x))^{-1}\in P_2$ for each $g\in G$ since $\prod_{h\in 
G}h(x)\cdot\prod_{h\in G}(hg(x))^{-1} = 1$. So 
	\[ x^{|G|} = \bigl( \textstyle\prod_{g\in G}gx \bigr) \cdot 
	\bigl( \textstyle\prod_{g\in G}x(gx)^{-1} \bigr) \in P_1P_2, \]
and $T=P_1P_2$ since it is infinitely divisible. 

If $P_1$ is finite, then $P_2$ has finite index in $T$, and hence $P_2=T$, 
again since it is infinitely divisible. Hence for each $x\in 
P_1=C_T(G)$, we have $x^{|G|}=\prod_{g\in G}gx=1$ by definition of $P_2$, 
and so $x^{p^\ell}=1$ since $T$ has no nonidentity elements of order prime 
to $p$.
\end{proof}

We next recall the definition of the \emph{generalized Fitting subgroup} 
$F^*(G)$ of a finite group $G$. A subgroup $H\le G$ is subnormal (denoted 
$H\snsg G$) if there is a sequence of subgroups $H=H_0\nsg 
H_1\nsg\cdots\nsg H_n=G$ each normal in the following one. 
A component of $G$ is a subnormal 
subgroup $C\snsg G$ that is quasisimple (i.e., $C$ is perfect and $C/Z(C)$ 
is simple). All components of $G$ commute with each other and with the 
subgroups $O_p(G)$ for all primes $p$. Thus $F^*(G)$, the subgroup 
generated by all components of $G$ and all subgroups $O_p(G)$ for primes 
$p$, is a central product of these groups. One of the 
important properties of $F^*(G)$ is that it is centric in $G$; i.e., 
$C_G(F^*(G))\le F^*(G)$. We refer to \cite[\S31]{A-FGT} for more details 
and proofs of these statements, and also to \cite[A.11--A.13]{AKO} for a 
shorter summary.

\begin{Lem} \label{l:seq-exotic}
Let $\calf$ be a saturated fusion system over an infinite discrete 
$p$-toral group $S$, and let $T\nsg S$ be the identity component. Assume 
\begin{enumi} 

\item $S>T$ and $C_S(T)=T$, and 

\item $\minsc\calf=S$.

\end{enumi}
Assume also that $\calf$ is sequentially realizable, and let 
$\calf_1\le\calf_2\le\calf_3\le\cdots$ be realizable fusion subsystems of 
$\calf$ over finite subgroups $S_1\le S_2\le\cdots$ of $S$ such that 
$S=\bigcup_{i=1}^\infty S_i$ and $\calf=\bigcup_{j=1}^\infty\calf_i$. 
If $G_1,G_2,G_3,\dots$ are finite groups such that for each $i\ge1$,
	\[ S_i\in\sylp{G_i}, \qquad O_{p'}(G_i)=1, 
	\qquad\textup{and}\qquad  \calf_i=\calf_{S_i}(G_i), \]
then for $n$ large enough, the generalized Fitting subgroup $F^*(G_n)$ is a 
product of nonabelian simple groups of Lie type in characteristic different 
from $p$ that are pairwise conjugate in $G_n$.
\end{Lem}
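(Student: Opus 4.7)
The plan is to use the asymptotic results of Section \ref{s:incr.seq.} to identify $T_n = T \cap S_n$ as a large abelian $p$-subgroup of $G_n$, and then exploit hypothesis (ii) together with Corollary \ref{c:Weyl} to constrain the components. For $n$ sufficiently large, Lemma \ref{l:autf(A)-2} yields that $T_n$ is $\autf(T)$-invariant, that restriction induces an isomorphism $\Aut_{G_n}(T_n) = \Aut_{\calf_n}(T_n) \cong \autf(T) =: W$, and that $T_n \supseteq \Omega_5(T)$. Since $C_S(T) = T$, the group $\Aut_S(T)$ acts faithfully on $T$; Lemma \ref{l:CS(Omega1)} shows this action is already faithful on $\Omega_2(T)$, so $C_S(\Omega_2(T)) = T$, whence $C_{S_n}(\Omega_2(T_n)) = T_n$. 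As $\Omega_5(T_n) = \Omega_5(T)$ is the $p^5$-torsion of the discrete $p$-torus $T$ (hence homocyclic of exponent $p^5$), the subgroup $T_n$ is a large abelian $p$-subgroup of $S_n \in \sylp{G_n}$ in the sense of Definition \ref{d:largeabel}. Lemma \ref{l:minsc} further guarantees that $\minsc{\calf_n}$ is non-trivial for all $n$ in the relevant range and that $\{\minsc{\calf_i}\}$ is ascending with union $S$.

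Next I would analyze $F^*(G_n) = O_p(G_n) \cdot E(G_n)$, writing $E(G_n) = L_1 \cdots L_k$ as a central product of components with $G_n$-orbits $\Omega$ on them. For each $\omega \in \Omega$, $L_\omega := \prod_{i \in \omega} L_i$ is normal in $G_n$ and $Q_\omega := S_n \cap L_\omega$ is strongly closed in $\calf_n$; likewise $O_p(G_n)$ is strongly closed. The central intermediate step is to show that for $n$ large, $O_p(G_n) = 1$, there is a unique $G_n$-orbit of components, and each $L_i$ is actually simple. The strategy: any two distinct subgroups among $O_p(G_n)$ and the $L_\omega$ intersect inside the finite centre of the layer $Z(E(G_n))$ (because components in different orbits commute and $O_p$ commutes with $E$), so their Sylow $p$-intersections are contained in a finite strongly closed subgroup of $\calf_n$. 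Since $\minsc{\calf_n}$ must lie in every non-trivial strongly closed subgroup yet grows without bound to the infinite group $S$, these bad configurations are eventually ruled out. Once $F^*(G_n) = L_1 \times \cdots \times L_k$ is a direct product of pairwise $G_n$-conjugate nonabelian simple groups, I would transfer the large abelian structure to each component: one shows $T_n \leq F^*(G_n)$, then $T_n \cap L_i$ is a large abelian $p$-subgroup of $L_i$ with automizer inheriting a shape from $W$, and Corollary \ref{c:Weyl} (via Propositions \ref{p:ex_p-le3} and \ref{p:Weyl}) forces each $L_i$ to be of Lie type in characteristic different from $p$.

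The principal obstacle is the structural step in the second paragraph: using $\minsc\calf = S$ to force a single $G_n$-orbit of components with trivial centres and $O_p(G_n) = 1$ for $n$ sufficiently large. The size of the bad-configuration bound on $\minsc{\calf_n}$ depends on $n$ through the number and type of components, so the comparison against the unbounded growth of $\minsc{\calf_n}$ must be carried out carefully, most likely by bounding both the number of components and the $p$-parts of their Schur multipliers in terms of $T_n$ and $W$, uniformly in $n$. A secondary technical point is verifying $T_n \leq F^*(G_n)$ once the structure of $F^*(G_n)$ is partially known, which amounts to controlling the $p$-part of $\Out(F^*(G_n))$ via the shape of $W$.
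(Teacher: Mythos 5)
Your outline tracks the paper's argument quite closely in its broad structure: show $O_p(G_n)=1$ and that $G_n$ is transitive on the components of $F^*(G_n)$, locate a large abelian $p$-subgroup inside each simple factor, and invoke Corollary \ref{c:Weyl}. However, the pivot around which you organize the second half --- the inclusion $T_n\le F^*(G_n)$ where $T_n=T\cap S_n$ --- is not generally true, is not claimed in the paper, and is not needed. The paper works instead with $A:=F^*(G_n)\cap T$, which may well be a \emph{proper} subgroup of $T_n$. The observation that makes things work is that $\Omega_5(T)\le Q_n:=\minsc{\calf_n}$ for $n$ large (Lemma \ref{l:minsc}), while $Q_n\le F^*(G_n)\cap S_n$ because $F^*(G_n)\cap S_n$ is a nontrivial strongly closed subgroup of $\calf_n$. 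Hence $\Omega_5(T)\le A$, so $\Omega_5(A)=\Omega_5(T)$ is homocyclic of exponent $p^5$, and combined with $C_S(\Omega_2(T))=T$ (via Lemma \ref{l:CS(Omega1)}) one gets directly that $A$ and each of its direct factors $A_j=A\cap H_j$ is a large abelian $p$-subgroup of the appropriate Sylow subgroup. You flag $T_n\le F^*(G_n)$ as a ``secondary technical point'' to be resolved by bounding the $p$-part of $\Out(F^*(G_n))$; that is a detour with no obvious resolution, and the correct fix is simply not to assert the inclusion.

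Your handling of the $\minsc$ argument is also vaguer than the situation requires. You argue that ``bad configurations'' (nontrivial $O_p$, multiple orbits of components) are ``eventually ruled out'' because $\minsc{\calf_n}$ ``grows without bound,'' but the mechanism is sharper and does not rely on any asymptotic size comparison: once $Q_n\ge\Omega_5(T)$, \emph{every} nontrivial strongly closed subgroup of $\calf_n$ contains the fixed homocyclic subgroup $\Omega_5(T)$ of exponent $p^5$. If $O_p(G_n)\ne1$, then $\Omega_1(Z(O_p(G_n)))$ is nontrivial, strongly closed, and elementary abelian, so it cannot contain $\Omega_5(T)$ --- contradiction. (Simplicity of the components, which you treat as a separate issue, is then automatic: $O_p(G_n)=O_{p'}(G_n)=1$ gives $F(G_n)=1$, hence $Z(F^*(G_n))=Z(F(G_n))=1$.) Likewise, if a proper nonempty subset $J$ of the index set of components were $G_n$-invariant, the strongly closed subgroup $\gen{U_j\,|\,j\in J}$ would have to contain $\Omega_1(T)$, but it misses $\Omega_1(T)\cap A_{j'}\ne1$ for any $j'\notin J$. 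Your phrasing in terms of intersections inside $Z(E(G_n))$ and ``unbounded growth'' does not obviously deliver either contradiction, and the second one in particular requires the nontriviality of each $A_{j'}$, which in turn needs the $C_U(A)=A$ step (to rule out $U_{j'}=1$) that your sketch does not isolate.
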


\begin{proof} Set $T_i=T\cap S_i$ and $Q_i=\minsc{\calf_i}\le S_i$ for each 
$i$. Then $\bigcup_{i=1}^\infty Q_i=\minsc\calf=S$ by Lemma \ref{l:minsc} 
and (ii). So we can choose $m\ge1$ such that for all $n\ge m$, we 
have 
	\beqq TQ_n=S, \qquad Q_n\ge\Omega_5(T), \qquad\textup{and}\qquad
	C_S(Q_n\cap T)=C_S(T)=T. \label{e:seq-ex} \eeqq

For the rest of the proof, we fix $n$ such that $n\ge m$ and hence 
\eqref{e:seq-ex} holds. 
If $O_p(G_n)\ne1$, then $\Omega_1(Z(O_p(G_n)))$ is nontrivial and 
strongly closed in $\calf_n$, hence contains $Q_n=\minsc{\calf_n}$, 
contradicting \eqref{e:seq-ex}. Thus $O_p(G_n)=1$, and since 
$O_{p'}(G_n)=1$ by assumption, the generalized Fitting 
subgroup $F^*(G_n)$ is a product of nonabelian simple groups. Set 
$H=F^*(G_n)$ and let $H_1,\dots,H_\ell\le H$ be its simple factors; thus 
	\[ H = F^*(G_n) = H_1 \times \cdots \times H_\ell. \]
Let $\pr_j\:H\too H_j$ be projection to the $j$-th factor.

Set $U=H\cap S_n=H\cap S\in\sylp{H}$, and set $U_j=U\cap H_j\in\sylp{H_j}$ 
for each $j$. Thus $U=U_1\times\cdots\times U_\ell$. 
Also, $U_j\ne1$ for each $j$ since otherwise $H_j\le O_{p'}(G_n)=1$. 
Finally, $U\ge Q_n=\minsc{\calf_n}$ since $U$ is strongly closed in 
$\calf_n$.

Set $A=U\cap T=H\cap T$, and set $A_j=U_j\cap T=H_j\cap T$ for each $1\le 
j\le\ell$. Then $C_U(A)\le U\cap C_S(Q_n\cap T)=A$ by \eqref{e:seq-ex}, so 
$A$ is a maximal abelian subgroup of $U$. Since 
$A\le\pr_1(A)\times\cdots\times \pr_\ell(A)\le U$, this implies that 
$A_j=\pr_j(A)$ for each $j$, and hence that $A=A_1\times\cdots\times 
A_\ell$. If $A_j=1$ for some $j$, then $U_j\le C_U(A)=A$, so $U_j=1$ which 
we just saw is impossible. Thus each factor $A_j$ is nontrivial.

For each $j$, since $1\ne A_j\le T$, we have $\Omega_1(T)\cap A_j\ne1$. 
If the simple factors $H_j$ are not permuted transitively by 
$\Aut_{G_n}(H)$, then there is a nonempty proper subset 
$J\subsetneqq\{1,\dots,\ell\}$ such that $\gen{H_j\,|\,j\in J}\nsg G_n$. 
Then $\gen{U_j\,|\,j\in J}\in\calf_n\scl$, hence contains $Q_n$ but does 
not contain $\Omega_1(T)$. This contradicts \eqref{e:seq-ex}, and we 
conclude that the $H_j$ are permuted transitively by $\Aut_{G_n}(H)$ and 
hence are pairwise conjugate in $G_n$. 

Now, 
\begin{itemize}

\item $A\nsg U$ since $T\nsg S$ and $A=U\cap T$; 

\item $\Omega_5(A)$ is homocyclic of exponent $p^5$ since $\Omega_5(T)\le 
Q_n\le U$ by \eqref{e:seq-ex} and $A=T\cap U$; and 

\item $C_U(\Omega_2(A))=A$ by Lemma \ref{l:CS(Omega1)} and since 
$\Aut_U(A)$ is the restriction of an action on $T$. 

\end{itemize}
Hence for each $j=1,\dots,\ell$, we have $A_j\nsg U_j$, $\Omega_5(A_j)$ is 
homocyclic of exponent $p^5$, and $C_{U_j}(\Omega_2(A_j))=A_j$. 
So $A_j$ is a large abelian $p$-subgroup of $H_j$, and by Corollary 
\ref{c:Weyl} and the classification of finite simple groups,
each simple factor $H_j$ is of Lie type in characteristic different 
from $p$. 
\end{proof}

We are now ready to show that under certain hypotheses on a fusion system 
$\calf$, if it is sequentially realizable, then 
the automizer of a maximal torus contains one of the groups $W$ that 
appears in cases (a)--(e) of Proposition \ref{p:Weyl} as a normal subgroup 
of index prime to $p$. 

\begin{Thm} \label{t:seq-exotic}
Let $\calf$ be a saturated fusion system over an infinite discrete 
$p$-toral group $S$, and let $T\nsg S$ be the identity component. Assume 
\begin{enumi} 

\item $S>T$ and $C_S(T)=T$; 

\item no proper subgroup $R<S$ containing $T$ is strongly closed in 
$\calf$; and 

\item no infinite proper subgroup of $T$ is invariant under the action of 
$\autf(T)$.

\end{enumi}
Assume also that $\calf$ is sequentially realizable. 
Set $k=2$ if $p=2$, or $k=1$ if $p$ is odd. Then there is a normal 
subgroup $H\nsg\autf(T)$ of index prime to $p$ such that for some 
$\ell\ge1$, $(H,\Omega_k(T))\cong(W^\ell,(M')^{\oplus\ell})$, where $(W,M)$ 
is one of the pairs listed in Proposition \ref{p:Weyl}(a--e) and $M'$ is an 
$\Z/p^kW$-module with the same composition factors as $M$. If $\ell>1$, then 
the $\ell$ factors are permuted transitively by $\autf(T)$. If {\rm(i)} and 
{\rm(ii)} hold and no infinite proper subgroup of $T$ is invariant under 
the action of $O^{p'}(\autf(T))$, then this conclusion holds with 
$\ell=1$.
\end{Thm}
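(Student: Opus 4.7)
The plan is to reduce to the case $\minsc\calf=S$ via a quotient, apply Lemma \ref{l:seq-exotic} to analyze the generalized Fitting subgroups of the approximating finite realizations, and combine Proposition \ref{p:Weyl} with the extension axiom of $\calf$ to realize $W^\ell$ as a normal subgroup of $\autf(T)$ of index prime to $p$.

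By Lemma \ref{l:no.str.cl.}(b), hypotheses (i)--(iii) imply that the subgroup $Q\le T$ generated by all proper strongly closed subgroups of $S$ is finite and strongly closed in $\calf$, and $\minsc{\calf/Q}=S/Q$. By Lemma \ref{l:F/Q-seq.real}, $\calf/Q$ is sequentially realizable, has identity component $T/Q$, satisfies the analogues of (i)--(iii) (and of the $O^{p'}$-hypothesis when assumed), and admits a canonical isomorphism $\autf(T)\cong\Aut_{\calf/Q}(T/Q)$. The Tate modules of $T$ and $T/Q$ are $\Z_p$-lattices in the same $\Q_p\autf(T)$-representation, so $\Omega_k(T)$ and $\Omega_k(T/Q)$ share the same composition factors as $\Z/p^k\autf(T)$-modules; since the theorem only specifies $\Omega_k(T)$ up to composition factors, we may assume $\minsc\calf=S$. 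Next choose realizable subsystems $\calf=\bigcup_{n\ge1}\calf_n$ with $\calf_n=\calf_{S_n}(G_n)$, arranging $O_{p'}(G_n)=1$ by quotienting. For all large $n$, Lemma \ref{l:seq-exotic} gives $H:=F^*(G_n)=H_1\times\cdots\times H_\ell$ with pairwise $G_n$-conjugate nonabelian finite simple factors $H_j$ of Lie type in characteristic different from $p$. With $T_n=T\cap S_n$, each $A_j:=H_j\cap T_n$ is a large abelian $p$-subgroup of $H_j$; Proposition \ref{p:Weyl} then furnishes a pair $(W,M)$ from cases (a)--(e) with $(\Aut_{H_j}(A_j),\Omega_k(A_j))\cong(W,M)$. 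Setting $A=H\cap T_n=\bigoplus A_j$ gives $\Aut_H(A)\cong W^\ell$ acting on $\Omega_k(A)\cong M^{\oplus\ell}$ with each factor on its own summand, and $G_n/H$ permuting the $H_j$ transitively. Enlarging $n$ further via Lemmas \ref{l:autf(A)}, \ref{l:minsc}, and \ref{l:autf(A)-2}, I may assume that $A$ is fully centralized in $\calf$, that $\autf(T)\cong\Aut_{G_n}(T_n)$ by restriction, and that $\Omega_k(T)\le A$ with $\Omega_k(A)=\Omega_k(T)$.

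To embed $\Aut_H(A)$ into $\autf(T)$, fix $h\in N_H(A)$ and set $\varphi=c_h|_A\in\Aut_\calf(A)$. Since $A$ is abelian, $T\le C_S(A)\le N_\varphi$, so the extension axiom (applicable to the fully centralized $A$) gives $\tilde\varphi\in\homf(N_\varphi,S)$ extending $\varphi$. Its image $\tilde\varphi(T)$ is divisible and hence contained in the identity component $T$, and $\homf(T,T)$ is finite by Proposition \ref{p:Frc-finite}(a), so the injective endomorphism $\tilde\varphi|_T$ is invertible and lies in $\autf(T)$. The restriction $\rho_A\:\autf(T)\to\Aut(A)$ is injective (its composition with further restriction to $\Omega_k(T)$ is injective by Lemma \ref{l:CS(Omega1)}), which yields an embedding $W^\ell\cong\Aut_H(A)\hookrightarrow\autf(T)$. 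Normality follows from $gAg^{-1}=gHg^{-1}\cap gT_ng^{-1}=A$ for each $g\in N_{G_n}(T_n)$ (using $H\nsg G_n$), so conjugation preserves $N_H(A)$. For index prime to $p$, the Sylow $p$-subgroup $\Aut_S(T)$ of $\autf(T)$ corresponds under $\autf(T)\cong\Aut_{G_n}(T_n)$ to $\Aut_{S_n}(T_n)=\Aut_U(T_n)$ with $U=H\cap S_n$ (using $S_n=T_nU$), and this sits inside the image of $\Aut_U(A)\le\Aut_H(A)$ under the embedding.

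For the final statement, assume no infinite proper subgroup of $T$ is $O^{p'}(\autf(T))$-invariant, and suppose for contradiction $\ell>1$. The permutation action of $\autf(T)$ on the $\ell$ factors factors through $\autf(T)/W^\ell$, which has order prime to $p$; hence $O^{p'}(\autf(T))$, being generated by $p$-elements, acts trivially on factor indices, and each divisible hull $L_j\le T$ of $A_j$ is $O^{p'}(\autf(T))$-invariant. But $L_j$ is infinite and proper, contradicting the hypothesis; thus $\ell=1$. The main technical obstacle is the extension step, which combines full centralization of $A$ (via Lemma \ref{l:autf(A)}) with the divisibility argument to land $\tilde\varphi|_T$ inside $\autf(T)$.
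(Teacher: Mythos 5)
Your proposal follows the same overall strategy as the paper: reduce to the case $\minsc\calf=S$ by passing to $\calf/Q$, invoke Lemma \ref{l:seq-exotic} to identify $F^*(G_n)$ as a product of $\ell$ cross-characteristic Lie-type simple groups permuted transitively, recognize $A=F^*(G_n)\cap T_n$ as a large abelian $p$-subgroup, apply Proposition \ref{p:Weyl}, and then lift $\Aut_{F^*(G_n)}(A)\cong W^\ell$ to a normal subgroup of index prime to $p$ in $\autf(T)$. Where you depart from the paper is in two technical steps, both legitimate. First, for the lift of $\Aut_{F^*(G_n)}(A)$ into $\autf(T)$ the paper simply cites Lemma \ref{l:autf(A)-2}(c) to get the restriction isomorphism $\autf(T)\cong\autf(A)=\Aut_{G_n}(A)$, while you re-derive it from the extension axiom applied to the fully centralized $A$ together with a divisibility argument showing $\tilde\varphi(T)\le T$ and that an injective endomorphism of a discrete $p$-torus is an automorphism; this is valid but more work than needed given that you already cite Lemma \ref{l:autf(A)-2}. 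Your assertion that ``normality follows from $gAg^{-1}=A$'' is a bit compressed --- what one actually needs is that every element of $\autf(A)$ is $c_g|_A$ for some $g\in N_{G_n}(T_n)$ (so that $H\nsg G_n$ yields $\Aut_H(A)\nsg\autf(A)$); this does follow from the identification $\autf(T)\cong\Aut_{G_n}(T_n)$ you arranged, but should be said. Second, for the final $\ell=1$ statement you argue that the divisible hull of a single factor $A_j$ is an infinite proper $O^{p'}(\autf(T))$-invariant subgroup of $T$, whereas the paper argues instead that $C_T(W_1)$ is infinite via Lemma \ref{l:CT(G)}; your divisible-hull argument is a clean alternative and arguably more direct. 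The remaining steps (the exact-sequence comparison of composition factors of $\Omega_k(T)$ and $\Omega_k(T/Q)$, the Sylow argument giving index prime to $p$) match the paper up to phrasing.
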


\begin{proof} Let $Q\le S$ be the subgroup generated by all proper 
subgroups of $S$ strongly closed in $\calf$. By Lemma \ref{l:no.str.cl.}(b) 
and points (i)--(iii), $Q$ is finite, is contained in $T$, is strongly 
closed in $\calf$, and $\minsc{\calf/Q}=S/Q$. So by Lemma 
\ref{l:F/Q-seq.real} and since $\calf$ is sequentially realizable, 
$\calf/Q$ is also sequentially realizable, $C_{S/Q}(T/Q)=C_S(T)/Q=T/Q$, and 
$\Aut_{\calf/Q}(T/Q)\cong\autf(T)$. Also, $\calf/Q$ is saturated by Lemma 
\ref{l:F/Q}. We claim that 
	\beqq \textup{\begin{small} 
	$\Omega_k(T)$ and $\Omega_k(T/Q)$ have isomorphic 
	composition factors as $\Z/p^k\autf(T)$-modules.
	\end{small}}
	\label{e:seq-exotic1} \eeqq
Then upon replacing $\calf$ by $\calf/Q$, we can assume without 
loss of generality that $\minsc\calf=S$.

To prove \eqref{e:seq-exotic1}, first note that for each $i\ge0$, there is 
an exact sequence 
	\[ 0 \to \Omega_{i+1}(Q)/\Omega_i(Q) \too 
	\Omega_k(T/\Omega_i(Q)) \too \Omega_k(T/\Omega_{i+1}(Q))  
	\xto{~\rho_i~} \Omega_{i+1}(Q)/\Omega_i(Q) \to 0, \]
where $\rho_i(x\Omega_{i+1}(Q))=p^kx\Omega_i(Q)$. (Here, $\Omega_0(T)=1$.) So 
$\Omega_k(T/\Omega_i(Q))$ and $\Omega_k(T/\Omega_{i+1}(Q))$ have isomorphic 
composition factors for each $i\ge0$ by the Jordan-H\"older theorem, and 
since $Q=\Omega_i(Q)$ for $i$ large enough, this implies that $\Omega_k(T)$ 
and $\Omega_k(T/Q)$ have isomorphic composition factors.

Now assume that $\minsc\calf=S$. Fix realizable fusion subsystems 
$\calf_1\le\calf_2\le\dots$ of $\calf$ over finite subgroups $S_1\le 
S_2\le\dots$ of $S$ such that $\calf=\bigcup_{i=1}^\infty\calf_i$ and 
$S=\bigcup_{i=1}^\infty S_i$. Let $G_1,G_2,\dots$ be finite groups such 
that for each $i\ge1$, $O_{p'}(G_i)=1$, $S_i\in\sylp{G_i}$, and 
$\calf_i=\calf_{S_i}(G_i)$. Set $F^*_i=F^*(G_i)$, $U_i=F^*_i\cap 
S_i\in\sylp{F^*_i}$, $T_i=T\cap S_i$, and $A_i=T\cap U_i$. Also, set 
$Q_i=\minsc{\calf_i}\le S_i$ for each $i$. Note that $U_i\ge Q_i$ for each 
$i$ since $U_i$ is strongly closed in $\calf_i$.

We now fix $n_0\ge1$ such that 
\begin{enumerate}[(1) ]

\item for all $i\ge n_0$, 
$F^*_i$ is a product of nonabelian simple groups of Lie type in 
characteristic different from $p$ that are pairwise conjugate in $G_i$; 

\item $Q_i\le Q_{i+1}$ for all $i\ge n_0$, and $\bigcup_{i=n_0}^\infty 
Q_i=\minsc\calf=S$; 

\item $Q_{n_0}T=S$ and $Q_{n_0}\ge\Omega_5(T)$;

\item for all $i\ge n_0$, the subgroup $T_i$ is $\autf(T)$-invariant; and 

\item $\Aut_{G_i}(A)=\autf(A)\cong\autf(T)$ for each $i\ge n_0$ and each 
$\autf(T)$-invariant subgroup $A\le T_i$ containing $T_{n_0}$, where 
the isomorphism $\autf(T)\xto{~\cong~}\autf(A)$ is induced by restricting 
from $T$ to $A$.

\end{enumerate}
Point (1) holds for $n_0$ large enough by Lemma \ref{l:seq-exotic}, point (2) 
by Lemma \ref{l:minsc}, point (3) as a consequence of (2), and points (4) 
and (5) by Lemma \ref{l:autf(A)-2}. 

For each $i\ge n_0$, the subgroup $T_i=T\cap G_i$ is $\autf(T)$-invariant 
by (4), and hence $\Aut_{G_i}(T_i)=\autf(T_i)\cong\autf(T)$ by (5). Since 
$F^*_i\nsg G_i$, the subgroup $A_i=F^*_i\cap T_i$ is 
$\Aut_{G_i}(T_i)$-invariant, and hence is $\autf(T)$-invariant. By (2), we can 
choose $n\ge n_0$ so that $Q_{n}\ge T_{n_0}$; then $A_i\ge T_{n_0}$ for all 
$i\ge n$, and so $\Aut_{G_i}(A_i)=\autf(A_i)\cong\autf(T)$ by (5).

Now assume $i\ge n\ge n_0$. Thus $U_iT=S$ by (3) and since $U_i\ge Q_i\ge 
Q_0$. So for each $g\in N_{S_i}(A_i)$, there is $h\in U_i\cap gT$, and 
hence $c_g=c_h\in\Aut(A_i)$. Thus $\Aut_{S_i}(A_i)\le\Aut_{F^*_i}(A_i)$. 
Since $\Aut_{S_i}(A_i)\in\sylp{\Aut_{G_i}(A_i)}$, this proves that 
$\Aut_{F^*_i}(A_i)$ is normal of index prime to $p$ in $\Aut_{G_i}(A_i)$. 
Set $H_i=\Aut_{F^*_i}(A_i)$, and let $H\nsg\autf(T)$ be the 
corresponding normal subgroup of index prime to $p$ under the isomorphism 
$\Aut_{G_i}(A_i)\cong\autf(T)$. Thus 
	\[ H = \bigl\{ \alpha\in\autf(T) \,\big|\, 
	\alpha|_{A_i}\in H_i \bigr\}
	= \bigl\{ \alpha\in\autf(T) \,\big|\, 
	\alpha|_{A_i}\in\Aut_{F^*_i}(A_i) \bigr\}. \]

Still assuming $i\ge n$, since $U_i\ge Q_i\ge Q_{n_0}\ge\Omega_5(T)$, we 
have $A_i=U_i\cap T\ge\Omega_5(T)$. So $C_{U_i}(A_i)\le C_S(\Omega_2(T))=T$ 
by Lemma \ref{l:CS(Omega1)}, and hence $C_{U_i}(A_i)=U_i\cap T=A_i$. Thus 
$U_i$ is nonabelian ($U_i>A_i$ since $U_iT=S$), and $A_i$ is a maximal 
abelian subgroup of $U_i$. Also, $\Omega_5(A_i)=\Omega_5(T)$ is homocyclic 
of exponent $p^5$, and $F^*_i$ is a product of simple groups of Lie type in 
characteristic different from $p$ permuted transitively by $G_i$ by (1). So 
$A_i$ is a large abelian $p$-subgroup of $F^*_i$, and each of the simple 
factors of $F^*_i$ has some direct factor of $A_i$ as a large abelian 
$p$-subgroup. By Proposition \ref{p:Weyl}, for one of the pairs $(W,M)$ 
listed in \ref{p:Weyl}(a)--(e), we have 
$(H,\Omega_k(T))\cong(H_i,\Omega_k(A_i))\cong(W^\ell,M^{\oplus\ell})$, 
where $\ell$ is the number of simple factors in $F^*_i$. We have already 
seen that the $\ell$ factors are permuted transitively by 
$\Aut_{G_i}(A_i)\cong\autf(T)$.

It remains to prove the last statement. Assume there is no infinite 
proper subgroup of $T$ that is $O^{p'}(\autf(T))$-invariant; we must show 
that $\ell=1$. Assume otherwise: assume $\ell\ge2$. Then 
$\Aut_{F^*_i}(A_i)\ge O^{p'}(\Aut_{G_i}(A_i))$, and we can write 
$\autf(T)=W_1\times\cdots\times W_\ell$ and $A_i=M_1\times\cdots\times 
M_\ell$, where $W_j\cong W_1$ and $M_j\cong M_1$ for all $2\le j\le\ell$, 
and where $W_j$ acts trivially on $M_t$ for $t\ne j$. We can assume that 
$i$ was chosen so that $A_i\ge\Omega_m(T)$ where $m=v_p(|\autf(T)|)$; in 
particular, so that each $M_j$ has exponent at least $p^m$. Thus $M_2\le 
C_T(W_1)<T$, so $C_T(W_1)$ has exponent at least equal to 
$|\autf(T)|>|W_1|$, and hence is infinite by Lemma \ref{l:CT(G)}. Thus 
$C_T(W_1)$ is an infinite proper subgroup of $T$ invariant under the 
action of $O^{p'}(\autf(T))$, contradicting our assumption. 
\end{proof}

We now show that a fusion system that satisfies the same hypotheses 
(i)--(iii) as in Theorem \ref{t:seq-exotic} cannot be realized by a linear 
torsion group in characteristic zero. 

Recall that $\rk_q(G)$ denotes the $q$-rank of a group $G$: the least upper 
bound for ranks of finite abelian $q$-subgroups of $G$.

\begin{Prop} \label{p:no.char0}
Let $\calf$ be a saturated fusion system over an infinite discrete 
$p$-toral group $S$, and let $T\nsg S$ be the identity component. Assume 
\begin{enumi} 

\item $S>T$ and $C_S(T)=T$; 

\item no proper subgroup $R<S$ containing $T$ is strongly closed in 
$\calf$; and 

\item no infinite proper subgroup of $T$ is invariant under the action of 
$\autf(T)$. 

\end{enumi}
Then if $\calf=\calf_S(G)$ for some locally finite group $G$ with 
$S\in\sylp{G}$, there is a prime $r\ne p$ such that $\srk_r(G)=\infty$. In 
particular, $\calf$ is not isomorphic to the fusion system of any linear 
torsion group in characteristic $0$. 
\end{Prop}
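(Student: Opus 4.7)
The plan is to adapt the strategy of Example \ref{ex:no.char0} to the general setting, combining Lemma \ref{l:seq-exotic} with an argument showing that the defining characteristic of the simple Lie-type sections of the realizing finite subgroups eventually stabilizes, whence the sectional rank in that fixed characteristic grows without bound.

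First I would reduce to a countable subgroup $G_* \le G$ in which $S$ remains Sylow and realizes $\calf$. Starting from locally finite $G$ with $S \in \sylp{G}$, Proposition \ref{p:f.s.union}(a) gives a countable $G_* \le G$ with $S \le G_*$ and $\calf_S(G_*) = \calf$; iteratively enlarging $G_*$ by adjoining $G$-conjugators that carry each finite $p$-subgroup of the current approximation into $S$ (countably many stages, each adding countably many elements), Proposition \ref{p:Pi<Q} together with Lemma \ref{l:P<Q} yields a countable $G_*$ with $S \in \sylp{G_*}$. Since $\srk_r$ only decreases when passing to a subgroup, it suffices to show $\srk_r(G_*) = \infty$ for some $r \ne p$, so I may replace $G$ by $G_*$. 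Proposition \ref{p:f.s.union}(b) then produces an increasing sequence of finite subgroups $G_1 \le G_2 \le \cdots \le G$ with $S_i := S \cap G_i \in \sylp{G_i}$, $S = \bigcup_i S_i$, and $\calf = \bigcup_i \calf_{S_i}(G_i)$. Using Lemma \ref{l:no.str.cl.}(b) applied to hypotheses (i)--(iii), the subgroup $Q \le S$ generated by all proper strongly closed subgroups of $\calf$ is finite, contained in $T$, and $\minsc{\calf/Q} = S/Q$; after passing to $\calf/Q$, realized by $N_G(Q)/Q$, I may further assume $\minsc{\calf} = S$.

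Next I would apply Lemma \ref{l:seq-exotic} to the finite groups $\bar G_n := G_n/O_{p'}(G_n)$, which still realize $\calf_n$ and satisfy $O_{p'}(\bar G_n) = 1$. For $n$ sufficiently large, $F^*(\bar G_n) = H_{n,1} \times \cdots \times H_{n,\ell_n}$ is a product of pairwise $\bar G_n$-conjugate nonabelian simple groups of Lie type in characteristics different from $p$, each of Lie rank bounded by $\rk(\Omega_5(T))$. Since $\calf$ is infinite and the large abelian $p$-subgroup $A_n := T \cap F^*(\bar G_n)$ exhausts the growing part of $T$, the orders $|H_{n,j}|$ tend to infinity with $n$.

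The heart of the argument, and the main obstacle, is to show that the defining characteristic $r_n$ of the factors $H_{n,j}$ stabilizes to a fixed prime $r \ne p$. Because $G_n \le G_{n+1}$ as subgroups of $G$, the composition $G_n \hookrightarrow G_{n+1} \twoheadrightarrow \bar G_{n+1}$ realizes $\bar G_n$ as a quotient of a subgroup of $\bar G_{n+1}$, so every nonabelian simple section of $\bar G_n$ is a simple section of $\bar G_{n+1}$. Since $\bar G_{n+1}/F^*(\bar G_{n+1})$ embeds in $\Out(F^*(\bar G_{n+1}))$, which is solvable for products of Lie-type simple groups, each such simple section of $\bar G_{n+1}$ is in turn a simple section of some factor $H_{n+1,k}$. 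Invoking the classification of finite simple groups together with structural results on simple Lie-type sections of simple groups of Lie type of bounded rank (analogous to Dickson's classification used in Example \ref{ex:no.char0}), the fact that $|H_{n,j}| \to \infty$ forces $r_n = r_{n+1}$ for $n$ sufficiently large. Once the characteristic is fixed at $r$, the bounded Lie rank $d$ together with $|H_{n,j}| \to \infty$ forces the field size $q_n = r^{a_n}$ to satisfy $a_n \to \infty$; a Sylow $r$-subgroup of a simple Lie-type group of rank $d$ over $\F_{r^{a_n}}$ has sectional $r$-rank at least proportional to $d \cdot a_n$, so $\srk_r(H_{n,j}) \to \infty$, and therefore $\srk_r(G) \ge \srk_r(G_n) \ge \srk_r(\bar G_n) \ge \srk_r(H_{n,j}) \to \infty$. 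For the ``in particular'' statement, if $\calf \cong \calf_{S'}(\Gamma)$ with $\Gamma \le \GL_m(K)$ torsion and $\chr(K) = 0$, the main claim applied to $\Gamma$ gives $\srk_r(\Gamma) = \infty$ for some $r \ne p$, contradicting Lemma \ref{l:srk(GLn)}(a), which yields $\srk_r(\Gamma) \le rm/(r-1) < \infty$.
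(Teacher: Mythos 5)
Your overall plan matches the paper's proof: pass to a countable subgroup via Proposition \ref{p:f.s.union}, reduce to $\minsc\calf = S$ using Lemma \ref{l:no.str.cl.}(b) and Lemma \ref{l:F(G)/Q}, apply Lemma \ref{l:seq-exotic} to get that $F^*(G_n/O_{p'}(G_n))$ is a product of simple groups of Lie type in characteristic prime to $p$, and show that the defining characteristic eventually repeats so that root subgroups force unbounded sectional rank. The framing and all the reductions are right.

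The genuine gap is in the sentence that does the real work: you assert that the defining characteristics $r_n$ stabilize by ``invoking the classification of finite simple groups together with structural results on simple Lie-type sections of simple groups of Lie type of bounded rank (analogous to Dickson's classification).'' No off-the-shelf result of that form is available for groups of arbitrary bounded rank, and the statement as you phrase it is not a citable fact. The paper has to earn this step. It first passes to a subsequence on which the pair (group scheme, number of factors) is constant, which is possible since $\ell_i\cdot\rk(\9{d_i}\gg_i)=\rk(T)$ is fixed. Then it argues by contradiction: if no prime $r$ occurs as $r_i$ infinitely often, then for almost all $i$ the prime $r_i$ is coprime to the fixed Weyl group order $|\4W|$ and larger than $\rk(T)$; by \cite[Theorem 4.10.2]{GLS3} this forces the Sylow $r_i$-subgroups of $H_j^*$ to be abelian whenever $j>i$ and $r_j\ne r_i$, and using that each simple factor of $H_i^*$ is a section of a simple factor of $H_j^*$ (which itself needs the solvability of $\Out$ of simple groups and that $r_i\nmid|G_j^*/\5H_j^*|$), one deduces that $H_i^*$ has abelian Sylow $r_i$-subgroups in its own defining characteristic, hence $H_i^*\cong\PSL_2(q_i)^\ell$. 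Only after this reduction to $\PSL_2$ does Dickson's subgroup list apply and force $r_i=r_j$. Your proposal skips directly from ``simple Lie-type section of bounded rank'' to ``same characteristic,'' which is precisely the content that needs to be proved.

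A smaller point: your opening reduction to a countable $G_*$ with $S\in\sylp{G_*}$ appeals to Proposition \ref{p:Pi<Q}, but that proposition requires knowing in advance that $G_*$ has discrete $p$-toral Sylow $p$-subgroups, which is what you are trying to establish; Example \ref{ex:FqS.S} shows this is not automatic for locally finite groups. The correct route is to use Lemma \ref{l:invlim} on the sets of conjugators $\{g\in G_*\mid \9gP_i\le S\}$ modulo centralizers (finite orbit sets by Proposition \ref{p:Frc-finite}(a)) to get a compatible system of conjugations of the finite pieces of a given $p$-subgroup $P$ into $S$, and then glue them to a single morphism of $\calf_S(G_*)=\calf$ using the continuity axiom, which by definition of $\calf_S(G_*)$ must itself be a conjugation. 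That said, this is repairable and not the central issue; the missing argument for stabilization of the characteristic is.
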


\begin{proof} Assume $\calf=\calf_S(G)$, where $G$ is a locally finite 
group with $S\in\sylp{G}$. We will show that $\srk_r(G)=\infty$ for some 
prime $r\ne p$. It then follows from Lemma \ref{l:srk(GLn)}(a) that $G$ 
cannot be a subgroup of $\GL_N(K)$ for any $N\ge1$ and any field $K$ of 
characteristic $0$. 

Let $Q\nsg S$ be the subgroup generated by all proper subgroups of $S$ 
strongly closed in $\calf$. By Lemma \ref{l:no.str.cl.}(b) and (i)--(iii), 
the subgroup $Q$ is finite, $Q\le T$, $Q$ is strongly closed in 
$\calf$, and $\minsc{\calf/Q}=S/Q$. By Lemma \ref{l:F(G)/Q}, we have 
$\calf/Q=\calf_{S/Q}(N_G(Q)/Q)$, where 
$S/Q\in\sylp{N_G(Q)/Q}$ since $Q$ is strongly closed. Also, 
$\calf/Q$ is saturated by Lemma \ref{l:F/Q}, and 
$C_{S/Q}(T/Q)=C_S(T)/Q=T/Q$ and 
$\Aut_{\calf/Q}(T/Q)\cong\autf(T)$ by Lemma \ref{l:F/Q-seq.real}. So 
conditions (i)--(iii) also hold for $\calf/Q$. Since 
$\srk_q(G)\ge\srk_q(N_G(Q)/Q)$ for every prime $q\ne p$, we can 
replace $\calf$ by $\calf/Q$, and reduce to the case where 
$\minsc\calf=S$.

By Proposition \ref{p:f.s.union}, there is an increasing sequence $G_1\le 
G_2\le\cdots$ of finite subgroups of $G$ such that if we set 
$S_i=S\cap G_i$ and $\calf_i=\calf_{S_i}(G_i)$ for each $i$, then 
$S_i\in\sylp{G_i}$ for each $i$, and $S=\bigcup_{i=1}^\infty S_i$ and 
$\calf=\bigcup_{i=1}^\infty\calf_i$. Also, 
$\calf=\calf_S\bigl(\bigcup_{i=1}^\infty G_i\bigr)$ by the same 
proposition, so we can assume that $G$ is the union of the $G_i$.

For each $i$, set $G_i^*=G_i/O_{p'}(G_i)$. For all $j\ge i$, we have 
$O_{p'}(G_j)\cap G_i\nsg G_i$, and hence $O_{p'}(G_j)\cap G_i\le 
O_{p'}(G_i)$. So $G_i^*$ is isomorphic to a section (subquotient) of 
$G_j^*$ whenever $i<j$.

For each $i$, set $H_i^*=F^*(G_i^*)$: the generalized Fitting subgroup 
of $G_i^*$. By Lemma \ref{l:seq-exotic}, there 
is $n\ge1$ such that for all $i\ge n$, $H_i^*$ is a product of simple 
groups of Lie type in characteristic different from $p$, where the simple 
factors are permuted transitively by $\Aut_{G_i}(H_i^*)$. Assume, for 
each $i\ge n$, that $H_i^*$ is isomorphic to a product of $\ell_i$ copies 
of the simple group $\9{d_i}\,\gg_i(q_i)$, where $q_i$ is a power of a 
prime $r_i$, where $\gg_i$ is a simple group scheme over $\Z$, and where 
$\9{d_i}\,\gg_i(-)$ means the group $\gg_i(-)$ twisted by a graph automorphism of 
order $d_i=1,2,3$. Thus $\ell_i\cdot\rk(\9{d_i}\,\gg_i)=\rk(T)$ for each 
$i$, where $\rk(\9{d_i}\,\gg_i)$ is its Lie rank. Since there are only 
finitely many possibilities for $\9{d_i}\,\gg_i$ of any given Lie rank, 
there must be some pair $(\9{d_i}\,\gg_i,\ell_i)$ that occurs for 
infinitely many indices $i\ge n$. So upon removing the other terms in the 
sequence, we can assume there are $\ell$, $\gg$, and $d=1,2,3$ such that 
$H_i^*$ is a product of $\ell$ copies of $\9d\,\gg(q_i)$ for each $i$. 
Let $\4W$ be the Weyl group of $\gg$.

We claim that 
	\beqq \textup{there is a prime $r$ such that $r_i=r$ for 
	infinitely many $i\ge n$.} \label{e:r=ri} \eeqq
Once we have shown this, it then follows that the set $\{q_i\,|\,i\ge 
n\}$ contains arbitrarily large powers of $r$. But the group 
$\9d\,\gg(q_i)$ always contains a subgroup isomorphic to $(\F_{q_i},+)$ 
(a subgroup of a root subgroup), and hence the set $\{\rk_r(G_i^*)\}$ of 
$r$-ranks of the $G_i^*$ is unbounded. So $\srk_r(G)=\infty$, which is 
what we needed to show.

To prove \eqref{e:r=ri}, let $\5H_i^*\nsg G_i^*$ be the subgroup of 
elements that normalize each simple factor in $H_i^*$. If \eqref{e:r=ri} 
does not hold, then in particular, there are only finitely many terms for 
which $r_i\mid|\4W|$ or $r_i\le\rk(T)$. Upon removing those terms, we have 
$r_i\nmid|\4W|$ and $r_i\nmid|G_j^*/\5H_j^*|$ for all $i,j\ge n$. (Note 
that $|G_j^*/\5H_j^*|$ divides $\rk(T)!$ since $G_j^*/\5H_j^*$ acts 
faithfully on the set of simple factors of $H_j^*$, and the number of 
factors is at most $\rk(T)$.)

For each pair $i<j$, we already saw that $G^*_i$, and hence each simple 
factor of $H^*_i$, is isomorphic to a section (subquotient) of $G_j^*$. 
Hence each simple factor is isomorphic to a section of $H_j^*$, 
$\5H_j^*/H_j^*$, or $G_j^*/\5H_j^*$: the second is impossible since 
$\5H_j^*/H_j^*$ is solvable (since the outer automorphism group of each 
simple factor is solvable), and the last since $r_i\nmid|G_j^*/\5H_j^*|$. 
Hence for each $i<j$, each simple factor of $H_i^*$ is isomorphic to a 
section of a simple factor of $H_j^*$. 

By \cite[Theorem 4.10.2]{GLS3} and since $r_i\nmid|\4W|$, the Sylow 
$r_i$-subgroups of $H_j^*$ are abelian for all $j>i$ such that $r_j\ne 
r_i$. Hence the Sylow 
$r_i$-subgroup of $H_i^*$ is abelian for all $i$, and $H_i^*$ is a product 
of copies of $\PSL_2(q_i)$. So $\PSL_2(q_i)$ is isomorphic to a subquotient 
of $\PSL_2(q_j)$ for all $i<j$, which for $q_i\ge7$ is possible only 
when $r_i=r_j$ (see, e.g., \cite[Theorem 6.5.1]{GLS3}). 
\end{proof}

Proposition \ref{p:no.char0} will be applied in the next section to fusion 
systems of connected $p$-compact groups, and in particular, of connected 
compact Lie groups.


\section{Fusion systems of \texorpdfstring{$p$-compact}{p-compact} groups} 
\label{s:p-cpt}

A $p$-compact group consists of a triple $(X,BX,i)$, where $X$ 
and $BX$ are spaces and $i\:X\too\Omega(BX)$ is a homotopy equivalence, and 
such that $BX$ is $p$-complete in the sense of Bousfield and Kan \cite{BK} 
and $H^*(X;\F_p)$ is finite. Usually, we just say that $X$ is a $p$-compact 
group and $BX$ is its classifying space. For example, if $G$ is a compact 
Lie group such that $\pi_0(G)$ is a $p$-group, then 
$G\pcom\simeq\Omega(BG\pcom)$ is a $p$-compact group with classifying space 
$BG\pcom$.

A $p$-subgroup of a $p$-compact group $X$ is a pair $(P,f)$, where $P$ is a 
discrete $p$-toral group and $f\:BP\too BX$ is a pointed map that does not 
factor (up to homotopy) through any quotient group $P/Q$ for $1\ne Q\nsg 
P$. (By \cite[Theorems 7.2--7.3]{DW}, this is equivalent to the definition 
of a $p$-subgroup of $X$ given in \cite[\S\,3]{DW} and used in 
\cite[\S\,10]{BLO3}.) By a theorem of Dwyer and Wilkerson (see 
\cite[Propositions 2.10 and 2.14]{DW-center} or \cite[Proposition 
10.1(a)]{BLO3}), every $p$-compact group $X$ contains a Sylow $p$-subgroup: 
a $p$-subgroup $(S,f)$ of $X$ such that every other $p$-subgroup of $X$ 
factors through $f$ up to homotopy.

Let $X$ be a $p$-compact group with $(S,f)\in\sylp{X}$. In \cite[Definition 
10.2]{BLO3}, we give an explicit construction of a fusion system 
$\calf_{S,f}(X)$ over $S$, defined via maps to $BX$. If $T$ is the identity 
component of $S$, we refer to $T$ (or $(T,f|_{BT})$) as the maximal torus 
of $X$ and to $\Aut_{\calf_{S,f}(X)}(T)$ as its Weyl group.

Recall that $\Z_p$ denotes the ring of $p$-adic integers and 
$\Q_p=\Z_p[1/p]$ its field of fractions. Note that 
$\Q_p/\Z_p\cong\Z[\frac1p]/\Z\cong\Z/p^\infty$.

\begin{Lem} \label{l:Hom(Qp,T)}
Let $T$ be a discrete $p$-torus of rank $r$. Then 
\begin{enuma} 

\item $\Hom(\Q_p/\Z_p,T)\cong(\Z_p)^r$ and 
$\Hom(\Q_p,T)\cong(\Q_p)^r$, and the inclusion of $\Hom(\Q_p/\Z_p,T)$ into 
$\Hom(\Q_p,T)$ induces an isomorphism $\Q\otimes_{\Z}\Hom(\Q_p/\Z_p,T)\cong 
\Hom(\Q_p,T)$; and 

\item there is a natural isomorphism of $\Z_p\Aut(T)$-modules 
$\Hom(\Q_p/\Z_p,T)\xto{~\cong~}\pi_2(BT\pcom)$. 

\end{enuma}
\end{Lem}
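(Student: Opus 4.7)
My plan is to reduce both parts to the rank-one case $T = \Q_p/\Z_p$ using additivity ($\Hom$ is additive in the target, and $\pi_*$ commutes with finite products), and then prove each part by direct computation.

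For part (a), I will first compute $\Hom(\Q_p/\Z_p, \Q_p/\Z_p) = \Z_p$ using the presentation $\Q_p/\Z_p = \varinjlim_n \Z/p^n$, which immediately gives $\Hom(\Q_p/\Z_p, \Q_p/\Z_p) = \varprojlim_n \Hom(\Z/p^n, \Q_p/\Z_p) = \varprojlim_n \Z/p^n = \Z_p$. To compute $\Hom(\Q_p, \Q_p/\Z_p)$, I will apply $\Hom(-,\Q_p/\Z_p)$ to the short exact sequence $0 \to \Z_p \to \Q_p \to \Q_p/\Z_p \to 0$; divisibility of $\Q_p/\Z_p$ (so injectivity as an abelian group) makes this exact. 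I will also need $\Hom(\Z_p, \Q_p/\Z_p) \cong \Q_p/\Z_p$, which I obtain from $0 \to \Z \to \Z_p \to \Z_p/\Z \to 0$ by observing that $\Z_p/\Z$ is a $\Q$-vector space and hence admits no nonzero homomorphism to the torsion group $\Q_p/\Z_p$. The resulting short exact sequence $0 \to \Z_p \to \Hom(\Q_p, \Q_p/\Z_p) \to \Q_p/\Z_p \to 0$ will then be identified with $0 \to \Z_p \to \Q_p \to \Q_p/\Z_p \to 0$ via the natural map $q \mapsto (x \mapsto qx \bmod \Z_p)$. The last assertion of (a) then reduces to $\Q \otimes_\Z \Z_p = \Q_p$.

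For part (b), the short exact sequence $0 \to \Z_p \to \Q_p \to \Q_p/\Z_p \to 0$ yields, after looping and rotating, a fiber sequence $K(\Q_p, 1) \to B(\Q_p/\Z_p) \to K(\Z_p, 2)$. Since $\Q_p$ is a $\Q$-vector space, $K(\Q_p, 1)$ is mod-$p$ acyclic, so its Bousfield--Kan $p$-completion is contractible; this, combined with the fact that $K(\Z_p, 2)$ is already $p$-complete and simply connected, implies $B(\Q_p/\Z_p)\pcom \simeq K(\Z_p, 2)$ after fiberwise completion. Hence $\pi_2(B(\Q_p/\Z_p)\pcom) \cong \Z_p$, which matches $\Hom(\Q_p/\Z_p, \Q_p/\Z_p)$ from part (a). Taking $r$-fold products extends this to arbitrary discrete $p$-tori of rank $r$.

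The main subtle point is verifying $\Z_p\Aut(T)$-equivariance in part (b): I need the natural action of $\Aut(T)$ on $\pi_2(BT\pcom)$ induced by functoriality of $B(-)\pcom$ to agree with the postcomposition action on $\Hom(\Q_p/\Z_p, T)$. Since all identifications above are natural in $T$, this should follow automatically, but careful tracking of the $\Z_p$-structures and automorphisms through the fiber sequence and $p$-completion step is required. All other steps are routine calculations in abelian group theory and homotopy theory.
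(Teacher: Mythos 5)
There is a genuine gap in your proof of part (a), in the step computing $\Hom(\Z_p,\Q_p/\Z_p)$. The claim that $\Z_p/\Z$ is a $\Q$-vector space is false: for instance $\tfrac12+\Z$ (or $\tfrac13+\Z$ if $p=2$) is a nonzero torsion element of $\Z_p/\Z$, since $\tfrac12\in\Z_p\setminus\Z$ while $2\cdot\tfrac12\in\Z$. Even if you replace $\Z$ by $\Z_{(p)}$, so that $\Z_p/\Z_{(p)}$ genuinely is a $\Q$-vector space, the intended vanishing still fails: a $\Q$-vector space can admit plenty of nonzero homomorphisms to a torsion group, and in fact $\Hom(\Q,\Q_p/\Z_p)\cong\Q_p\ne0$ (e.g.\ the surjection $\Q\twoheadrightarrow\Q/\Z_{(p)}\cong\Q_p/\Z_p$). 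Since $\Z_p/\Z_{(p)}$ is a $\Q$-vector space of uncountable dimension, the abstract group $\Hom_{\Z}(\Z_p,\Q_p/\Z_p)$ is vastly larger than $\Q_p/\Z_p$, so your short-exact-sequence computation of $\Hom(\Q_p,\Q_p/\Z_p)$ does not go through.

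The paper avoids this pitfall by never touching $\Hom(\Z_p,\Q_p/\Z_p)$ at all. It exhibits the explicit maps $\psi\:\Q_p\to\Hom(\Q_p,\Q_p/\Z_p)$, $\psi(x)(y)=xy+\Z_p$, and $\ev_1(\rho)=\rho(1)$, checks directly that $\psi$ is injective and that $\ev_1$ is surjective with kernel $\Hom(\Q_p/\Z_p,\Q_p/\Z_p)$, and then proves $\psi$ is surjective by a $p$-adic convergence argument: choosing lifts $x_n$ of $\rho(p^{-n})$, the sequence $p^nx_n$ is Cauchy, and its limit $x$ satisfies $\rho=\psi(x)$. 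The identification $\Hom(\Q_p/\Z_p,\Q_p/\Z_p)\cong\Z_p$ and the rationalization statement then both drop out from the fact that $\ev_1\circ\psi$ is the quotient $\Q_p\to\Q_p/\Z_p$. If you want to salvage your route, you would have to work with $\Z_p$-module homomorphisms throughout, where $\Hom_{\Z_p}(\Z_p,\Q_p/\Z_p)\cong\Q_p/\Z_p$ becomes a triviality and $\Q_p/\Z_p$ is an injective $\Z_p$-module; indeed the paper's own surjectivity step is quietly using $\Z_p$-linearity, since agreement of $\rho$ and $\psi(x)$ at the points $p^{-n}$ forces $\rho=\psi(x)$ only once one knows both are $\Z_p$-linear on the $\Z_p$-generating set $\{p^{-n}\}$ of $\Q_p$.

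For part (b), your argument via the fibration $K(\Q_p,1)\to B(\Q_p/\Z_p)\to K(\Z_p,2)$ and mod-$p$ acyclicity of $K(\Q_p,1)$ is a reasonable, more self-contained alternative to the paper's bare citation of \cite[VI.5.1]{BK}, but as you note yourself, the $\Z_p\Aut(T)$-equivariance still needs to be tracked carefully through the identifications rather than merely asserted.
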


\begin{proof} \textbf{(a) } It suffices to prove this when $r=1$; i.e., 
when $T=\Q_p/\Z_p$. Consider the homomorphisms 
	\[ \Q_p \Right4{\psi} \Hom(\Q_p,\Q_p/\Z_p) \Right4{\ev_1} 
	\Q_p/\Z_p \]
defined by setting $\psi(x)(y)=xy+\Z_p$ and $\ev_1(\rho)=\rho(1)$. One 
easily seens that $\psi$ is injective, and $\ev_1$ is surjective with 
kernel $\Hom(\Q_p/\Z_p,\Q_p/\Z_p)$. If $\rho\in\Hom(\Q_p,\Q_p/\Z_p)$, 
then for each $n\ge1$, we can choose elements $x_n\in\Q_p$ such that 
$\rho(p^{-n})=x_n+\Z_p$; the sequence $\{p^nx_n\}_{n\ge0}$ converges in 
the $p$-adic topology to some $x\in\Q_p$, and $\rho=\psi(x)$. So $\psi$ 
is surjective.

Thus $\psi$ is an isomorphism, and 
$\Hom(\Q_p/\Z_p,\Q_p/\Z_p)=\Ker(\ev_1)\cong \Ker(\ev_1\circ\psi)$. Since 
$\ev_1\circ\psi$ is the natural surjection of $\Q_p$ onto $\Q_p/\Z_p$, this 
proves that 
	\[ \Hom(\Q_p/\Z_p,\Q_p/\Z_p)\cong\Z_p \qquad\textup{and}\qquad 
	\Hom(\Q_p,\Q_p/\Z_p)\cong\Q\otimes_{\Z}\Hom(\Q_p/\Z_p,\Q_p/\Z_p). 
	\]

\smallskip

\noindent\textbf{(b) } This is a special case of \cite[VI.5.1]{BK}, 
applied with $X=BT$. 
\end{proof}

In the next lemma, we use these $\Q_p$-vector spaces $\Hom(\Q_p,T)$ to 
provide a useful criterion when verifying condition (iii) in Theorem 
\ref{t:seq-exotic}.

\begin{Lem} \label{l:Theta}
Let $T$ be a discrete $p$-torus, and let $G$ be a 
finite group of automorphisms of $T$. Then there is an infinite proper 
$G$-invariant 
subgroup $T_0<T$ if and only if the $\Q_pG$-module $\Hom(\Q_p,T)$ is 
reducible. 
\end{Lem}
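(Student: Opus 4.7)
The plan is to identify divisible $G$-invariant subgroups of $T$ with $\Q_p G$-submodules of $V = \Hom(\Q_p,T)$, and separately to show that on the group side ``infinite $G$-invariant subgroup'' can always be replaced by ``nontrivial divisible $G$-invariant subgroup''.

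First I would set up the dictionary. Write $L = \Hom(\Q_p/\Z_p,T)$ and $V = \Hom(\Q_p,T)$. By Lemma \ref{l:Hom(Qp,T)}(a), $L \cong \Z_p^r$ is a $\Z_p$-lattice of rank $r = \rk(T)$ inside $V \cong \Q_p^r$, with $V = \Q\otimes_\Z L$. The evaluation $\ev_1\colon V\to T$, $\rho\mapsto\rho(1)$, is a surjective $\Z G$-homomorphism with kernel $L$, inducing a $\Z G$-isomorphism $T\cong V/L$. I claim that the assignments $U\mapsto\Hom(\Q_p,U)$ and $W\mapsto\ev_1(W)$ give mutually inverse, inclusion- and $G$-preserving bijections between the divisible subgroups of $T$ and the $\Q_p$-subspaces of $V$, carrying $0\leftrightarrow0$ and $T\leftrightarrow V$. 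Indeed, $\Hom(\Q_p,U)\le V$ is closed under division by $p$ because $\Q_p$ is, and $\ev_1(W)\le T$ is divisible for a $\Q_p$-subspace $W$ since $w(1) = p^n\,w(1/p^n)\in p^n\ev_1(W)$ for each $w\in W$ and $n\ge1$. That the two assignments are mutually inverse reduces to a dimension count: for $W\le V$ of $\Q_p$-dimension $s$, every $w\in W$ has $p^Nw\in W\cap L$ for $N$ large, so $W\cap L$ is a $\Z_p$-lattice of rank $s$, whence $\ev_1(W)\cong W/(W\cap L)\cong(\Z/p^\infty)^s$; then $\Hom(\Q_p,\ev_1(W))\cong\Q_p^s$ contains $W$, forcing equality.

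Next I would reduce infinite to divisible. Given a $G$-invariant subgroup $T_0\le T$, the descending chain $T_0\ge pT_0\ge p^2T_0\ge\cdots$ consists of $G$-invariant subgroups and stabilizes because $T$ is artinian (Proposition \ref{p:artin-loc.f.}); its eventual value $U_0=\bigcap_n p^nT_0$ satisfies $pU_0=U_0$, so is divisible and $G$-invariant. If $T_0$ is infinite then $U_0\ne0$: for each $n$, $T_0[p^n]\le T[p^n]\cong(\Z/p^n)^r$ is finite, so $p^nT_0\cong T_0/T_0[p^n]$ is infinite, which forces $U_0\ne0$. Moreover $U_0\le T_0$, so if $T_0$ is proper then so is $U_0$. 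Conversely, every nontrivial divisible $p$-group is infinite.

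Combining the two reductions: an infinite proper $G$-invariant subgroup of $T$ produces a nontrivial proper divisible $G$-invariant subgroup, which by the dictionary corresponds to a nonzero proper $\Q_p G$-submodule of $V$, proving $V$ reducible; conversely, any nonzero proper $\Q_p G$-submodule of $V$ corresponds to a nontrivial proper divisible (hence infinite) $G$-invariant subgroup of $T$. The main technical obstacle I expect is verifying the dictionary — in particular that the two assignments land in the claimed targets and are mutually inverse — which ultimately rests on the fact that $\Q_p$ is the $p$-divisible hull of $\Z_p$.
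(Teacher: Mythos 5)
Your proof is correct and takes essentially the same route as the paper: both pass from an infinite $G$-invariant subgroup to its maximal divisible subgroup (what the paper calls the identity component, which you reconstruct as $\bigcap_n p^n T_0$ via the artinian chain condition), and both translate between $\Q_p$-subspaces of $V$ and divisible subgroups of $T$ using $\Hom(\Q_p,-)$ and $\ev_1$. The paper's write-up is shorter only because it cites the identity component directly and verifies just the two implications it needs, whereas you package the correspondence as a full bijection; the underlying argument is the same.
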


\begin{proof} Set $V=\Hom(\Q_p,T)$ for short, regarded as a $\Q_pG$-module.
Assume $P<T$ is an infinite proper $G$-invariant 
subgroup of $T$, and let $T_0\le P$ be its identity component. Then $T_0$ 
is an infinite discrete $p$-torus and is also $G$-invariant. Set 
$V_0=\Hom(\Q_p,T_0)\le V$. Then 
$0<\rk(T_0)<\rk(T)$ implies that $0<\dim_{\Q_p}(V_0)<\dim_{\Q_p}(V)$, so 
$V_0$ is a proper nontrivial $\Q_pG$-submodule of $V$, and $V$ is reducible. 

Conversely, assume $V$ is reducible, and let $V_0<V$ be a proper nontrivial 
submodule. Let $\ev_1\:V=\Hom(\Q_p,T)\too T$ be the homomorphism $\ev_1(v)=v(1)$, and 
set $T_0=\ev_1(V_0)$. The image under $\ev_1$ of each 1-dimensional 
subspace of $V$ is isomorphic to $\Q_p/\Z_p\cong\Z/p^\infty$, and from this 
it follows that $T_0$ is a discrete $p$-torus where 
$\rk(T_0)=\dim_{\Q_p}(V_0)<\dim_{\Q_p}(V)=\rk(T)$. Thus $T_0$ is an 
infinite, $G$-invariant, proper subgroup of $T$.
\end{proof}

We showed in \cite[Theorem 10.7]{BLO3} that if $X$ is a $p$-compact group 
and $(S,f)\in\sylp{X}$, then $\calf_{S,f}(X)$ is saturated, it has an 
associated centric linking system $\call_{S,f}^c(X)$, and 
$|\call_{S,f}^c(X)|\pcom\simeq BX$. We also showed that the fusion system 
of $X$ is determined by the homotopy type of $BX$, as is made explicit in 
the following lemma. 

\begin{Lem} \label{l:BX=BY}
Let $X$ and $Y$ be $p$-compact groups. If $BX\simeq BY$, then for 
$(S,f)\in\sylp{X}$ and $(U,g)\in\sylp{Y}$ we have $\calf_{S,f}(X)\cong 
\calf_{U,g}(Y)$. If $p$ is odd and $X$ and $Y$ are connected, then this is 
the case if the Weyl groups of $X$ and $Y$ are isomorphic and have 
isomorphic actions on their maximal tori. 
\end{Lem}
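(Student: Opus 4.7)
The plan is to handle the two statements separately, with the first being a routine consequence of results already quoted in the paper and the second resting on a deep classification theorem.

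For the first statement, I would mirror the argument used in the proof of Lemma \ref{l:BG=BGamma}. By \cite[Theorem 10.7]{BLO3}, for any $(S,f)\in\sylp{X}$ there is a centric linking system $\call_{S,f}^c(X)$ associated to $\calf_{S,f}(X)$, and $|\call_{S,f}^c(X)|\pcom\simeq BX$; the analogous statement holds for $(U,g)\in\sylp{Y}$. Given $BX\simeq BY$, we get
\[ |\call_{S,f}^c(X)|\pcom \simeq BX \simeq BY \simeq |\call_{U,g}^c(Y)|\pcom, \]
and \cite[Theorem 7.4]{BLO3} then yields the isomorphism $\calf_{S,f}(X)\cong\calf_{U,g}(Y)$.

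For the second statement, the aim is to reduce to the first by showing that isomorphism of Weyl group data implies $BX\simeq BY$. I would invoke the classification of connected $p$-compact groups at odd primes (due to Andersen--Grodal, building on earlier work of Andersen, Dwyer--Wilkerson, Notbohm, and others), which says that a connected $p$-compact group at an odd prime is determined up to isomorphism by its associated $\Z_p$-reflection group, i.e.\ by the finite group $W=\autf(T)$ together with its action on the maximal torus $T$ (equivalently, on $\Hom(\Q_p/\Z_p,T)\cong\pi_2(BT\pcom)$, using Lemma \ref{l:Hom(Qp,T)}). Under this classification, an isomorphism between $(W_X,T_X)$ and $(W_Y,T_Y)$ as finite groups acting on discrete $p$-tori produces a homotopy equivalence $BX\simeq BY$. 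The first statement then furnishes the desired fusion-system isomorphism.

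The hard part is the second statement, and the main obstacle is the appeal to the classification theorem. A more elementary approach would be to argue directly within fusion systems that for $p$ odd and a connected $p$-compact group the fusion system is determined by $\autf(T)$ via Alperin's fusion theorem (Theorem \ref{t:AFT})---one would need to show that every $\calf$-essential subgroup either contains $T$ (so that the morphisms on it are controlled by $\autf(T)$ and the extension axiom) or is otherwise pinned down by the Weyl group data. This requires an independent characterization of the essential subgroups of such a fusion system, which seems harder than simply quoting the classification. I therefore expect the cleanest proof to proceed via the classification, noting that hypothesis ``$p$ odd'' is precisely what is needed so that the extension $T\to N_X(T)\to W$ splits (a theorem of Andersen) and the $p$-compact group is determined by its root datum.
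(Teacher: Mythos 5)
Your proof is correct and follows essentially the same route as the paper: the first statement is obtained by applying \cite[Theorems 10.7 and 7.4]{BLO3} to deduce an isomorphism of fusion systems from the equivalence of classifying spaces, and the second reduces to the first by the classification of connected $p$-compact groups at odd primes together with Lemma \ref{l:Hom(Qp,T)}(b) to identify the Weyl group action on the discrete $p$-torus with the action on $\pi_2(BT\pcom)$. One small correction: the odd-primary classification theorem you need is due to Andersen, Grodal, M{\o}ller, and Viruel (cited as \cite[Theorem 1.1]{AGMV} in this paper); Andersen--Grodal \cite{AG} is the $p=2$ case, which is not what is invoked here.
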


\begin{proof} By \cite[Theorem 10.7]{BLO3}, there are centric linking systems 
$\call_{S,f}^c(X)$ and $\call_{U,g}^c(Y)$, associated to $\calf_{S,f}(X)$ 
and $\calf_{U,g}(Y)$, such that
	\[ |\call_{S,f}^c(X)|\pcom \simeq BX\pcom \simeq BY\pcom \simeq 
	|\call_{U,g}^c(Y)|\pcom. \]
Hence $\calf_{S,f}(X)\cong\calf_{U,g}(Y)$ by \cite[Theorem 7.4]{BLO3}. 
This proves the first statement. The second follows from 
\cite[Theorem 1.1]{AGMV}, together with Lemma \ref{l:Hom(Qp,T)}(b) which 
says that the actions of the Weyl groups on $\Hom(\Q_p/\Z_p,T_X)$ and 
$\Hom(\Q_p/\Z_p,T_Y)$, where $T_X$ and $T_Y$ are maximal discrete $p$-tori 
in $X$ and $Y$, are isomorphic to the actions on $L_X$ and $L_Y$ used in 
\cite{AGMV}. 
\end{proof}

The next lemma is similar, and is our means of showing that the fusion 
systems of certain $p$-compact groups are realized by linear torsion 
groups. 

\begin{Lem} \label{l:BX=BGamma}
Fix a prime $p$, a $p$-compact group $X$, and a linear torsion group 
$\Gamma$ in characteristic different from $p$ such that $BX\simeq 
B\Gamma\pcom$. Then for $(S,f)\in\sylp{X}$ and $S_\Gamma\in\sylp\Gamma$, we 
have $\calf_{S,f}(X)\cong\calf_{S_\Gamma}(\Gamma)$. 
\end{Lem}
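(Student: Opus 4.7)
The plan is to mimic the proof of Lemma \ref{l:BG=BGamma} essentially verbatim, since all the required ingredients are already in place from \cite{BLO3}. The only difference is that on the $p$-compact side we invoke the linking system construction for $p$-compact groups instead of for compact Lie groups.

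First I would produce, on each side, a centric linking system whose $p$-completed nerve recovers the relevant classifying space. On the $\Gamma$ side, Proposition \ref{p:LT-real.} gives us that for $S_\Gamma\in\sylp\Gamma$, the fusion system $\calf_{S_\Gamma}(\Gamma)$ is saturated, $\call_{S_\Gamma}^c(\Gamma)$ (as constructed in Definition \ref{d:LSc(G)}) is a centric linking system associated to it, and
\[
    |\call_{S_\Gamma}^c(\Gamma)|\pcom \simeq B\Gamma\pcom.
\]
On the $X$ side, for $(S,f)\in\sylp X$, the theorem of Dwyer--Wilkerson together with \cite[Theorem 10.7]{BLO3} gives a centric linking system $\call_{S,f}^c(X)$ associated to the saturated fusion system $\calf_{S,f}(X)$, with
\[
    |\call_{S,f}^c(X)|\pcom \simeq BX,
\]
noting that $BX$ is already $p$-complete by definition of a $p$-compact group.

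Next I would chain these equivalences. By hypothesis $BX\simeq B\Gamma\pcom$, so
\[
    |\call_{S,f}^c(X)|\pcom \simeq BX \simeq B\Gamma\pcom \simeq
    |\call_{S_\Gamma}^c(\Gamma)|\pcom.
\]
Finally, \cite[Theorem 7.4]{BLO3} asserts that two saturated fusion systems over discrete $p$-toral groups whose associated centric linking systems have homotopy equivalent $p$-completed nerves are themselves isomorphic; applying this gives $\calf_{S,f}(X)\cong\calf_{S_\Gamma}(\Gamma)$.

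There is essentially no obstacle here beyond making sure each cited input applies; the argument is a one-line diagram chase once the linking systems are in hand. The only thing to be careful about is to note explicitly that $BX$ needs no further $p$-completion (since $X$ is a $p$-compact group, $BX$ is already $p$-complete), so that the two $p$-completed nerves are compared in the same homotopy category. Everything else is an appeal to results already stated in the excerpt or in \cite{BLO3}.
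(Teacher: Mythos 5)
Your argument is essentially identical to the paper's: obtain centric linking systems $\call_{S,f}^c(X)$ and $\call_{S_\Gamma}^c(\Gamma)$ via \cite[Theorems 8.10 and 10.7]{BLO3}, chain the homotopy equivalences of their $p$-completed nerves through $BX\simeq B\Gamma\pcom$, and conclude by \cite[Theorem 7.4]{BLO3}. Your remark that $BX$ is already $p$-complete is a fine clarification but changes nothing of substance.
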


\begin{proof} By \cite[Theorem 10.7]{BLO3} and Proposition 
\ref{p:LT-real.}, there are centric linking systems $\call_{S,f}^c(X)$ and 
$\call_{S_\Gamma}^c(\Gamma)$, associated to $\calf_{S,f}(X)$ and 
$\calf_{S_\Gamma}(\Gamma)$, respectively, such that 
	\[ |\call_{S,f}^c(X)|\pcom \simeq BX\pcom \simeq B\Gamma\pcom \simeq 
	|\call_{S_\Gamma}^c(\Gamma)|\pcom. \]
So $\calf_{S,f}(X)\cong\calf_{S^*}(\Gamma)$ by \cite[Theorem 7.4]{BLO3}. 
\end{proof}

The next lemma proves some of the properties of fusion systems of 
$p$-compact groups needed to apply Theorem \ref{t:seq-exotic} and similar 
results. Recall that a connected $p$-compact group $X$ with maximal torus 
$T$ and Weyl group $W$ is \emph{simple} if $Z(X)=1$ and 
$\Q\otimes_{\Z}\pi_2(BT\pcom)$ is irreducible as a $\Q_pW$-module (see 
\cite[Definition 1.2]{DW-prod}).

\begin{Lem} \label{l:X.conn}
Let $X$ be a connected $p$-compact group, fix $(S,f)\in\sylp{X}$, and set 
$\calf=\calf_{S,f}(X)$. Let $T\nsg S$ be the identity component. Then 
\begin{enuma} 

\item each $s\in S$ is $\calf$-conjugate to an element in $T$; 

\item $C_S(T)=T$, and no proper subgroup of $S$ containing $T$ is strongly 
closed in $\calf$; and 

\item if $X$ is simple, then no infinite proper subgroup of $T$ is 
invariant under the action of $\autf(T)$.

\end{enuma}
Thus whenever $X$ is simple and $p\mid|\autf(T)|$, conditions (i), (ii), 
and (iii) in Theorem \ref{t:seq-exotic} all hold. 
\end{Lem}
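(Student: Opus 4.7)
The plan is to verify parts (a)--(c) in order, drawing respectively on toral conjugacy of cyclic subgroups, self-centralization of the maximal torus, and the irreducibility built into the definition of simple $p$-compact group; the final assertion is then a formal consequence. For part (a), I would invoke the standard fact from $p$-compact group theory (the analogue of the classical torus conjugacy theorem for compact connected Lie groups) that every cyclic $p$-subgroup of a connected $p$-compact group is toral; translated into the language of $\calf = \calf_{S,f}(X)$ this means that every $s \in S$ admits a morphism $\varphi \in \homf(\gen{s}, S)$ with $\varphi(s) \in T$.

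For part (b), the equality $C_S(T) = T$ is the fusion-system shadow of the Dwyer--Wilkerson theorem that $C_X(T) = T$ for connected $X$: we have $C_S(T) \le C_X(T) \cap S = T$, and the reverse inclusion is automatic since $T$ is abelian. For the second assertion, let $T \le R \le S$ be strongly closed in $\calf$ and fix any $s \in S$; by (a) there is $\varphi \in \homf(\gen{s}, S)$ with $\varphi(s) = t \in T \le R$. Since $R$ is strongly closed and $t \in R$, we get $s \in t^\calf \subseteq R$; as $s$ was arbitrary this gives $R = S$.

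For part (c), simplicity of $X$ includes the statement that $V := \Q \otimes_\Z \pi_2(BT\pcom)$ is irreducible as a $\Q_p W$-module, where $W = \autf(T)$. By Lemma \ref{l:Hom(Qp,T)}, $V$ is naturally isomorphic as a $\Q_p W$-module to $\Hom(\Q_p, T)$, and Lemma \ref{l:Theta} then asserts precisely that no infinite proper $W$-invariant subgroup of $T$ exists. For the final statement, hypotheses (ii) and (iii) of Theorem \ref{t:seq-exotic}, and the $C_S(T) = T$ part of (i), follow from (b) and (c); it remains only to verify $S > T$. Since $T$ is the identity component of $S$ it is characteristic in $S$, hence weakly closed in $\calf$, and $N_S(T) = S$ makes $T$ fully normalized; saturation then gives $\Aut_S(T) \in \sylp{\autf(T)}$. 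When $p \mid |\autf(T)|$ this forces $\Aut_S(T) \ne 1$, so $C_S(T) \ne S$, and combined with $C_S(T) = T$ from (b) we conclude $S > T$.

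The main technical obstacle is making precise, within the fusion-system framework of $\calf_{S,f}(X)$, the two invocations from $p$-compact group theory used in (a) and (b) --- namely cyclic toral conjugacy and the self-centralizing property $C_X(T) = T$ --- since both are classically phrased in homotopical rather than combinatorial language. Once those translations are in hand, the remainder of the proof is a routine manipulation of strongly closed subgroups and the saturation axioms.
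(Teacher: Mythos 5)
Your proposal is correct in outline and matches the paper's argument for parts (a), (c), the second half of (b), and the closing deduction, but it takes a genuinely different route to the first claim of (b), namely $C_S(T)=T$. You derive it from the Dwyer--Wilkerson self-centralizing torus theorem $C_X(T)=T$, writing $C_S(T)\le C_X(T)\cap S=T$. The paper instead deduces $C_S(T)=T$ purely from part (a) together with the saturation axioms: given $x\in C_S(T)$, one uses (a) and \cite[Lemma 2.4(a)]{BLO3} to conjugate $\gen{x}$ into a fully centralized subgroup of $T$, applies the extension axiom to extend along $C_S(x)\ge T$, and observes that the extension sends $T$ to itself, forcing $x\in T$. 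The two routes buy different things: yours is conceptually shorter, but the expression $C_X(T)\cap S$ is not literally meaningful (neither $X$ nor $C_X(T)$ is a group), so making it precise requires the correspondence between homotopy-theoretic centralizers of toral subgroups and centralizer fusion subsystems of $\calf_{S,f}(X)$, which is additional machinery. The paper's route stays entirely inside the saturated fusion system once (a) is in hand, and only invokes $p$-compact group theory in the proof of (a) itself --- and there, rather than citing cyclic toral conjugacy as a black box as you do, it assembles it from \cite[Proposition 5.6]{DW}, \cite[Proposition 8.11]{DW}, and \cite[Proposition 3.2]{DW-center}. You correctly flag both translation points as the main technical work; with those filled in, your argument goes through.
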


\begin{proof} The last statement follows immediately from (b) and (c), and 
since $p\mid|\autf(T)|$ implies $S>T$. 

\smallskip

\noindent\textbf{(a) } By assumption, $f\:BS\too BX$ is a Sylow 
$p$-subgroup of $X$. Set $f_0=f|_{BT}\:BT\too BX$. 

Fix $s\in S\sminus T$, and let $m\ge1$ be such that $|s|=p^m$. Let 
$\rho\in\Hom(\Z/p^m,S)$ be the homomorphism that sends the class of $1$ to 
$s$, and set $\chi=f\circ B\rho$. By \cite[Proposition 5.6]{DW} and since 
$X$ is connected, $\chi$ extends to maps defined on $B\Z/p^n$ for all 
$n>m$, and hence to a map $\5\chi\:BA\too BX$ where $A\cong(S^1)\pcom$ has 
discrete approximation $A_\infty\cong\Z/p^\infty$. 

By \cite[Proposition 8.11]{DW}, there is a pointed map $\5\tau\:BA\too 
BT\pcom$ such that $\5\chi\simeq f_0\circ\5\tau$. By \cite[Proposition 
3.2]{DW-center}, there is $\tau\in\Hom(A_\infty,T)$ such that $\5\tau\simeq 
B\tau$. So $f\circ B\rho=\5\chi|_{B\Z/p^m}\simeq f_0\circ 
B(\tau|_{\Z/p^m})$, and by definition of the fusion system $\calf$, this 
means that the homomorphisms $\rho\in\Hom(\Z/p^m,S)$ and 
$\tau|_{\Z/p^m}\in\Hom(\Z/p^m,T)$ are $\calf$-conjugate. So $s=\rho(1)$ is 
$\calf$-conjugate to $\tau(1)\in T$. 

\smallskip

\noindent\textbf{(b) } Fix an element $x\in C_S(T)$. 
By (a), $\gen{x}$ is $\calf$-conjugate to a subgroup of $T$, so by 
\cite[Lemma 2.4(a)]{BLO3}, it is conjugate to a subgroup of $T$ fully 
centralized in $\calf$. So fix $\varphi\in\homf(\gen{x},T)$ such that 
$\varphi(\gen{x})\le T$ and is fully centralized in $\calf$. Then $\varphi$ extends 
to some $\4\varphi\in\homf(C_S(x),S)$ by the extension axiom. But $T\le 
C_S(x)$ and $\4\varphi(T)=T$, so $x\in T$ since $\4\varphi(x)\in T$. Thus 
$C_S(T)=T$.

If $R$ is strongly closed in $\calf$ and $T\le R\le S$, then $R=S$ since 
each element of $S$ is $\calf$-conjugate to an element of $R$.

\smallskip

\noindent\textbf{(c) } Set $L_X=\pi_2(BT\pcom)$ (following the notation of 
Dwyer and Wilkerson \cite{DW-prod}). By the definition in 
\cite[\S\,1]{DW-prod} of a simple connected $p$-compact group, $\Q\otimes 
L_X$ is a simple $\Q_p\autf(T)$-module whenever $X$ is simple. So 
$\Hom(\Q_p,T)$ is a simple $\Q_p\autf(T)$-module by Lemma 
\ref{l:Hom(Qp,T)}(a,b), and there is no infinite proper $\autf(T)$-invariant 
subgroup of $T$ by Lemma \ref{l:Theta}.
\end{proof}

The following notation for certain infinite algebraic extensions of $\F_q$ 
will be useful when stating our main theorem. 

\begin{Not} \label{n:Kqp}
When $q$ is a prime power and $\scrp$ is a set of primes, then 
$\F_{q^{\gen\scrp}}$ denotes the union of all finite fields $\F_{q^a}$ for 
$a\ge1$ divisible only by primes in $\scrp$. When $0\ne m\in\N$, we also 
write $\F_{q^{\gen{m}}}=\bigcup_{i=1}^\infty\F_{q^{m^i}}$ and 
$\F_{q^{\gen{m'}}}=\bigcup\{\F_{q^k}\,|\,\gcd(k,m)=1\}$; i.e., the special 
cases of $\F_{q^{\gen\scrp}}$ where $\scrp$ is the set of primes dividing $m$ 
or its complement.
\end{Not}

Thus, for example, $\F_{q^{\gen1}}=\F_q$ and $\F_{q^{\gen{1'}}}=\4\F_q$.

We are now ready to determine exactly which fusion systems of 
simple, connected $p$-compact groups are sequentially realizable and which 
are not. 

\begin{Thm} \label{t:p-compact}
Let $X$ be a simple, connected $p$-compact group, choose $S\in\sylp{X}$, 
and set $\calf=\calf_S(X)$. Let $T\nsg S$ be the identity component of $S$, 
and let $W=\autf(T)$ be the Weyl group. 
\begin{enuma} 

\item If $p\nmid|W|$, then $\calf$ is \textup{LT}-realized by the 
semidirect product $T\rtimes W$.

\item If $W$ is the Weyl group of a simple group scheme $\gg$ 
over $\Z$, then $\calf$ is \textup{LT}-realized by the algebraic group 
$\gg(\4\F_q)$ for each prime $q\ne p$. 

\item If $p\mid|W|$ and $W$ is not the Weyl group of a simple algebraic 
group scheme, then $\calf$ is either \textup{LT}-realizable or not 
sequentially realizable, as described in Table \ref{tbl:p-cpt}.

\end{enuma}
\begin{table}[ht]
\[ \renewcommand{\arraystretch}{1.4} \renewcommand{\4}[1]{\overline{#1}}
\begin{array}{cccc|c} 
p & W & \textup{conditions or comments} & \rk(T) 
& \textup{realized by} \\\hline
p & G(m,1,n) & 3\le m\mid(p-1),~ n\ge p & n & 
\GL_{mn}(\F_{q_1^{\gen{p}}}) \\
p & G(2m,2,n) & 2\le m\mid(p-1)/2,~ n\ge p & n & 
\PSO_{2mn}^{(-1)^n}(\F_{q_2^{\gen{p}}}) \\
p & G(m,k,n) & 1\ne k\mid m\mid(p-1),~ k\ge3,~ n\ge p & n & 
\textup{not seq. real.} \\
3 & \ST{12} & W\cong\GL_2(3) & 2 & \lie2F4(\F_{2^{\gen{3}}}) \\ 
2 & \ST{24} & W\cong C_2\times\SL_3(2) & 3 & \textup{not seq. real.} \\
5 & \ST{29} & W\cong (C_4\circ Q_8\circ Q_8).\Sigma_5 & 4 
& \textup{not seq. real.} \\ 
5 & \ST{31} & W\cong(C_4\circ Q_8\circ Q_8).\Sigma_6 & 4 
& E_8(\F_{q_3^{\gen5}}) \\ 
7 & \ST{34} & W\cong 6.\PSU_4(3).2 & 6 & \textup{not seq. real.} \\
\end{array}
\]
\caption{Here, $q_1$, $q_2$, and $q_3$ are prime powers: $q_1$ and 
$q_2$ have order $m$ and $2m$, respectively, in $(\Z/p)^\times$, while 
$q_3\equiv\pm2$ (mod $5$). In the last five cases, $\ST{n}$ is the 
reflection group of type $n$ in the Shephard-Todd list \cite[p. 301]{ST}. 
Also, $G\circ H$ denotes a central product of the groups $G$ and $H$, while 
$G.H$ denotes an extension of $G$ by $H$.}
\label{tbl:p-cpt}
\end{table} 
\end{Thm}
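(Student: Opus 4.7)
My plan is to treat the three parts separately, using Theorem \ref{t:seq-exotic} for the negative (not sequentially realizable) assertions in (c) and explicit constructions combined with Lemma \ref{l:BX=BGamma} for the positive (LT-realizable) assertions in (a), (b), and (c). Throughout, Lemma \ref{l:X.conn} certifies that $\calf=\calf_{S,f}(X)$ satisfies hypotheses (i)--(iii) of Theorem \ref{t:seq-exotic} whenever $p\mid|W|$, and the simplicity of $X$ (via Lemma \ref{l:Theta} applied to $\Hom(\Q_p,T)$) yields the stronger $O^{p'}(\autf(T))$-invariance condition that permits invoking the $\ell=1$ refinement at the end of that theorem.

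For part (a), when $p\nmid|W|$ the Sylow subgroup $S$ coincides with $T$, so $\Gamma=T\rtimes W$ has $T$ as its unique Sylow $p$-subgroup and $\calf_T(\Gamma)=\calf$ directly. Since $T$ embeds in $(\4\F_q^\times)^{\rk(T)}$ for any prime $q\ne p$, the finite extension $\Gamma$ is linear over $\4\F_q$. Part (b) is then essentially a corollary of the proof of Theorem \ref{t:cpt.cn.Lie}: if $G$ is the compact connected Lie group with $BG\pcom\simeq BX$, the Friedlander--Mislin theorem gives $B\gg(\4\F_q)\pcom\simeq B\gg(\C)\pcom\simeq BX$ for every prime $q\ne p$, and Lemma \ref{l:BX=BGamma} produces the required isomorphism of fusion systems.

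For part (c), the non-realizability statements reduce to inspection: by Theorem \ref{t:seq-exotic} together with the $\ell=1$ refinement, sequential realizability would force $W$ to contain a normal subgroup $H$ of index prime to $p$ such that $(H,\Omega_k(T))$ is isomorphic to one of the pairs $(W,M)$ listed in Proposition \ref{p:Weyl}(a)--(e). For each of $G(m,k,n)$ with $k\ge 3$, $\ST{24}$ at $p=2$, $\ST{29}$ at $p=5$, and $\ST{34}$ at $p=7$, I verify directly from the known orders, centers, and minimal faithful modular representations of these groups that no such $H$ exists. For the four LT-realizable entries, I construct explicit linear torsion groups $\Gamma$ and verify $B\Gamma\pcom\simeq BX$ via Lemma \ref{l:BX=BGamma}: for $G(m,1,n)$ the group is $\GL_{mn}(\F_{q_1^{\gen{p}}})$ where $q_1$ has order $m$ in $(\Z/p)^\times$, whose ``twisted'' maximal torus (built from $n$ copies of $\F_{q_1^m}^\times$ sitting inside $\GL_m$) has automizer $C_m\wr\Sigma_n=G(m,1,n)$; the case $G(2m,2,n)$ is analogous, realized by $\PSO_{2mn}^{(-1)^n}(\F_{q_2^{\gen{p}}})$. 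The two exceptional realizations, $\lie2F4(\F_{2^{\gen{3}}})$ for $(p,W)=(3,\ST{12})$ and $E_8(\F_{q_3^{\gen{5}}})$ for $(p,W)=(5,\ST{31})$, follow from the Aguad\'e--Zabrodsky constructions of these $p$-compact groups as $p$-completions of classifying spaces of the corresponding algebraic groups over suitable algebraic extensions of a finite field.

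The main obstacle is the explicit LT-realization in the two exceptional cases $(3,\ST{12})$ and $(5,\ST{31})$, where one must match the fusion data of a sporadic reflection group with that of an algebraic group of Lie type at a bad prime and identify the $p$-completed classifying space of the chosen countable algebraic subgroup with $BX$. A secondary technical point requiring care is the exclusion of $G(m,k,n)$ with $k\ge 3$ from the list of sequentially realizable cases: one must check that no normal subgroup of $G(m,k,n)$ of index prime to $p$, with its natural action on $(\F_p)^n$, is isomorphic to $G(m',1,n')$ or $G(2m',2,n')$ for any permissible parameters.
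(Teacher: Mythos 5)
Your framework — Lemma \ref{l:X.conn} to certify hypotheses (i)--(iii) of Theorem \ref{t:seq-exotic}, that theorem for the nonrealizability claims, and Lemma \ref{l:BX=BGamma} for the LT-realizability claims — is exactly the paper's, and parts (a), (b), and the nonrealizability entries in part (c) are treated essentially identically. The gap is entirely in the positive entries of Table \ref{tbl:p-cpt}, where your sketch treats the homotopy equivalence $B\Gamma\pcom\simeq BX$ as something to be ``verified'' after matching Weyl groups, but this equivalence is the crux and the paper needs substantial input for each case.

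For $W\cong G(m,1,n)$, matching the torus automizer of $\GL_{mn}(\F_{q_1^{\gen p}})$ is not enough: the paper must first show that $B\Gamma\pcom$ is the classifying space of a connected $p$-compact group at all (via Quillen's polynomial computation of $H^*(B\GL_n(\4\F_q);\F_p)$), then invoke the uniqueness theorem of Andersen--Grodal--M{\o}ller--Viruel \cite[Theorem 1.1]{AGMV} and compatibility of fusion with field extensions from \cite[Theorem A(a)]{BMO1}. For $W\cong G(2m,2,n)$, ``analogous'' severely understates what happens: the paper runs through Friedlander's \'etale homotopy fixed-point identification of $(B\SO_{2nm}^{\pm}(q))\pcom$, lifts a $C_{2m}$-action on $B\SO_{2nm}(\C)\pcom$ using Broto--M{\o}ller \cite{BM}, and interpolates a colimit of homotopy fixed point spaces $BY(q_0^{p^i})$ to identify $B\SO_{2nm}^{(-1)^n}(\F_{q_2^{\gen p}})\pcom$ with $BY$. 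None of this is a routine twisted-torus count. Finally, for $\ST{12}$ and $\ST{31}$, the paper does \emph{not} use the Aguad\'e--Zabrodsky spaces at all: it compares essential subgroups and automizers between Tables 4.2 and 5.1 of \cite{indp3} to show directly that $\calf$ is the increasing union of the fusion systems of $\lie2F4(2^k)$ (resp. $E_8(q_3^k)$) over odd $k$, a purely finite-fusion-theoretic argument. The Aguad\'e--Zabrodsky constructions build $BX_{12}$ and $BX_{31}$ independently; they do not by themselves give $B\lie2F4(K)\pcom\simeq BX_{12}$, so your proposed route in those two cases would need a separate bridging argument that you have not indicated.

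One smaller point in your favor: your use of the $\ell=1$ refinement of Theorem \ref{t:seq-exotic} (justified since $X$ simple gives the $O^{p'}(W)$-irreducibility via Lemma \ref{l:X.conn}(c)) is slightly cleaner than the paper's, which checks against products of the listed groups; both are fine, but yours trims the inspection.
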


\begin{proof} For the proofs that each simple, connected $p$-compact group 
has the form listed in (a) or (b) or is one of the entries in Table 
\ref{tbl:p-cpt}, we refer to \cite[Theorem 1.1]{AGMV} or \cite[Theorem 
1.1]{AG}, as well as the classification by Clark and Ewing \cite{CE} of 
finite reflection groups over $\Z_p$. Also, when $p$ is odd, a simple, 
connected $p$-compact group is uniquely determined by the Weyl group and 
its action on the maximal torus by \cite[Theorem 1.1]{AGMV} again.

\smallskip

\noindent\textbf{(a) } If $p\nmid|W|$, then $S=T\nsg\calf$ by 
Theorem \ref{t:AFT} (Alperin's fusion theorem), and so $\calf$ is the 
fusion system of $T\rtimes W$. 

\smallskip

\noindent\textbf{(b) } This was shown in Theorem \ref{t:cpt.cn.Lie}. 

\smallskip

\noindent\textbf{(c) } We consider individually the entries in Table 
\ref{tbl:p-cpt}.

\smallskip

\noindent\textbf{Case 1: } Assume $W\cong G(m,1,n)\cong C_m\wr\Sigma_n$ for 
some $3\le m\mid(p-1)$ and $n\ge p$. In particular, $p$ is odd. Fix $q_1$ of 
order $m$ in $(\Z/p)^\times$, set 
$K=\F_{q_1^{\gen{m'}}}\subseteq\4\F_{q_1}$, the 
union of all finite extensions of $q_1$ of degree prime to $m$, and set 
$\Gamma=GL_{mn}(K)$. Also, set $K_0=\F_{q_1^{\gen{p}}}\le K$ and 
$\Gamma_0=\GL_{mn}(K_0)\le\Gamma$. For each $r\ge1$ prime to $m$, if 
$p^i\mid r$ is the largest power of $p$ dividing $r$, then the $p$-fusion 
systems of $\GL_{mn}(q_1^{p^i})$ and of $\GL_{mn}(q_1^r)$ are isomorphic by 
\cite[Theorem A(a)]{BMO1}. Since 
$\Gamma_0=\bigcup_{i=0}^\infty\GL_{mn}(q_1^{p^i})$ and $\Gamma$ is the 
union of the groups $\GL_{mn}(q_1^r)$ for all $r$ prime to $m$,
the $p$-fusion systems of $\Gamma_0$ and $\Gamma$ are also isomorphic. 

By a theorem of Quillen \cite[\S10]{Quillen}, $H^*(B\Gamma;\F_p)$ is a 
polynomial algebra, and hence the loop space $\Omega(B\Gamma\pcom)$ has 
finite cohomology (an exterior algebra). So $B\Gamma\pcom$ is the 
classifying space of a $p$-compact group. Thus by Lemma 
\ref{l:BX=BGamma}, the $p$-fusion system of $\Gamma$ is the fusion system 
of a $p$-compact group. From \cite[Table 6.1]{BMO2} (applied to finite 
subfields of $K$), it follows that $\Gamma$ has a maximal torus $T$ of 
rank $n$ with Weyl group $\autf(T)\cong C_m\wr\Sigma_n$. So by the 
uniqueness statement in \cite[Theorem 1.1]{AGMV}, $\calf$ is the fusion 
system of $X$.

\smallskip

\noindent\textbf{Case 2: } Assume $W\cong G(2m,2,n)$ for some $2\le 
m\mid(p-1)/2$ and $n\ge p$. Let $G=SO_{2nm}(\C)$ be the simple complex 
algebraic group of type $SO_{2nm}$. Let $\tau\in\Aut(BG)$ be the self map 
induced by a graph automorphism of order $2$. By \cite[Theorem 
12.2]{Friedlander}, if $q=r^m$ for some prime $r\ne p$ and 
$\psi^q\in\Aut(BG\pcom)$ is the unstable Adams map of degree $q$, then 
$(B\SO_{2nm}^+(q))\pcom$ and $(B\SO_{2nm}^-(q))\pcom$ are the spaces of 
homotopy fixed points of the actions of $\psi^q$ and $\tau\psi^q$, 
respectively, on $BG\pcom$.

Choose a prime power $q_2$ with $\ord_p(q_2)=2m$. Let $\zeta$ be the primitive 
$2m$-th root of unity in the $p$-adic integers $\Z_p$ such that $\zeta\equiv 
q_2$ (mod $p$). Set $q_0 =\zeta^{-1}q_2\in \Z_p$; thus $q_2= \zeta q_0$ and 
$q_0\equiv1$ (mod $p$). Set $\Gamma=\gen{\tau^n\psi^\zeta}$. Since $\tau$ 
has order $2$ and commutes with $\psi^\zeta$ up to homotopy by 
\cite[Corollary 3.5]{JMO}, 
$\Gamma$ is cyclic of order $2m$ as a subgroup of $\Out(BG\pcom)$ in all cases. 
By \cite[Theorem B]{BM}, the homotopy action of $\Gamma\cong C_{2m}$ on $BG$ 
lifts to an actual action (i.e., a homomorphism $\Gamma\too\Aut(BG)$). So 
for each $i\ge0$, 
	\beqq (B\SO_{2nm}^{(-1)^n}(q_2^{p^i}))\pcom \simeq 
	(B\SO_{2nm}(\C)\pcom)^{h\tau^n\psi^{q_2^{p^i}}} \simeq 
	\bigl((B\SO_{2nm}(\C)\pcom)^{h\Gamma}\bigr)(q_0^{p^i}) 
	\label{e:p-cpt-1} \eeqq
where the second equivalence follows from \cite[Theorem E]{BM} and since 
$q_2^{p^i}=\zeta q_0^{p^i}$ (recall $\zeta^{2m}=1$ and $2m\mid(p-1)$). Here, 
$B\SO_{2nm}^{(-1)^n}$ means $B\SO_{2nm}^+$ if $n$ is even and 
$B\SO_{2nm}^-$ if $n$ is odd. When $X$ is a $p$-compact group, we follow 
the notation in \cite{BM} and let $BX(q_2)$ denote the homotopy fixed set of 
an unstable Adams operation of degree $q_2$ acting on $BX$. 

Set $BY=(B\SO_{2nm}(\C)\pcom)^{h\Gamma}$ for short. Then \eqref{e:p-cpt-1} 
implies that 
	\[ (B\SO_{2nm}^{(-1)^n}(\F_{q_2^{\gen{p}}}))\pcom \simeq 
	\hocolim_{i\ge0} BY(q_0^{p^i}). \]


By \cite[Theorem B]{BM}, $BY$ is the classifying space of a connected 
$p$-compact group. By the same theorem and since $H^*(B\SO(2nm);\F_p)$ is 
polynomial, $H^*(BY;\F_p)$ is also polynomial. So by \cite[Theorem F]{BM}, 
for each $i\ge1$, $H^*(BY(q_0^{p^i});\F_p)$ is isomorphic as a graded ring 
to the tensor product of $H^*(BY;\F_p)$ with an exterior algebra, where the 
polynomial generators are higher Bocksteins of the exterior generators 
(different higher Bocksteins depending on $i$). Hence the natural maps from 
$BY(q_0^{p^i})$ to $BY(q_0^{p^{i+1}})$ induce isomorphisms on the 
polynomial parts of the cohomology rings and trivial maps on the exterior 
parts. The natural map from $\hocolim(BY(q_0^{p^i}))$ to $BY$ thus induces 
an isomorphism 
	\begin{align*} 
	H^*(BY;\F_p) \cong \holim_{i\ge0} H^*(BY(q_0^{p^i});\F_p) &\cong 
	H^*\bigl( \hocolim_{i\ge0}(BY(q_0^{p^i})) ; \F_p) \\ 
	&\cong H^*(B\SO_{2nm}^{(-1)^n}(\F_{q_2^{\gen{p}}});\F_p), 
	\end{align*}
and so $B\SO_{2nm}^{(-1)^n}(\F_{q_2^{\gen{p}}})\pcom \simeq BY$. 

Thus $B\SO_{2nm}^{(-1)^n}(\F_{q_2^{\gen{p}}})\pcom$ is the classifying space of 
a connected $p$-compact group. Let $W(Y)$ denote its Weyl group; 
equivalently, the Weyl group (torus automizer) of 
$B\SO_{2nm}^{(-1)^n}(q_2^{p^i})$ for large $i$. From \cite[Table 6.1]{BMO2}, we 
see that $W(Y)\cong C_{G(2,2,mn)}(w_0)$ where $w_0$ acts on a maximal torus in 
$\SO_{2mn}(\4\F_{q_2})$ by sending $(\lambda_1,\lambda_2,\dots,\lambda_{mn})$ 
to 
	\[ (\lambda_m^{-1},\lambda_1,\dots,\lambda_{m-1}, 
	\lambda_{2m}^{-1},\lambda_{m+1},\dots,\lambda_{2m-1},\dots, 
	\lambda_{nm}^{-1},\lambda_{(n-1)m+1},\dots,\lambda_{nm-1}). \]
Thus $W(Y)$ is generated by all products of evenly many permutations 
	\[ [\lambda_{im+1},\dots,\lambda_{im+m-1},\lambda_{(i+1)m}]~ \mapsto ~ 
	[\lambda_{(i+1)m}^{-1},\lambda_{im+1},\dots,\lambda_{im+m-1}] \]
(of order $2m$) together with all permutations of the $n$ blocks of length 
$m$, and is isomorphic to $G(2m,2,n)$. So $BY\simeq BX(2m,2,n)$ by Lemma 
\ref{l:BX=BY}, and hence the fusion systems of 
$\SO_{2nm}^{(-1)^n}(\F_{q^{\gen{p}}})$ and $X(2m,2,n)$ are isomorphic by 
Lemma \ref{l:BX=BGamma}.

\smallskip

\noindent\textbf{Case 3: } 
Assume $W\cong\ST{12}$ or $\ST{31}$. The fusion systems of these two 
$p$-compact groups are described in \cite[Table 5.1]{indp3}.

Consider the pairs $(G,p)=(\lie2F4(2^k),3)$ and $(E_8(q_3^k),5)$, where 
$q_3\equiv\pm2$ (mod $5$) is a prime power and $k$ is odd. In each of these 
cases, the Sylow $p$-subgroup $S\in\sylp{G}$ contains a homocyclic abelian 
subgroup $A\nsg S$ of index $p$, and information about the fusion systems 
$\calf_S(G)$, including the essential subgroups other than $A$, is given in 
\cite[Table 4.2]{indp3}. If $(G,p)=(\lie2F4(2^k),3)$, then $A$ 
has rank $2$ and exponent $3^e$ where $e=v_3(2^k+1)>0$. If 
$(G,p)=(E_8(q_3^k),5)$, then $A$ has rank $4$ and exponent $5^e$ where 
$e=v_5(q_3^{2k}+1)>0$. By comparing Tables 4.2 and 5.1 in \cite{indp3}, we see 
that $\calf$ is the union of the fusion systems of the finite groups 
$\lie2F4(2^k)$ or $E_8(q_3^k)$ taken over all odd $k$, and hence is realized 
by the linear torsion group $\lie2F4(\F_{2^{\gen{2'}}})$ or 
$E_8(\F_{q_3^{\gen{2'}}})$, respectively. A similar argument shows that $\calf$ 
is also realized by $\lie2F4(\F_{2^{\gen3}})$ or $E_8(\F_{q_3^{\gen5}})$. 

\smallskip

\noindent\textbf{Case 4: } Assume either $W\cong G(m,k,n)$ where $1\ne 
k\mid m\mid(p-1)$, $k\ge3$, and $n\ge p$, or $W\cong \ST{24}$, $\ST{29}$, 
or $\ST{34}$. By Lemma \ref{l:X.conn}, $\calf$ satisfies conditions 
(i)--(iii) in Theorem \ref{t:seq-exotic}. 

By inspection, $W=\autf(T)$ does not contain a normal subgroup of 
index prime to $p$ isomorphic 
to a product of copies of any of the groups listed in points (a)--(e) of 
Theorem \ref{t:seq-exotic}. So $\calf$ is not sequentially realizable by 
that theorem. 
\end{proof}

\begin{Rmk} \label{rmk:Ree}
\renewcommand{\4}[1]{\overline{#1}}
In Theorem \ref{t:p-compact}(c), the fusion systems of certain 
$3$-compact groups are realized by Ree groups $\lie2F4(K)$ for an 
infinite field $K$ of characteristic $2$. 
The groups $\lie2F4(K)$ are often defined only when $K$ is a finite 
extension of $\F_2$ of odd degree, but in fact, they were defined by Ree 
in \cite{Ree} for each perfect field $K$ of characteristic $2$ that has an 
automorphism $\theta\in\Aut(K)$ such that $\theta^2\psi=\Id_K$ where 
$\psi\in\Aut(K)$ is the Frobenius $(t\mapsto t^2)$. In particular, this is 
the case for each algebraic extension of $\F_2$ that is a union of 
extensions of odd degree. Ree's construction in this general situation is 
also described in \cite[Sections 12.3 and 13.4]{Carter}. 

More precisely, Ree defines, for each perfect field $K$ of characteristic 
$2$, a graph automorphism $\sigma\in\Aut(F_4(K))$, with the property that 
$\sigma^2=\5\psi$, where $\5\psi\in\Aut(F_4(K))$ is the field automorphism 
induced by $\psi$. He does this by defining $\sigma$ explicitly on the root 
groups, exchanging the root groups for long and short roots. If 
$\theta\in\Aut(K)$ is such that $\theta^2\psi=\Id_K$ and 
$\5\theta\in\Aut(F_4(K))$ is the induced field automorphism, then 
$(\sigma\5\theta)=\Id$, and he defines 
$\lie2F4(K)=C_{F_4(K)}(\sigma\5\theta)$. 
\end{Rmk}

So far, all of our examples of realizing fusion systems of $p$-compact 
groups involve linear torsion groups in characteristic $q$ for $q\ne p$. In 
Theorem \ref{t:cn.p-cpt}, as well as generalizing Theorem 
\ref{t:p-compact} to arbitrary connected $p$-compact groups, we will 
apply Proposition \ref{p:no.char0} to show that in most cases, their fusion 
systems cannot be realized by linear torsion groups in characteristic $0$. 

We first need to know that the fusion system of a product of $p$-compact 
groups is the product of their fusion systems.


\begin{Prop} \label{p:X1xX2}
Let $X_1$ and $X_2$ be $p$-compact groups, and set $X=X_1\times X_2$. 
Choose Sylow $p$-subgroups $(S_i,f_i)\in\sylp{X_i}$, and set $S=S_1\times 
S_2$ and $f=f_1\times f_2\:BS\too BX$. Then $(S,f)\in\sylp{X}$, and 
	\[ \calf_{S,f}(X) \cong \calf_{S_1,f_1}(X_1)\times 
	\calf_{S_2,f_2}(X_2). \]
\end{Prop}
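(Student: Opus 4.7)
The plan is to prove the result in two steps: first that $(S,f)\in\sylp{X}$, and then that $\calf_{S,f}(X)$ equals the product fusion system $\calf^*:=\calf_{S_1,f_1}(X_1)\times\calf_{S_2,f_2}(X_2)$. The group $S=S_1\times S_2$ is discrete $p$-toral as a product of such. To show that $(S,f)$ is a $p$-subgroup of $X$, observe that any factorization $f\simeq \tilde f\circ B\pi$ through a proper quotient $\pi\colon S\to S/Q$ with $Q\neq1$ projects onto some $BX_i$ to give a corresponding factorization of $f_i$ through a proper quotient of $S_i$, contradicting the $p$-subgroup property of $(S_i,f_i)$. For Sylow maximality, let $(P,g)$ be an arbitrary $p$-subgroup of $X$, decompose $g=(g_1,g_2)$ with $g_i\colon BP\to BX_i$, and let $K_i\nsg P$ be the maximal normal subgroup through whose quotient $g_i$ factors up to homotopy, yielding $\tilde g_i\colon B(P/K_i)\to BX_i$. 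Maximality of $K_i$ forces $(P/K_i,\tilde g_i)$ to be a $p$-subgroup of $X_i$, so the Sylow property of $(S_i,f_i)$ provides $\tilde\varphi_i\colon P/K_i\to S_i$ with $f_i\circ B\tilde\varphi_i\simeq \tilde g_i$. Composing with the projection $P\to P/K_i$ and combining gives a homomorphism $\varphi=(\varphi_1,\varphi_2)\colon P\to S$ with $f\circ B\varphi\simeq g$.

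Write $\pr_i\colon S\to S_i$ for the projections. The inclusion $\calf^*\subseteq \calf_{S,f}(X)$ is immediate: a product morphism $\varphi_1\times\varphi_2$ of $\varphi_i\in\Hom_{\calf_i}(A_i,B_i)$ inherits a witnessing homotopy by taking the product of the two witnessing homotopies for the $\varphi_i$, and the remaining morphisms of $\calf^*$ arise by restriction, which preserves $\calf_{S,f}(X)$. For the reverse inclusion, given $\psi\in\Hom_{\calf_{S,f}(X)}(P,Q)$ I would show $\psi=(\psi_1\times\psi_2)|_P$ with $\psi_i\in\Hom_{\calf_i}(P_i,Q_i)$, where $P_i=\pr_i(P)$ and $Q_i=\pr_i(Q)$. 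The crux is that $\psi$ respects the product decomposition, i.e., the component $\pr_{3-i}\circ\psi\colon P\to S_{3-i}$ vanishes on $K:=P\cap\ker(\pr_{3-i}|_P)$ for each $i$. Projecting the defining homotopy $f|_{BQ}\circ B\psi\simeq f|_{BP}$ to $BX_{3-i}$ and restricting to $BK$ makes the right-hand side null-homotopic (since $\pr_{3-i}|_K=1$), so $f_{3-i}\circ B((\pr_{3-i}\circ\psi)|_K)\simeq *$. Invoking the faithfulness of $(S_{3-i},f_{3-i})$ as a $p$-subgroup of $X_{3-i}$ forces $(\pr_{3-i}\circ\psi)|_K=1$. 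Hence $\psi$ descends to a well-defined $\psi_i\colon P_i\to Q_i$ characterized by $\psi_i\circ \pr_i|_P=\pr_i\circ\psi$, and projecting the original homotopy to $BX_i$ identifies $\psi_i$ as a morphism in $\calf_i$.

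The main technical obstacle is this faithfulness assertion: any homomorphism $\rho\colon N\to S_j$ (for $j\in\{1,2\}$ and $N$ discrete $p$-toral) with $f_j\circ B\rho\simeq *$ must be trivial. I would prove it by reducing to cyclic subgroups---for each $x\in\rho(N)$ the restriction $f_j|_{B\gen{x}}$ is null-homotopic, and the Dwyer--Wilkerson analysis of the mapping spaces $\textup{Map}(B\gen{x},BX_j)$, combined with the fact that $(S_j,f_j)$ does not factor through any proper quotient, should force $x=1$. Once this is in hand, the factorization argument above closes the circle and gives $\calf_{S,f}(X)=\calf^*$.
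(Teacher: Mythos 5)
Your approach is genuinely different from the paper's: you try to verify the Sylow property and identify the fusion system directly at the level of maps out of classifying spaces, whereas the paper reduces both claims to the linking-system machinery of \cite{BLO3}. For the Sylow claim, the paper invokes the homotopy-fiber characterization from \cite[Proposition 10.1(a)]{BLO3} --- $(S,f)\in\sylp{X}$ iff $\textup{hofib}(f)$ has finite $\F_p$-cohomology with Euler characteristic prime to $p$ --- which makes the product case immediate since $\textup{hofib}(f)\simeq\textup{hofib}(f_1)\times\textup{hofib}(f_2)$. For the fusion identification, the paper uses \cite[Theorem 6.3(a)]{BLO3} to convert $[BP,|\call_i|\pcom]$ into $\Rep(P,\call_i)$, and then the product decomposition of the classifying spaces gives a bijection $\Rep(P,\call)\cong\Rep(P,\call_1)\times\Rep(P,\call_2)$ that is precisely the statement that $\calf=\calf_1\times\calf_2$.

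Your argument has genuine gaps. First, in the $p$-subgroup verification, the claim that a factorization of $f$ through $S/Q$ projects to give a factorization of $f_i$ through a proper quotient of $S_i$ fails when $Q$ is diagonal: from $f|_{BQ}\simeq *$ one only gets $f_i\circ B(\pr_i|_Q)\simeq *$ with $\pr_i|_Q$ a surjection onto $\pr_i(Q)\le S_i$, and $\pr_i(Q)$ need not be normal in $S_i$, so no factorization of $f_i$ results. Second, the ``maximal normal subgroup $K_i$ through whose quotient $g_i$ factors'' need not exist a priori, since discrete $p$-toral groups satisfy DCC but not ACC on subgroups; some continuity argument is needed before one can even speak of $K_i$. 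Third, and most importantly, the faithfulness assertion (that $f_j\circ B\rho\simeq *$ forces $\rho=1$) --- which you correctly identify as the crux, and which would also repair the first gap --- is true, but its proof in this generality goes precisely through the bijection $[BP,BX_j]\cong[BP,|\call_j|\pcom]\cong\Rep(P,\call_j)$ of \cite[Theorem 6.3(a)]{BLO3}: under that identification, a null map corresponds to the trivial class while $f_j\circ B\rho$ corresponds to $[\rho]$, and morphisms in $\calf_j$ are injective. So the ``more elementary'' route does not actually avoid the linking-system technology; it re-derives the same key fact in a piecemeal fashion. The paper's approach is the same input, deployed once and more transparently.
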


\begin{proof} Set $\calf=\calf_{S,f}(X)$ and $\calf_i=\calf_{S_i,f_i}(X_i)$ 
for short. Let $\pr_i\:S\too S_i$ and $\rho_i\:BX\too BX_i$ be the 
projections. By a theorem of Dwyer and Wilkerson (see \cite[Proposition 
10.1(a)]{BLO3} for details), a pair $(S,f)$ is in $\sylp{X}$ if and 
only if the homotopy fiber of $f\:BS\too BX$ has finite mod $p$ homology 
and Euler characteristic prime to $p$. Since $\textup{hofib}(f)\simeq 
\textup{hofib}(f_1)\times \textup{hofib}(f_2)$, it follows immediately that 
$(S,f)\in\sylp{X}$. 

Set $\call=\call_{S,f}^c(X)$ and $\call_i=\call_{S_i,f_i}^c(X_i)$ in the 
notation of \cite[Theorem 10.7]{BLO3}. Then $BX\simeq|\call|\pcom$ and 
$BX_i\simeq|\call_i|\pcom$ by the same theorem, and so 
$|\call|\pcom\simeq|\call_1|\pcom\times|\call_2|\pcom$. For each discrete 
$p$-toral group $P$, the projections $\pr_i$ induce a bijection 
$[BP,|\call|\pcom]\cong[BP,|\call_1|\pcom]\times [BP,|\call_2|\pcom]$, and 
hence by \cite[Theorem 6.3(a)]{BLO3} a bijection 
	\beqq \Rep(P,\call) \cong \Rep(P,\call_1) \times \Rep(P,\call_2). 
	\label{e:X1xX2} \eeqq
Here, $\Rep(P,\call)=\Hom(P,S)/{\sim}$, where $\psi\sim\psi'$ if and only 
if there is $\varphi\in\homf(\psi(P),\psi'(P))$ such that 
$\psi'=\varphi\psi$. 

Thus for each $P\le S$ and each $\varphi\in\Hom(P,S)$, 
we have $\varphi\in\homf(P,S)$ if and only if $[\varphi]=[P\hookrightarrow 
S]$ in $\Rep(P,\call)$. By \eqref{e:X1xX2}, this holds if and only if 
$[\pr_i\circ\varphi]=[\pr_i|_P]$ in $\Rep(P,\call_i)$ for $i=1,2$; i.e., if 
and only if $\Ker(\pr_i\circ\varphi)=\Ker(\pr_i|_P)=P\cap S_{3-i}$ and 
$\pr_i\circ\varphi=\varphi_i\circ(\pr_i|_P)$ for some 
$\varphi_i\in\Hom_{\calf_i}(\pr_i(P),S_i)$. This is exactly the 
condition for $\varphi$ to be in $\Hom_{\calf_1\times\calf_2}(P,S)$, and 
hence $\calf=\calf_1\times\calf_2$. 
\end{proof}


We are now ready to look more generally at fusion systems of connected 
$p$-compact groups.

\begin{Thm} \label{t:cn.p-cpt}
Let $X$ be a connected $p$-compact group, fix $(S,f)\in\sylp{X}$, and set 
$\calf=\calf_{S,f}(X)$. 
\begin{enuma} 

\item Set $q=2$ if $p=3$, and let $q$ be any prime whose class generates 
$(\Z/p)^\times$ if $p\ne3$. If $\calf$ is sequentially realizable, then 
$\calf$ is realized by a torsion group that is linear over $\4\F_q$. 

\item The fusion system $\calf$ is realized by a linear torsion group in 
characteristic $0$ if and only if the Weyl group of $X$ has order prime to 
$p$. 

\end{enuma}
\end{Thm}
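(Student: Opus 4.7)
The plan is to decompose $\calf$ as a product of fusion systems of simple connected $p$-compact groups (together with a possible $p$-compact torus factor) and then handle each factor using Theorem \ref{t:p-compact}. Since $W=\autf(T)$ is a finite $p$-adic reflection group on $L_X=\pi_2(BT\pcom)$, the Clark--Ewing decomposition of $L_X\otimes_{\Z_p}\Q_p$ into simple $\Q_p[W]$-modules yields $W=W_1\times\cdots\times W_k$. By the classification of connected $p$-compact groups in \cite[Theorem 1.1]{AGMV} (and its analogue at $p=2$), $X$ is correspondingly a product of connected $p$-compact groups $X_i$ with Weyl groups $W_i$, and Proposition \ref{p:X1xX2} gives $\calf\cong\calf_1\times\cdots\times\calf_k$, with each $\calf_i$ the fusion system of either a simple connected $p$-compact group or a $p$-compact torus.

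For part (a), assume $\calf$ is sequentially realizable. Proposition \ref{p:F1xF2.real}(b) forces each $\calf_i$ to be sequentially realizable, and Theorem \ref{t:p-compact} then places each $\calf_i$ among its LT-realizable cases (the three rows of Table \ref{tbl:p-cpt} labeled ``not seq.\ real.''\ being excluded). I would verify case-by-case that each realization can be taken linear over $\4\F_q$ for our specified $q$: the realization in Theorem \ref{t:p-compact}(b) is $\gg(\4\F_q)$, valid for every prime $q\ne p$; in the $G(m,1,n)$ and $G(2m,2,n)$ rows of Table \ref{tbl:p-cpt} one chooses the base prime power to be $q^{(p-1)/m}$ or $q^{(p-1)/(2m)}$, which has the required order $m$ or $2m$ modulo $p$ because $q$ generates $(\Z/p)^\times$; the $\ST{12}$ row at $p=3$ naturally lives in characteristic $2=q$, and the $\ST{31}$ row at $p=5$ accepts $q_3=q$ since $q\equiv\pm2\pmod 5$. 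In the case $p\nmid|W_i|$, $\calf_i=\calf_{T_i}(T_i\rtimes W_i)$, and $T_i\rtimes W_i$ is linear over $\4\F_q$ by combining the diagonal embedding $T_i\hookrightarrow\mu_{p^\infty}^{\rk T_i}(\4\F_q)$ with a Brauer-type lift of the faithful reflection representation of $W_i$ (possible since $p\nmid|W_i|$). The direct product of these realizations is a linear torsion group over $\4\F_q$ realizing $\calf$.

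For part (b), if $p\nmid|W|$, the analogous construction over $\C$ (using $\mu_{p^\infty}\subset\C^\times$) realizes $\calf=\calf_T(T\rtimes W)$ as a linear torsion subgroup of $\GL_N(\C)$; hence $\calf$ is LT-realized in characteristic $0$. Conversely, assume $p\mid|W|$, so $p\mid|W_i|$ for some simple factor $X_i$; by Lemma \ref{l:X.conn}, $\calf_i$ satisfies conditions (i)--(iii) of Proposition \ref{p:no.char0}. Suppose for contradiction that $\calf=\calf_S(G)$ for a linear torsion group $G$ over a characteristic $0$ field, and set $S^{(i)}=\prod_{j\ne i}S_j\le S$, a subgroup strongly closed in $\calf$. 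By Lemma \ref{l:F(G)/Q}, the quotient $\calf_i\cong\calf/S^{(i)}$ is realized by the locally finite group $G_i=N_G(S^{(i)})/S^{(i)}$; strong closure of $S^{(i)}$ forces $S_i\in\sylp{G_i}$ (any $p$-subgroup of $G_i$ lifts to a $p$-subgroup of $G$ which is $G$-conjugate into $S$ by an element that must normalize $S^{(i)}$ by weak closure). Proposition \ref{p:no.char0} then produces a prime $r\ne p$ with $\srk_r(G_i)=\infty$. Since sectional $r$-rank cannot increase under passage to subgroups or to locally finite quotients, $\srk_r(G)=\infty$, contradicting Lemma \ref{l:srk(GLn)}(a). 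The main obstacle I anticipate is justifying the product decomposition $\calf\cong\calf_1\times\cdots\times\calf_k$ cleanly at the level of fusion systems--in particular that $S$ factors as $S_1\times\cdots\times S_k$ compatibly with $W=\prod W_i$--together with the case-by-case verification in part (a) that the explicit realizations in Table \ref{tbl:p-cpt} admit a base ring contained in $\4\F_q$.
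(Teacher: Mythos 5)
Your overall logical structure (decompose, handle each factor, multiply the realizations) matches the paper's, and the case-by-case verification in part (a) of how to choose the base prime power inside $\4\F_q$ is correct and would need to be said. The problem is the decomposition step itself, which you flagged as a technicality but which is in fact a genuine gap.

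You claim that because $L_X\otimes_{\Z_p}\Q_p$ decomposes into irreducible $\Q_p W$-modules, the classification theorem forces $X$ to decompose as a product of simple connected $p$-compact groups and a $p$-compact torus. This is false: the classification of \cite[Theorem 1.1]{AGMV} matches connected $p$-compact groups with $\Z_p$-reflection \emph{lattices}, and a $\Q_p$-decomposition of $L_X\otimes\Q_p$ need not descend to a $\Z_pW$-direct-sum decomposition of $L_X$. Concretely, at the prime $p$ the compact connected Lie group $(SU(p)\times SU(p))/\Delta\mu_p$ (or $U(p)$) has Weyl group $\Sigma_p\times\Sigma_p$ (resp.\ $\Sigma_p$) with $L_X\otimes\Q_p$ decomposable, but $L_X$ does not split, and neither the Sylow $p$-subgroup nor the fusion system is a product. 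The paper avoids this by invoking the coarser decomposition of \cite[Theorem 1.2]{AGMV}/\cite[Theorem 1.1]{AG}: $X\simeq G\pcom\times X_1\times\cdots\times X_k$ where $G$ is a compact connected Lie group (not further decomposed) and the $X_i$ are exotic simples. The Lie factor is then handled as a single unit by Theorem \ref{t:cpt.cn.Lie} for part (a); and for the ``only if'' of part (b), when $p$ divides the order of $W_G$, one does not need $G$ to factor --- one instead finds a strongly closed $U\nsg S$ with $\calf/U$ isomorphic to the fusion system of a \emph{simple quotient} of $G$ whose Weyl group has order divisible by $p$, and applies Lemma \ref{l:F(G)/Q} together with Proposition \ref{p:no.char0} to the locally finite group $N_\Gamma(U)/U$. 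Your $S^{(i)}=\prod_{j\ne i}S_j$ trick is essentially this argument, but since $S$ need not be a product, one should instead extract $U$ directly from the structure of the Lie factor rather than from a (nonexistent) global product decomposition.

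The rest of the argument is sound modulo this fix: the reduction to linear torsion groups over $\4\F_q$ via the specific base fields $q_1=q^{(p-1)/m}$, $q_2=q^{(p-1)/(2m)}$, $q_3=q$, $q=2$, is exactly the arithmetic needed to land in Table \ref{tbl:p-cpt}, and for the ``if'' direction of (b), inducing a faithful representation of $T\rtimes W$ from the $r$-dimensional diagonal representation of $T$ over $\C$ (rather than a ``Brauer lift,'' which is not quite the right tool) gives the characteristic-$0$ linear torsion group.
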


\begin{proof} By \cite[Theorem 1.2]{AGMV} or \cite[Theorem 1.1]{AG}, there 
are a compact connected Lie group $G$, and simple connected $p$-compact 
groups $X_1,\dots,X_k$ (some $k\ge0$) such that $X$ is the direct product 
of $G\pcom=\Omega(BG\pcom)$ with the $X_i$. Hence for $S_0\in\sylp{G}$ 
and $(S_i,f_i)\in\sylp{X_i}$, the fusion system $\calf$ is the product of 
$\calf_{S_0}(G)$ and the $\calf_{S_i,f_i}(X_i)$ by Proposition \ref{p:X1xX2}. 

\smallskip

\noindent\textbf{(a) } The fusion system of $G$ is realized by a linear 
torsion group in characteristic $q$ by Theorem \ref{t:cpt.cn.Lie}. By 
Theorem \ref{t:p-compact}(a,c) and Table \ref{tbl:p-cpt}, the fusion system 
of each $X_i$ is either realizable by a linear torsion group in 
characteristic $q$ or is not sequentially realizable. So either their 
product $\calf$ is realizable by a linear torsion group in characteristic 
$q$; or the fusion system of one of the factors $X_i$ is not sequentially 
realizable, in which case $\calf$ fails to be sequentially realizable by 
Proposition \ref{p:F1xF2.real}(b). 

\smallskip

\noindent\textbf{(b) } Let $T\nsg S$ be the identity 
component of $S$, and set $W=\autf(T)$ (the Weyl group of $X$). If 
$p\nmid|W|$, then $S=T$, and by Alperin's fusion theorem, $\calf$ is 
realized by an extension $\Gamma$ of $T$ by $W$. Since $W$ is finite, 
$\Gamma$ is a linear torsion group in characteristic $0$. 

Now assume that $p\mid|W|$; equivalently, that $S>T$. Let $\Gamma$ be a 
locally finite group such that $S\in\sylp{\Gamma}$ and 
$\calf=\calf_S(\Gamma)$. We will show that there is a prime $q\ne p$ such 
that $\srk_q(\Gamma)=\infty$, and hence that $\Gamma$ cannot be 
linear in characteristic $0$ by Lemma \ref{l:srk(GLn)}(a). 

Assume first that $X$ is simple. By Lemma \ref{l:X.conn} and since 
$p\mid|W|$, conditions (i), (ii), and (iii) in Proposition \ref{p:no.char0} 
all hold. So by that proposition, there is a prime $q\ne p$ such that 
$\srk_q(\Gamma)=\infty$.

Now let $X$ be arbitary (not necessarily simple). Then either $p$ divides 
the order of the Weyl group in one of the simple factors $X_i$ (for $1\le 
i\le k$), or $p$ divides the order of the Weyl group of $\Gamma$. In either 
case, there is a connected simple $p$-compact group $Y$ whose Weyl group 
has order a multiple of $p$, and whose fusion system is a quotient of the 
fusion system of $X$. Then for some $U\nsg S$ strongly closed in $\calf$, 
$\calf/U$ is isomorphic to the fusion system of $Y$. Then 
$\calf/U\cong\calf_{S/U}(N_\Gamma(U)/U)$ by Lemma \ref{l:F(G)/Q}, so by what was 
shown in the last paragraph, there is a prime $q\ne p$ such that 
$\srk_q(N_\Gamma(U)/U)=\infty$. Hence $\srk_q(\Gamma)=\infty$. 
\end{proof}

Theorem \ref{t:cn.p-cpt}(b) shows in particular that the fusion system of a 
compact connected Lie group $G$ at a prime $p$ that divides the order of 
the Weyl group of $G$ cannot be realized by any torsion subgroup of $G$.


\section{Other examples}
\label{s:other.ex.}

By analogy with groups, a saturated fusion system is \emph{simple} if it 
contains no proper nontrivial normal fusion subsystems (see 
\cite[Definition I.6.1]{AKO}). In Section 5 of \cite{indp3}, the authors 
described all simple fusion systems over nonabelian infinite discrete 
$p$-toral groups containing a discrete $p$-torus with index $p$. With the 
help of Theorem \ref{t:seq-exotic}, we can now determine in all cases 
exactly which of those fusion systems are sequentially realizable and which 
are not. We will see that in fact, most of them are not sequentially 
realizable. The only exceptions are those that are fusion systems of 
$p$-compact groups and were shown in Section \ref{s:p-cpt} to be 
sequentially realizable.

Before going into details of the examples, we recall a few of the definitions 
used in \cite{indp3}. Let $Z_2(G)$ denote the second term in the 
upper central series of a group $G$; thus $Z_2(G)/Z(G)=Z(G/Z(G))$.

\begin{Not} \label{n:Delta}
Let $p$ be an odd prime. Assume $\calf$ is a saturated fusion system over a 
nonabelian discrete $p$-toral group $S$ whose identity component 
$T$ has index $p$ in $S$. Assume also that $T\nnsg\calf$; then $S$ 
splits over $T$ by \cite[Corollary 2.6]{indp3}.
\begin{itemize} 

\item Let $\calh$ be the set of all $Z(S)\gen{x}$ for $x\in S\sminus T$. 

\item Let $\calb$ be the set of all $Z_2(S)\gen{x}$ for $x\in S\sminus T$.

\item Let $\EE\calf$ be the set of all $\calf$-essential subgroups 
(see Definition \ref{d:subgroups}). By \cite[Lemma 2.2]{indp3}, 
$\EE\calf\subseteq\{T\}\cup\calh\cup\calb$. 

\end{itemize}
\end{Not}

If $\rk(T)=p-1$, then by Proposition A.4 and Lemma A.5(a,c) in \cite{indp3}, 
$T\cong\Q_p(\zeta)/\Z_p[\zeta]$ as $\Z[S/T]$-modules, where $\zeta$ is a 
primitive $p$-th root of unity in $\4{\Q_p}$ and a generator of $S/T$ acts 
via multiplication by $\zeta$. Hence $Z(S)=C_S(T)$ has order $p$ in this 
case, and so $\autf^\vee(T)=N_{\autf(T)}(\Aut_S(T))$ in the terminology of 
\cite[Theorem B]{indp3}. See also \cite[Notation 2.9]{indp3}. 

\begin{Lem} \label{l:(iii).indp}
Let $\calf$ be a saturated fusion system over an infinite nonabelian discrete 
$p$-toral group $S$ whose identity component $T\nsg S$ has index $p$ in 
$S$. Assume also that $O_p(\calf)=1$. Then there is no infinite proper 
$\autf(T)$-invariant subgroup of $T$. 
\end{Lem}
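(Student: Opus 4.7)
Suppose for contradiction that $T_0 < T$ is an infinite proper $\autf(T)$-invariant subgroup. Replacing $T_0$ by its identity component (still $\autf(T)$-invariant as a characteristic subgroup, and still an infinite proper subtorus of $T$), I may assume $T_0$ is a subtorus, and $T_0 \nsg S$ since $\Aut_S(T) \subseteq \autf(T)$. The plan is to show $T_0 \nsg \calf$; combined with $O_p(\calf) = 1$, this forces $T_0 = 1$, contradicting $T_0$ being infinite.

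First I would verify that $T_0$ is strongly closed in $\calf$. Given $x \in T_0$ and $\varphi \in \homf(\gen{x}, S)$, the image $\varphi(x)$ lies in $T$ since $T$ (the set of infinitely divisible elements of $S$) is strongly closed in $\calf$. Using the extension axiom with $T$ centralizing $\gen{x}$ and $T$ fully normalized (hence fully centralized and receptive by saturation), $\varphi$ extends across $T$ to a morphism whose restriction to $T$ lies in $\autf(T)$, and $\autf(T)$-invariance of $T_0$ then gives $\varphi(x) \in T_0$.

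Next, to show $T_0 \nsg \calf$, by Alperin's fusion theorem (Theorem \ref{t:AFT}) it suffices to check that every automorphism of a fully normalized $P \in \calf^{rc}$ extends to an automorphism of $PT_0$ preserving $T_0$. This is immediate when $P \in \{S, T\}$: for $P = S$, $\autf(S)$ restricts to a subgroup of $\autf(T)$, and for $P = T$ it is the hypothesis. For any other essential $P$, the classification of essentials in this setting (Notation \ref{n:Delta} and \cite[Lemma 2.2]{indp3}, with the analogous result in the $p=2$ case) shows $P$ is finite of the form $Z(S)\gen{y}$ or $Z_2(S)\gen{y}$ for some $y \in S \setminus T$. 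For such $P$ and $\alpha \in \autf(P)$, extending $\alpha|_{P \cap T}$ across $T$ via the extension axiom produces $\tilde\alpha \in \autf(T)$ preserving $T_0$, and the candidate combined map $\bar\alpha$ on $PT_0$ is defined by $\bar\alpha|_P = \alpha$ and $\bar\alpha|_{T_0} = \tilde\alpha|_{T_0}$.

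The main obstacle is ensuring $\bar\alpha$ is a well-defined homomorphism, which requires the conjugation compatibility $\tilde\alpha \circ c_y|_{T_0} = c_{\alpha(y)} \circ \tilde\alpha|_{T_0}$. Since $\alpha$ acts as the identity on $P/(P \cap T) \cong C_p$, one has $\alpha(y) \in yZ(S)$, and $Z(S) \le C_S(T) = T$ acts trivially on $T$ by conjugation, giving $c_{\alpha(y)}|_T = c_y|_T$. The compatibility thus reduces to choosing $\tilde\alpha \in C_{\autf(T)}(c_y|_T)$; since the identity $\alpha c_y = c_{\alpha(y)} \alpha$ holds on $P \cap T$, the restriction $\alpha|_{P \cap T}$ already commutes with $c_y|_{P \cap T}$ in $\Aut(P \cap T)$, and this relation must be lifted to all of $T$ by adjusting $\tilde\alpha$ within its coset modulo the pointwise stabilizer $\{\beta \in \autf(T) : \beta|_{P \cap T} = \Id\}$ in $\autf(T)$.
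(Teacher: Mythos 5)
Your overall strategy --- suppose an infinite proper $\autf(T)$-invariant $T_0 < T$ exists, argue that $T_0$ is normal in $\calf$, and then contradict $O_p(\calf)=1$ --- is a reasonable line of attack, but it is not the one the paper takes, and as written the argument has two genuine gaps that prevent it from going through.

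The paper's proof translates the question into $\Q_p$-representation theory: by Lemma~\ref{l:Theta}, the existence of an infinite proper $\autf(T)$-invariant subgroup of $T$ is equivalent to reducibility of the $\Q_p\autf(T)$-module $\Hom(\Q_p,T)$. The hypothesis $O_p(\calf)=1$ then enters through \cite[Lemma 5.3]{indp3}, which guarantees that $T/C_T(\Aut_S(T))$ has rank exactly $p-1$; this forces the corresponding quotient module to be an irreducible $\Q_p C_p$-module, so any $\Q_p\autf(T)$-decomposition must have a summand on which $\Aut_S(T)$, and hence $O^{p'}(\autf(T))$, acts trivially, which \cite[Lemma 2.7]{indp3} rules out. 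Notice that $O_p(\calf)=1$ is doing real structural work at two points; in your proposal it appears only at the very end, in the final contradiction. That is a warning sign: the intermediate claim ``every $\autf(T)$-invariant infinite proper $T_0$ is normal in $\calf$'' is a strong statement that cannot be established by soft arguments alone.

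Concretely, the proposal breaks down in the last paragraph. First, the assertion that $\alpha\in\autf(P)$ ``acts as the identity on $P/(P\cap T)\cong C_p$'' is unjustified and false in general: for $P\in\calh$ with $P\cong C_p\times C_p$ and $\autf(P)$ containing $\SL_2(p)$ (the situation arising for an essential $P$), there are plenty of $\alpha$ with $\alpha(y)\in y^kZ(S)$ for $k\neq 1$, so $\alpha(y)\notin yZ(S)$ and $c_{\alpha(y)}|_T=c_y^k|_T\neq c_y|_T$. This means the compatibility condition you must arrange is $\tilde\alpha\circ c_y|_{T_0}=c_y^k\circ\tilde\alpha|_{T_0}$, not the simpler commutation you wrote. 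Second, and more fundamentally, the final sentence names the obstacle --- finding $\tilde\alpha\in\autf(T)$ that both restricts to $\alpha|_{P\cap T}$ and has the right conjugation action on $T_0$ --- but never shows that such a $\tilde\alpha$ exists. That is exactly where the content of the lemma lives, and proving it would require the kind of structural information about $\autf(T)$, $N_{\autf(T)}(\Aut_S(T))$, and the maps $\mu$, $\mu_\calf$ from \cite{indp3} that the paper's proof accesses through Lemma~\ref{l:Theta} and \cite[Lemmas 2.7 and 5.3]{indp3}. As written, the argument stops just before the step that would make it work.
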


\begin{proof} Set $G=\autf(T)$ for short. Set $V=\Hom(\Q_p,T)$, 
regarded as a $\Q_pG$-module, and let $\ev_1\:V\too T$ be the surjective 
homomorphism evaluation at $1\in\Q_p$. By Lemma \ref{l:Theta}, it 
suffices to show that the $V$ is irreducible. 

Set $U=\Aut_S(T)\in\sylp{G}$. By \cite[Lemma 5.3]{indp3} and since 
$O_p(\calf)=1$, $T/C_T(U)$ is a discrete $p$-torus of rank $p-1$. Hence 
$\dim_{\Q_p}(V/C_V(U))=p-1$, and $V/C_V(U)$ is 
irreducible as a $\Q_pU$-module. So if $V=V_1\oplus V_2$ 
where the $V_i$ are $\Q_pG$-submodules, then one of the factors, say 
$V_2$, has trivial action of $U$, and hence trivial action of 
$O^{p'}(G)$. But then $\ev_1(V_2)$ is an infinite $G$-invariant subgroup 
of $T$ on which $O^{p'}(G)$ acts trivially, which is impossible by 
\cite[Lemma 2.7]{indp3} and since $O_p(\calf)=1$. 
\end{proof}

We are now ready to examine the realizability of such fusion systems.

\begin{Thm} \label{t:index.p}
Let $\calf$ be a simple saturated fusion system over an infinite nonabelian 
discrete $p$-toral group $S$ whose identity component $T\nsg S$ has index 
$p$ in $S$. Then either 
\begin{enuma} 

\item $\calf$ is isomorphic to the fusion system of a simple, connected 
$p$-compact group, and is \textup{LT}-realizable or not sequentially 
realizable as described in Theorem \ref{t:p-compact}; or 

\item $\calf$ is not sequentially realizable.

\end{enuma}
\end{Thm}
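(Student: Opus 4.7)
My plan is to verify the hypotheses of Theorem \ref{t:seq-exotic} for any such $\calf$, and then match the classification in \cite[\S5]{indp3} against the list in Proposition \ref{p:Weyl}. Conditions (i) and (iii) of Theorem \ref{t:seq-exotic} are immediate. Since $[S:T]=p$ and $S$ is nonabelian, $S>T$; and $C_S(T)=T$ because $C_S(T)$ contains $T$ but cannot equal $S$ (which would make $S$ abelian). Simplicity of $\calf$ forces $O_p(\calf)=1$, whence Lemma \ref{l:(iii).indp} yields (iii).

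For condition (ii), the only subgroup $R$ with $T\le R<S$ is $T$ itself, so it suffices to show that $T$ is not strongly closed in $\calf$. Simplicity implies $T\nnsg\calf$ (otherwise $T$ would be a proper nontrivial normal $p$-subsystem), so by \cite[Corollary 2.6]{indp3} one has $S=T\rtimes C_p$, and by \cite[Lemma 2.2]{indp3} there exists an essential subgroup $Q\in\calh\cup\calb$ beyond $T$ itself (if the only essential subgroup were $T$, Alperin's fusion theorem would make $T$ normal in $\calf$). Such a $Q$ satisfies $Q\nleq T$ while $Q\cap T\supseteq Z(S)\ne1$, and its automizer contains a subgroup acting irreducibly on $Q/\Phi(Q)$, hence fusing elements of $Q\sminus T$ into $Q\cap T$. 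Applying Lemma \ref{l:no.str.cl.}(a) with any $s\in(S\sminus T)\cap Q$ then shows $T$ is not strongly closed.

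With (i)--(iii) verified, if $\calf$ were sequentially realizable, Theorem \ref{t:seq-exotic} would force $W=\autf(T)$ to contain a normal subgroup $H$ of index prime to $p$ with $(H,\Omega_1(T))\cong(W_0^\ell,{M'}^{\oplus\ell})$ for some pair $(W_0,M)$ from Proposition \ref{p:Weyl}(a)--(e) and some $\ell\ge1$. Using the structure $T\cong\Q_p(\zeta)/\Z_p[\zeta]$ as $\Z[S/T]$-modules from \cite[Appendix A]{indp3} (so that $\rk(T)=p-1$ and $\Omega_1(T)$ is irreducible under $\Aut_S(T)$), together with Lemma \ref{l:V.simple}, I expect to force $\ell=1$ and match $(W,\Omega_1(T))$ with one of the five types in Proposition \ref{p:Weyl}. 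I would then run through the explicit list of triples $(S,\autf(T),\EE\calf)$ produced by \cite[\S5]{indp3}: those for which such a match exists are precisely the fusion systems of simple connected $p$-compact groups, uniquely identified via \cite[Theorem 1.1]{AGMV}, giving case (a) together with Theorem \ref{t:p-compact}; those for which no match exists give case (b).

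The main obstacle will be the case-by-case verification in the final step, and in particular carefully tracking not just the abstract isomorphism type of $W=\autf(T)$ but also its concrete action on $\Omega_1(T)$, since Proposition \ref{p:Weyl} constrains the pair, not just the group. The strongly closed verification in paragraph two is also delicate since strong closure is weaker than normality; I expect to argue more robustly by observing that the irreducible action of $\autf(Q)/O_p(\autf(Q))$ on $Q/\Phi(Q)$ for $Q\in\calh\cup\calb$ combined with $Q\cap T\ne Q$ genuinely mixes $T$ and $S\sminus T$ under $\calf$-conjugation.
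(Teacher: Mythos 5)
Your general strategy — verify hypotheses (i)--(iii) of Theorem \ref{t:seq-exotic} and then compare $\autf(T)$ against the list in Proposition \ref{p:Weyl} — is exactly what the paper does for the main case, and your verifications of (i) and (iii) are correct. But there are two genuine gaps. First, you assume throughout that $\rk(T)=p-1$ and hence $T\cong\Q_p(\zeta)/\Z_p[\zeta]$ as $\Z[S/T]$-module. This is false in general: the implication in the paper goes the other way (the module description is a \emph{consequence} of $\rk(T)=p-1$), and the classification in \cite[Theorem B]{indp3} includes families with $\rk(T)=n\ge p$, namely those with $\autf(T)\cong G(m,1,n)$, $G(m,k,n)$, and $G(2m,2,n)$, which are precisely the ones matching cases (b) and (c) of Proposition \ref{p:Weyl}. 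You need to split into $\rk(T)\ge p$ and $\rk(T)=p-1$. In the $\rk(T)\ge p$ case, matching $(H,\Omega_1(T))$ to a pair from Proposition \ref{p:Weyl} is not sufficient: you must also verify that $H=\autf(T)$ (not merely that $H$ is normal of $p'$-index) and that $\EE\calf$ matches, which requires the $\mu_\calf$/$\autf^\vee(T)$ apparatus and the uniqueness statement in \cite[Theorem B]{indp3}. Second, you omit the case $p=2$ entirely; there the toolbox of \cite[\S2,\S5]{indp3} that you rely on (Notation \ref{n:Delta}, Lemma \ref{l:(iii).indp}, etc.) is set up for odd $p$, and the paper instead quotes \cite[Theorem 5.6]{indp3} directly to conclude that $\calf$ is the $2$-fusion system of $\SO(3)$ or $\PSU(3)$.

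A smaller but real issue: you write that you "expect to force $\ell=1$," but this must actually be argued. The last clause of Theorem \ref{t:seq-exotic} gives $\ell=1$ under the hypothesis that no infinite proper subgroup of $T$ is invariant under $O^{p'}(\autf(T))$, and this does follow from the proof of Lemma \ref{l:(iii).indp} (irreducibility over $\Q_p\autf(T)$ together with the multiplicity-one appearance of the nontrivial $\Q_p\Aut_S(T)$-constituent yields irreducibility over $\Q_p O^{p'}(\autf(T))$), so it is available, but you should invoke it explicitly rather than leave it as an expectation. Finally, your argument that $T$ is not strongly closed is in the right spirit (find $s\in S\sminus T$ that is $\calf$-fused into $T$ via an essential $Q\in\calh\cup\calb$ and apply Lemma \ref{l:no.str.cl.}(a)), but be careful: for $Q\in\calb$ the automizer acts irreducibly on $Q/\Phi(Q)$, and one has to lift that action to actual fusion of elements of $Q\sminus T$ into $T$, which is where the splitting $S=T\rtimes C_p$ from \cite[Corollary 2.6]{indp3} and the explicit structure of $\calb$ enter.
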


\begin{proof} Let $\calf$ be a simple saturated fusion system over an 
infinite nonabelian discrete $p$-toral group $S$ with identity component 
$T$ of index $p$ in $S$. Note that $C_S(T)=T$ since $S$ is nonabelian. 

If $p=2$, then by \cite[Theorem 5.6]{indp3}, $\calf$ is isomorphic to the 
fusion system of $\SO(3)$ ($\autf(T)\cong C_2$ and $\rk(T)=1$) or $\PSU(3)$ 
($\autf(T)\cong\Sigma_3$ and $\rk(T)=2$). Both of these are 
\textup{LT}-realizable by Theorem \ref{t:cpt.cn.Lie}. 

If $p$ is odd and $T\notin\EE\calf$, then by \cite[Theorem 5.12]{indp3}, 
$\autf(T)\cong C_p\rtimes C_{p-1}$, $\rk(T)=p-1$, and $\EE\calf=\calh$, 
and there is a unique such fusion system for each odd prime $p$. When 
$p=3$, this is the $3$-fusion system of the compact Lie group (or $3$-compact 
group) $\PSU(3)$. So assume $p\ge5$. 
Condition (i) in Theorem \ref{t:seq-exotic} clearly holds, (iii) holds by 
Lemma \ref{l:(iii).indp}, and (ii) holds by Lemma \ref{l:no.str.cl.}(a) 
and since $T\nnsg\calf$ (since a normal 
subgroup is contained in all essential subgroups and hence in each member 
of $\calh$). No subgroup of 
index prime to $p$ in $C_p\rtimes C_{p-1}$ is among those listed in 
Theorem \ref{t:seq-exotic}(b), and so $\calf$ is not sequentially 
realizable by that theorem.

The remaining cases, where $p$ is odd and $T\in\EE\calf$, are all described 
in \cite[Theorem B]{indp3}. Note first that $\rk(T)\ge p-1$ in all 
cases (the minimal dimension of a faithful $\Q_pC_p$-module).

We recall some more notation used in \cite{indp3}, when $\calf$ is a 
saturated fusion system over a nonabelian discrete $p$-toral group $S$ 
whose identity component $T$ has index $p$ in $S$. 
\begin{itemize} 

\item Set $\Delta=(\Z/p)^\times\times(\Z/p)^\times$. For each $i\in\Z$, set 
$\Delta_i=\{(r,r^i)\,|\,r\in(\Z/p)^\times\}\le\Delta$.


\item Define $\mu\:\Aut(S)\too\Delta$ by setting 
$\mu(\alpha)=(r,s)$ if $\alpha(x)\in x^rT$ for $x\in S\sminus T$ 
and $\alpha(g)=g^s$ for $g\in Z(S)\cap[S,S]$. (In all cases, 
$|Z(S)\cap[S,S]|=p$ by \cite[Lemma 2.4]{indp3}.) 

\item Define $\mu_\calf\:N_{\autf(T)}(\Aut_S(T))\too\Delta$ by setting 
$\mu_\calf(\beta)=\mu(\alpha)$ for some $\alpha\in\Aut(S)$ such that 
$\alpha|_T=\beta$. (Such an $\alpha$ exists by the extension axiom.)

\item Set $\autf^\vee(T)=\bigl\{\beta\in N_{\autf(T)}(\Aut_S(T)) \,\big|\, 
[\beta,Z(S)]\le Z(S)\cap[S,S]\bigr\}$. 

\end{itemize}

By Theorem B in \cite{indp3}, a simple fusion system $\calf$ that realizes a 
given pair $(\autf(T),T)$ is determined up to isomorphism 
by $\EE\calf$, where $\EE\calf=\{T\}\cup\calh$ or $\{T\}\cup\calb$ in all 
cases. Set $W=\autf(T)$ and $W^\vee=\autf^\vee(T)$ for short. Then 
the following implications hold by \cite[Theorem B]{indp3} again: 
	\beqq \begin{split} 
	\rk(T)=p-1,~ \EE\calf=\{T\}\cup\calb 
	&~\implies~ \mu_\calf(W^\vee)\ge\Delta_0 
	~\textup{and}~ W=O^{p'}(W)\mu_\calf^{-1}(\Delta_0) \\
	\rk(T)=p-1,~ \EE\calf=\{T\}\cup\calh 
	&~\implies~ \mu_\calf(W^\vee)\ge\Delta_{-1} 
	~\textup{and}~ W=O^{p'}(W)\mu_\calf^{-1}(\Delta_{-1}) .
	\end{split} \label{e:index.p-1} \eeqq
	\beqq \rk(T)\ge p \quad\implies\quad \EE\calf=\{T\}\cup\calb, ~
	\mu_\calf(W^\vee)=\Delta_0, ~\textup{and}~ 
	W=O^{p'}(W)\cdot W^\vee. \label{e:index.p-2} \eeqq
Thus there are up to isomorphism at most two simple fusion systems 
realizing $(\autf(T),T)$ if $\rk(T)=p-1$, and at most one such system if 
$\rk(T)\ge p$.

Condition (i) in Theorem \ref{t:seq-exotic} ($S>T$ and $C_S(T)=T$) holds in 
all cases since $|S/T|=p$ and $S$ is nonabelian. Condition (ii) ($T$ is not 
strongly closed in $\calf$) holds since $\EE\calf\not\subseteq\{T\}$, and 
(iii) holds by Lemma \ref{l:(iii).indp}. Thus by Theorem 
\ref{t:seq-exotic}, $\calf$ can be sequentially realizable only if there is 
a normal subgroup $H\nsg\autf(T)$ of index prime to $p$ isomorphic to one of 
the groups listed in that theorem, and whose action on $\Omega_1(T)$ has 
the same composition factors as the $\F_pH$-module $M$ listed in 
Proposition \ref{p:Weyl}. 

Assume first that $\rk(T)\ge p$. Thus we are in one of cases (a)--(c) in 
Proposition \ref{p:Weyl}, so there is a connected simple $p$-compact group 
$X$ with maximal discrete $p$-torus $T^*$ and Weyl group $H$. Also, by 
Lemma \ref{l:V.simple}, the $\F_pH$-module $M$ is simple, and hence 
$\Omega_1(T^*)\cong M\cong\Omega_1(T)$ as $\F_pH$-modules. Let $\cale$ be the 
fusion system of $X$, and set $H^\vee=H\cap W^\vee=\Aut_\cale^\vee(T)$. By 
\cite[Theorem B(b)]{indp3}, we have $T^*\cong T$ as $\Z_pH$-modules. 


By \cite[Lemma 2.5(a)]{indp2}, $\Ker(\mu_\calf|_{W^\vee})=\Aut_S(T)\le 
H^\vee$. Since $\mu_\calf(W^\vee)=\Delta_0=\mu_\cale(H^\vee)$ by 
\eqref{e:index.p-2}, this proves that $H^\vee=W^\vee$, and hence that $H=W$ 
by \eqref{e:index.p-2} again (and since $O^{p'}(H)=O^{p'}(W)$). We already 
saw that the fusion system is determined by $(W,T)$, and hence 
$\calf\cong\cale$. Thus $\calf$ is always the fusion system of a connected 
simple $p$-compact group in this case.

We are left with the cases where $\rk(T)=p-1$ 
and $H\nsg\autf(T)$ is one of the groups listed in Theorem 
\ref{t:seq-exotic}. Note that for such $\calf$, $Z(S)\le[S,S]$ and hence 
$\autf^\vee(T)=N_{\autf(T)}(\Aut_S(T))$. We list these cases in Table 
\ref{tbl:inf-real}.
\begin{table}[ht]
\[ \renewcommand{\arraystretch}{1.4} \renewcommand{\4}[1]{\overline{#1}}
\begin{array}{ccccc} 
p & \autf(T) & \rk(T) & 
\EE\calf & \textup{seq. realizable?} \\\hline\hline
p\ge5 & \Sigma_p & p{-}1 & \{T\}\cup\calh & \PSL_p(\4\F_q) \\ \hline
3 & \ST{12}\cong GL_2(3) & 2 & \{T\}\cup\calb & \lie2F4(\F_{2^{\gen3}}) 
\\ \hline 
3 & \ST{12}\cong GL_2(3) & 2 & \{T\}\cup\calh 
& \textup{not seq. real.} \\ \hline 
5 & \ST{31}\cong(C_4\circ2^{1+4}).\Sigma_6 & 4 & \{T\}\cup\calb & 
E_8(\F_{2^{\gen5}}) \\ \hline
5 & \ST{31}\cong(C_4\circ2^{1+4}).\Sigma_6 & 4 & \{T\}\cup\calh & 
\textup{not seq. real.} \\\hline 
\end{array}
\]
\caption{In each case, either the entry in the last column either is a 
linear torsion group that realizes the fusion system $\calf$ determined by 
the information in the other columns, or $\calf$ is not 
sequentially realizable. In Case 1, $q$ is an arbitrary prime different 
from $p$. See Notation \ref{n:Kqp} for the definition of 
the fields $\F_{2^{\gen{3}}}$ and $\F_{2^{\gen{5}}}$.} 
\label{tbl:inf-real}
\end{table} 

\smallskip

\noindent\boldd{$\autf(T)$ contains $\Sigma_p$ for $p\ge5$. } 
Again set $W=\autf(T)$, and assume that $H\nsg W$ has index prime to $p$, 
that $H\cong\Sigma_p$, and that the action of $H$ on $\Omega_1(T)$ has the 
same composition factors as that of the Weyl group of $\PSU(p)$ on the 
$p$-torsion in its maximal torus. Let $M\cong(\F_p)^p$ be the module with 
permutation action of $H$, and let $M_1<M_{p-1}<M$ be the 
$\F_pH$-submodules of dimension $1$ and $p-1$, respectively. (Thus 
$M_1=C_M(H)$ and $M_{p-1}=[H,M]$.) By \cite[Table 6.1]{indp3}, 
$\Omega_1(T)$ is isomorphic to $M_{p-1}$ or $M/M_1$ as $\F_pH$-modules. 
By direct computation,
	\[ \Omega_1(T)\cong M_{p-1} \implies \mu_\calf(H)=\Delta_0 
	\qquad\textup{and}\qquad
	\Omega_1(T)\cong M/M_1 \implies \mu_\calf(H)=\Delta_{-1} . \] 
By \cite[Theorem B]{indp3} and since $M_{p-1}$ contains a $1$-dimensional 
$\F_pW$-submodule, $\EE\calf=\{T\}\cup\calh$ if $\Omega_1(T)\cong M_{p-1}$. 

If $\Omega_1(T)\cong M_{p-1}$ and $\EE\calf=\{T\}\cup\calh$ or 
$\Omega_1(T)\cong M/M_1$ and $\EE\calf=\{T\}\cup\calb$, then the first 
condition on each line in \eqref{e:index.p-1} implies that $W=H\times 
C_{p-1}$. But then $O^{p'}(W)\mu_\calf^{-1}(\Delta_t)$ ($t=0,-1$) has index 
$2$ in $W$ in each case, so the second condition fails to hold. So these 
cases cannot occur, and we are left with the case where $\Omega_1(T)\cong 
M/M_1$ and $\EE\calf=\{T\}\cup\calh$, where $\calf$ is the fusion system of 
the compact Lie group $\PSU(p)$, and is sequentially realizable by Theorem 
\ref{t:cpt.cn.Lie} and Proposition \ref{p:LT=>s.real.}.

\smallskip

\noindent\textbf{$\autf(T)$ contains $\ST{12}$ or $\ST{31}$. } 
Set $W=\autf(T)$, and let $H\nsg W$ be such that either 
$H\cong\ST{12}$, $p=3$, and $\rk(T)=2$; or $H\cong\ST{31}$, $p=5$, and 
$\rk(T)=4$. Recall (Lemma \ref{l:CS(Omega1)}) that $W$ acts faithfully on 
$\Omega_1(T)$. So $H=W$ if $H\cong\ST{12}$. If $H\cong\ST{31}$, then since 
$\ST{31}$ is the normalizer in $\GL_4(5)$ of $4\circ2^{1+4}$, as shown in 
the proof of Proposition \ref{p:Weyl}, we also have $W=H$ in this case.

In both cases, $\mu_\calf(\autf^\vee(T))=\Delta$. If 
$\EE\calf=\{T\}\cup\calb$, then $\calf$ is the fusion system of a 
$p$-compact group by Theorem B and Table 5.1 in \cite{indp3}. By Theorem 
\ref{t:p-compact}, $\calf$ is realized by the group 
$\lie2F4(\F_{2^{\gen3}})$ or $E_8(\F_{2^{\gen5}})$ as listed in Table 
\ref{tbl:inf-real}.

Now assume $\autf(T)\cong\ST{12}$ or $\ST{31}$ and 
$\EE\calf=\{T\}\cup\calh$; we must show that $\calf$ is not sequentially 
realizable. Assume otherwise, and let $\calf_1\le\calf_2\le\cdots$ be an 
increasing sequence of realizable fusion subsystems of $\calf$ over finite 
subgroups $S_1\le S_2\le\cdots$ of $S$ such that $S=\bigcup_{i=1}^\infty 
S_i$ and $\calf=\bigcup_{i=1}^\infty\calf_i$. Let $G_1,G_2,\dots$ be finite 
groups such that $S_i\in\sylp{G_i}$ and $\calf_i=\calf_{S_i}(G_i)$ for each 
$i\ge1$. Set $T_i=T\cap S_i$.

By Lemma \ref{l:autf(A)-2}, there is $n$ such that $S_n\nleq T$ and 
$S_n\ge\Omega_2(T)$, and also $T_n\nnsg\calf_n$ and 
$\Aut_{\calf_n}(T_n)\cong\autf(T)$. Thus $T_n=\Omega_k(T)$ for some $k\ge2$ 
since $\autf(T)$ acts irreducibly on $\Omega_1(T)$. By assumption, 
$\EE\calf\sminus\{T\}=\calh$: the set of subgroups of $S$ not contained in 
$T$ and isomorphic to $C_p\times C_p$. Hence no extraspecial subgroups of 
order $p^3$ can be essential in $\calf_n$, so 
$\EE{\calf_n}\subseteq\{T_n\}\cup\calh$. Also, 
$\EE{\calf_n}\supsetneqq\{T_n\}$ since $T_n\nnsg\calf_n$.

We now check that $\calf_n$ is reduced using the criteria in \cite[Lemma 
2.7]{indp2}.
\begin{itemize} 

\item By point (a) in that lemma, $O_p(\calf)=1$ if there are no nontrivial 
$\Aut_{\calf_n}(T_n)$-invariant subgroups of $Z(S_n)$, and this holds since 
$\Aut_{\calf_n}(T_n)$ acts irreducibly on $\Omega_1(T_n)>Z(S_n)$. 

\item By point (b) in the lemma, $O^p(\calf_n)=\calf_n$ since 
$[\Aut_{\calf_n}(T_n),T_n]=T_n$ --- again since the action is irreducible.

\item By point (c.iii) in the lemma and since $\EE{\calf_n}\sminus\{T_n\} 
\subseteq\calh$, $O^{p'}(\calf_n)=\calf_n$ if 
	\beqq \Aut_{\calf_n}(S_n)=\Gen{ 
	\Aut_{\calf_n}(S_n)\cap\mu^{-1}(\Delta_{-1}), 
	\Aut_{\calf_n}^{(T_n)}(S_n) }, \label{e:Op'2} \eeqq
where $\mu$ and $\Delta_{-1}$ are defined above and 
	\[ \Aut_{\calf_n}^{(T_n)}(S_n) = \bigl\{ \alpha\in \Aut_{\calf_n}(S_n) 
	\,\big|\, \alpha|_A\in O^{p'}(\Aut_{\calf_n}(T_n)) \bigr\}. \]
One checks in each case that 
$\mu(O^{p'}(\Aut_{\calf_n}(T_n)))=\{(1,s)\,|\,s\in(\Z/p)^\times\}<\Delta$. 
Since this subgroup together with 
$\Delta_{-1}=\{(r,r^{-1})\,|\,r\in(\Z/p)^\times\}$ generates $\Delta$, 
condition \eqref{e:Op'2} does hold, and hence $O^{p'}(\calf_n)=\calf_n$. 

\end{itemize}

Thus $\calf_n$ is reduced, and we are in the situation of case (d.iii) in 
\cite[Theorem 2.8]{indp2}. But by Table 2.2 in that paper, such a fusion 
system is not realizable, contradicting the original assumption that 
$\calf$ is sequentially realizable. 
\end{proof}

Note in particular the cases in Theorem \ref{t:index.p} where 
$\autf(T)\cong\ST{12}$ ($p=3$) or $\ST{31}$ ($p=5$). Each of these groups 
is realized by two different simple fusion systems: one which is realized 
by a $p$-compact group and by a linear torsion group, and another which is 
not sequentially realizable.

There are many simple fusion systems over discrete $p$-toral groups $S$ 
whose identity component $T$ has index $p$ in $S$ that are described by 
\cite[Theorem B]{indp3} and are not sequentially realizable. One can get an 
idea of the many possibilities for $\autf(T)$ by looking at Table 6.1 in 
\cite{indp3}, and at Examples 6.2 and 6.3 in the same paper.


\end{document}